\newtheorem{theorem}{Theorem}[section]
\newtheorem{lemma}{Lemma}
\newtheorem{proposition}[lemma]{Proposition}
\newtheorem{definition}[lemma]{Definition}
\newtheorem{remark}[lemma]{Remark}
\numberwithin{lemma}{section}
\numberwithin{equation}{section}
\newcommand{\R}{{\mathbb R}}
\renewcommand{\H}{{\mathcal H }}
\newcommand{\nP}{{\mathbf P}}
\newcommand{\Half}{{\frac{1}{2}}}
\newcommand{\CalAO}{{\mathcal{A}_1}}
\newcommand{\CalAT}{{\mathcal{A}_{\frac{3}{2}}}}
\newcommand{\CalAZ}{{\mathcal{A}_0}}
\newcommand{\CalAZS}{{\mathcal{A}_0^\sharp}}
\newcommand{\CalAOS}{{\mathcal{A}_1^\sharp}}
\newcommand{\CalATS}{{\mathcal{A}^\sharp_{\frac{3}{2}}}}
\newcommand{\W}{{\mathbf W}}
\newcommand{\Y}{{\mathcal Y}}
\author{Lizhe Wan}
\address{Department of Mathematics, University of Wisconsin - Madison}
\email{lwan33@wisc.edu}
\keywords{water waves, free boundary problem, point vortices, enhanced lifespan.}
\subjclass[2020]{35Q31, 76B03, 76B47}
\begin{document}

\title{On the gravity-capillary water waves with point vortices}

\begin{abstract}
We consider the two-dimensional deep gravity-capillary  water waves with point vortices.
We first formulate the question in the holomorphic coordinates.
Then, we derive an a priori energy estimate for water waves, and show that the water wave system has a unique solution for initial data in $\H^s\times \Omega_t^N$, $s>\frac{3}{2}$.
Finally, we show that if there are only two vortices, and vortices, initial velocity, and initial surfaces are symmetric with respect to the vertical axis, then the solution of pure capillary water waves with two point vortices has an extended cubic lifespan.
\end{abstract}

\maketitle

\tableofcontents

\section{Introduction}
In this paper, we study the Cauchy problem of two-dimensional deep gravity-capillary water waves with point vortices.
This system solves a special kind of free boundary incompressible Euler equations with appropriate boundary conditions, and the fluid vorticity is irrotational everywhere except for finitely many point vortices.

Suppose that an incompressible, homogeneous, and inviscid fluid occupies a time-dependent unbounded domain $\Omega_t\subset \mathbb{R}^2$ at each time $t\geq 0$.
We assume that the domain $\Omega_t$ has infinite depth, and its upper free boundary $\Gamma_t$ is asymptotically flat at infinity.
Let the fluid velocity at time $t\geq 0$ be $\textbf{v}(t, \cdot): \Omega_t \rightarrow \mathbb{R}^2$.
The vorticity of the fluid is defined to be
\begin{equation*}
    \omega := \nabla^\perp \cdot \textbf{v}, \quad \nabla^\perp : = ( - \partial_y, \partial_x).
\end{equation*}
While most of the results in water waves assume zero or constant vorticity, we will consider in this paper that the vorticity consists of a finite sum of point vortices.
A \textit{point vortex} for vorticity is the case where $\omega(t) = \lambda \delta_{z(t)}.$
In other words, the vorticity is a Dirac measure supported at $z(t)\in \Omega_t$ with \textit{vortex strength} $\lambda$. 
For two-dimensional fluid, the vorticity $\omega$ satisfies the transport equation $(\partial_t + \mathbf{v}\cdot \nabla) \omega = 0.$
The transport term $\mathbf{v}\cdot \nabla \omega$ is not well-defined at the vortex center $z(t)$.
To make the transport equation well-defined, we require $\mathbf{v}$ to be a weak solution to the Euler equations away from $z(t)$.
Assuming there are $N$ point vortices with vortex strength $\lambda_j$ at vortex center $z_j(t)$ for $1\leq j \leq N$, then inside the domain $\Omega_t$, the fluid satisfies the incompressible Euler equations
\begin{equation} \label{Euler}
\left\{
             \begin{aligned}
            &\mathbf{v}_t +\mathbf{v}\cdot \nabla \mathbf{v}= -\nabla p - g\textbf{e}_y &&  \text{in } \Omega_t\backslash \cup_{j=1}^N\{ z_j(t)\}, \\
            &\nabla \cdot \mathbf{v} = 0 && \text{in } \Omega_t, \\
            &\omega = \sum_{j=1}^N \lambda_j \delta_{z_j(t)} &&\text{in } \Omega_t,
             \end{aligned}
\right.
\end{equation}
for gravitational constant $g\geq 0$, and pressure $p(t,\cdot): \Omega_t \rightarrow \mathbb{R}$.
Since this is a free boundary problem, on the upper surface $\Gamma_t$, we have the kinematic boundary condition 
\begin{equation*}
\partial_t+ \mathbf{v} \cdot \nabla \text{ is tangent to } \bigcup \Gamma_t,
\end{equation*}
as well as the dynamic boundary condition
\begin{equation*}
 p = - \sigma {\bf H}  \ \ \text{ on } \Gamma_t.
\end{equation*}
Here, ${\bf H}$ is the mean curvature of the boundary, and $\sigma>0$ is the coefficient of the surface tension.
The position of each point vortex centered at $z_j(t)$ is governed by the \textit{Helmholtz--Kirchhoff} model 
\begin{equation} \label{Helmholtz}
 \frac{d}{dt} z_j(t) = \textbf{v}(t, z_j(t)) = \textbf{u}(t, z_j(t))-\sum_{ k\neq j}\frac{\lambda_k}{2\pi} \nabla^\perp  \log{|{z_j(t) -z_k(t)}|},
\end{equation}
where $\mathbf{u}$ is the irrotational part of the velocity field such that $\nabla \times \mathbf{u} = \mathbf{0}$, and $\nabla \cdot \mathbf{u} = 0$.   
The second term on the right-hand side of \eqref{Helmholtz} vanishes in the case of a single vortex.
A rigorous derivation of the Helmholtz–Kirchhoff  model can be found in Marchioro and Pulvirenti \cite{MR1220946}, and Chapter $4$ of the textbook \cite{MR1245492}.

For free boundary incompressible Euler equations with nonzero vorticity, see the work of  Beyer-Gunther \cite{MR1637554}, Ogawa-Tani \cite{MR1946720}, Lindblad \cite{MR2178961}, Coutand-Shkoller \cite{MR2291920}, Shatah-Zeng \cite{MR2763036, MR2388661}, Zhang-Zhang \cite{MR2410409}, Bieri-Miao-Shahshahani-Wu \cite{bieri2015motion}, Ifrim-Tataru \cite{MR3869381}, Christodoulou-Lindblad \cite{MR1780703}, Wang-Zhang-Zhao-Zheng \cite{MR4263411}, Ifrim-Pineau-Tataru-Taylor \cite{ifrim2023}, and the references therein.

However, the results mentioned in the previous paragraph do not apply in the setting of point vortices.
Here, we first recall some results in fluid dynamics with point vortices in the fixed domain.
Gallay in \cite{MR2787587} considered Navier–Stokes equations with increasingly concentrated vorticity.
He showed that the vanishing viscosity limit for smooth solutions is just Euler equations with point vortices.
Glass, Munnier, and Sueur in \cite{MR3858400} showed that the motion of a point vortex inside a domain can be obtained as the limit of the motion of a rigid body immersed in the fluid when the body shrinks to a massless point particle with fixed circulation.

As for the water waves with point vortices, there are many results on the existence and stability of solitary waves, see Ter-Krikorov \cite{MR108145}, Filippov \cite{MR128208, MR151069}, and Chen-Varholm-Walsh-Wheeler \cite{chen2024} for the gravity water waves, and Shatah-Walsh-Zeng \cite{MR3053431}, Varholm \cite{MR3485858}, and Varholm-Wahl\'en-Walsh \cite{MR4164269} for the gravity-capillary water waves.
To the author's knowledge, the only well-posedness result for the Cauchy problem is the work of Su \cite{MR4179726} on the gravity water waves with point vortices.
Su also considered in \cite{MR4656809} the transition of the Rayleigh-Taylor instability for gravity water waves with a pair of point vortices.
No well-posedness result for gravity-capillary water waves with point vortices was known before.

\subsection{Water waves in holomorphic coordinates}
Instead of solving the Euler equations \eqref{Euler} in a moving domain, one can reformulate the problem and work in a fixed domain instead.
In Section $5$ of \cite{MR4164269}, Varholm, Wahl\'en, and Walsh rewrote the water waves with a single point vortex.
This formulation was previously used to study irrotational water waves by, for instance, Alazard, Burq, and Zuily \cite{MR3852259, MR2805065, MR3260858, MR2931520, MR3465379}.
In \cite{MR4179726, MR4656809}, Su reexpressed the gravity water waves with point vortices using a mix of Lagrangian and conformal formulation as in the framework of Wu \cite{MR1471885, MR2507638} for irrotational gravity water waves. 

While both two formulations are feasible for studying the Cauchy problem of gravity-capillary water waves with point vortices, 
we will follow the formulation in Hunter-Ifrim-Tataru \cite{MR3535894}, and work in the \textit{holomorphic coordinates}.
We remark that the use of holomorphic coordinates has been successfully applied in many water wave models by Ifrim, Tataru, and their collaborators, see \cite{MR3535894, MR3499085, MR3625189,  ai2023dimensional, MR4462478, MR4483135, MR3667289, MR3869381}.

In this paper, we will slightly abuse notation and write holomorphic functions either as functions on the lower half of the complex plane or as their restriction to the real line.
Let $\W(t, \alpha)$ be the differentiated holomorphic position of the free surface,  $R(t, \alpha)$ be the irrotational part of the complex conjugate of the complex velocity on the surface, and $z_j(t): = x_j(t)+iy_j(t)\in \Omega_t$ be the complex position of the $j$-th point vortex.
The water wave system can be reexpressed in terms of  unknowns $(\W, R, \{z_j\}_{j=1}^N)$.
Following the idea of Appendix $A$ of Hunter-Ifrim-Tataru \cite{MR3535894} and Ifrim-Tataru \cite{MR3667289}, the gravity-capillary water waves with point vortices are given by the system
\begin{equation}\label{e:WW}
\left\{
\begin{aligned}
& (\partial_t + b\partial_\alpha)\W + \frac{(1+\W)R_\alpha}{1+\bar{\W}} = (1+\W)M - \sum_{j=1}^N \frac{\lambda_j i}{2\pi}\frac{1+\bar{\W}}{(\overline{Z-z_j})^2}\\
& (\partial_t + b\partial_\alpha)R  - (\partial_t + b\partial_\alpha) \sum_{j=1}^N \frac{\lambda_j i}{2\pi}\frac{1}{Z-z_j} -i\frac{g\W - a}{1+\W}   = \frac{ 2\sigma}{1+\W}\mathbf{P}\Im \left[ \frac{\W_{ \alpha}}{J^{\frac{1}{2}}(1+\W)}\right]_{\alpha}\\
&\frac{d}{dt}z_j(t) = \mathcal{U}(t, z_j(t)) + \sum_{k\neq j} \frac{\lambda_k i}{2\pi}\frac{1}{\overline{z_j(t) - z_k(t)}}, \quad \forall 1\leq j \leq N,
\end{aligned} 
\right.
\end{equation}
where $J := |1+\W|^2$ is the Jacobian, $\mathbf{P} = \frac{1}{2}(I -iH)$ is the holomorphic projection, $Z_\alpha = 1+\W$, $\mathcal{U}(t,z)$ is the irrotational part of velocity at the point $z$, which is given by
\begin{equation} \label{IrrotationalV}
   \mathcal{U}(t,z) :=  \frac{i}{2\pi}\int_{\mathbb{R}} \frac{(1+\W)\cdot \bar{R} (t,\alpha)}{Z(t, \alpha)-z} \,d\alpha,
\end{equation}
and auxiliary functions $a$, $b$ and $M$ are defined in \eqref{FrequencyShit}, \eqref{bDef} and \eqref{MDef} respectively.
The detailed computations and derivations of the water wave system \eqref{e:WW} will be carried out in Section \ref{s:Eqn}. 

When there are no point vortices, the irrotational gravity-capillary water waves are locally well-posed for $(\W_0, R_0)\in H^{s+\frac{1}{2}}(\mathbb{R})\times H^s(\mathbb{R})$ for $s>1$, see \cite{wan2024}.
In this paper, we will also use the functional space
\begin{equation*}
    \H^s: = H^{s+\frac{1}{2}}(\mathbb{R})\times H^s(\mathbb{R}),
\end{equation*}
and consider $(\W(t), R(t), \{z_j(t)\}_{j=1}^N)\in \H^s \times \Omega_t^N$.

\subsection{Main results}
The first main result of this paper is the energy estimate for water waves \eqref{e:WW}.
Before stating the energy estimate, we first introduce the control norms.
Let $\epsilon>0$ be an arbitrarily small positive constant, and fix time $t$, define
\begin{align*}
&\CalAZ : = \|\W \|_{ C^{\epsilon}_{*}}+ \|R \|_{ C^{-\frac{1}{2}+\epsilon}_{*}}+ \|Z-\alpha\|_{L^\infty} + \sum_{j=1}^N |\lambda_j| + \max_{j\neq k}\frac{|\lambda_j|}{|z_j - z_k|}, \\ 
&\CalAO : = \|\W \|_{ C^{1+\epsilon}_{*}}+\|R\|_{C_{*}^{\frac{1}{2}+\epsilon}}+  \|Z-\alpha\|_{C^1_{*}}+ \sum_{j=1}^N |\lambda_j|+ \max_{j\neq k}\frac{|\lambda_j|}{|z_j - z_k|},\\
&\CalAT : = \|\W \|_{ C^{\frac{3}{2}+\epsilon}_{*}}+ \|R \|_{ C^{1+\epsilon}_{*}}+ \|Z-\alpha\|_{C^{\frac{3}{2}}_{*}}+ \sum_{j=1}^N |\lambda_j|+ \max_{j\neq k}\frac{|\lambda_j|}{|z_j - z_k|}.
\end{align*}
Here, $C^s_{*}$ is the Zygmund space that will be defined in Section \ref{s:Norm}. 
Each control norm in $\CalAO$ has $1$ more derivative than the corresponding term in $\CalAZ$, and each control norm in $\CalAT$ has $\frac{3}{2}$ more derivatives than the corresponding term in $\CalAZ$.
In addition, if the term
\begin{equation*}
\lim_{t\nearrow T^{*}}\max_{j\neq k}\frac{|\lambda_j|}{|z_j(t) - z_k(t)|} = +\infty,
\end{equation*}
then at least two point vortices collide at time $T^{*}$.
The model \eqref{Euler} is no longer true and we will not consider this situation.

The first main result in this paper is the a priori energy estimate for the system \eqref{e:WW}.
\begin{theorem} \label{t:MainEnergyEstimate}
 For any $s>0$, suppose  $(\W, R, \{z_j\}_{j=1}^N)\in C( [0,T]; \H^s)\times C^1([0,T];\Omega_t)^N$ solve the water waves \eqref{e:WW}.
 Then, there exists an energy functional $E_s$ which has the following properties:
 \begin{enumerate}
     \item Norm equivalence:
       \begin{equation}
           E_s \approx_\CalAZ \|(\W,  R)\|^2_{\mathcal{H}^s} + \| Z-\alpha\|_{L^2}^2+ \sum_{j=1}^N |\lambda_j|^2.\label{normEquivalence}
       \end{equation}
     \item Energy estimate:
      \begin{equation}
     \frac{d}{dt}E_s\lesssim_\CalAO \CalAT E_s.\label{EnergyEstimate}
 \end{equation} 
 \end{enumerate} 
\end{theorem}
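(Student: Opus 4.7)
The plan is to adapt the paradifferential energy construction for irrotational gravity--capillary water waves developed in \cite{wan2024} (following the framework of Hunter--Ifrim--Tataru \cite{MR3535894}) and to treat the point-vortex contributions in \eqref{e:WW} as controlled perturbations. I propose taking
\[
E_s := E_s^{\mathrm{ww}}(\W, R) + \|Z - \alpha\|_{L^2}^2 + \sum_{j=1}^{N} |\lambda_j|^2,
\]
where $E_s^{\mathrm{ww}}$ is the paradifferential energy for capillary water waves, built with coefficients involving $a$, $b$, $M$ and weights drawn from $J = |1+\W|^2$. Each vortex strength $\lambda_j$ is a conserved quantity of the Helmholtz--Kirchhoff dynamics \eqref{Helmholtz}, so $\sum |\lambda_j|^2$ contributes $0$ to the time derivative while ensuring the right-hand side of \eqref{normEquivalence} is bounded below.

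For the norm equivalence \eqref{normEquivalence}, the coercivity of $E_s^{\mathrm{ww}}$ in $\H^s$ modulo factors controlled by $\CalAZ$ follows from the positivity of the surface-tension symbol, together with paracoefficient bounds in terms of $\|\W\|_{C_*^\epsilon}$ and $\|R\|_{C_*^{-1/2+\epsilon}}$, exactly as in the irrotational capillary case. The $\|Z-\alpha\|_{L^2}^2$ piece is inherited unchanged; the $\sum|\lambda_j|^2$ piece is trivial. The vortex-separation quantity $\max_{j \neq k}|\lambda_j|/|z_j-z_k|$ appears only on the right-hand side of \eqref{normEquivalence} through $\CalAZ$ and does not enter $E_s$ itself.

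For the estimate \eqref{EnergyEstimate}, I differentiate $E_s$ in time using \eqref{e:WW}. The $E_s^{\mathrm{ww}}$ contribution reproduces the capillary energy identity of \cite{wan2024} and yields the desired $\CalAO \cdot \CalAT \cdot E_s$ bound modulo the new forcing from the vortex source terms
\[
F_1 = -\sum_{j=1}^{N}\frac{\lambda_j i}{2\pi}\frac{1+\bar{\W}}{(\overline{Z-z_j})^2}, \qquad F_2 = (\partial_t + b\partial_\alpha)\sum_{j=1}^{N}\frac{\lambda_j i}{2\pi}\frac{1}{Z-z_j},
\]
appearing in the $\W$ and $R$ equations. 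The key observation is that $(Z - z_j)^{-1}$ is the boundary trace of a function holomorphic on the lower half-plane away from $z_j \in \Omega_t$, whose distance to the free surface is quantified by $\CalAZ$. All Zygmund norms of $F_1$ and $F_2$ therefore reduce to polynomial expressions in $|\lambda_j|$, the vortex-pair distances, and $\|\W\|_{C_*^{s+1/2}}, \|R\|_{C_*^s}$, and their high-order pairings against the $\H^s$-part of $E_s$ are bounded by $\CalAO \cdot \CalAT \cdot E_s$.

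The main obstacle will be controlling the coupling between the vortex positions and the surface energy: the irrotational velocity $\mathcal{U}(t, z_j)$ from \eqref{IrrotationalV} is a Cauchy integral of $(1+\W)\bar R$ along the free surface, so the dynamics of $z_j$ depends nonlocally on $(\W, R)$. I expect that the strictly positive distance from $z_j$ to the surface, together with the standing assumption $\max_{j\neq k} |\lambda_j|/|z_j - z_k| < \infty$ embedded in $\CalAZ$, allows one to bound $|\mathcal{U}(t, z_j)|$ in terms of $\CalAZ$, which in turn guarantees that $|Z - z_j|$ and $|z_j - z_k|$ evolve with rate $\lesssim \CalAO$. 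This same bound justifies the commutator estimates needed when $F_2$ is paired with the top-order piece of the energy, closing the estimate.
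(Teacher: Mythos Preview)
Your overall architecture matches the paper: take the paradifferential capillary energy for $(\W,R)$, augment by $\|Z-\alpha\|_{L^2}^2 + \sum_j|\lambda_j|^2$, and treat the vortex contributions as perturbative source terms. The paper does exactly this, with the explicit energy carrying a $T_{J^{-3/2}}$ weight on the $\W$ piece and two cubic correction integrals to kill sub-leading commutator terms, just as in the irrotational case.

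There is, however, a genuine gap in your handling of the vortex forcing. You propose to bound $F_1,F_2$ in Zygmund norms and then pair them against the energy. But the top-order pairing is $\langle F_1,\W\rangle_{H^{s+1/2}} + \langle F_2,R\rangle_{H^s}$, so what you actually need are the \emph{Sobolev} bounds $\|F_1\|_{H^{s+1/2}}$, $\|F_2\|_{H^s}$. Your stated mechanism for these, namely that the distance from $z_j$ to the surface is ``quantified by $\CalAZ$'', is not correct: $\CalAZ$ contains $\|Z-\alpha\|_{L^\infty}$, $\sum_j|\lambda_j|$, and the vortex-pair separation, but no vortex-to-surface distance. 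A bound based purely on that distance would not close in terms of the stated control norms.

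The paper's actual mechanism is a holomorphic/antiholomorphic splitting of the kernel. One computes explicitly that
\[
\nP(v^{rot}\circ Z)(\alpha) = \sum_j \frac{\lambda_j i}{2\pi}\,\frac{1}{\overline{h_z(t,c_j)(\alpha - c_j)}}, \qquad c_j = h^{-1}(t,z_j)\in\mathbb{H},
\]
which is a smooth function lying in $H^s\cap C^s_*$ for every $s\ge 0$ with norm $\lesssim \sum_j|\lambda_j|$. The complementary piece $\bar\nP(v^{rot}\circ Z)$ vanishes identically when $Z=\alpha$, so a Moser-type estimate gives $\|\bar\nP(v^{rot}\circ Z)\|_{H^s}\lesssim_{\CalAZ}\|Z-\alpha\|_{H^s}$. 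The same splitting applied to $(Z-z_j)^{-2}$ handles $F_1$, and for $F_2$ one uses the transport identity $D_t Z = \bar R + v^{rot}\circ Z$ together with the ODE bound $|\dot z_j|\le \|R\|_{L^\infty} + (N-1)\max_{k\ne j}|\lambda_k|/|z_j-z_k|\lesssim\CalAO$ (maximum principle on $\mathcal U$, not a $\CalAZ$ bound as you wrote). With these Sobolev bounds in hand, the vortex source terms are absorbed into the perturbative remainders $(G,K)$ of the paralinearized system, after which the energy estimate proceeds exactly as in the irrotational case. Your sketch omits this decomposition, and without it the Sobolev control of $F_1,F_2$ in terms of the stated control norms does not follow.
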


As a consequence of the a priori energy estimate, one can prove the local well-posedness result of water waves \eqref{e:WW}.
\begin{theorem} \label{t:Wellposed}
For $s> \frac{3}{2}$, if the initial datum $(\W_0, R_0)\in \H^s$, and disjoint point vortices $z_{j}(0)\in \Omega_0$, $\lambda_j\in \mathbb{R}$
 for $1\leq j \leq N$,  then the system \eqref{e:WW} has a unique solution $(\W, R) \times \{z_j(t) \}_{j=1}^N\in C( [0,T]; \H^s)\times C^1([0,T];\Omega_t)^N$ for some time $0<T< T^{*}$.
Moreover, $T^{*}$ is the maximal lifespan such that either $T^{*} = +\infty$ or the solution blows up at $T^{*}$:
\begin{equation*}
   \lim_{\tau\nearrow T^{*}} \| \CalAO(t)\|_{L^\infty_t[0, \tau)} + \| \CalAT(t)\|_{L^1_t[0,\tau)} = +\infty.
\end{equation*}
\end{theorem}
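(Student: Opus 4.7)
The plan is to derive Theorem \ref{t:Wellposed} from the a priori estimate of Theorem \ref{t:MainEnergyEstimate} by a standard regularization-plus-compactness scheme, supplemented by a low-regularity difference estimate for uniqueness and a Bona-Smith argument for continuous dependence in $\mathcal{H}^s$. The crucial observation is that for $s > \frac{3}{2}$, the Sobolev/Zygmund embeddings $H^{s+\frac{1}{2}}(\mathbb{R}) \hookrightarrow C^{\frac{3}{2}+\epsilon}_{*}(\mathbb{R})$ and $H^s(\mathbb{R}) \hookrightarrow C^{1+\epsilon}_{*}(\mathbb{R})$ (for $\epsilon > 0$ small enough) yield $\CalAO + \CalAT \lesssim \|(\W, R)\|_{\mathcal{H}^s} + (\text{vortex quantities})$, so that Gr\"onwall applied to \eqref{EnergyEstimate} closes the $\mathcal{H}^s$ bound on a time interval whose length depends only on the initial norm and the initial pairwise separation of the vortices.

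\textbf{Existence.} First I would introduce a family of regularized systems by replacing $(\W, R)$ in the nonlinearities of \eqref{e:WW} with their frequency-truncated versions $P_{\leq n} \W$, $P_{\leq n} R$ (and likewise regularizing the singular integral \eqref{IrrotationalV}) so as to preserve the holomorphic structure, the action of $\mathbf{P}$, and the Helmholtz-Kirchhoff coupling, while turning the problem into a locally Lipschitz evolution for which Picard iteration produces smooth solutions $(\W^n, R^n, \{z_j^n\})$ on some interval $[0, T_n]$. Reproducing the proof of Theorem \ref{t:MainEnergyEstimate} for these regularized solutions then yields uniform $\mathcal{H}^s$-bounds on a common subinterval $[0, T]$, along with uniform lower bounds on the separations $|z_j^n - z_k^n|$ and on the distances of the vortices to the regularized free surface. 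Aubin-Lions compactness in $C_t \mathcal{H}^{s'}$ for $s' < s$, combined with weak-$*$ compactness in $L^\infty_t \mathcal{H}^s$ and Arzel\`a-Ascoli on the vortex trajectories, delivers a limit solution in the claimed class; weak lower semicontinuity transfers the uniform bounds.

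\textbf{Uniqueness and continuous dependence.} For two solutions $(\W^i, R^i, \{z_j^i\})$, $i = 1, 2$, with the same data, I would linearize \eqref{e:WW} around one of them and derive a difference estimate at a lower regularity such as $\mathcal{H}^{s-1}$; the vortex position differences satisfy ODEs driven by the differences of $(\W, R)$ through \eqref{IrrotationalV}, which is Lipschitz in those arguments so long as the vortices stay strictly inside $\Omega_t$ and separated. Gr\"onwall then forces the differences to vanish, yielding uniqueness. Continuous dependence in $\mathcal{H}^s$ follows from a Bona-Smith argument: regularize both initial data, combine high-regularity stability for the regularized solutions with the low-regularity difference estimate, and interpolate.

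\textbf{Blow-up criterion.} Finally, if the right-hand side of the criterion is finite on $[0, T^{*})$, Gr\"onwall applied to \eqref{EnergyEstimate} gives $E_s(t) \leq E_s(0)\exp\bigl(C(\|\CalAO\|_{L^\infty_t})\,\|\CalAT\|_{L^1_t}\bigr)$, so $(\W, R)$ remains bounded in $\mathcal{H}^s$; since $\CalAO$ also controls $\max_{j \neq k}|\lambda_j|/|z_j - z_k|$, the vortices stay uniformly separated, and the local existence result can be restarted at $T^{*}$, contradicting maximality. The main obstacle I anticipate is the first step: designing a regularization scheme that is simultaneously compatible with the holomorphic projection $\mathbf{P}$, with the singular integral \eqref{IrrotationalV} defining $\mathcal{U}$, and with the point-vortex ODEs, and for which the energy estimate can be reproduced uniformly in $n$. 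Once this is settled, the remaining steps are routine applications of Gr\"onwall, compactness, and interpolation.
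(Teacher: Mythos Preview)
Your overall strategy is sound and would likely succeed, but it differs from the paper's proof in a substantive way. The paper does \emph{not} regularize the system and pass to the limit via compactness. Instead, it runs a Picard-type iteration directly on the paralinearized form \eqref{WRParaMatSys}: at step $n{+}1$ one solves the \emph{linear} paradifferential system \eqref{LinearIterationEqn} (Proposition \ref{t:LinearExist}) with coefficients $\tilde b,\tilde A,\tilde B,\tilde C$ and source terms $G^{(n)},K^{(n)}$ built from the previous iterate, together with an explicit ODE for $z_j^{(n+1)}$. Uniform bounds are propagated through properties \eqref{distance}--\eqref{Energygrowth}, and convergence is obtained by showing the iterates form a \emph{Cauchy sequence} in the weaker topology $\H^{s-\frac{3}{2}}\times L^2\times\mathbb{C}^N$, rather than by Aubin--Lions/weak-$*$ compactness. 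Uniqueness and continuous dependence are then read off from the Lipschitz bound \eqref{Lipschitz} combined with the $\H^s$ a priori estimate, following the Alazard--Burq--Zuily argument; the blow-up criterion is handled exactly as you describe.

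The main advantage of the paper's route is precisely that it sidesteps the obstacle you flagged: because each step solves a linear system whose structure is already the paralinearization of \eqref{e:WW}, the holomorphic projection $\nP$, the singular integral $\mathcal U$, and the Helmholtz--Kirchhoff coupling are automatically respected at every stage without any ad hoc truncation. Your regularization-plus-compactness scheme is a legitimate alternative and is standard for quasilinear dispersive problems, but here it would require nontrivial work to design a frequency cutoff that commutes well enough with $\nP$ and with the conformal/holomorphic bookkeeping to reproduce the energy identity uniformly in $n$; the iterative approach buys you that compatibility for free. Conversely, your approach has the mild advantage that one never needs to prove a separate linear existence result like Proposition \ref{t:LinearExist}.
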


\begin{remark}
\begin{enumerate}
\item For the blow-up criterion of gravity water waves with point vortices in \cite{MR4179726}, the author uses Sobolev norms to characterize the blow-up of solutions.
Here, in Theorem \ref{t:MainEnergyEstimate}, we prove the tame energy estimate that involves Zygmund control norms $\CalAZ, \CalAO, \CalAT$.
This is why  the blow-up criterion in Theorem \ref{t:Wellposed} does not involve the Sobolev control norms.
\item An interesting question one may ask is whether it is possible to further show that $z_j(t)\in C^2([0,T]; \Omega_t)$ for each $j$ as in the corresponding result in \cite{MR4179726}?
In order to estimate $|\frac{d^2}{dt^2}z_j|$, one will have to estimate $\|R_t\|_{L^\infty},$ which is bounded when $s>2$(or $s>\frac{7}{4}$ if further using the Strichartz estimate).
It may not be bounded in the low regularity setting $\frac{3}{2}<s\leq \frac{7}{4}$.
Hence, we only prove that $z_j(t)\in C^1([0,T]; \Omega_t)$.
\item For reference, the local well-posedness of two dimensional irrotational gravity-capillary water waves was established in, for instance, Alazard-Burq-Zuily \cite{MR2805065}, Ifrim-Tataru \cite{MR3667289}, de Poyferr\'{e}-Nguyen \cite{MR3487264}, Nguyen \cite{MR3724757}, and Ai \cite{ai2023improved}.
The local well-posedness of two-dimensional irrotational gravity water waves was established in Wu \cite{MR1471885}, Lannes \cite{MR2138139}, Alazard-Burq-Zuily \cite{MR3260858,MR3465379,MR3852259}, Hunter-Ifrim-Tataru \cite{MR3348783}, Ai \cite{MR4161284, MR4098033, MR4035330}, and Ai-Ifrim-Tataru \cite{ai2023dimensional} for the low regularity well-posedness.
\end{enumerate}
\end{remark}

The dynamics of point vortices when $N \geq 3$ can be highly complex.
Su in Section 3.3 in \cite{MR4179726} has a brief discussion on some possible situations for the dynamics of two vortices $N = 2$. 

When there are no point vortices and no gravity, the first global well-posedness result belongs to Ifrim-Tataru \cite{MR3667289}.  
In their work, Ifrim-Tataru proved a low regularity global well-posednss result, but also showed that the solutions do enjoy a cubic lifespan, as well as, scale-invariant energy estimates which both findings allowed them to work close to the regularity threshold. 
This is the first result of this type for capillary water waves in 2D.
This result was followed, later on,  by a  global well-posedness result  of Ionescu-Pusateri \cite{MR3862598}, however obtained in a much higher regularity setting.

For water waves with point vortices \eqref{e:WW}, since the terms with point vortices are not quadratic, in general, one can only expect quadratic lifespans for solutions.
However, our next main result shows that when $N = 2$, and these two point vortices are symmetric with respect to the vertical axis, small initial data leads to the extended cubic lifespan.

\begin{theorem} \label{t:lifespan}
Let $s>\frac{3}{2}$, and the gravity is zero $g=0$, we assume that the water wave system \eqref{e:WW} satisfies the following conditions:
\begin{enumerate}
\item 2 point vortices are symmetric, and are not close to the surface: The number of vortices $N =2$, with position $z_1(t) = -x(t)+ iy(t)$, $z_2(t) = x(t)+iy(t)$ and strength $\lambda$, $-\lambda$, where $x(t)>0$, $y(t)<0$, and $\lambda$ satisfies either $\lambda<0$, $|y(0)|\gtrsim 1$ or $\lambda>0$, $|y(0)| \gg \epsilon^{-2} $.
\item The free upper surface and the complex velocity field on the surface are symmetric: Following the definition of variables in Section \ref{s:Eqn},  $(\Re \W_0, \Im R_0, \Im Z_0)$ are even functions and $(\Im \W_0, \Re R_0, \Re Z_0-\alpha)$ are odd functions.
\item The initial datum of \eqref{e:WW} and vortex strength are small such that
\begin{equation*}
 \|(\W_0, R_0, Z_0 -\alpha) \|_{\H^s\times L^2}\leq \epsilon, \quad |\lambda |^2 + |\lambda x(0)|\leq \epsilon
\end{equation*}
for some small constant $0<\epsilon\leq \epsilon_0 \ll 1$.
\item The initial rotational part of the velocity dominates the irrotational part of the velocity: $\frac{|\lambda|}{x(0)}\geq M $, for some constant $M\gg 1$.
\end{enumerate} 
Then the solution of \eqref{e:WW} has an extended cubic lifespan $T \geq \delta \epsilon^{-2}$ for some constant $\delta>0$ that depends on $\epsilon_0$.
Moreover, the energy of the solution is bounded:
\begin{equation} \label{WRZNormBound}
 \|(\W(t), R(t), Z(t) -\alpha) \|_{\H^s\times L^2}\lesssim \epsilon, \quad \forall  t\in [0, \delta\epsilon^{-2}].
\end{equation}
\end{theorem}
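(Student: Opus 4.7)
My plan is a bootstrap argument built on the a priori estimate of Theorem~\ref{t:MainEnergyEstimate}, combined with a symmetry reduction and a cubic modified-energy correction adapted from Ifrim--Tataru \cite{MR3667289}.

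First I would verify that the symmetry is preserved by the flow: the ansatz $z_1(t)=-\overline{z_2(t)}$ with opposite strengths $\pm\lambda$, together with the stated parities of $(\W_0,R_0,Z_0-\alpha)$, makes the system \eqref{e:WW} invariant under the involution $\alpha\mapsto-\alpha$ combined with appropriate conjugation, so that for as long as the solution exists we may write $z_1(t)=-x(t)+iy(t)$, $z_2(t)=x(t)+iy(t)$, and the parities of $(\W,R,Z-\alpha)$ persist. Next, I would analyze the reduced Helmholtz--Kirchhoff ODE. Under symmetry the leading mutual-interaction term contributes only to $\dot y$, giving $\dot y\approx-\lambda/(4\pi x)$ with $\dot x$ driven only by the small irrotational velocity $\mathcal U$. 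The hypothesis $|\lambda|/x(0)\geq M$ and the sign dichotomy in condition~(1) then ensure that on the target time interval $[0,\delta\epsilon^{-2}]$ the vortex stays bounded away from the surface: $|y(t)|\gtrsim |y(0)|$ and $x(t)\approx x(0)$, and hence the rotational velocity continues to dominate the irrotational one.

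The bootstrap hypothesis I would carry is
\begin{equation*}
\|(\W(t),R(t),Z(t)-\alpha)\|_{\H^s\times L^2}\leq C\epsilon,\qquad \CalAO(t)+\CalAT(t)\lesssim\epsilon, \qquad |y(t)|\sim|y(0)|,\quad x(t)\sim x(0),
\end{equation*}
and the goal is to upgrade the constants by a factor of $2$ on any such $[0,T]$ with $T\leq\delta\epsilon^{-2}$. Since naive Sobolev embedding only yields $\CalAT\lesssim\epsilon$, Theorem~\ref{t:MainEnergyEstimate} by itself produces merely quadratic lifespan. To reach cubic, I would construct a modified energy $\widetilde E_s=E_s+Q_s(\W,R)$, where $Q_s$ is the cubic holomorphic normal-form correction of Ifrim--Tataru for pure capillary water waves, chosen precisely to cancel the non-perturbative quadratic contribution to $\frac{d}{dt}E_s$ arising from the irrotational part of \eqref{e:WW} at $g=0$. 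The upshot is that $\widetilde E_s\approx E_s$ and the irrotational portion of $\frac{d}{dt}\widetilde E_s$ is already at the cubic level $\epsilon^2\widetilde E_s$.

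The decisive new step is the treatment of the vortex source terms
\begin{equation*}
\sum_{j=1}^{2}\frac{\lambda_j i}{2\pi}\frac{1+\bar\W}{(\overline{Z-z_j})^2}\qquad\text{and}\qquad (\partial_t+b\partial_\alpha)\sum_{j=1}^{2}\frac{\lambda_j i}{2\pi}\frac{1}{Z-z_j}.
\end{equation*}
With $z_2=-\bar z_1$ and strengths $\pm\lambda$, a direct expansion in the small parameter $x(t)$ produces the cancellation
\begin{equation*}
\frac{1}{(\overline{Z-z_1})^2}-\frac{1}{(\overline{Z-z_2})^2}=\frac{-4x(\bar Z+iy)}{\bigl((\bar Z+iy)^2-x^2\bigr)^2},
\end{equation*}
and analogously for the other sum. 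Thus the effective size of the vortex forcing is controlled by $|\lambda x|\leq\epsilon$ (by hypothesis~(3)) divided by powers of $|y|$, which is $\gtrsim 1$. Pairing these forcings against the energy in the derivation of $\frac{d}{dt}\widetilde E_s$, and using $\|(\W,R)\|_{\H^s}\lesssim\epsilon$, I would show each vortex-generated contribution enters at the $O(\epsilon^3)$ level, so that
\begin{equation*}
\frac{d}{dt}\widetilde E_s\lesssim \epsilon^2\,\widetilde E_s.
\end{equation*}
Gr\"onwall then gives $\widetilde E_s(t)\lesssim\epsilon^2$ on $[0,\delta\epsilon^{-2}]$ for suitable $\delta>0$, which together with the ODE bounds on $(x(t),y(t))$ closes the bootstrap and yields \eqref{WRZNormBound}.

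I expect the main obstacle to be the interaction step: verifying that the Ifrim--Tataru normal form correction $Q_s$ remains admissible in the presence of the vortex source terms, and that the $b\partial_\alpha$ transport and the $(\partial_t+b\partial_\alpha)$ differentiation of the $(Z-z_j)^{-1}$ term do not regenerate new quadratic non-normal-form contributions. The symmetry hypothesis is essential here: without it the vortex forcing would only be $O(\lambda)=O(\epsilon^{1/2})$, ruinous for cubic lifespan, and it is precisely the odd-in-$x$ cancellation together with $|\lambda x|\lesssim\epsilon$ and $|y|$ bounded below that promotes it to $O(\epsilon)$ and hence, after pairing, to the required $O(\epsilon^3)$.
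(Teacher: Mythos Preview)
Your overall architecture matches the paper---bootstrap, Ifrim--Tataru-type cubic modified energy, symmetry cancellation in the vortex sources---and the cancellation you write is exactly the one used: it makes the vortex forcing of size $|\lambda x|\, d_S^{-k}\lesssim\epsilon\, d_S^{-k}$ in $H^s$. The gap is in the power counting for the case $\lambda<0$, $|y(0)|\gtrsim 1$. Using only $d_S\gtrsim 1$, the vortex forcing in the $\W$-equation has $H^{s+\frac12}$ norm $O(\epsilon)$, not $O(\epsilon^2)$; paired against $\W$ this contributes $O(\epsilon^2)$ to $\frac{d}{dt}\widetilde E_s$, not $O(\epsilon^3)$, and integrating either bound over $[0,\delta\epsilon^{-2}]$ gives $O(\delta)$ (respectively $O(\delta\epsilon)$), neither of which is $\lesssim\epsilon^2$, so the bootstrap does not close. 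The same obstruction undermines your claim $x(t)\approx x(0)$: symmetry gives $\dot x=x\,\Re\mathcal U_z(t,\tilde x+iy)$, but the maximum-principle bound $|\mathcal U_z|\lesssim\|R_\alpha\|_{L^\infty}\lesssim\epsilon$ only yields $x(t)\le x(0)e^{C\epsilon t}$, unbounded on the target interval.

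The missing mechanism is time decay coming from the growth of the vortex depth. In the $\lambda<0$ case the vortices descend at speed $|\dot y|\approx|\lambda|/(4\pi x)\gtrsim M\gg1$, so $d_S(t)\gtrsim 1+t$; this and the bound $x(t)\approx x(0)$ are established self-consistently by feeding the decay into the integral formula for $\mathcal U_z$ (rather than the maximum principle). Every vortex-generated term then carries a factor $(1+t)^{-3/2}$, and the paper's modified-energy inequality reads
\[
\frac{d}{dt}\mathcal E_s\lesssim\bigl(\CalAOS\CalATS+(1+t)^{-3/2}\bigr)\mathcal E_s+\epsilon^2(1+t)^{-3/2},
\]
whose right-hand side is integrable on $[0,\delta\epsilon^{-2}]$. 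Your concern about the normal form tolerating the vortex sources is legitimate but secondary: with the $d_S$-decay in hand those terms are simply absorbed into the perturbative remainder $(G_3,K_3)$, and no cancellation beyond the one you identified is required there.
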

A detailed energy inequality will be given in Theorem \ref{t:CubicEnergy}.

\begin{remark}
These conditions ensure that two point vortices either move downward or move upward but their positions are far from the surface.
In either case, the terms caused by point vortices are under control.
\end{remark}

For the proof of Theorem \ref{t:lifespan}, the strategy is to show that point vortices have bounded vertical velocities, and their horizontal movements are also controlled, so that the rotational terms are small.
For the irrotational part of the system, we use the normal form analysis in the spirit of \cite{MR3667289}.
As a consequence, we will prove the modified energy estimate Theorem \ref{t:CubicEnergy}, which will then lead to the cubic lifespan of the solution.

\subsection{The organization of the paper}
Section \ref{s:Def} is devoted to basic estimates and derivation of the water wave system \eqref{e:WW}.
In Section \ref{s:Norm}, we define the function spaces we will use in this paper.
In Section \ref{s:ParaEst}, we define paradifferential operators and list paradifferential estimates that will be used in the analysis.
We then follow the derivation in \cite{MR3535894} to derive the system of equations governing the evolution of water waves in holomorphic coordinates in Section \ref{s:Eqn}.

Next, in Section \ref{s:Estimate}, we derive some water wave estimates including estimates of $a, b$, the rotational part of the complex velocity, and the time derivatives of point vortices.
We also compute  the para-material derivatives $T_{D_t} := \partial_t + T_b\partial_\alpha$ of $\W, R$ and $J^s$.

In the next section, we consider the local well-posedness of water waves.
In Section \ref{s:Energy}, we derive the modified energy estimate Theorem \ref{t:MainEnergyEstimate} by constructing appropriate cubic energy corrections.
Later, in Section \ref{s:ProofWell}, we prove Theorem \ref{t:Wellposed} by building a sequence of approximate solutions to show that this sequence converges to the solution of water waves \eqref{e:WW}.
We also show that the solution is unique.

Finally, in Section \ref{s:CubicLifespan}, we consider the case of two symmetric small point vortices as in the case of Theorem \ref{t:lifespan}. 
We make the bootstrap assumption on the Sobolev norms of $(\W, R, Z-\alpha)$.
We obtain lower bounds for the distance of the vortices to the surface $d_S(t)$ \eqref{DefDistance}, and show that rotational terms in water waves can be bounded by $\epsilon d_S(t)^{-\frac{3}{2}}$.
Working with normal form variables and choosing appropriate energy corrections, we prove the modified energy estimate \eqref{t:CubicEnergy}.
At the end, we close the bootstrap assumption \eqref{bootstrap} and finish the proof of Theorem \ref{t:lifespan}.

\section{Preliminaries and derivation of water waves} \label{s:Def}

\subsection{Norms and function spaces} \label{s:Norm}
In this section, we list the definitions of norms and function spaces we will need for the analysis. 

In two space dimensions, we identify $\mathbb{R}^2$ with $\mathbb{C}$, so that a point $(x, y)\in \mathbb{R}^2$ is equivalent to a complex number $x + iy\in \mathbb{C}$.
In the following, we will write $\mathbb{R}^2$ vectors as complex numbers to simplify the computation.

Let $H$ be the Hilbert transform on the real line:
\begin{equation*}
     Hf(x)  := p_{.}v_{.} \frac{1}{\pi}\int_{\mathbb{R}} \frac{f(y)}{x-y}\,dy.
\end{equation*}
The Hilbert transform can also be represented as a Fourier multiplier
\begin{equation*}
    \widehat{Hf}(\xi) = -i\mbox{sgn}(\xi)\hat{f}(\xi).
\end{equation*}
As a consequence, $H^2 = -I$.
An important property of the Hilbert transform is the convolution identity  
\begin{equation} \label{HilbertCon}
  H\left[uH[v]+vH[u]\right] = H[u]H[v] - uv, \quad \forall u,v \in L^2,
\end{equation}

We define $\mathbf{P}$ as the projection onto negative frequencies
\begin{equation*}
  \widehat{\nP f}(\xi): = \mathbf{1}_{(-\infty,0)}\hat{f}(\xi)
\end{equation*}
The operator $\nP$ can also be expressed using the Hilbert transform
\begin{equation*}
    \mathbf{P} = \frac{1}{2}(\mathbf{I} - iH).
\end{equation*}
A complex-valued function $f$ on the real line that can be extended to be holomorphic in the lower half-plane satisfies
\begin{equation*}
 \nP f = f,
\end{equation*}
which means that its real and imaginary parts satisfy the relations
\begin{equation} \label{ReIm}
    \Re f = H\Im f, \quad \Im f = -H \Re f.
\end{equation}
We will see later in the derivation of water waves, unknowns $\W$, $R$, and $Z-\alpha$ are all holomorphic functions, and therefore satisfy the above properties.
On the other hand, a complex-valued function $f$ on the real line that can be extended to be holomorphic in the upper half-plane satisfies $\bar{\nP} f = f$.

Consider the  Littlewood-Paley frequency decomposition,
\begin{equation*}
    I = \sum_{k\in \mathbb{N}} P_k, 
\end{equation*}
where for each $k\geq 1$, the symbols of the operator $P_k$ are smooth and localized at $2^k$, and $P_0$ selects the low frequency components $|\xi|\leq 1$.

 Let $s\in \mathbb{R}$, and $p,q \in [1, \infty]$, the non-homogeneous Besov space $B^s_{p,q}(\mathbb{R})$ is defined as the space of all  tempered distributions $u$ such that
\begin{equation*}
\| u\|_{B^s_{p,q}} : = \left\|(2^{ks}\|P_k u \|_{L^p})_{k=0}^\infty \right\|_{l^q} < +\infty.
\end{equation*}
When $p = q = \infty$, the Besov space $B^s_{\infty, \infty}$ becomes the \textit{Zygmund space} $C^s_{*}$.
When $p = q =2$, the Besov space $B^s_{2,2}$ becomes the \textit{Sobolev space} $H^s$.

 Let $1\leq p_1 \leq p_2 \leq \infty$, $1\leq r_1 \leq r_2 \leq \infty$, then for any real number $s$,
\begin{equation*}
    B^s_{p_1, r_1}(\mathbb{R}) \hookrightarrow B^{s-(\frac{1}{p_1} - \frac{1}{p_2})}_{p_2, r_2}(\mathbb{R}).
\end{equation*}
As a special case when $p_1 = r_1 =2$ and $p_2 = r_2 = \infty$, 
\begin{equation*}
 H^{s+\frac{1}{2}}(\mathbb{R}) \hookrightarrow C^s_{*}(\mathbb{R}) \quad \forall s. 
\end{equation*}
The Sobolev space $H^{s+\frac{1}{2}}(\mathbb{R})$ can be embedded into the Zygmund space $C^s_{*}(\mathbb{R})$.

 The Zygmund space $C^s_{*}(\mathbb{R})$ is just the usual H\"{o}lder space $W^{s, \infty}(\mathbb{R})$ when $s\in (0,\infty)\backslash \mathbb{N}$.
One has the embedding properties
\begin{align*}
  &C_{*}^s(\mathbb{R}) \hookrightarrow L^\infty(\mathbb{R}), \quad s>0; \qquad L^\infty(\mathbb{R}) \hookrightarrow C_{*}^s, \quad s<0;\\
  &C_{*}^{s_1}(\mathbb{R})\hookrightarrow C_{*}^{s_2}(\mathbb{R}), \quad H^{s_1}(\mathbb{R})\hookrightarrow H^{s_2}(\mathbb{R}), \qquad s_1>s_2.
\end{align*}

\subsection{Paradifferential estimates} \label{s:ParaEst}
In this section, we recall some paradifferential estimates that will be used in the rest part of the paper.

\begin{definition}
\begin{enumerate}
\item Let $\rho\in [0,\infty)$, $m\in \mathbb{R}$. 
$\Gamma^m_\rho(\mathbb{R})$ denotes the space of locally bounded functions $a(x, \xi)$ on $\mathbb{R}\times (\mathbb{R}\backslash \{0\})$, which are $C^\infty$ with respect to $\xi$ for $\xi \neq 0$ and such that for all $k \in \mathbb{N}$ and $\xi \neq 0$, the function $x\mapsto \partial_\xi^k a(x,\xi)$ belongs to $W^{\rho,\infty}(\mathbb{R})$ and there exists a constant $C_k$ with
\begin{equation*}
\forall |\xi|\geq \frac{1}{2}, \quad \|\partial_\xi^k a(\cdot,\xi) \|_{W^{\rho,\infty}} \leq C_k (1+ |\xi|)^{m-k}.
\end{equation*}
Let $a(x,\xi)\in \Gamma^m_\rho$,  we define the semi-norm
\begin{equation*}
M^m_{\rho}(a) = \sup_{k \leq \frac{3}{2}+\rho} \sup_{|\xi|\geq \frac{1}{2}} \|(1+ |\xi|)^{k-m}\partial_\xi^k a(\cdot,\xi)  \|_{W^{\rho,\infty}}.
\end{equation*}
\item Given $a(x,\xi)\in \Gamma^m_\rho(\mathbb{R})$, let $C^\infty$ functions $\chi(\theta, \eta)$ and $\psi(\eta)$ be such that for some $0<\epsilon_1 < \epsilon_2<1$,
\begin{align*}
    &\chi(\theta, \eta) = 1,  \text{ if } |\theta| \leq \epsilon_1(1+ |\eta|), \qquad \chi(\theta, \eta) = 0,  \text{ if } |\theta| \geq \epsilon_2(1+ |\eta|),\\
    &\psi(\eta) = 0, \text{ if } |\eta|\leq \frac{1}{5}, \qquad \psi(\eta) =1, \text{ if } |\eta|\geq \frac{1}{4}.
\end{align*}
We define the paradifferential operator $T_a$ in Weyl quantization by
\begin{align*}
    \widehat{T_a u}(\xi) = \frac{1}{2\pi}\int \chi(\xi -\eta, \xi+\eta) \hat{a}\left(\xi-\eta, \frac{\xi+\eta}{2}\right)\psi(\eta)\hat{u}(\eta) d\eta,
\end{align*}
where $\hat{a}(\theta, \xi)$ is the Fourier transform of $a(x,\xi)$ with respect to the spacial variable x.
\item Let $m\in \mathbb{R}$, an operator  is said to be of order $m$ if, for all $s\in \mathbb{R}$, it is bounded from $H^s$ to $H^{s-m}$.
\end{enumerate}
\end{definition}

We recall the basic symbolic calculus for paradifferential operators in the following result.
\begin{lemma}[\hspace{1sp}\cite{MR3052498}]
Let $m\in \mathbb{R}$ and $\rho\in [0, +\infty)$.
\begin{enumerate}
\item If $a\in \Gamma^m_0$, then the paradifferential operator $T_a$ is of order $m$. 
Moreover, for all $s\in \mathbb{R}$, 
\begin{equation}
\|T_a\|_{H^s\rightarrow H^{s-m}} \lesssim M^m_0(a). \label{TABound}
\end{equation}
\item If $a\in \Gamma^m_\rho$, and $b\in \Gamma^{m^{'}}_\rho$ with $\rho\in(1,2]$, then 
\begin{equation*}
 T_a T_b - T_b T_a = iT_{\partial_x a \partial_\xi b - \partial_\xi a \partial_x b} + E, \quad T_a T_b - T_{ab} = \frac{i}{2}T_{\partial_x a \partial_\xi b - \partial_\xi a \partial_x b} + E,
\end{equation*}
where the remainder operator $E$ is of order $m+m^{'} -\rho$.
Moreover, for all $s\in \mathbb{R}$, 
\begin{equation}
  \|E \|_{H^s \rightarrow H^{s-m-m^{'}+\rho}} \lesssim M^m_\rho(a)M^{m^{'}}_0(b) + M^m_0(a)M^{m^{'}}_\rho(b). \label{CompositionPara}
\end{equation}
When $\rho \in (0,1]$, then $T_a T_b - T_b T_a = E$, and $T_a T_b - T_{ab} = E$ with $E$ satisfies the same operator bound \eqref{CompositionPara}.
\item The adjoint operator of the paradifferential operator $T_a$ is $(T_a)^{*} = T_{\bar{a}}$.
In particular, if $a$ is a real-valued function, then $T_a = (T_a)^{*}$.
\end{enumerate}
\end{lemma}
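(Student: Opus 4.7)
The stated lemma is classical paradifferential calculus in the Weyl quantization (cf.\ \cite{MR3052498}); if I were writing the proof from scratch, here is how I would organize it.

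\textbf{Part (1): $H^s \to H^{s-m}$ boundedness.} The plan is to decompose $T_a$ via a Littlewood--Paley decomposition of the symbol in the $x$-variable. Because the cutoff $\chi(\xi-\eta,(\xi+\eta)/2)$ is supported in a cone $|\theta|\lesssim 1+|\eta|$, the piece $T_{P_j a}u$ has frequency support essentially in the dyadic shell $\{|\xi|\sim 2^j\}$ whenever $j$ is at least comparable to the frequency of $u$, and otherwise is a low-frequency correction. I would then estimate each $T_{P_j a}P_k u$ in $L^2$ using Plancherel combined with the pointwise bound $|\partial_\xi^0 a(x,\xi)| \lesssim M_0^m(a)(1+|\xi|)^m$ and $L^\infty$ control of $P_j a$. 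Summing in $j$ and $k$ via almost orthogonality gives \eqref{TABound}. The only place where the Weyl choice matters is in keeping track of the symmetric $(\xi+\eta)/2$ argument, which is harmless for $L^2$ mapping properties.

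\textbf{Part (2): Symbolic calculus.} The key formula I would work from is the integral representation
\begin{equation*}
\widehat{T_a T_b u}(\xi) = \frac{1}{(2\pi)^2}\iint K(\xi,\eta,\zeta)\,\hat u(\zeta)\,d\eta\,d\zeta,
\end{equation*}
where $K$ is a product of two truncated symbol factors. I would then Taylor expand $\hat a(\xi-\eta,(\xi+\eta)/2)$ and $\hat b(\eta-\zeta,(\eta+\zeta)/2)$ around a common midpoint $(\xi+\zeta)/2$, so that the zeroth-order term reconstructs $T_{ab}$, the first-order terms produce exactly the antisymmetric Poisson bracket $\partial_x a\,\partial_\xi b - \partial_\xi a\,\partial_x b$ (with a clean coefficient $\tfrac{i}{2}$ because Weyl quantization kills the symmetric part at this order), and the remainder of the Taylor expansion is controlled by $M^m_\rho(a) M^{m'}_0(b) + M^m_0(a) M^{m'}_\rho(b)$ via part (1) applied to the remainder symbol. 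For the commutator identity, one forms $T_aT_b - T_bT_a$ and the symmetric Taylor terms cancel, leaving twice the Poisson bracket. The case $\rho\in(0,1]$ is handled by the same expansion stopped at zeroth order.

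\textbf{Part (3): Adjoint.} This is the cleanest piece: one writes the sesquilinear pairing $\langle T_a u, v\rangle$ as an integral against $\hat u\,\overline{\hat v}$, then swaps the roles of $\xi$ and $\eta$ in the kernel. The symmetric argument $(\xi+\eta)/2$ is invariant under this swap and $\chi$ is symmetric in its two arguments, so the adjoint kernel is obtained simply by conjugating the symbol, giving $(T_a)^* = T_{\bar a}$ and self-adjointness for real $a$.

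\textbf{Main obstacle.} The only delicate step is the bookkeeping in Part (2): one has to verify that after the Taylor expansion the remainder symbol still sits in $\Gamma^{m+m'-\rho}_0$ with seminorm controlled by the stated product of $M_\rho$ and $M_0$ factors. The standard trick is to write the remainder in integral (Schlömilch) form and to use that differentiation in $\xi$ on the symmetric argument $(\xi+\eta)/2$ trades one power of regularity for one power of frequency decay; then part (1) closes the estimate. Everything else is symbol-pushing made routine by the Weyl symmetry.
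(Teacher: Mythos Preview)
The paper does not supply its own proof of this lemma: it is stated as a quotation from \cite{MR3052498} and used as a black box throughout. So there is nothing in the paper to compare your argument against.

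That said, your outline is a faithful sketch of the standard proof one finds in the paradifferential literature (e.g.\ Bony, M\'etivier, or the cited Alazard--Burq--Zuily reference). Part (1) via Littlewood--Paley localization and almost orthogonality, Part (2) via Taylor expansion of the composed kernel around the common midpoint with the Weyl symmetry killing the symmetric first-order term, and Part (3) via the $\xi\leftrightarrow\eta$ symmetry of the Weyl kernel are exactly the ingredients used in those references. Your identification of the main technical point---checking that the Taylor remainder symbol lies in $\Gamma^{m+m'-\rho}_0$ with the correct seminorm bound---is accurate, and the integral-form remainder plus Part (1) is indeed how it is closed. Nothing is missing from your sketch; it simply reproduces material the paper chose to cite rather than prove.
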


In Besov spaces, we have similar results for symbolic calculus.
\begin{lemma}[\hspace{1sp}\cite{MR3585049}]
Let $m, m^{'}, s\in \mathbb{R}$, $q\in [1,\infty]$ and $\rho\in [0,1]$.
\begin{enumerate}
\item If $a\in \Gamma^m_0$, then
\begin{equation*}
\|T_a\|_{B^s_{\infty, q}\rightarrow B^{s-m}_{\infty, q}} \lesssim M^m_0(a).
\end{equation*}
\item If $a\in \Gamma^m_\rho$, and $b\in \Gamma^m_\rho$, then
\begin{equation}
  \|T_a T_b-T_{a b} \|_{B^s_{\infty, q} \rightarrow B^{s-m-m^{'}+\rho}_{\infty, q}} \lesssim M^m_\rho(a)M^{m^{'}}_0(b) + M^m_0(a)M^{m^{'}}_\rho(b). \label{CompositionTwo}
\end{equation}
\end{enumerate}
In particular, when $q = \infty$, above symbolic calculus results hold for Zygmund spaces $C^s_{*}$.
\end{lemma}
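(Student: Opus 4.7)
The plan is to follow the standard Bony/Coifman--Meyer approach, adapted to the $p = \infty$ setting via Littlewood--Paley localization. For part (1), I would first observe that by the frequency cutoffs $\chi(\theta, \eta)$ and $\psi(\eta)$ in the definition of $T_a$, the block $P_k T_a u$ can be written as a finite sum $\sum_{|j - k|\leq N_0} P_k T_a P_j u$. For each such term I would prove the kernel bound $\|K_j(x, \cdot)\|_{L^1} \lesssim 2^{jm} M^m_0(a)$ by using that $(1 + |\xi|)^{n-m}\partial_\xi^n a$ is bounded by $M^m_0(a)$, which via repeated integration by parts in the Fourier inversion formula yields the pointwise bound $|K_j(x, y)| \lesssim 2^{jm}(1 + 2^j|x - y|)^{-N}$ for any $N$. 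This gives the $L^\infty \to L^\infty$ bound $\|T_a P_j u\|_{L^\infty} \lesssim 2^{jm} M^m_0(a) \|P_j u\|_{L^\infty}$, and summing in the Besov $\ell^q$ norm over $k$ yields the claimed boundedness $T_a\colon B^s_{\infty, q} \to B^{s - m}_{\infty, q}$.

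For part (2), the strategy is to compare $T_a T_b$ and $T_{ab}$ block by block. Applied to $P_k u$, the inner operator $T_b$ produces output essentially localized at frequency $2^k$, so the outer symbol $a(x, \xi)$ can be evaluated near $\xi \sim 2^k$. I would then Taylor-expand the full product symbol in $x$ around the diagonal of the oscillatory integral defining the composition, isolating $T_{ab}$ as the zeroth-order term and bounding each successive term by one more derivative in $x$ of the symbols. The regularity hypothesis $a \in \Gamma^m_\rho$, $b \in \Gamma^{m'}_\rho$ with $\rho \in [0, 1]$ permits pushing this expansion up to $\rho$ derivatives, producing a remainder kernel whose natural scale is $2^{j(m + m' - \rho)}\bigl(M^m_\rho(a) M^{m'}_0(b) + M^m_0(a) M^{m'}_\rho(b)\bigr)$. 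Applying the same $\ell^q$ summation as in (1) then yields \eqref{CompositionTwo}.

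The main obstacle is that, unlike in the $L^2$-based Sobolev setting where Plancherel and Mikhlin-type theorems do much of the work automatically, in the $p = \infty$ case Fourier multiplier theorems are not directly available, so all kernel bounds must be proved explicitly and uniformly in the dyadic scale $2^j$. A related subtlety is that the Weyl quantization cutoff $\chi\bigl(\xi - \eta, \tfrac{\xi + \eta}{2}\bigr)$ depends symmetrically on the two frequencies rather than only on the input frequency, so one should verify that both the output frequency localization and the requisite kernel smoothness survive this symmetric truncation. Once the single-block kernel estimate is established, the passage to general $q \in [1, \infty]$, including the Zygmund case $q = \infty$, is automatic from $\ell^q$ summation, so the technical heart of the argument is entirely in the uniform-in-$j$ kernel analysis for a single dyadic block.
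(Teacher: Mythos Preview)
The paper does not prove this lemma; it is quoted directly from the reference \cite{MR3585049} and stated without proof. Your sketch follows the standard Bony--Meyer kernel approach, which is indeed how such results are established in the literature, so there is no conflict to diagnose. If anything, you should simply note that in this paper the lemma is invoked as a known result rather than proved, and your outline is consistent with how the cited reference would proceed.
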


When $a$ is only a function of $x$, $T_a u$ is the low-high paraproduct.
We then define 
\begin{equation*}
\Pi(a, u) := au -T_a u -T_u a
\end{equation*}
to be the balanced paraproduct.
For later use, we record below some estimates for balanced and low-high paraproducts.

\begin{lemma}[\hspace{1sp}\cite{MR2768550}] \label{t:ParaProductEst}
\begin{enumerate}
\item Let $\alpha, \beta \in \mathbb{R}$. 
If $\alpha+ \beta >0$, then
\begin{align}
&\|\Pi(a, u)\|_{H^{\alpha + \beta}(\mathbb{R})} \lesssim \| a\|_{C_{*}^\alpha(\mathbb{R})} \| u\|_{H^\beta(\mathbb{R})}, \label{HCHEstimate}\\
& \|\Pi(a, u)\|_{C_{*}^{\alpha + \beta}(\mathbb{R})} \lesssim \| a\|_{C_{*}^\alpha(\mathbb{R})} \| u\|_{C_{*}^\beta(\mathbb{R})}  \label{CCCEstimate}
\end{align}
\item Let $m > 0$ and $s\in \mathbb{R}$, then
\begin{align}
&\|T_{a} u \|_{H^{s-m}(\mathbb{R})} \lesssim \|a\|_{C_{*}^{-m}(\mathbb{R})} \| u \|_{H^s(\mathbb{R})} \label{HsCmStar}, \\
&\|T_{a} u \|_{H^{s}(\mathbb{R})} \lesssim \|a\|_{L^\infty(\mathbb{R})} \| u \|_{H^s(\mathbb{R})}, \label{HsLinfty}\\
&\|T_{a} u \|_{H^{s-m}(\mathbb{R})} \lesssim \|a\|_{H^{-m}(\mathbb{R})} \| u \|_{C_{*}^s(\mathbb{R})}, \label{HsHmCStar}\\
&\|T_{a} u \|_{C_{*}^{s-m}(\mathbb{R})} \lesssim \|a\|_{C_{*}^{-m}(\mathbb{R})} \| u \|_{C_{*}^s(\mathbb{R})}, \label{CsCmStar}\\
&\|T_{a} u \|_{C_{*}^{s}(\mathbb{R})} \lesssim \|a\|_{L^\infty(\mathbb{R})} \| u \|_{C_{*}^s(\mathbb{R})}. \label{CsLInfty}
\end{align}
\end{enumerate}
\end{lemma}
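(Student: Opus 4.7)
The plan is to establish both families of bounds by the standard Littlewood--Paley dyadic analysis, exploiting the frequency localization that is baked into the paradifferential quantization $T_a$ and the balanced remainder $\Pi(a,u) = au - T_a u - T_u a$. First I would record Bony's frequency-support properties: $T_a u = \sum_{k} S_{k-3}a \cdot P_k u$ is supported in an annulus of size $2^k$, so $P_j(T_a u)$ vanishes unless $|j-k|\le 2$, while each term $\Delta_k a \cdot \widetilde P_k u$ in $\Pi(a,u)$ has frequency support in a ball of radius $\lesssim 2^k$, so $P_j\Pi(a,u)$ picks up contributions from all $k \gtrsim j$. Bernstein's inequality then converts $L^\infty$ control on the low-frequency factor into any desired $L^p$ control on the product.

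For the balanced estimate \eqref{HCHEstimate}, I would write
\[
\|P_j \Pi(a,u)\|_{L^2} \;\lesssim\; \sum_{k \gtrsim j} \|\Delta_k a\|_{L^\infty}\,\|\widetilde P_k u\|_{L^2} \;\lesssim\; \sum_{k \gtrsim j} 2^{-k\alpha} c_k \cdot 2^{-k\beta} d_k \,\|a\|_{C^\alpha_*}\,\|u\|_{H^\beta},
\]
with $(c_k), (d_k) \in \ell^\infty, \ell^2$ respectively. Multiplying by $2^{j(\alpha+\beta)}$ and summing in $k$ produces a tail with factor $2^{(j-k)(\alpha+\beta)}$; since $\alpha+\beta>0$, this is a Schur-type convolution in the index $j-k$ and closes in $\ell^2_j$. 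The $C^{\alpha+\beta}_*$ analogue \eqref{CCCEstimate} follows by the same argument with the $L^2$ norm in $u$ replaced by $L^\infty$ and the final $\ell^2_j$ sum replaced by $\ell^\infty_j$.

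For the low-high family \eqref{HsCmStar}--\eqref{CsLInfty}, the frequency support of $T_a u$ at scale $2^k$ coincides with the support of $P_k u$, so $\|P_k(T_a u)\|_{L^2}\lesssim \|S_{k-3}a\|_{L^\infty}\|P_k u\|_{L^2}$. The standard characterization $\|S_{k-3}a\|_{L^\infty} \lesssim 2^{km}\|a\|_{C^{-m}_*}$ for $m>0$ gives \eqref{HsCmStar} after multiplying by $2^{k(s-m)}$ and taking $\ell^2$; the variants with $L^\infty$, $H^{-m}$, or Zygmund regularity on $a$ use the corresponding Bernstein bound on $S_{k-3}a$. Estimates \eqref{HsLinfty} and \eqref{CsLInfty} follow from the simpler bound $\|S_{k-3}a\|_{L^\infty}\le \|a\|_{L^\infty}$.

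The main delicate point is the balanced case when $\alpha+\beta$ is close to zero: one needs $\alpha+\beta>0$ strictly, as the tail $\sum_{k\gtrsim j} 2^{(j-k)(\alpha+\beta)}$ must be geometrically summable for the Schur argument. Everything else reduces to routine frequency bookkeeping and Bernstein-type inequalities, together with the commutation of Littlewood--Paley projectors with the quantization $T_a$ up to harmless error terms (handled by the symbolic calculus already recorded earlier).
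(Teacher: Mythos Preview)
Your sketch is correct and follows the standard Littlewood--Paley argument for Bony's paraproduct estimates. The paper does not supply its own proof of this lemma: it is stated as a citation to \cite{MR2768550}, so there is nothing to compare against beyond noting that your argument is precisely the one found in that reference.
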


Using the above paraproduct estimates, we get the following result.
\begin{lemma}[\hspace{1sp}\cite{MR2768550}]
\begin{enumerate}
\item If $s \geq 0$, then
\begin{align}
&\|uv\|_{H^s} \lesssim \|u\|_{H^s}\|v\|_{L^\infty}+ \|u\|_{L^\infty}\|v\|_{H^s},\label{HsProduct} \\
&\|uv\|_{C_{*}^s} \lesssim \|u\|_{C_{*}^s}\|v\|_{L^\infty}+ \|u\|_{L^\infty}\|v\|_{C_{*}^s}. \label{CsProduct}
\end{align}    
\item Let $F\in C^\infty(\mathbb{C}^N)$ be a smooth function that satisfies $F(0) = 0$.
There exists a nondecreasing function $\mathcal{F}: \mathbb{R}_{+} \rightarrow \mathbb{R}_{+}$ such that,
\begin{align}
&\|F(u) \|_{H^s} \leq \mathcal{F}(\|u\|_{L^\infty}) \|u\|_{H^s},\quad s\geq 0,  \label{MoserOne}\\
&\|F(u) \|_{C_{*}^s} \leq \mathcal{F}(\|u\|_{L^\infty}) \|u\|_{C_{*}^s},\quad s>0. \label{MoserTwo}
\end{align}
\item Let $s_1 > s_2 > 0$, then
\begin{equation}
    \|uv\|_{C^{-s_2}_*}\lesssim \|u\|_{C^{s_1}_*} \|v\|_{C^{s_2}_*}. \label{CNegativeAlpha}
\end{equation}
\end{enumerate}
\end{lemma}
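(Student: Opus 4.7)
All three claims are classical consequences of Bony's paraproduct decomposition
\begin{equation*}
 uv = T_u v + T_v u + \Pi(u,v)
\end{equation*}
together with the paradifferential bounds collected in Lemma \ref{t:ParaProductEst}. The plan is to dispatch each item in turn, viewing bounded multipliers as symbols in $\Gamma^0_0$ whenever the paraproduct tools demand.

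For the product estimates \eqref{HsProduct} and \eqref{CsProduct}, I would apply the decomposition and treat the two low–high paraproducts via \eqref{HsLinfty} (respectively \eqref{CsLInfty}), obtaining
\begin{equation*}
 \|T_u v\|_{H^s} \lesssim \|u\|_{L^\infty}\|v\|_{H^s}, \qquad \|T_v u\|_{H^s} \lesssim \|v\|_{L^\infty}\|u\|_{H^s},
\end{equation*}
and symmetrically in $C^s_*$. For the balanced piece $\Pi(u,v)$, the range $s > 0$ is handled by the balanced estimates \eqref{HCHEstimate} and \eqref{CCCEstimate} with exponents $(\alpha,\beta) = (0, s)$, after invoking the embedding $L^\infty \hookrightarrow C^0_*$ to place $u$ into the correct space. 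The borderline case $s = 0$ of \eqref{HsProduct} drops out of plain H\"older: $\|uv\|_{L^2}\leq \|u\|_{L^\infty}\|v\|_{L^2}$.

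For the Moser bounds \eqref{MoserOne} and \eqref{MoserTwo}, the standard approach is the telescoping identity
\begin{equation*}
 F(u) = \sum_{k \geq 0}\bigl[F(S_{k+1} u) - F(S_k u)\bigr] = \sum_{k \geq 0} P_k u \cdot m_k,\qquad m_k := \int_0^1 F'\bigl(S_k u + t P_k u\bigr)\,dt,
\end{equation*}
where $S_k$ denotes the Littlewood--Paley partial sum and the hypothesis $F(0) = 0$ kills the boundary term. Because $\|S_k u + tP_k u\|_{L^\infty}\lesssim \|u\|_{L^\infty}$, each multiplier $m_k$ is bounded in $L^\infty$ by a nondecreasing continuous function of $\|u\|_{L^\infty}$, while $\partial^j m_k$ picks up a factor $2^{jk}$ from the frequency localization of $S_k u$ combined with chain-rule derivatives of $F'$ (still controlled by $\|u\|_{L^\infty}$). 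A Bernstein-weighted summation by $2^{ks}$ in $\ell^2$ yields \eqref{MoserOne} and in $\ell^\infty$ yields \eqref{MoserTwo}, with the constant a nondecreasing function $\mathcal{F}(\|u\|_{L^\infty})$.

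Finally, \eqref{CNegativeAlpha} follows at once from the strict positivity $s_1, s_2 > 0$, which supplies the embeddings $C^{s_1}_*, C^{s_2}_* \hookrightarrow L^\infty$ and $L^\infty \hookrightarrow C^{-s_2}_*$, so that
\begin{equation*}
 \|uv\|_{C^{-s_2}_*}\lesssim \|uv\|_{L^\infty}\lesssim \|u\|_{L^\infty}\|v\|_{L^\infty}\lesssim \|u\|_{C^{s_1}_*}\|v\|_{C^{s_2}_*}.
\end{equation*}
The main technical nuisance in the whole lemma is the Moser estimate, specifically justifying that the multipliers $m_k$ behave like symbols localized at frequency $\lesssim 2^k$ so that the dyadic sum converges cleanly in the Littlewood--Paley norms; this is the content of the classical composition argument in textbook treatments of paradifferential calculus, and I would cite the corresponding result from \cite{MR2768550} rather than reproduce it in full.
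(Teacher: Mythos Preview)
The paper does not prove this lemma; it is quoted verbatim from the reference \cite{MR2768550} and no argument is given. So there is no ``paper's own proof'' to compare against, and your sketch is exactly the standard textbook route one would find in that reference.

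Your outline is correct. Two small remarks. First, for part~(1) you invoke $L^\infty \hookrightarrow C^0_*$ to feed the balanced piece into \eqref{HCHEstimate}--\eqref{CCCEstimate}; this embedding is valid (each $P_k$ has $L^1$-bounded kernel, so $\sup_k\|P_k u\|_{L^\infty}\lesssim\|u\|_{L^\infty}$), even though the paper only states the weaker $L^\infty\hookrightarrow C^s_*$ for $s<0$. Second, your proof of \eqref{CNegativeAlpha} via $C^{s_j}_*\hookrightarrow L^\infty\hookrightarrow C^{-s_2}_*$ is legitimate and in fact shows the hypothesis $s_1>s_2$ is not needed for the statement as written; the extra constraint in the paper is harmless but superfluous.
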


When we compute normal form variables later, we need the estimates for low-high and balanced paradifferential bilinear forms.
Let $\chi_1(\theta_1, \theta_2), \chi_2(\theta_1, \theta_2)$ be two non-negative smooth bump functions 
\begin{equation}
    \chi_1(\theta_1, \theta_2) = \left\{
\begin{aligned}
1, \text{ when } |\theta_1|\leq \frac{1}{20}|\theta_2|\\
0, \text{ when } |\theta_1|\geq \frac{1}{10}|\theta_2|,
\end{aligned}
\right.  \label{ChiOnelh}
\end{equation}
\begin{equation}
  \chi_2(\theta_1, \theta_2) = \left\{
\begin{aligned}
&1, \text{ when } \frac{1}{10}\leq \frac{|\theta_1|}{|\theta_2|} \leq 10\\
&0, \text{ when } |\theta_1|\leq \frac{1}{20}|\theta_2| \text{ or } |\theta_2|\leq \frac{1}{20}|\theta_1|,
\end{aligned}
\right.  \label{ChiTwohh}
\end{equation}
and such that $\chi_1(\theta_1, \theta_2) + \chi_1(\theta_2, \theta_1) + \chi_2(\theta_1, \theta_2) =1$.
For a bilinear form $B(u,v)$ with symbol $m(\xi, \eta)$, we  define  the paradifferential bilinear forms:
\begin{itemize}
\item Low-high part and balanced part of the holomorphic bilinear forms:
\begin{align*}
 \widehat{B_{lh}(u,v)}(\zeta) &= \int_{\zeta = \xi +\eta} \chi_1\left(\xi, \eta+\xi\right) m(\xi, \eta)\hat{u}(\xi)\hat{v}(\eta) d\xi,\\
  \widehat{B_{hh}(u,v)}(\zeta) &= \int_{\zeta = \xi +\eta} \chi_2\left(\xi, \eta+\xi\right) m(\xi, \eta)\hat{u}(\xi)\hat{v}(\eta) d\xi.
\end{align*}
\item Low-high part and balanced part of the mixed bilinear forms:
\begin{align*}
 \widehat{B_{lh}(u,v)}(\eta) &= 1_{\eta<0}\int_{\eta = \zeta-\xi} \chi_1\left(\xi, \zeta-\xi\right) m(\xi, \zeta)\bar{\hat{u}}(\xi)\hat{v}(\zeta) d\xi,\\
  \widehat{B_{hh}(u,v)}(\eta) &=1_{\eta<0}\int_{\eta = \zeta-\xi} \chi_2\left(\xi, \zeta-\xi\right) m(\xi, \zeta)\bar{\hat{u}}(\xi)\hat{v}(\zeta) d\xi.
\end{align*}
\end{itemize}

These represent low-high and balanced paradifferential parts of the bilinear forms $B(u,v)$, respectively $\nP B(\bar{u},v)$, restricted to the holomorphic class. 
We will always assume that bilinear symbols $m$ are homogeneous, and smooth away from $(0,0)$.

We have the following estimates for balanced and low-high bilinear forms, which can be seen as generalizations of paraproduct estimates.

\begin{lemma}[\hspace{1sp}\cite{wan2024}] \label{t:SymbolPara}
Let $B^\mu(f,g)$ be a homogeneous  bilinear form of order $\mu\geq 0$ as above. 
For the balanced bilinear forms, when $\alpha+\beta+\mu>0$, $\mu_1 + \mu_2 = \mu$
\begin{align}
&\|B^\mu_{hh}(f, g)\|_{H^{\alpha + \beta}} \lesssim \| f\|_{C_{*}^{\alpha + \mu_1}} \| g\|_{H^{\beta + \mu_2}}, \label{HHBilinearHCH}\\
& \|B^\mu_{hh}(f, g)\|_{C_{*}^{\alpha + \beta}} \lesssim \| f\|_{C_{*}^{\alpha+\mu_1}} \| g\|_{C_{*}^{\beta+\mu_2}}.  \label{HHBilinearCCC}
\end{align}

For the estimates of low-high bilinear forms,
\begin{align}
&\|B^\mu_{lh}(f, g) \|_{H^{s-m}} \lesssim \|f\|_{C_{*}^{-m}} \| g \|_{H^{s+\mu}} \label{BFHsCmStar}, \\
&\|B^\mu_{lh}(f, g) \|_{H^{s}} \lesssim \|f\|_{L^\infty(\mathbb{R})} \| g \|_{H^{s+\mu}}, \label{BFHsLinfty}\\
&\|B^\mu_{lh}(f, g) \|_{C_{*}^{s-m}} \lesssim \|f\|_{C_{*}^{-m}} \| g \|_{C_{*}^{s+\mu}}, \label{BFCsCmStar}\\
&\|B^\mu_{lh}(f, g) \|_{C_{*}^{s}} \lesssim \|f\|_{L^\infty} \| g \|_{C_{*}^{s+\mu}}. \label{BFCsLInfty}     
\end{align}
\end{lemma}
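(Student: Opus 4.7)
The plan is to deduce both families of estimates from the paraproduct estimates of Lemma \ref{t:ParaProductEst} via a combination of Littlewood--Paley decomposition and a factorization of the homogeneous symbol $m(\xi,\eta)$ on the supports of $\chi_1$ and $\chi_2$. The factorization should write $m$, restricted to each support, as an absolutely convergent sum of tensor products $\phi_n(\xi)\psi_n(\eta)$ with $\phi_n$ homogeneous of degree $\mu_1$ and $\psi_n$ homogeneous of degree $\mu-\mu_1$ (more precisely, each term gives a paraproduct composed with $|D|^{\mu_1}$ on $f$ and $|D|^{\mu_2}$ on $g$). Such a decomposition is standard: on the support of $\chi_1$ (respectively $\chi_2$) the symbol is smooth and homogeneous of degree $\mu$, and a Fourier series expansion on the relevant unit sphere, combined with Paley--Wiener, yields rapidly decaying coefficients.

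For the low-high case, the cutoff $\chi_1(\xi,\xi+\eta)$ forces $|\xi|\ll|\xi+\eta|$, hence $|\eta|\sim|\xi+\eta|$, so after symbol factorization each summand becomes $T_{\phi_n(D)f}\bigl(\psi_n(D)g\bigr)$ with $\phi_n(D)$ of order $\mu_1$ and $\psi_n(D)$ of order $\mu_2=\mu-\mu_1$. Applying the low-high paraproduct bounds \eqref{HsCmStar}--\eqref{CsLInfty} term by term and summing in $n$ then yields \eqref{BFHsCmStar}--\eqref{BFCsLInfty}. For the balanced case, $\chi_2$ enforces $|\xi|\sim|\xi+\eta|$ with $|\eta|\lesssim|\xi|$, and the analogous symbol factorization turns $B^\mu_{hh}(f,g)$ into a convergent sum of balanced paraproducts $\Pi(\phi_n(D)f,\psi_n(D)g)$. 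The summability hypothesis $\alpha+\beta+\mu>0$ translates, after shifting regularities by $\mu_1$ and $\mu_2$, into exactly the condition $(\alpha+\mu_1)+(\beta+\mu_2)>0$ required by \eqref{HCHEstimate} and \eqref{CCCEstimate}, so those estimates give \eqref{HHBilinearHCH}--\eqref{HHBilinearCCC} directly.

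The mixed bilinear forms (the versions involving $\bar{\hat u}(\xi)\hat v(\zeta)$ and the projection $\mathbf{1}_{\eta<0}$) are handled by the same argument: conjugation of $\hat u$ does not affect any of the $L^p$-based estimates, and the holomorphic projection only drops terms, so the bounds carry over without change. The main obstacle I anticipate is making the symbol factorization fully quantitative, that is, ensuring the Fourier coefficients on the unit sphere decay fast enough (using smoothness of $m$ away from the origin) that the sum over $n$ of the resulting paraproduct bounds converges with constants independent of $f,g$. Once that is in place, the proof is essentially a bookkeeping application of Lemma \ref{t:ParaProductEst}, with no further genuinely new harmonic-analytic input required.
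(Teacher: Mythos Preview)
The paper does not supply its own proof of this lemma; it is quoted directly from \cite{wan2024}, so there is no in-paper argument to compare against. Your sketch---factoring the homogeneous symbol via a Fourier series on the unit sphere to reduce each bilinear form to an absolutely convergent sum of (shifted) paraproducts, and then invoking Lemma~\ref{t:ParaProductEst} term by term---is the standard Coifman--Meyer route and is essentially what the cited reference does.

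One small point to tighten in the balanced case: as you correctly note, the cutoff $\chi_2(\xi,\xi+\eta)$ in the holomorphic setting only forces $|\eta|\lesssim|\xi|$, not $|\xi|\sim|\eta|$, so the region formally includes a high-low piece that does not literally reduce to $\Pi(\cdot,\cdot)$. In the paper's actual applications (Section~\ref{s:Paralin}) the balanced symbols carry an explicit factor $\chi_2(\xi,\eta)$ enforcing $|\xi|\sim|\eta|$, so the reduction to $\Pi$ is exact there; for the lemma in full generality you should either note that the high-low remainder is handled by the low-high estimates \eqref{HsHmCStar}--\eqref{CsLInfty} with the roles of $f$ and $g$ swapped, or simply record that the stated balanced estimates are meant for symbols supported where $|\xi|\sim|\eta|$.
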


\subsection{Derivation of water waves in holomorphic coordinates} \label{s:Eqn}
In this section, we derive the water wave system with point vortices in holomorphic coordinates  following closely the work of Hunter-Ifrim-Tataru \cite{MR3535894}, but of course adding the point vortices dynamics into the system.
This is essentially a system of nonlinear dispersive partial differential equations coupled with $N$ ordinary differential equations that describe the motion of $N$ point vortices.
When there is no point vortex, the system is reduced to the irrotational water waves studied in \cite{MR3667289}.

We first use complex analysis to transform the lower half-plane $\mathbb{H}:= \{\alpha + i\beta| \beta \leq 0 \}$ to the fluid domain $\Omega_t$.
At each time $t$, by the Riemann mapping theorem, there exists a unique bijective conformal map $h(t, \cdot): \mathbb{H} \rightarrow \Omega_t$ that maps the lower half-plane $\mathbb{H}$ to the fluid domain $\Omega_t$, see Figure \ref{f:Figure}.

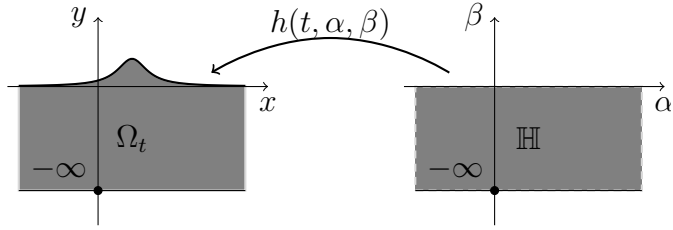
\begin{figure}[!ht] 
  \centering
 \begin{tikzpicture}
\begin{axis}[ xmin=-12, xmax=47.5, ymin=-2.5, ymax=1.5, axis x
      line = none, axis y line = none, samples=100, width=300, height=150]

      \addplot+[mark=none,domain=-10:10,stack plots=y]
      {0.4/(x*x*0.4+1)};
      \addplot+[mark=none,fill=gray,draw=gray!30,thick,domain=-10:10,stack
      plots=y] {-1.5-0.4/(x*x*0.4+1)} \closedcycle;
      \addplot+[black, thick,mark=none,domain=-10:10,stack plots=y]
      {1.5+0.4/(x*x*0.4+1)}; \addplot+[black,
      mark=none,domain=-10:10,stack plots=y]
      {-1.5-0.4/(x*x*0.4+1)};

      \draw[black,->] (axis cs:-3,-2) -- (axis cs:-3,1) node[left] {\(y\)};
      \draw[black,->] (axis cs:-11,0) -- (axis cs:12,0) node[below] {\(x\)};
      \filldraw (axis cs:-3,-1.5) circle (1.5pt) node[above left]
      {\(-\infty\)}; \node at (axis cs:0,-0.75) {\(\Omega_t\)};

     \addplot+[mark=none,domain=25:45]
      {0};
      \addplot+[mark=none,fill=gray,draw=gray!30,thick,domain=25:45] {-1.5} \closedcycle;
      \draw[black] (axis cs:25,-1.5) -- (axis cs:45,-1.5);

      \draw[black,->] (axis cs:32,-2) -- (axis cs:32,1) node[left] {\(\beta\)};
      \draw[black,->] (axis cs:24,0) -- (axis cs:47,0) node[below] {\(\alpha\)};
      \filldraw (axis cs:32,-1.5) circle (1.5pt) node[above left]
      {\(-\infty\)}; \node at (axis cs:35,-0.75) {\(  \mathbb{H}\)};

\draw [->, black, thick] plot [smooth, tension=1] coordinates { (axis cs:28,0.2) (axis cs:17.5,0.7) (axis cs:7,0.2)};

\node at (axis cs:17.5,0.9) {\(h(t,\alpha,\beta)\)};

\end{axis}
\end{tikzpicture}
 \caption{The conformal mapping of the domain} \label{f:Figure}
\end{figure}

The conformal map $h(t, \alpha, \beta)$ satisfies 
\begin{equation*}
    h(t, \alpha, \beta) - (\alpha + i\beta) \rightarrow 0, \quad \text{as } \alpha + i\beta \rightarrow \infty.  
\end{equation*}
The real and imaginary parts of the conformal map $x(t, \alpha, \beta)$ and $y(t, \alpha, \beta)$ satisfy the Cauchy-Riemann equations
\begin{equation}
    x_\alpha = y_\beta, \quad x_\beta = -y_\alpha. \label{CREqn}
\end{equation}

For the complex fluid velocity $v = v_1 +iv_2$ in $\Omega_t$, it can be decomposed into the irrotational part $v^{ir}$ and the rotational part $v^{rot}: = \sum_{j=1}^N v^{rot}_j$ caused by point vortices:
\begin{equation*}
    v = v^{ir} + \sum_{j=1}^Nv^{rot}_j, \quad v^{rot}_j(t, z): = \frac{\lambda_j i}{2\pi} \frac{1}{\overline{z-z_j(t)}}, \quad \text{for } z\neq z_j(t).
\end{equation*}
Then there exists a velocity potential $\phi$ of the fluid such that it can be further decomposed into the rotational and irrotational parts:
\begin{equation*}
\phi = \phi^{ir} + \sum_{j=1}^N \phi_j^{rot}, \quad (\partial_x+i\partial_y) \phi^{ir} = v^{ir}, \quad (\partial_x+i\partial_y) \phi^{rot}_j = v^{rot}_j.
\end{equation*}
Let $\psi = \phi\circ h: \mathbb{H} \rightarrow \mathbb{R}$ be the conformal velocity potential
\begin{equation*}
    \psi(t, \alpha, \beta) = \phi(t, x(t, \alpha, \beta), y(t,\alpha, \beta)).
\end{equation*}
Again, we can decompose the fluid potential $\psi$ into the irrotational and rotational parts $\psi = \psi^{ir} + \sum_{j=1}^N \psi_j^{rot}.$
Using the chain rule,
\begin{equation*}
\psi_\alpha = \phi_x x_\alpha + \phi_y y_\alpha, \quad \psi_\beta = \phi_x x_\beta + \phi_y y_\beta,
\end{equation*}
and the Cauchy-Riemann relations \eqref{CREqn}, the real and imaginary parts of the complex fluid velocity $v = v_1+iv_2 = \phi_x +i \phi_y$ are given by
\begin{equation} \label{uvformula}
    v_1 = \frac{1}{j}(x_\alpha \psi^{ir}_\alpha + x_\beta\psi^{ir}_\beta)+ \sum_{j=1}^N\Re \{ v^{rot}_j \circ h \},\quad v_2 = \frac{1}{j}(y_\alpha \psi^{ir}_\alpha + y_\beta\psi^{ir}_\beta) + \sum_{j=1}^N\Im \{ v^{rot}_j \circ h \},
\end{equation}
for the Jacobian $j = x^2_\alpha + y^2_\alpha.$

We parametrize the free surface using the conformal variables $X(t, \alpha): = x(t, \alpha, 0)$, and $Y(t, \alpha): = y(t, \alpha, 0)$.
Since $(x-\alpha) + i (y-\beta)$  is holomorphic in the lower half-plane and vanishes at infinity, we have by choosing $\beta = 0$, 
\begin{equation}
    X -\alpha = HY, \quad Y = -H(X-\alpha), \label{XYHilbert}
\end{equation}
where $H$ is the Hilbert transform.

Due to the incompressibility condition for the irrotational part of the velocity $\nabla \cdot \mathbf{u} = 0$, $\phi^{ir}$ and therefore $\psi^{ir}$ are harmonic.
One can find $\theta^{ir}(t, \alpha, \beta)$ as the harmonic conjugate of $\psi^{ir}$ such that they satisfy the Cauchy-Riemann equations,
\begin{equation*}
 \psi^{ir}_\alpha = \theta^{ir}_\beta, \quad \psi^{ir}_\beta = -\theta^{ir}_\alpha.
\end{equation*}
Let $\Psi(t, \alpha) = \psi^{ir}(t,\alpha,0)$ denote the boundary value of the conformal irrotational velocity potential,
and $\Theta(t, \alpha) = \theta^{ir}(t, \alpha,0)$ be the boundary value of its harmonic conjugate.
Then,
\begin{equation}
    \psi^{ir}_\beta|_{\beta = 0} = -\Theta_\alpha = H \Psi_\alpha. \label{Conjugate}
\end{equation}

We write $Z(t, \alpha) = h(t, \alpha, 0) = X(t,\alpha) + iY(t,\alpha)$.
The equations \eqref{Helmholtz} become 
\begin{equation}
\frac{d}{dt}z_j(t) = \mathcal{U}(t, z_j(t)) + \sum_{k\neq j} \frac{\lambda_k i}{2\pi}\frac{1}{\overline{z_j(t) - z_k(t)}}, \quad \forall 1\leq j \leq N, \label{Kirchhoff}
\end{equation}
where $\mathcal{U}(t, z)$ is the irrotational part of the complex velocity at $z$, and is given using the Cauchy's integral formula and \eqref{uvformula}, \eqref{Conjugate},
\begin{equation*}
\mathcal{U}(t,z) = \frac{i}{2\pi}\int_{\Gamma_t}\frac{(v_1^{ir}+iv_2^{ir})(t,Z)}{Z-z}\,dZ = \frac{i}{2\pi}\int_{\mathbb{R}} \frac{Z_\alpha\cdot(\Psi_\alpha - i\Theta_\alpha)(t,\alpha)}{\bar{Z}_\alpha(Z(t, \alpha) -z)}\,d\alpha.  
\end{equation*}
In the following, we will derive the equations representing the kinematic and dynamic boundary conditions on the free surface.

The kinematic boundary condition says that  the normal component of the velocity of the surface $\Gamma_t$ is equal to the normal component of the fluid velocity.
In other words,
\begin{equation*}
(X_t, Y_t)\cdot (-Y_\alpha, X_\alpha) = (v_1,v_2) \cdot (-Y_\alpha, X_\alpha) \quad \text{on } \Gamma_t.
\end{equation*}
Applying \eqref{uvformula} and \eqref{Conjugate} in the above equation, we get
\begin{equation}
    X_\alpha Y_t -Y_\alpha X_t = -\Theta_\alpha + \frac{X_\alpha}{2\pi} \sum_{j=1}^N \Re \left\{ \frac{\lambda_j}{\overline{Z-z_j(t)}} \right\} + \frac{Y_\alpha}{2\pi} \sum_{j=1}^N \Im \left\{ \frac{\lambda_j}{\overline{Z-z_j(t)}} \right\}. \label{XYOne}
\end{equation}
Note that both $h_t$ and $h_\alpha$ are holomorphic functions on the lower complex plane, then $\frac{h_t}{h_\alpha}$ is also holomorphic and decays at infinity.
The boundary value of $\frac{h_t}{h_\alpha}$ restricted on the real line is given by
\begin{equation*}
    \frac{X_\alpha X_t + Y_\alpha Y_t}{J} + i \frac{X_\alpha Y_t - Y_\alpha X_t}{J}, \quad J: = X_\alpha^2 + Y^2_\alpha = |Z_\alpha|^2.
\end{equation*}
For the holomorphic function $\frac{Z_t}{Z_\alpha}$, it satisfies \eqref{ReIm}, so that from \eqref{XYOne}, one gets
\begin{equation} \label{XYTwo}
\begin{aligned}
 X_\alpha X_t + Y_\alpha Y_t = &-J H \left[ \frac{\Theta_\alpha}{J} \right]  + JH\left[\frac{X_\alpha}{2\pi J} \sum_{j=1}^N \Re \left\{ \frac{\lambda_j}{\overline{Z-z_j(t)}} \right\}\right] \\
 &+ JH\left[\frac{Y_\alpha}{2\pi J} \sum_{j=1}^N \Im \left\{ \frac{\lambda_j}{\overline{Z-z_j(t)}} \right\}\right].
 \end{aligned}
\end{equation}
Solving equations \eqref{XYOne} and \eqref{XYTwo} for $X_t$ and $Y_t$, we obtain
\begin{equation*}
\begin{aligned}
X_t  = & -H\left[ \frac{\Theta_\alpha}{J} \right]X_\alpha+ \frac{\Theta_\alpha}{J}Y_\alpha - \frac{X_\alpha Y_\alpha}{2\pi J}\sum_{j=1}^N \Re \left\{ \frac{\lambda_j}{\overline{Z-z_j(t)}} \right\} - \frac{Y_\alpha^2}{2\pi J}\sum_{j=1}^N \Im \left\{ \frac{\lambda_j}{\overline{Z-z_j(t)}} \right\}\\
& + X_\alpha H\left[\frac{X_\alpha}{2\pi J} \sum_{j=1}^N \Re \left\{ \frac{\lambda_j}{\overline{Z-z_j(t)}} \right\}\right] + X_\alpha H\left[\frac{Y_\alpha}{2\pi J} \sum_{j=1}^N \Im \left\{ \frac{\lambda_j}{\overline{Z-z_j(t)}} \right\}\right]         ,\\
Y_t  = & -\frac{\Theta_\alpha}{J}X_\alpha  -H\left[ \frac{\Theta_\alpha}{J} \right]Y_\alpha + \frac{X^2_\alpha}{2\pi J}\sum_{j=1}^N \Re \left\{ \frac{\lambda_j}{\overline{Z-z_j(t)}} \right\} + \frac{X_\alpha Y_\alpha}{2\pi J}\sum_{j=1}^N \Im \left\{ \frac{\lambda_j}{\overline{Z-z_j(t)}} \right\} \\
&  + Y_\alpha H\left[\frac{X_\alpha}{2\pi J} \sum_{j=1}^N \Re \left\{ \frac{\lambda_j}{\overline{Z-z_j(t)}} \right\}\right] + Y_\alpha H\left[\frac{Y_\alpha}{2\pi J} \sum_{j=1}^N \Im \left\{ \frac{\lambda_j}{\overline{Z-z_j(t)}} \right\}\right].
\end{aligned}
\end{equation*}
By further choosing the unknown $Q = \Psi + i\Theta$, and using the holomorphic projection $\mathbf{P} = \frac{1}{2}(I -iH)$, the above two real-valued equations can be rewritten as the complex-valued equation
\begin{equation}
    Z_t + \nP \left[ \frac{Q_\alpha - \bar
{Q}_\alpha}{J}\right]Z_\alpha =  2i Z_\alpha \nP \Im \left[ \frac{1}{ \bar{Z}_\alpha} \sum_{j=1}^N \frac{\lambda i}{2\pi}\frac{1}{ Z - z_j}\right], \label{Zeqn}
\end{equation}
where we use the fact that
\begin{align*}
    &  i Z_\alpha \left(\frac{Y_\alpha}{2\pi J}\sum_{j=1}^N\Im \frac{\lambda_j}{\overline{Z-z_j}}- iH\left[ \frac{Y_\alpha}{2\pi J}\sum_{j=1}^N\Im \frac{\lambda_j}{\overline{Z-z_j}}\right] +\frac{X_\alpha}{2\pi J}\sum_{j=1}^N\Re \frac{\lambda_j}{\overline{Z-z_j}} - iH\left[ \frac{X_\alpha}{2\pi J}\sum_{j=1}^N\Re \frac{\lambda_j}{\overline{Z-z_j}}\right]\right)\\
    &= iZ_\alpha \nP \left[\frac{Y_\alpha}{\pi J}\sum_{j=1}^N\Im \frac{\lambda_j}{\overline{Z-z_j}} + \frac{X_\alpha}{\pi J}\sum_{j=1}^N\Re \frac{\lambda_j}{\overline{Z-z_j}}\right] \\
    & = iZ_\alpha \nP \Re\left[\frac{X_\alpha - iY_\alpha}{\pi J}\cdot \left(\sum_{j=1}^N\Re \frac{\lambda_j}{\overline{Z-z_j}} + i \sum_{j=1}^N\Im \frac{\lambda_j}{\overline{Z-z_j}}\right) \right] \\
    & = \frac{i}{\pi}Z_\alpha \nP \Re\left[ \frac{1}{Z_\alpha} \sum_{j=1}^N \frac{\lambda_j}{\overline{Z-z_j}}\right] = 2i Z_\alpha \nP \Im \left[ \frac{i}{2\pi \bar{Z}_\alpha} \sum_{j=1}^N \frac{\lambda_j}{Z - z_j}\right].
\end{align*}
Define the \textit{advection velocity} $b$ to be $ b := b_1+ b_2$,
\begin{equation} \label{bDef}
b_1 := \nP\bigg[\frac{Q_\alpha}{J}\bigg] + \bar{\nP}\bigg[\frac{\bar{Q}_\alpha}{J}\bigg], \quad b_2 : = -\nP \left[ \frac{i}{2\pi \bar{Z}_\alpha} \sum_{j=1}^N \frac{\lambda_j}{Z - z_j}\right] + \bar{\nP} \left[ \frac{i}{2\pi Z_\alpha} \sum_{j=1}^N \frac{\lambda_j}{ \overline{Z - z_j}}\right].
\end{equation}
The equation \eqref{Zeqn} can also be written as
\begin{equation} \label{ZEqnTwo}
    (\partial_t + b\partial_\alpha)Z = \bar{R} + \sum_{j=1}^N v^{rot}_j \circ Z,
\end{equation}
where $R: = \frac{Q_\alpha}{Z_\alpha}$, and two terms on the right-hand side of \eqref{ZEqnTwo} are the irrotational and rotational parts of the complex velocities on the surface.

We now  differentiate \eqref{ZEqnTwo} with respect to $\alpha$ and simplify the equation, we have
\begin{align*}
 &Z_{\alpha t}+ bZ_{\alpha \alpha} + \frac{1}{\bar{Z}_\alpha}\left(Q_{\alpha \alpha} - \frac{Q_\alpha}{Z_\alpha}Z_{\alpha \alpha} \right) 
 \\
 =& Z_\alpha \bar{\nP} \left[ \frac{Q_\alpha - \bar 
{Q}_\alpha}{J}\right]_\alpha + \left(\frac{\bar{Q}_\alpha}{\bar{Z}_\alpha} \right)_\alpha
+\frac{Q_\alpha \bar{Z}_{\alpha \alpha}}{\bar{Z}^2_\alpha}- \sum_{j=1}^N \frac{\lambda_j i}{2\pi}\frac{\bar{Z}_\alpha}{(\overline{Z-z_j})^2}.
\end{align*}
Choosing the differentiated holomorphic position $\W: = Z_\alpha -1$ and $R = \frac{Q_\alpha}{Z_\alpha}$, we get the equation that represents the kinematic boundary condition:
\begin{equation} \label{WEqn}
\W_t + b\W_\alpha + \frac{(1+\W)R_\alpha}{1+\bar{\W}} = (1+\W)M - \sum_{j=1}^N \frac{\lambda_j i}{2\pi}\frac{1+\bar{\W}}{(\overline{Z-z_j})^2},
\end{equation}
where the real-valued auxiliary function $M$ is given by
\begin{equation} \label{MDef}
M : = \frac{R_\alpha}{1+\bar{\W}} + \frac{\bar{R}_\alpha}{1+\W} - b_\alpha. 
\end{equation}
The complex conjugate of $R(t, \cdot)$ is just the irrotational part of the complex velocity on the surface $\Gamma_t$.

The dynamic boundary condition says the pressure of 
the fluid on the free surface is proportional to the mean curvature of the surface.
Assuming that the density of the fluid is $1$, the dynamic boundary condition becomes the  Bernoulli’s equation
\begin{equation} \label{Bernoulli}
\phi_t + \frac{1}{2}|\nabla \phi|^2 + gy = \sigma \mathbf{H} \quad \text{on } \Gamma_t. 
\end{equation}
For the first term of \eqref{Bernoulli}, we define $\Psi^{rot}(t, \alpha) = \phi^{rot}(t, \alpha, 0)$.
If $f(t, \cdot): \Omega_t \rightarrow \mathbb{C}$ is a time-dependent function, then for the composition $g(t,\cdot): = f(t,\cdot)\circ h: \mathbb{H}\rightarrow \mathbb{C}$, one has the chain rule
\begin{equation*}
    f_t = g_t -x_t f_x -y_tf_y.
\end{equation*}
Using this property of the composition and \eqref{uvformula}, \eqref{XYOne}, \eqref{XYTwo}, we compute
\begin{align*}
    &\phi_t|_{\beta = 0} = \Psi_t + \Psi^{rot}_t - (v_1|_{\beta = 0}X_t + v_2|_{\beta = 0}Y_t) \\
    =& \Psi_t + \Psi^{rot}_t - \frac{X_\alpha X_t \Psi_\alpha + X_\beta X_t \Psi_\beta}{J} - \frac{Y_\alpha Y_t \Psi_\alpha + Y_\beta Y_t \Psi_\beta}{J} - X_t\sum_{j=1}^N \Re \{ v_j^{rot}\circ Z\} 
 \\&- Y_t\sum_{j=1}^N \Im \{ v_j^{rot}\circ Z\} + H\left[\frac{Y_\alpha}{J} \sum_{j=1}^N\Re \{v_j^{rot}\circ Z \}\right]\Psi_\alpha - H\left[\frac{X_\alpha}{J} \sum_{j=1}^N\Im \{v_j^{rot}\circ Z \}\right]\Psi_\alpha \\ 
 &- \frac{Y_\alpha}{J} \sum_{j=1}^N\Re \{v_j^{rot}\circ Z \}\Theta_\alpha + \frac{X_\alpha}{J}\sum_{j=1}^N\Im \{v_j^{rot}\circ Z \}\Theta_\alpha\\
    =& \Psi_t + \Psi^{rot}_t + H\left[ \frac{\Theta_\alpha}{J}\right]\Psi_\alpha - \frac{1}{J}\Theta^2_\alpha -\Re \left\{ \bar{Z}_t \cdot \sum_{j=1}^Nv_j^{rot}\circ Z \right\} +2 \Im \left \{\nP \Im \left[\frac{\sum_{j=1}^N v_j^{rot}\circ Z}{Z_\alpha}\right] Q_\alpha \right\}.
\end{align*}
Here, we use the computation 
\begin{align*}
&H\left[\frac{Y_\alpha}{J} \sum_{j=1}^N\Re \{v_j^{rot}\circ Z \} -\frac{X_\alpha}{J} \sum_{j=1}^N\Im \{v_j^{rot}\circ Z \}\right]\Psi_\alpha - \left[ \frac{Y_\alpha}{J} \sum_{j=1}^N\Re \{v_j^{rot}\circ Z \}  -\frac{X_\alpha}{J}\sum_{j=1}^N\Im \{v_j^{rot}\circ Z\}\right]\Theta_\alpha \\
&= H \Re \left[ \frac{Y_\alpha + iX_\alpha}{J} \sum_{j=1}^N v_j^{rot}\circ Z \right]\Psi_\alpha - \Re \left[ \frac{Y_\alpha + iX_\alpha}{J} \sum_{j=1}^N v_j^{rot}\circ Z \right]\Theta_\alpha \\
& = \Im \left[\frac{\sum_{j=1}^N v_j^{rot}\circ Z}{Z_\alpha} \right]\Theta_\alpha -  H\Im \left[\frac{\sum_{j=1}^N v_j^{rot}\circ Z}{Z_\alpha} \right]\Psi_\alpha =  \Re \left\{-i(1-iH)\Im\left [\frac{\sum_{j=1}^N v_j^{rot}\circ Z}{Z_\alpha}\right] Q_\alpha \right\} \\
& = 2 \Im \left \{\nP\Im\left[\frac{\sum_{j=1}^N v_j^{rot}\circ Z}{Z_\alpha}\right] Q_\alpha \right\}.
\end{align*}

For the second term of \eqref{Bernoulli}, we have
\begin{align*}
 \frac{1}{2}|\nabla \phi(t, \cdot, 0)|^2 = \frac{1}{2}(u^2 +v^2)|_{\beta = 0} = \frac{1}{2J}(\Psi^2_\alpha + \Theta^2_\alpha) + \frac{1}{2} \Big|\sum_{j=1}^N v_j^{rot}\circ Z\Big|^2 + \Re \left\{R \cdot \sum_{j=1}^Nv_j^{rot}\circ Z \right\} .
\end{align*}

In holomorphic coordinates, the mean curvature is a function of $Z_\alpha$, see \cite{MR3667289},
\begin{equation*}
    \mathbf{H} = \frac{-i}{2(Z_\alpha + \bar{Z}_\alpha)}\left( \frac{Z_\alpha - \bar{Z}_\alpha}{|Z_\alpha|}\right)_\alpha = -\frac{i}{2} \left( \frac{Z_{\alpha \alpha}}{J^{\frac{1}{2}}Z_\alpha} -  \frac{\bar{Z}_{\alpha \alpha}}{J^{\frac{1}{2}}\bar{Z}_\alpha} \right).
\end{equation*}

Using above computations, the Bernoulli’s equation \eqref{Bernoulli} becomes
\begin{align*}
&\Psi_t  + H\left[ \frac{\Theta_\alpha}{J}\right]\Psi_\alpha - \frac{1}{2J}(\Psi_\alpha^2 - \Theta^2_\alpha ) + gY + \Re \left\{b \bar{Z}_\alpha \cdot \sum_{j=1}^Nv_j^{rot}\circ Z \right\}\\
  -& \frac{1}{2} \left |\sum_{j=1}^N v_j^{rot}\circ Z\right |^2 + \Psi^{rot}_t +2 \Im \left \{\nP \Im \left[\frac{\sum_{j=1}^N v_j^{rot}\circ Z}{Z_\alpha}\right] Q_\alpha \right\} +\frac{i\sigma}{2} \left( \frac{Z_{\alpha \alpha}}{J^{\frac{1}{2}}Z_\alpha} -  \frac{\bar{Z}_{\alpha \alpha}}{J^{\frac{1}{2}}\bar{Z}_\alpha} \right)= 0. 
\end{align*}
Applying the operator $I-iH$ to the above equation of $\Psi$, and using the convolution identity of the Hilbert transform \eqref{HilbertCon}, we obtain the complex-valued equation
\begin{equation} \label{QEqn}
\begin{aligned}
  &(\partial_t + b\partial_\alpha)Q+ 2\nP \Psi_t^{rot}  -ig(Z-\alpha) -\nP \left| \sum_{j=1}^N v_j^{rot}\circ Z\right|^2  \\
  = &  \bar{\nP}\left[ \frac{|Q_\alpha|^2}{J} \right]+\frac{\sum_{j=1}^N v_j^{rot}\circ Z}{Z_\alpha} Q_\alpha +2\sigma \nP \Im\left[ \frac{Z_{\alpha \alpha}}{J^{\frac{1}{2}}Z_\alpha} \right].
\end{aligned}
\end{equation}

Note that $2\nP \Psi^{rot}_\alpha$ term can be written as
\begin{align*}
2\nP \Psi^{rot}_\alpha &= (1-iH)\Psi^{rot}_\alpha = \psi_\alpha^{rot}|_{\beta=0} -i\psi_\beta^{rot}|_{\beta=0} \\
&= \psi_x^{rot}|_{\beta =0}X_\alpha + \psi_y^{rot}|_{\beta = 0} Y_\alpha -i(\psi_x^{rot}|_{\beta = 0}X_\beta +\psi_y^{rot}|_{\beta =0}Y_\beta) \\
&= \psi_x^{rot}|_{\beta = 0}(X_\alpha + iY_\alpha) -i\psi_y^{rot}|_{\beta = 0}(X_\alpha + iY_\alpha) \\
&= (\psi_x^{rot}-i\psi_y^{rot})|_{\beta = 0}(X_\alpha + iY_\alpha) = Z_\alpha\sum_{j=1}^N\overline{v_j^{rot}\circ Z}.
\end{align*}
Hence, we have
\begin{align*}
    &\frac{2}{1+\W}\partial_t \nP \Psi^{rot}_\alpha = \partial_t \sum_{j=1}^N\overline{v_j^{rot}\circ Z} + \frac{\W_t}{1+\W}\sum_{j=1}^N\overline{v_j^{rot}\circ Z} \\
    =&  \partial_t \sum_{j=1}^N\overline{v_j^{rot}\circ Z}+\left(\frac{\bar{R}_\alpha}{1+\W} - \frac{b\W_\alpha}{1+\W} +\frac{1}{1+\W}\partial_\alpha \sum_{j=1}^N v^{rot}_j\circ Z - b_\alpha \right) \sum_{j=1}^N\overline{v_j^{rot}\circ Z}.
\end{align*}

Taking the $\alpha$-derivative of the equation \eqref{QEqn}, using \eqref{ZEqnTwo} and the fact 
\begin{equation*}
 (\partial_t + b\partial_\alpha)R = \frac{1}{1+\W}(\partial_t + b\partial_\alpha)Q_\alpha - \frac{R}{1+\W}(\partial_t + b\partial_\alpha)\W,   
\end{equation*}
 we get that $R$ satisfies the equation
\begin{equation} \label{REqn}
 (\partial_t + b\partial_\alpha)R  - (\partial_t + b\partial_\alpha) \sum_{j=1}^N \frac{\lambda_j i}{2\pi}\frac{1}{Z-z_j} -i\frac{g\W - a}{1+\W}   = \frac{ 2\sigma}{1+\W}\mathbf{P}\Im \left[ \frac{\W_{ \alpha}}{J^{\frac{1}{2}}(1+\W)}\right]_{\alpha},
\end{equation}
where $a$ is the  real \textit{frequency-shift} defined by
\begin{equation} \label{FrequencyShit}
    a: = i\left(\bar{\nP}[(\bar{R}+ v^{rot}\circ Z)\partial_\alpha(R+  \overline{v^{rot} \circ Z})] - \nP[(R+ \overline{v^{rot}\circ Z})\partial_\alpha(\bar{R}+  v^{rot} \circ Z)]\right),
\end{equation}
and 
\begin{equation} \label{vrot}
    v^{rot}(t,z) = \sum_{j=1}^N v^{rot}_j(t,z) = \sum_{j=1}^N \frac{\lambda_j i}{2\pi} \frac{1}{\overline{z(t) - z_j(t)}}. 
\end{equation}

Equations \eqref{Kirchhoff}, \eqref{WEqn} and \eqref{REqn} form the water wave system \eqref{e:WW}.
Once we solve unknowns $(\W, R, \{z_j\}_{j=1}^N)$ at each time $t$, then
\begin{equation*}
    Z(t, \alpha) = \alpha + \int_{-\infty}^\alpha \W(t, y) \,dy,
\end{equation*}
which  determines the fluid domain at each time $t$.
The irrotational and the rotational parts of the complex fluid velocity at point $z = x+iy$ inside the domain are given by \eqref{IrrotationalV} and \eqref{vrot}.
As a consequence, we then solve the incompressible free boundary Euler equations \eqref{Euler} with the kinematic and dynamic boundary conditions.

\section{Water waves related estimates} \label{s:Estimate}
In this section, we first derive some estimates that are related to the water wave system \eqref{e:WW}, and then compute the leading term of the para-material derivative $T_{D_t} := \partial_t + T_b\partial_\alpha$ of $\W, R$, and $J^s$.
These estimates will later be needed when we obtain the a priori energy estimate for water waves.

We start by computing estimates for the rotational part of the complex velocity on the surface.
\begin{lemma} 
For $z_j\in \Omega_t$, and let $h(t, \cdot)$ be the Riemann mapping in Figure \ref{f:Figure}, we set $c_j : = h^{-1}(t, z_j)$.
Then, we have the following identities:
\begin{align}
\bar{\nP} \frac{1}{Z(t,\alpha)-z_j} &= \frac{1}{h_z(t, c_j)(\alpha - c_j)}, \label{projection1}\\
\bar{\nP} \frac{1}{(Z(t,\alpha)-z_j)^2}& = \frac{1}{h_z(t, c_j)^2(\alpha -c_j)^2}. \label{projection2}
\end{align}

As a consequence, for the rotational part of the velocity $v^{rot}$ \eqref{vrot} on the surface, 
\begin{equation} \label{barPvRot}
 \nP (v^{rot}\circ Z) = \sum_{j=1}^N \frac{\lambda_j i}{2\pi} \frac{1}{\overline{h_z(t, c_j)(\alpha - c_j)}}.
\end{equation}
$\nP (v^{rot}\circ Z) \in H^s$ for all $s\geq 0$.
\end{lemma}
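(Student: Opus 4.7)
The strategy is to compute the projections by extending each function to the lower half-plane via the Riemann map $h(t,\cdot)$ and reading off the Laurent principal part at the interior point $c_j\in\mathbb{H}$. The underlying dichotomy I will exploit is the following: a boundary value of a function that extends holomorphically to $\mathbb{H}$ with sufficient decay has Fourier transform supported in $(-\infty,0]$, so it lies in the $\nP$ class; conversely, a rational function on $\mathbb{R}$ whose only singularities lie in the lower half-plane extends holomorphically to the upper half-plane, so it lies in the $\bar{\nP}$ class.

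For \eqref{projection1}, I note that $w\mapsto (h(w)-z_j)^{-1}$ is meromorphic on $\mathbb{H}$ with a single simple pole at $w=c_j$. Since $h(c_j)=z_j$ and $h_z(c_j)\neq 0$ by conformality, the Laurent expansion yields
\begin{equation*}
\frac{1}{h(w)-z_j} = \frac{1}{h_z(c_j)(w-c_j)} + \phi(w),
\end{equation*}
with $\phi$ holomorphic on $\mathbb{H}$ and $O(|w|^{-1})$ at infinity (using the normalization $h(w)-w\to 0$). The first summand extends holomorphically to the upper half-plane, since its only pole $c_j$ lies in $\mathbb{H}$, so on $\mathbb{R}$ it lies in the $\bar{\nP}$ class; the boundary value of $\phi$ is in $L^2$ and holomorphic in $\mathbb{H}$, so it lies in the $\nP$ class. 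Taking boundary values on $\mathbb{R}$ yields \eqref{projection1}. Identity \eqref{projection2} is obtained by the same Laurent-subtraction argument applied to $(h(w)-z_j)^{-2}$: one identifies the principal part at $c_j$ from the Taylor expansion of $h$, observes that this principal part is rational with pole only in $\mathbb{H}$ (hence lies in $\bar{\nP}$), while the remainder is holomorphic on $\mathbb{H}$ with suitable decay (hence lies in $\nP$).

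For \eqref{barPvRot}, I would apply the conjugation identity $\nP\bar{f}=\overline{\bar{\nP}f}$, which follows from $\nP=\frac{1}{2}(I-iH)$ and $H\bar{f}=\overline{Hf}$, term by term to $v^{rot}\circ Z=\sum_j\frac{\lambda_j i}{2\pi}(\overline{Z-z_j})^{-1}$, and invoke \eqref{projection1}. The $H^s$ regularity is then immediate: each summand has the form $\mathrm{const}\cdot(\alpha-\overline{c_j})^{-1}$ with $\operatorname{Im}\overline{c_j}>0$, hence is $C^\infty$ on $\mathbb{R}$ with every derivative decaying like $|\alpha|^{-k-1}$, so it lies in $H^s$ for every $s\geq 0$; equivalently, its Fourier transform is supported on $(-\infty,0)$ and decays exponentially at rate $|\operatorname{Im}\overline{c_j}|$.

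The step most likely to require care is verifying that the holomorphic remainder after subtracting the principal part has adequate decay at infinity to legitimately belong to the $\nP$ class in the sense used above; this hinges on the asymptotic normalization $h(w)=w+o(1)$ at infinity, which guarantees the $O(|w|^{-1})$ decay of the remainder.
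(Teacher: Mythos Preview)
Your approach is essentially the same as the paper's: subtract the Laurent principal part at $c_j$, observe that this rational piece extends holomorphically to the upper half-plane (hence lies in the $\bar{\nP}$ class), while the remainder is holomorphic on $\mathbb{H}$ (hence lies in the $\nP$ class). Your treatment is in fact slightly more careful than the paper's in two respects: you explicitly invoke the asymptotic normalization $h(w)-w\to 0$ to justify the $L^2$ decay of the holomorphic remainder, and you spell out the conjugation identity $\nP\bar{f}=\overline{\bar{\nP}f}$ for \eqref{barPvRot}, whereas the paper simply writes the computation.

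One point worth flagging, which affects both your sketch and the paper's proof, concerns \eqref{projection2}. The full Laurent principal part of $(h(w)-z_j)^{-2}$ at $w=c_j$ is
\[
\frac{1}{h_z(c_j)^2(w-c_j)^2}-\frac{h_{zz}(c_j)}{h_z(c_j)^3(w-c_j)},
\]
so the Laurent-subtraction argument, carried out precisely, yields this two-term expression for $\bar{\nP}(Z-z_j)^{-2}$ rather than the single term stated in \eqref{projection2}. Neither your sketch nor the paper's proof addresses the simple-pole term. This does not affect the qualitative conclusions used downstream (smoothness and $H^s$ bounds for $\bar{\nP}(Z-z_j)^{-2}$), but the identity as written is not what the argument actually produces unless $h_{zz}(c_j)=0$.
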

\begin{proof}
Recall that $Z(t,\alpha)-z_j$ is the boundary value of $h(t, \alpha, \beta) - z_j$ in the lower half plane.
Since $h(t,\cdot)$ is a bijective map, $h(t, \alpha, \beta) -z_j$ has a unique zero $c_j = h^{-1}(t, z_j)$.
Hence $\frac{1}{Z(t, \alpha)-z_j}$ is a meromorphic function with a pole $h^{-1}(t, z_j)$ of multiplicity one.
For $z$ in a small neighborhood of $c_j$, we have the Taylor series expansion
\begin{equation*}
    h(t, z) - z_j = h_z(t, c_j)(z - c_j) + \sum_{n=2}^\infty c_{n,j} (z -c_j)^n.
\end{equation*}
Therefore, $\frac{1}{Z(t, \alpha) - z_j}-\frac{1}{h_z(t, c_j)(\alpha - c_j)}$ is a holomorphic function on $\mathbb{H}$, and we have
\begin{equation*}
    \bar{\nP} \frac{1}{Z(t,\alpha)-z_j} = \bar{\nP}\frac{1}{h_z(t, c_j)(\alpha - c_j)}.
\end{equation*}
Since $\frac{1}{h_z(t, c_j)(\alpha - c_j)}$ can be extended to be holomorphic in the upper half-plane, $\bar{\nP}\frac{1}{h_z(t, c_j)(\alpha - c_j)} = \frac{1}{h_z(t, c_j)(\alpha - c_j)}$, and this gives the identity \eqref{projection1}.
Similarly, 
\begin{equation*}
    \frac{1}{(Z(t,\alpha)- z_j)^2} - \frac{1}{h_z(t, c_j)^2(\alpha - c_j)^2}
\end{equation*}
is a holomorphic function on $\mathbb{H}$, so that 
\begin{equation*}
    \bar{\nP} \frac{1}{(Z(t,\alpha)-z_j)^2} = \bar{\nP}\frac{1}{h_z(t, c_j)^2(\alpha - c_j)^2} = \frac{1}{h_z(t, c_j)^2(\alpha - c_j)^2}.
\end{equation*}

As for $\nP (v^{rot}\circ Z)$, it is given by
\begin{equation*}
 \nP (v^{rot}\circ Z) = \sum_{j=1}^N \frac{\lambda_j i}{2\pi} \nP \frac{1}{\overline{Z(t,\alpha)-z_j}}   = \sum_{j=1}^N \frac{\lambda_j i}{2\pi} \frac{1}{\overline{h_z(t, c_j)(\alpha - c_j)}}. 
\end{equation*}
Since points $z_j$ are in the interior of $\Omega_t$ and $h(t,\cdot)$ is bijective, $|\alpha - c_j| > 0$, and $\frac{1}{\alpha - c_j} \in L^2$.
Therefore, $\nP (v^{rot}\circ Z) \in H^s\cap C^s_{*}$ for all $s\geq 0$.
\end{proof}

\begin{lemma}
Let $s>0$, then for the rotational part of the complex velocity on the surface,
\begin{equation} \label{PvrotZHsCs}
 \|\bar{\nP}(v^{rot}\circ Z) \|_{H^s} \lesssim_{\CalAZ} \| Z -\alpha\|_{H^{s}}, \quad \|\bar{\nP}(v^{rot}\circ Z) \|_{C^s_{*}} \lesssim_{\CalAZ} \| Z-\alpha\|_{C_{*}^{s}}.
\end{equation}
Hence, $v^{rot}\circ Z$ can be decomposed into the smooth holomorphic part and the antiholomorphic part that can be controlled by $Z-\alpha$ in some appropriate norm.
\end{lemma}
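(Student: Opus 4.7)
The strategy is to subtract a ``flat-surface'' comparison from $v^{rot}\circ Z$ whose anti-holomorphic projection vanishes, so that the remainder is algebraically proportional to $\overline{Z-\alpha}$. Since each vortex $z_j\in\Omega_t$ satisfies $\Im z_j<0$, its conjugate $\bar z_j$ lies in the upper half-plane, so $\frac{1}{\alpha-\bar z_j}$ extends holomorphically to the lower half-plane and thus lies in the range of $\nP$; consequently $\bar{\nP}\!\left[\sum_{j=1}^N \frac{\lambda_j i}{2\pi(\alpha-\bar z_j)}\right]=0$. Combining this with the algebraic identity
\begin{equation*}
\frac{1}{\overline{Z-z_j}}-\frac{1}{\alpha-\bar z_j}\;=\;-\frac{\overline{Z-\alpha}}{\overline{Z-z_j}\,(\alpha-\bar z_j)},
\end{equation*}
I obtain
\begin{equation*}
\bar{\nP}(v^{rot}\circ Z)\;=\;-\sum_{j=1}^N \frac{\lambda_j i}{2\pi}\,\bar{\nP}\!\left[\,\overline{(Z-\alpha)}\cdot g_j\right], \qquad g_j:=\frac{1}{\overline{Z-z_j}\,(\alpha-\bar z_j)}.
\end{equation*}

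Since $\bar{\nP}=\tfrac12(I+iH)$ is bounded on both $H^s$ and $C^s_*$, it suffices to estimate $\|\overline{(Z-\alpha)}\cdot g_j\|_{H^s}$ and $\|\overline{(Z-\alpha)}\cdot g_j\|_{C^s_*}$. The denominators defining $g_j$ are bounded away from zero: $|\alpha-\bar z_j|\geq|\Im z_j|>0$ and $|Z-z_j|\geq \mathrm{dist}(z_j,\Gamma_t)>0$, the latter positive whenever $\|Z-\alpha\|_{L^\infty}\leq\CalAZ$ is smaller than $|\Im z_j|$. Hence every $W^{k,\infty}$-norm of $g_j$ is bounded by a constant depending only on $\CalAZ$.

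I decompose $g_j=\frac{1}{(\alpha-\bar z_j)^2}+\bigl(g_j-\frac{1}{(\alpha-\bar z_j)^2}\bigr)$. The first piece is a fixed smooth function of $\alpha$ depending only on $z_j$, so pointwise multiplication by it is bounded on $H^s$ and $C^s_*$ with norm $\lesssim_{\CalAZ} 1$. The second piece is a smooth function of $\overline{Z-\alpha}$ vanishing at $Z-\alpha=0$; expanding as the Neumann series
\begin{equation*}
g_j-\frac{1}{(\alpha-\bar z_j)^2}\;=\;\sum_{k\geq 1}\frac{(-1)^k(\overline{Z-\alpha})^k}{(\alpha-\bar z_j)^{k+2}},
\end{equation*}
convergent in $L^\infty$, and applying the Moser estimates \eqref{MoserOne}, \eqref{MoserTwo} term by term yields $\|g_j-\frac{1}{(\alpha-\bar z_j)^2}\|_{H^s}\lesssim_{\CalAZ}\|Z-\alpha\|_{H^s}$ and the analogue in $C^s_*$. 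Combined with the product estimates \eqref{HsProduct}, \eqref{CsProduct}, this gives $\|(Z-\alpha)g_j\|_{H^s}\lesssim_{\CalAZ}\|Z-\alpha\|_{H^s}$ and its $C^s_*$ version.

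Summing over $j$ with $|\lambda_j|\leq\CalAZ$ completes the proof. The main technical obstacle is verifying the geometric lower bound on $\mathrm{dist}(z_j,\Gamma_t)$ that keeps $g_j$ smooth and ensures convergence of the Neumann series; once this is granted via the smallness of $\|Z-\alpha\|_{L^\infty}$ relative to the vortex depth $|\Im z_j|$, the remainder of the argument is a routine application of Moser and paraproduct estimates to a smooth function of $\overline{Z-\alpha}$.
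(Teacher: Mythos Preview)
Your route differs from the paper's. The paper uses the preceding lemma's exact formula $\nP(v^{rot}\circ Z)=\sum_j\frac{\lambda_j i}{2\pi}\,\overline{h_z(c_j)(\alpha-c_j)}^{\,-1}$, with $c_j=h^{-1}(z_j)$ the conformal preimage of the vortex, writes $\bar{\nP}(v^{rot}\circ Z)=v^{rot}\circ Z-\nP(v^{rot}\circ Z)$, observes that this difference vanishes when $Z=\alpha$ (since then $h=\mathrm{id}$, $c_j=z_j$, $h_z=1$), and invokes Moser. Your comparison term $\frac{1}{\alpha-\bar z_j}$ is the flat-surface specialization of this and is more elementary: it avoids the previous lemma entirely and yields the explicit algebraic factor $\overline{Z-\alpha}\cdot g_j$, from which the product and Moser estimates flow cleanly.

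There is, however, a real gap in your argument: the assertion that $z_j\in\Omega_t$ forces $\Im z_j<0$ is false in general. The fluid domain lies below the free surface $\Gamma_t$, not below the real axis, so if the surface rises above $\{\Im z=0\}$ a vortex may sit at $0<\Im z_j<\Im Z(\alpha)$ for some $\alpha$. In that case $\bar z_j$ lies in the \emph{lower} half-plane, $\frac{1}{\alpha-\bar z_j}$ is annihilated by $\nP$ rather than $\bar{\nP}$, and your subtraction no longer kills the anti-holomorphic part. The same issue reappears in the Neumann series step, which requires $\|Z-\alpha\|_{L^\infty}<|\Im z_j|$; nothing in $\CalAZ$ enforces this. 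The paper's choice sidesteps the problem because $c_j$ lies in the open lower half-plane by construction of the Riemann map, so $\bar c_j$ is always in the upper half-plane regardless of the surface geometry. Your argument can be repaired by subtracting $\frac{1}{\overline{h_z(c_j)}(\alpha-\bar c_j)}$ instead of $\frac{1}{\alpha-\bar z_j}$, after which the rest of your algebraic manipulation and the product/Moser estimates go through unchanged.
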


\begin{proof}
 We express
\begin{equation*}
 \bar{\nP}(v^{rot}\circ Z) = v^{rot}\circ Z-\nP(v^{rot}\circ Z) = \sum_{j=1}^N \frac{\lambda_j i}{2\pi} \left(\frac{1}{\overline{Z - z_j}} - \frac{1}{\overline{h_z(t, c_j)(\alpha - c_j)}} \right).
\end{equation*}
When $Z(t, \alpha)- \alpha = 0$, the fluid domain $\Omega_t = \mathbb{H}$, so that $h_z = 1$, and $c_j = z_j$.
Hence,
\begin{equation*}
 \frac{1}{\overline{Z - z_j}} - \frac{1}{\overline{h_z(t, c_j)(\alpha - c_j)}} = 0, \quad \text{when } Z = \alpha.   
\end{equation*}
Using Moser-type estimates \eqref{MoserOne}, \eqref{MoserTwo}, for $s>0$,
\begin{align*}
 &\left\| \frac{1}{\overline{Z - z_j}} - \frac{1}{\overline{h_z(t, c_j)(\alpha - c_j)}} \right\|_{H^s} \lesssim_{\CalAZ} \|Z-\alpha\|_{H^{s}}, \\
 &\left\| \frac{1}{\overline{Z - z_j}} - \frac{1}{\overline{h_z(t, c_j)(\alpha - c_j)}} \right\|_{C_{*}^s}  \lesssim_{\CalAZ} \|Z-\alpha \|_{C_{*}^s}.
\end{align*}
This gives the proof of \eqref{PvrotZHsCs} by adding each term of the complex rotational velocity.
\end{proof}
Using a similar way for the proof, we can show that
\begin{equation} \label{nPZsquare}
 \left\|\nP \frac{1}{(Z(t,\alpha)-z_j)^2}\right\|_{H^s} \lesssim_\CalAZ \|Z-\alpha \|_{H^s}, \quad  \left\|\nP \frac{1}{(Z(t,\alpha)-z_j)^2}\right\|_{C^s_{*}} \lesssim_\CalAZ \|Z-\alpha \|_{C^s_{*}}.
\end{equation}

Define $d_S(t)$ to be the minimum distance of the point vortices to the free surface, 
\begin{equation} \label{DefDistance}
    d_S(t) := \inf_{\alpha \in \R}\min_{1\leq j \leq N} |Z(t,\alpha)- z_j(t)|. 
\end{equation}
We have the following results for $d_S(t)$ from Lemma 2.8 and Corollary 2.1 in \cite{MR4179726}:
\begin{lemma}[\hspace{1sp}\cite{MR4179726}]
Assume that $Z(t, \alpha)$ satisfies the chord-arc condition 
\begin{equation} \label{chordarc}
    C_0|\alpha - \beta|\leq |Z(t,\alpha) - Z(t, \beta)| \leq C_1|\alpha - \beta|.
\end{equation}
Then for $k>1$, 
\begin{equation} \label{Lkestimate}
 \int_{-\infty}^\infty \frac{1}{|Z(t,\beta) - z_j(t)|^k}\,d\beta \leq C d_S(t)^{-k+1},
\end{equation}
where the constant $C = 4C_0^{-1}+ \frac{4C_1^{k-1}}{(k-1)C_0^{k-1}}$. 
Suppose that $d_S(t)\gtrsim 1$, and $s>\frac{3}{2}$, then 
\begin{equation} \label{LkHsEstimate}
 \left\| \frac{1}{(\overline{Z(t, \alpha) - z_j(t)})^k} \right\|_{H^s} \leq C d_S(t)^{-k+\frac{1}{2}},
\end{equation}
where the positive constant $C$ depends on $C_0, C_1, \|\W \|_{C([0,T]; H^{s}(\R))}$. 
\end{lemma}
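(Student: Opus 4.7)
The plan is to establish the two inequalities in order, using the first as the base case for an iterated-derivative argument that delivers the $H^s$ bound.

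For the integral inequality I would localize the integration around a near-minimizer of $|Z(t,\cdot) - z_j(t)|$. Choose $\alpha^{*}\in\mathbb{R}$ with $|Z(t,\alpha^{*}) - z_j(t)| = d_S(t)$ (or approximately so, via a minimizing sequence) and split the line at $|\beta - \alpha^{*}| = 2 d_S(t)/C_0$. On the inner band, the very definition of $d_S(t)$ forces $|Z(t,\beta) - z_j(t)| \geq d_S(t)$, so the integrand is bounded by $d_S(t)^{-k}$ and the contribution is at most $4 C_0^{-1} d_S(t)^{-k+1}$. On the outer complement, the lower chord-arc estimate gives $|Z(t,\beta) - Z(t,\alpha^{*})| \geq C_0 |\beta - \alpha^{*}| > 2 d_S(t)$, and the triangle inequality then yields $|Z(t,\beta) - z_j(t)| \geq \tfrac{C_0}{2}|\beta - \alpha^{*}|$. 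Integrating $(\tfrac{C_0}{2} r)^{-k}$ over $r > 2 d_S(t)/C_0$ converges because $k > 1$ and contributes a tail of order $\frac{1}{(k-1) C_0^{k-1}} d_S(t)^{-k+1}$, with the upper chord-arc constant $C_1$ entering when one tightens the constant by comparing the outer tail against the $Z$-parametrization.

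For the Sobolev bound, the $L^2$ estimate is immediate from the first part applied with $k$ replaced by $2k$: this yields $\|(\overline{Z - z_j})^{-k}\|_{L^2}^2 \leq C d_S(t)^{-2k+1}$, i.e.\ the target power $d_S(t)^{-k+\frac{1}{2}}$. For higher regularity I would iterate the chain-rule identity
\[
\partial_\alpha \frac{1}{(\overline{Z - z_j})^k} = -k \, \frac{1 + \bar{\W}}{(\overline{Z - z_j})^{k+1}},
\]
obtaining after $m$ derivatives a finite sum whose terms are polynomials in $\bar{\W}, \bar{\W}_\alpha, \ldots$ divided by a higher integer power of $\overline{Z - z_j}$. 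Each pole factor is estimated in $L^2$ by the integral bound at the larger exponent, while the polynomial in $\W$ is controlled through the product and Moser-type estimates \eqref{HsProduct} and \eqref{MoserOne}, using the embedding $H^s \hookrightarrow C^{1+\epsilon}_{*}$ for $s > \tfrac{3}{2}$. Integer-order Sobolev bounds together with standard interpolation then close the $H^s$ estimate.

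The main technical subtlety is tracking the correct power of $d_S(t)$: after $m$ derivatives the natural estimate produces $d_S(t)^{-(k+m)+\frac{1}{2}}$, which is worse than the claimed $d_S(t)^{-k+\frac{1}{2}}$ whenever $m \geq 1$. The standing hypothesis $d_S(t) \gtrsim 1$ absorbs this discrepancy into a constant, since $d_S(t)^{-(k+m)+\frac{1}{2}} \leq d_S(t)^{-k+\frac{1}{2}}$ in that regime; this is precisely why $d_S(t) \gtrsim 1$ appears as a hypothesis for the $H^s$ bound but not for the $L^1$-type integral bound, and why the constant is allowed to depend on $\|\W\|_{C([0,T];H^s(\mathbb{R}))}$ in addition to $C_0, C_1, k$.
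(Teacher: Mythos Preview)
The paper does not supply a proof of this lemma; it is quoted directly from Lemma~2.8 and Corollary~2.1 of Su~\cite{MR4179726}, so there is no in-paper argument to compare against.

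Your proof is correct and self-contained. For \eqref{Lkestimate}, the near/far split at radius $2d_S(t)/C_0$ about a near-minimizer $\alpha^*$, together with the lower chord-arc inequality $|Z(\beta)-z_j|\geq C_0|\beta-\alpha^*|-d_S(t)\geq \tfrac{C_0}{2}|\beta-\alpha^*|$ on the far set, gives exactly the claimed $d_S(t)^{-k+1}$ decay. One remark: your tail computation actually yields a constant $4C_0^{-1}+4C_0^{-1}(k-1)^{-1}$ that does not involve $C_1$ at all, so your aside about $C_1$ ``entering when one tightens the constant'' is unnecessary---the specific constant in the statement simply reflects a slightly different organization in the cited source, and your bound is at least as good.

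For \eqref{LkHsEstimate}, reducing the $L^2$ base case to \eqref{Lkestimate} with exponent $2k$, then iterating the chain-rule identity and interpolating from integer orders, is the right strategy. You have also correctly identified why the hypothesis $d_S(t)\gtrsim 1$ is needed: each $\alpha$-derivative raises the pole order by one, so after $m$ derivatives the natural bound is $d_S(t)^{-(k+m)+\frac12}$, and only under $d_S(t)\gtrsim 1$ can the surplus $d_S(t)^{-m}$ be absorbed into the constant. The dependence of that constant on $\|\W\|_{C([0,T];H^s)}$ enters through the $L^\infty$ (or $L^2$) control of the factors $\bar\W,\bar\W_\alpha,\dots$ produced by differentiation, exactly as you indicate via \eqref{HsProduct} and the embedding $H^s\hookrightarrow C_*^{1+\epsilon}$ for $s>\tfrac32$.
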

We remark that the chord-arc condition \eqref{chordarc} holds as long as $\|\W \|_{C([0,T]; C^{\epsilon}_{*}(\R))}$ is small enough.

Define the auxiliary function $\mathcal{Y}: = \frac{\W}{1+\W}$, then $\Y$ is a holomorphic function such that $\Y = \nP \Y$.
Again by Moser-type estimates \eqref{MoserOne}, \eqref{MoserTwo},
\begin{equation} \label{YBounds}
  \|\mathcal{Y}\|_{H^s} \lesssim_{\CalAZ} \|\W \|_{H^s}, \quad  \|\mathcal{Y}\|_{C_{*}^s} \lesssim_{\CalAZ} \|\W \|_{C_{*}^s}. 
\end{equation}

Next, we consider the estimates for the advection velocity $b$ \eqref{bDef} and the frequency-shift $a$ \eqref{FrequencyShit}.
Using the definition of $\Y$, $b$ can be rewritten as 
\begin{equation*}
    b = b_1 + b_2, \quad  b_1=2\Re \nP[(1-\bar{\Y})R], \quad b_2 = 2\Re \bar{\nP}[(1-\Y)v^{rot}].
\end{equation*}
\begin{lemma}
For $s>0$, the advection velocity $b$ is bounded as follows: 
\begin{equation} \label{bEst}
 \|b\|_{C^s_{*}}\lesssim_\CalAZ \|R\|_{C^s_{*}} + \|Z-\alpha \|_{C^s_{*}}, \quad  \|b\|_{H^s}\lesssim_\CalAZ \|R\|_{H^s} + \|Z-\alpha \|_{H^s}.    
\end{equation}
\end{lemma}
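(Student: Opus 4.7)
The plan is to bound $b_1$ and $b_2$ separately, exploiting in each case the fact that one factor is holomorphic and the other (essentially) antiholomorphic, so that a paraproduct decomposition of the product, combined with the holomorphic/antiholomorphic projection $\nP$ or $\bar\nP$, kills the ``bad'' piece whose estimate would require more regularity than $\CalAZ$ provides.

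For $b_1 = 2\Re \nP[(1-\bar\Y)R]$, I use that $R$ is holomorphic, so $\nP R = R$, and write
\begin{equation*}
\nP[(1-\bar\Y)R] = R - \nP[\bar\Y R].
\end{equation*}
The first term is trivially controlled by $\|R\|_{H^s}$, respectively $\|R\|_{C^s_*}$. For the second, I perform a Bony decomposition $\bar\Y R = T_{\bar\Y}R + T_R \bar\Y + \Pi(\bar\Y, R)$. Since $\bar\Y$ has spectrum in $[0,\infty)$ and $R$ in $(-\infty, 0]$, the low-high piece $T_R \bar\Y$ is spectrally supported in $[0,\infty)$ modulo negligible cutoff overlaps, hence $\nP T_R \bar\Y = 0$. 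For $T_{\bar\Y}R$, I apply \eqref{HsLinfty} (respectively \eqref{CsLInfty}) to get $\|T_{\bar\Y}R\|_{H^s} \lesssim \|\bar\Y\|_{L^\infty}\|R\|_{H^s}$, and $\|\bar\Y\|_{L^\infty}\lesssim_\CalAZ 1$ follows from the embedding $C^\epsilon_* \hookrightarrow L^\infty$ combined with \eqref{YBounds}. For the balanced piece $\Pi(\bar\Y, R)$, I invoke \eqref{HCHEstimate} (respectively \eqref{CCCEstimate}) with $\alpha=\epsilon$, $\beta = s-\epsilon$ to obtain a bound by $\|\bar\Y\|_{C^\epsilon_*}\|R\|_{H^{s-\epsilon}}\lesssim_\CalAZ \|R\|_{H^s}$.

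For $b_2 = 2\Re\bar\nP[(1-\Y)v^{rot}]$, I decompose $v^{rot}\circ Z = v_+ + v_-$ with $v_+ := \nP(v^{rot}\circ Z)$ and $v_- := \bar\nP(v^{rot}\circ Z)$. By \eqref{barPvRot}, $v_+$ is smooth holomorphic, and crucially $\Y v_+$ is holomorphic (product of two holomorphic functions), so $\bar\nP[\Y v_+]=0$. Meanwhile $\bar\nP v_+ = 0$ and $\bar\nP v_- = v_-$, so
\begin{equation*}
\bar\nP[(1-\Y)v^{rot}] = v_- - \bar\nP[\Y v_-].
\end{equation*}
The term $v_-$ is controlled by \eqref{PvrotZHsCs}: $\|v_-\|_{H^s}\lesssim_\CalAZ \|Z-\alpha\|_{H^s}$ and the analogous Zygmund estimate. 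For $\bar\nP[\Y v_-]$, $\Y$ has negative spectrum and $v_-$ has positive spectrum, so running the Bony decomposition in the mirror form, $\bar\nP T_{v_-}\Y = 0$ by spectral support, $\|T_\Y v_-\|_{H^s}\lesssim \|\Y\|_{L^\infty}\|v_-\|_{H^s}$, and the balanced term $\Pi(\Y, v_-)$ is again handled by \eqref{HCHEstimate}/\eqref{CCCEstimate}; each is $\lesssim_\CalAZ \|Z-\alpha\|_{H^s}$ or $\lesssim_\CalAZ \|Z-\alpha\|_{C^s_*}$.

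The main subtlety is ensuring that every estimate on $\Y$ (or $\bar\Y$) uses only an $L^\infty$ or $C^\epsilon_*$ norm controlled by $\CalAZ$; if any derivative bound on $\Y$ slipped in, we would need $\|\W\|_{H^s}$, which is stronger than the $\|Z-\alpha\|_{H^s}$ allowed on the right-hand side. This is exactly what the two vanishing paraproducts ($\nP T_R\bar\Y = 0$ and $\bar\nP[\Y v_+]=0 = \bar\nP T_{v_-}\Y$) buy us: in every surviving piece, either the derivatives fall on $R$ or on $v_\pm$, where $v_-$ is reduced to $\|Z-\alpha\|$ through \eqref{PvrotZHsCs}. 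Summing the bounds for $b_1$ and $b_2$ yields \eqref{bEst}.
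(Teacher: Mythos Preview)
Your proof is correct and follows essentially the same strategy as the paper: for $b_2$ both you and the paper exploit the holomorphic/antiholomorphic splitting of $v^{rot}\circ Z$ and the fact that $\Y$ is holomorphic, so that after the Bony decomposition the only surviving pieces under $\bar\nP$ are $T_{1-\Y}\,\bar\nP(v^{rot}\circ Z)$ and $\bar\nP\Pi(\Y,\bar\nP(v^{rot}\circ Z))$, which are handled by \eqref{HsLinfty}/\eqref{CsLInfty} and \eqref{HCHEstimate}/\eqref{CCCEstimate} together with \eqref{PvrotZHsCs}. For $b_1$ the paper simply cites earlier references, while you spell out the parallel argument with $\nP T_R\bar\Y=0$; this is exactly the standard treatment in those references.
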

\begin{proof}
The estimates for $b_1$ were carried out in \cite{ai2023dimensional,wan2024}; it suffices to consider the bounds for the antiholomorphic part of $b_2$.
We expand the antiholomorphic part as the sum of paraproducts:
\begin{equation*}
 \bar{\nP}[(1-\Y)(v^{rot}\circ Z)] = T_{1-\Y}\bar{\nP}(v^{rot}\circ Z) - \bar{\nP}\Pi(\Y, \bar{\nP}(v^{rot}\circ Z)).
\end{equation*}
By \eqref{HCHEstimate}, \eqref{CCCEstimate}, \eqref{HsLinfty}, \eqref{CsLInfty} we have
\begin{align*}
 &\|T_{1-\Y}\bar{\nP}(v^{rot}\circ Z) \|_{C^s_{*}} + \| \bar{\nP}\Pi(\Y, \bar{\nP}(v^{rot}\circ Z))\|_{C^s_{*}} \lesssim (1+\|\Y \|_{L^\infty}) \|\bar{\nP}(v^{rot}\circ Z) \|_{C^s_{*}} \lesssim_\CalAZ \|Z-\alpha \|_{C^s_{*}}, \\
  &\|T_{1-\Y}\bar{\nP}(v^{rot}\circ Z) \|_{H^s} + \| \bar{\nP}\Pi(\Y, \bar{\nP}(v^{rot}\circ Z))\|_{H^s} \lesssim (1+\|\Y \|_{L^\infty}) \|\bar{\nP}(v^{rot}\circ Z) \|_{H^s} \lesssim_\CalAZ \|Z-\alpha \|_{H^s}.
\end{align*}
Hence, we get estimates in \eqref{bEst}.
\end{proof}

\begin{lemma}
The frequency-shift $a$ satisfies the estimate
\begin{equation}
\|a\|_{C_{*}^{\epsilon}} \lesssim_\CalAZ  \mathcal{A}_1^2, \label{ACHalf}
\end{equation}
as well as the Sobolev estimate
\begin{equation}
\| a\|_{H^{s}}  \lesssim_\CalAZ \CalAT  \left(\sum_{j=1}^N |\lambda_j|+\|R\|_{H^s}+ \|Z-\alpha \|_{H^s}\right), \quad s>0. \label{AHsEst}  
\end{equation}
\end{lemma}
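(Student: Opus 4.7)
My plan is to expand $a$ as a finite sum of bilinear expressions in the building blocks $R$, $\bar R$, $V := v^{rot}\circ Z$ and $\bar V$, decompose each factor into its holomorphic and antiholomorphic parts, and then apply the paradifferential bilinear form estimates of Lemma \ref{t:SymbolPara}. Concretely, I split $V = V_h + V_a$, where $V_h := \nP V$ is given explicitly by \eqref{barPvRot} and is therefore smooth with bounds depending only on $\CalAZ$ and $\sum_j |\lambda_j|$, while $V_a := \bar{\nP} V$ is controlled by $\|Z-\alpha\|_{C^s_*}$ or $\|Z-\alpha\|_{H^s}$ via \eqref{PvrotZHsCs}; analogous statements hold for $\bar V$. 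Inserting $A = \bar R + V_h + V_a$ and $B = R + \bar V_h + \bar V_a$ into the definition \eqref{FrequencyShit} and multiplying by $\partial_\alpha B$ or $\partial_\alpha A$, products of two factors of the same type (both with negative-frequency support, or both with positive-frequency support) are annihilated by $\bar{\nP}$ or $\nP$ respectively, except for their balanced paradifferential contribution. After this cancellation $a$ reduces to a finite sum of: (i) balanced bilinear forms $B^1_{hh}$ of two factors chosen from $\{R, \bar R, V_a, \bar V_a\}$; (ii) low-high bilinear forms $B^1_{lh}$ of mixed type among the same factors; and (iii) pieces with at least one smooth factor $V_h$ or $\bar V_h$.

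For the Zygmund estimate \eqref{ACHalf}, I would apply \eqref{HHBilinearCCC} to each balanced piece with $\alpha = \beta = \epsilon/2$ and $\mu_1 = \mu_2 = 1/2$, so that each factor is placed in $C^{1/2 + \epsilon/2}_*$, a norm controlled by $\CalAO$ (note $\|\bar{\nP} V\|_{C^{1+\epsilon/2}_*} \lesssim_{\CalAZ} \|Z-\alpha\|_{C^{1+\epsilon/2}_*} \leq \CalAO$ via \eqref{PvrotZHsCs}). The low-high pieces are treated analogously via \eqref{BFCsCmStar}--\eqref{BFCsLInfty}, and the smooth pieces are bounded by $\CalAZ^2 \leq \CalAO^2$ using the chord-arc consequence \eqref{Lkestimate} together with \eqref{YBounds} and $|\lambda_j| \leq \CalAZ$.

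For the Sobolev estimate \eqref{AHsEst}, I would play the same game in a tame configuration. In each balanced piece $B^1_{hh}(f,g)$ I would put one factor in a Hölder norm contained in $\CalAT$ (for instance $C^{1+\epsilon}_*$ for $R$, $C^{3/2}_*$ for $Z-\alpha$, or the corresponding Hölder control of $V_a, \bar V_a$ inherited from $\|Z-\alpha\|_{C^{3/2}_*}$) and the other factor in its $H^s$ counterpart; the relevant splitting is permitted by \eqref{HHBilinearHCH} because the constraint $\alpha + \beta + 1 > 0$ is automatic once $s > 0$. The low-high pieces fall under \eqref{BFHsCmStar}--\eqref{BFHsLinfty}, and the smooth contributions from $V_h$ and $\bar V_h$ produce the $\sum_j |\lambda_j|$ factor on the right-hand side.

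The main obstacle I expect is the term-by-term bookkeeping: one has to verify, for each of the many resulting bilinear terms, that exactly one factor absorbs more than $s$ derivatives into a $\CalAT$ norm while the partner stays in $H^s$, and that the balanced-form positivity $\alpha + \beta + \mu > 0$ can be simultaneously realized. Since the control norms in $\CalAT$ exceed those in $\CalAO$ by $1/2$ derivative, the required slack is available and the tame estimate closes. The argument for $b_2$ in \eqref{bEst} already illustrates this mechanism for the low-high/balanced split of products involving $\bar{\nP}(v^{rot}\circ Z)$, and the same kind of splitting is simply iterated twice here.
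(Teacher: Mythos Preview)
Your approach is correct and essentially the same as the paper's: both split $v^{rot}\circ Z$ into its holomorphic part (controlled by $\sum_j|\lambda_j|$ via \eqref{barPvRot}) and antiholomorphic part (controlled by $\|Z-\alpha\|$ via \eqref{PvrotZHsCs}), expand $a$ into bilinear pieces, and handle each via the low-high/balanced paraproduct decomposition with the estimates of Lemma~\ref{t:ParaProductEst}. The only cosmetic differences are that the paper works term-by-term with explicit paraproducts rather than invoking the general bilinear-form lemma, and that the smooth $V_h$ pieces are bounded directly from \eqref{barPvRot} rather than via \eqref{Lkestimate}.
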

\begin{proof}
It suffices to consider the antiholomorphic part of $a$.
The estimates for $i\bar{\nP}[\bar{R}R_\alpha]$ are given in \cite{ai2023dimensional,wan2024}.
For other parts of $a$, we write
\begin{align*}
    \bar{\nP}[R_\alpha (v^{rot}\circ Z)] =& T_{R_\alpha} \bar{\nP}(v^{rot}\circ Z) + \bar{\nP}\Pi[R_\alpha, \bar{\nP} (v^{rot}\circ Z)], \\
    \bar{\nP}[\bar{R} \partial_\alpha(\overline{v^{rot}\circ Z})] =& T_{\partial_\alpha(\overline{v^{rot}\circ Z})}\bar{R} + T_{\bar{R}}\bar{\nP}\partial_\alpha(\overline{v^{rot}\circ Z}) + \bar{\nP} \Pi[\bar{R}, \partial_\alpha(\overline{v^{rot}\circ Z})],\\
    \bar{\nP}[(v^{rot}\circ Z) \partial_\alpha(\overline{v^{rot}\circ Z})] =& T_{v^{rot}\circ Z}\bar{\nP}\partial_\alpha(\overline{v^{rot}\circ Z}) + T_{\partial_\alpha(\overline{v^{rot}\circ Z})} \bar{\nP}(v^{rot}\circ Z) \\
    &+ \bar{\nP}\Pi[\partial_\alpha(\overline{v^{rot}\circ Z}),  v^{rot}\circ Z].
\end{align*}
We estimate each term using \eqref{HCHEstimate}, \eqref{CCCEstimate}, \eqref{HsCmStar}, \eqref{CsCmStar},
\begin{align*}
 &\|T_{R_\alpha} \bar{\nP}(v^{rot}\circ Z) \|_{C^\epsilon_{*}} +  \|\bar{\nP}\Pi[R_\alpha, \bar{\nP} (v^{rot}\circ Z)] \|_{C^\epsilon_{*}} \lesssim \| R\|_{C^{\frac{1}{2}+\epsilon}_{*}}\|\bar{\nP}(v^{rot}\circ Z) \|_{C^{\frac{1}{2}}_{*}},\\
 &\|T_{R_\alpha} \bar{\nP}(v^{rot}\circ Z) \|_{H^s} +  \|\bar{\nP}\Pi[R_\alpha, \bar{\nP} (v^{rot}\circ Z)] \|_{H^s} \lesssim \| R\|_{C^{1+\epsilon}_{*}}\|\bar{\nP}(v^{rot}\circ Z) \|_{H^{s}}, \\
 &\|T_{\partial_\alpha(\overline{v^{rot}\circ Z})}\bar{R} \|_{C^\epsilon_{*}} + \|\bar{\nP} \Pi[\bar{R}, \partial_\alpha(\overline{v^{rot}\circ Z})] \|_{C^\epsilon_{*}} \lesssim \| R\|_{C^{\frac{1}{2}+\epsilon}_{*}}\|(\nP+\bar{\nP})(\overline{v^{rot}\circ Z}) \|_{C^{\frac{1}{2}}_{*}} ,\\
  &\|T_{\partial_\alpha(\overline{v^{rot}\circ Z})}\bar{R} \|_{H^s} + \|\bar{\nP} \Pi[\bar{R}, \partial_\alpha(\overline{v^{rot}\circ Z})] \|_{H^s} \lesssim \| R\|_{H^{s}}\|(\nP+\bar{\nP})(\overline{v^{rot}\circ Z}) \|_{C^{1}_{*}},\\
  & \| T_{\bar{R}}\bar{\nP}\partial_\alpha(\overline{v^{rot}\circ Z})\|_{C^\epsilon_{*}}  \lesssim \|R\|_{C^\epsilon_{*}}\|\bar{\nP}(\overline{v^{rot}\circ Z}) \|_{C^{1+\epsilon}_{*}}, \\
  &\| T_{\bar{R}}\bar{\nP}\partial_\alpha(\overline{v^{rot}\circ Z})\|_{H^s} \lesssim \|R\|_{C^{-\frac{1}{2}}_{*}}\|\bar{\nP}(\overline{v^{rot}\circ Z}) \|_{H^{s+\frac{1}{2}}} \lesssim \|R\|_{H^s}\|\bar{\nP}(\overline{v^{rot}\circ Z}) \|_{H^{s+\frac{1}{2}}}, \\
  & \|T_{v^{rot}\circ Z}\bar{\nP}\partial_\alpha(\overline{v^{rot}\circ Z}) \|_{C^\epsilon_{*}} \lesssim \|(\nP + \bar{\nP})(v^{rot}\circ Z) \|_{C^\epsilon_{*}}\|\bar{\nP}(\overline{v^{rot}\circ Z}) \|_{C^{1+\epsilon}_{*}}, \\
  & \|T_{v^{rot}\circ Z}\bar{\nP}\partial_\alpha(\overline{v^{rot}\circ Z}) \|_{H^s} \lesssim \|(\nP + \bar{\nP})(v^{rot}\circ Z) \|_{H^s}\|\bar{\nP}(\overline{v^{rot}\circ Z}) \|_{H^{s+\frac{3}{2}}}, \\
  &\|T_{\partial_\alpha(\overline{v^{rot}\circ Z})} \bar{\nP}(v^{rot}\circ Z) \|_{C^\epsilon_{*}} \lesssim \|(\nP + \bar{\nP})(\overline{v^{rot}\circ Z}) \|_{C^\epsilon_{*}}\|\bar{\nP}(v^{rot}\circ Z) \|_{C^{1+\epsilon}_{*}}, \\
   &\|T_{\partial_\alpha(\overline{v^{rot}\circ Z})} \bar{\nP}(v^{rot}\circ Z) \|_{H^s} \lesssim \|(\nP + \bar{\nP})(\overline{v^{rot}\circ Z}) \|_{C^\epsilon_{*}}\|\bar{\nP}(v^{rot}\circ Z) \|_{H^{s+1}},\\
&\|\bar{\nP}\Pi[\partial_\alpha(\overline{v^{rot}\circ Z}),  v^{rot}\circ Z] \|_{C^\epsilon_{*}} \lesssim \|(\nP + \bar{\nP})(v^{rot}\circ Z) \|_{C^\epsilon_{*}}\|(\nP+\bar{\nP})(\overline{v^{rot}\circ Z}) \|_{C^{1+\epsilon}_{*}}, \\
&\|\bar{\nP}\Pi[\partial_\alpha(\overline{v^{rot}\circ Z}),  v^{rot}\circ Z] \|_{H^s} \lesssim \|(\nP + \bar{\nP})(v^{rot}\circ Z) \|_{C^\epsilon_{*}}\|(\nP+\bar{\nP})(\overline{v^{rot}\circ Z}) \|_{H^{s+1}}.
\end{align*}
For each part of the estimate, we use the estimate \eqref{PvrotZHsCs}, and the fact that
\begin{equation*}
 \| \nP(v^{rot}\circ Z)\|_{H^s} \lesssim \sum_{j=1}^N |\lambda_j|, \quad  \| \nP(v^{rot}\circ Z)\|_{C_{*}^s} \lesssim \sum_{j=1}^N |\lambda_j|, \quad \forall s \geq 0.
\end{equation*}
These estimates lead to the bounds \eqref{ACHalf} and \eqref{AHsEst}.
\end{proof}

For the upper bounds of the time derivatives of $z_j$, $1\leq j \leq N$, we have the following result.
\begin{lemma}
The time derivatives of $z_j$, $1\leq j \leq N$ are bounded by
\begin{equation} 
 \left|\frac{d}{dt}z_j(t)\right| \leq \|R(t, \cdot)\|_{L^\infty} + (N-1)\max_{j\neq k} \frac{|\lambda_k|}{2\pi} \frac{1}{|z_j(t) - z_k(t)|} \lesssim \CalAO, \label{zjtimedot} 
\end{equation}
\end{lemma}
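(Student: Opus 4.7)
The starting point is the third equation of the water wave system \eqref{e:WW}, namely
\begin{equation*}
\frac{d}{dt}z_j(t) = \mathcal{U}(t, z_j(t)) + \sum_{k\neq j} \frac{\lambda_k i}{2\pi}\frac{1}{\overline{z_j(t) - z_k(t)}},
\end{equation*}
so the plan is simply to bound each of the two terms on the right. The sum over $k\neq j$ is immediate: by the triangle inequality it is controlled by $(N-1)\max_{k\neq j}\frac{|\lambda_k|}{2\pi}\frac{1}{|z_j - z_k|}$, which yields the second summand in the claimed bound. The only real work is to show that $|\mathcal{U}(t, z_j(t))|\le \|R(t,\cdot)\|_{L^\infty}$.

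For that bound, I would start from the definition \eqref{IrrotationalV} and perform the change of variables $w = Z(t,\alpha)$, $dw = (1+\W)\,d\alpha$, to rewrite
\begin{equation*}
\mathcal{U}(t,z) = \frac{i}{2\pi}\int_{\Gamma_t}\frac{\bar R(t,Z^{-1}(w))}{w-z}\,dw,
\end{equation*}
the Cauchy integral of the boundary datum $\bar R$ along $\Gamma_t$. Since $\bar R$ is the boundary value on $\Gamma_t$ of the irrotational complex velocity, which is a holomorphic function of $z$ in the fluid domain $\Omega_t$ decaying at infinity, the Cauchy integral above recovers precisely this holomorphic extension. Evaluating at the interior point $z = z_j(t)\in \Omega_t$ and invoking the maximum modulus principle on the (unbounded) domain $\Omega_t$, together with the decay at infinity, yields
\begin{equation*}
|\mathcal{U}(t,z_j(t))| \le \sup_{w\in \Gamma_t}|\bar R(t,Z^{-1}(w))| = \|R(t,\cdot)\|_{L^\infty},
\end{equation*}
which gives the first inequality. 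A cleaner bookkeeping alternative, should one wish to avoid invoking the maximum principle directly on $\Omega_t$, is to work in the holomorphic coordinates: pull back through $h(t,\cdot)$ so that $\mathcal{U}\circ h$ is the holomorphic extension of $\bar R$ to the lower half plane, and apply the Poisson/Cauchy representation at the interior point $h^{-1}(z_j(t))$.

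For the final comparison with $\CalAO$, I would use the Besov embedding $C^{\frac{1}{2}+\epsilon}_{*}\hookrightarrow L^\infty$ to get $\|R\|_{L^\infty}\lesssim \|R\|_{C^{\frac{1}{2}+\epsilon}_{*}} \le \CalAO$, and observe that the sum term is controlled by $N \cdot \max_{k\neq j}\frac{|\lambda_k|}{|z_j-z_k|}$, which is bounded by $\CalAO$ directly from the definition of the control norm $\CalAO$.

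The only genuine obstacle is the first inequality, and specifically the justification that the Cauchy-type integral $\mathcal{U}$ is majorized by its boundary data at interior points. If the chord-arc condition \eqref{chordarc} on $\Gamma_t$ is invoked, this is standard; the subtle point to verify carefully is simply the decay of $\mathcal{U}$ at infinity, which follows from the integrability of $(1+\W)\bar R$ against $(Z-z)^{-1}$ for large $|z|$.
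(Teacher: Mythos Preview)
Your proposal is correct and follows essentially the same route as the paper: start from the Helmholtz--Kirchhoff equation, bound the vortex interaction sum trivially, and control $|\mathcal{U}(t,z_j)|$ by the maximum principle applied to the holomorphic (or antiholomorphic) extension of the irrotational velocity, whose boundary trace is $\bar R$. The paper is terser---it simply states that $\mathcal{U}$ is this extension and invokes the maximum principle---while you unpack the Cauchy-integral identification and the embedding $C^{\frac12+\epsilon}_*\hookrightarrow L^\infty$ more explicitly, but the argument is the same.
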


\begin{proof}
According to \eqref{Kirchhoff}, 
\begin{equation*}
\left|\frac{d}{dt}z_j(t)\right| \leq |\mathcal{U}(t, z_j)| + \sum_{k\neq j} \frac{|\lambda_k|}{2\pi} \frac{1}{|z_j(t) - z_k(t)|}.
\end{equation*}
$\mathcal{U}(t, \cdot)$ is the extension of the antiholomorphic irrotational part of the complex velocity in $\Omega_t$.
By the maximum principle, $|\mathcal{U}(t, z_j)|\leq \|v^{ir}(t, \cdot)\|_{L^\infty(\Gamma_t)}$, and the fact $ \|v^{ir}(t, \cdot)\|_{L^\infty(\Gamma_t)} = \|R(t, \cdot)\|_{L^\infty}$, we obtain the bound for the time derivative of $z_j$. 
\end{proof}

Define the para-material derivative $T_{D_t} := \partial_t + T_b\partial_\alpha$, it is the paradifferential version of the material derivative $D_t : = \partial_t + b\partial_\alpha$.
In the last part of this section, we express the para-material derivatives of $\W$ and $R$ as the sum of leading terms of order $\frac{3}{2}$, sub-leading terms of order $\frac{1}{2}$ and remaining perturbative terms.

\begin{lemma}
The leading term of the para-material derivative of $\W$ is given by
\begin{equation}
       T_{D_t}\W  = -\partial_\alpha T_{(1+\W)(1-\bar{\Y})}R + G,  \label{WParaMat}
\end{equation}
where for $s>\frac{1}{2}$, the source term $G$  satisfies the bound
\begin{equation*}
    \|G\|_{H^s} \lesssim_\CalAZ \CalAT \left(\|(\W,R)\|_{\mathcal{H}^{s-\frac{1}{2}}}+ \|Z-\alpha \|_{H^{s+1}}\right).
\end{equation*}
\end{lemma}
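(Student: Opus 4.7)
The plan is to start from the first equation in \eqref{e:WW}, write
\[
T_{D_t}\W \;=\; (\partial_t + b\partial_\alpha)\W + (T_b - b)\W_\alpha \;=\; (\partial_t + b\partial_\alpha)\W - T_{\W_\alpha}b - \Pi(b,\W_\alpha),
\]
substitute the $\W$ equation, and use $\tfrac{1+\W}{1+\bar{\W}} = (1+\W)(1-\bar{\Y})$ to obtain
\[
T_{D_t}\W = -(1+\W)(1-\bar{\Y})R_\alpha + (1+\W)M - \sum_{j=1}^N \tfrac{\lambda_j i}{2\pi}\tfrac{1+\bar{\W}}{(\overline{Z-z_j})^2} - T_{\W_\alpha}b - \Pi(b,\W_\alpha).
\]

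Next I would paraproduct-expand the first term as $T_{(1+\W)(1-\bar{\Y})}R_\alpha + T_{R_\alpha}[(1+\W)(1-\bar{\Y})] + \Pi((1+\W)(1-\bar{\Y}),R_\alpha)$ and use the Weyl-quantization identity $T_a\partial_\alpha = \partial_\alpha T_a - T_{\partial_\alpha a}$ to convert $T_{(1+\W)(1-\bar{\Y})}R_\alpha$ into $\partial_\alpha T_{(1+\W)(1-\bar{\Y})}R - T_{\partial_\alpha[(1+\W)(1-\bar{\Y})]}R$. This produces the advertised principal term and leaves $G$ as a concrete list of perturbative contributions: the three commutator pieces $T_{\partial_\alpha[(1+\W)(1-\bar{\Y})]}R$, $T_{R_\alpha}[(1+\W)(1-\bar{\Y})]$, $\Pi((1+\W)(1-\bar{\Y}),R_\alpha)$ from the $R_\alpha$ expansion; the transport leftovers $T_{\W_\alpha}b$ and $\Pi(b,\W_\alpha)$; and the two source contributions $(1+\W)M$ and the point-vortex sum.

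Each piece is then estimated in $H^s$ by placing one factor in a Zygmund space sitting inside $\CalAT$ and the other in a Sobolev space at the claimed scale ($H^s$ for $\W$, $H^{s-\frac{1}{2}}$ for $R$, $H^{s+1}$ for $Z-\alpha$), and applying the paraproduct and paradifferential calculus of Lemma \ref{t:ParaProductEst}. For the $R$-flavored pieces, the half-derivative deficit is closed using \eqref{HsCmStar} with $m = -\frac{1}{2}$, placing the low factor in $C^{\frac{1}{2}+\epsilon}_*$ (which is how $\W\in C^{\frac{3}{2}+\epsilon}_*\subset \CalAT$ enters in $T_{\partial_\alpha[(1+\W)(1-\bar{\Y})]}R$); the Moser estimates \eqref{MoserOne}--\eqref{MoserTwo} together with \eqref{YBounds} provide the control on $(1+\W)(1-\bar{\Y})$. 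The transport remainders use \eqref{HsCmStar}, \eqref{HCHEstimate} and the $b$-bound \eqref{bEst}, noting that $\|b\|_{H^{s-\frac{1}{2}}}$ is already dominated by the right-hand side. The $(1+\W)M$ piece is handled by unfolding $M$ via \eqref{MDef} and \eqref{bDef} and reusing the same paraproduct estimates plus \eqref{HsProduct}. Finally the point-vortex sum is split with $\nP + \bar{\nP}$: the $\bar{\nP}$ projection is controlled by the conjugate version of \eqref{nPZsquare} and so feeds the $\|Z-\alpha\|_{H^{s+1}}$ budget, while the $\nP$ projection is the explicit smooth rational $\frac{1}{\overline{h_z(c_j)}^{2}(\alpha-\overline{c_j})^{2}}$ from \eqref{projection2}, whose $H^s$ norm is a $\CalAZ$-bounded constant times $|\lambda_j|\leq \CalAT$.

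The main subtlety I expect is the half-derivative bookkeeping just described: because $R$ is available only in $H^{s-\frac{1}{2}}$, every term pairing $R$ with a low-frequency factor forces the companion into $C^{\frac{1}{2}+\epsilon}_*$ rather than merely $L^\infty$, and one must verify that the resulting Zygmund norm is absorbed by $\CalAT$ rather than leaking into a Sobolev factor that is not present on the right-hand side. This is precisely why the Zygmund norm $\CalAT$ (and not $\CalAO$) is the correct high-regularity control, and why the $C^{\frac{3}{2}+\epsilon}_*$-budget on $\W$ is needed even though only $H^s$-information on $\W$ appears on the right.
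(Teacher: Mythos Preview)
There is a genuine gap in your estimation of the ``$R$-flavored'' commutator pieces. You invoke \eqref{HsCmStar} with $m=-\tfrac12$, but that estimate is stated only for $m>0$; there is no inequality of the form $\|T_a R\|_{H^s}\lesssim\|a\|_{C^{1/2}_*}\|R\|_{H^{s-1/2}}$ for a low--high paraproduct, since the output frequency is pinned to that of $R$. Consequently your commutator term $T_{\partial_\alpha[(1+\W)(1-\bar\Y)]}R$ is an order-zero operator on $R$ and cannot be placed in $H^s$ using only $\|R\|_{H^{s-1/2}}$. The same obstruction hits the $T_{\bar\Y_\alpha}R$ piece hidden inside $(1+\W)M$ and the $T_{\W_\alpha}T_{1-\bar\Y}R$ piece inside $-T_{\W_\alpha}b$: individually none of these is perturbative at the claimed regularity.

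What actually happens is that these pieces combine and the half-derivative gain comes from the composition estimate \eqref{CompositionPara}, not from \eqref{HsCmStar}. The paper organizes the computation to make this transparent: it first rewrites $D_t\W=\bar R_\alpha-(1+\W)b_\alpha+\partial_\alpha(v^{rot}\circ Z)$, applies $\nP$ (killing $\bar R_\alpha$), and then expands $\nP b$ so that the two $R$-high-frequency contributions come out as $-T_{1+\W}\partial_\alpha T_{1-\bar\Y}R$ and $-T_{\W_\alpha}T_{1-\bar\Y}R$. These combine \emph{exactly} into $-\partial_\alpha T_{1+\W}T_{1-\bar\Y}R$, and a single application of \eqref{CompositionPara} with $\rho=\tfrac32$ (using $\W,\Y\in C^{3/2+\epsilon}_*$) gives
\[
-\partial_\alpha T_{1+\W}T_{1-\bar\Y}R=-\partial_\alpha T_{(1+\W)(1-\bar\Y)}R-\partial_\alpha(T_{\W\bar\Y}-T_\W T_{\bar\Y})R,
\]
where the error operator is of order $-\tfrac32$, so after $\partial_\alpha$ it costs only $\|R\|_{H^{s-1/2}}$. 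Your decomposition can be repaired along the same lines---the bad pieces do cancel---but you must invoke \eqref{CompositionPara} to see it; the shortcut through \eqref{HsCmStar} with negative $m$ does not exist.
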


\begin{proof}
 We rewrite the first equation of \eqref{e:WW} as
\begin{equation} \label{WMatDerivative}
D_t\W = \bar{R}_\alpha - (1+\W)b_\alpha + \partial_\alpha(v^{rot}\circ Z). 
\end{equation}
Writing $\W$ using the paraproduct decomposition and applying the holomorphic projection $\nP$, we obtain the holomorphic paradifferential equation
\begin{equation*}
 T_{D_t}\W = -T_{1+\W}\nP b_\alpha - T_{\W_\alpha} \nP b -T_{b_\alpha}\W -\nP \Pi(b_\alpha, \W) -\nP \Pi(\W_\alpha, b) + \nP\partial_\alpha(v^{rot}\circ Z),
\end{equation*}
where we use the fact that $\nP$ commutes with $T_{D_t}$.

For the first term on the right-hand side of $T_{D_t}\W$, 
\begin{align*}
 &-T_{1+\W}\nP b_\alpha = -T_{1+\W}\nP\partial_\alpha[(1-\bar{\Y})(R+\overline{v^{rot} \circ Z})] \\
 =& - T_{1+\W}\partial_\alpha T_{1-\bar{\Y}}R - T_{1+\W}\partial_\alpha T_{1-\bar{\Y}}\nP(\overline{v^{rot} \circ Z})
 +T_{1+\W}\nP \partial_\alpha \Pi(R+\overline{v^{rot} \circ Z}, \bar{\Y}).
\end{align*}
The first term $- T_{1+\W}\partial_\alpha T_{1-\bar{\Y}}R$ is one of the leading terms of $T_{D_t}\W$. 
For the other three terms, we use inequalities \eqref{HsLinfty}, \eqref{HCHEstimate}, and bounds \eqref{PvrotZHsCs}, \eqref{YBounds}, \eqref{bEst},
\begin{align*}
& \|T_{1+\W}\partial_\alpha T_{1-\bar{\Y}}\nP(\overline{v^{rot} \circ Z}) \|_{H^s}\\
\lesssim& (1+\|\W\|_{C^\epsilon_{*}})(1+\|\Y\|_{C^\epsilon_{*}})\|\nP(\overline{v^{rot} \circ Z}) \|_{H^s} \lesssim_\CalAZ \CalAT\|Z-\alpha \|_{H^s},\\
& \|T_{1+\W}\nP\partial_\alpha \Pi(R+\overline{v^{rot} \circ Z}, \bar{\Y}) \|_{H^s} \lesssim_\CalAZ \|\Y\|_{C^{\frac{3}{2}+\epsilon}_{*}}\|R\|_{H^{s-\frac{1}{2}}}+ \| \Y\|_{C^{\epsilon}_*}\|v^{rot} \circ Z\|_{H^{s+1}}\\
 \lesssim&_\CalAZ \|\Y\|_{C^{\frac{3}{2}+\epsilon}_{*}} \left(\|R\|_{H^{s-\frac{1}{2}}} +\|Z-\alpha \|_{H^{s+1}}\right) + \sum_{j=1}^N|\lambda_j|\|\Y\|_{H^s} \\
 \lesssim&_\CalAZ \CalAT \left(\|(\W, R)\|_{H^{s-\frac{1}{2}}}+\|Z-\alpha\|_{H^{s+1}}\right),
\end{align*}
so that these terms can be put into $G$.
For the second term on the right-hand side of $T_{D_t}\W$, 
\begin{align*}
 &-T_{\W_\alpha}\nP b = -T_{\W_\alpha}\nP[(1-\bar{\Y})(R+\overline{v^{rot} \circ Z})]\\
=& - T_{\W_\alpha} T_{1-\bar{\Y}}R - T_{\W_\alpha} T_{1-\bar{\Y}}\nP(\overline{v^{rot} \circ Z})
 +T_{\W_\alpha}\nP \Pi(R+\overline{v^{rot} \circ Z}, \bar{\Y}).
\end{align*}
We add the first terms of $-T_{\W_\alpha}\nP b$ and $-T_{1+\W}\nP b_\alpha$ together,
\begin{equation*}
- T_{1+\W}\partial_\alpha T_{1-\bar{\Y}}R - T_{\W_\alpha} T_{1-\bar{\Y}}R = -\partial_\alpha T_{(1+\W)(1-\bar{\Y})}R - \partial_\alpha\left(T_{\W\bar{\Y}} - T_{\W}T_{\bar{\Y}}\right)R. 
\end{equation*}
Using the composition for the paradifferential operator \eqref{CompositionPara}, 
\begin{equation*}
\left\|\partial_\alpha\left(T_{\W\bar{\Y}} - T_{\W}T_{\bar{\Y}}\right)R \right\|_{H^s} \lesssim \left(\|\W\|_{C^\epsilon_{*}}\|\Y\|_{C^{\frac{3}{2}}_{*}}+ \|\Y\|_{C^\epsilon_{*}}\|\W\|_{C^{\frac{3}{2}}_{*}}\right) \|R\|_{H^{s-\frac{1}{2}}} \lesssim \CalAZ \CalAT \|R\|_{H^{s-\frac{1}{2}}}.
\end{equation*}
For the next two terms of $-T_{\W_\alpha}\nP b$, we estimate
\begin{align*}
 &\|T_{\W_\alpha} T_{1-\bar{\Y}}\nP(\overline{v^{rot} \circ Z}) \|_{H^s} \lesssim_\CalAZ \| \W\|_{C^1_{*}}\|\nP(\overline{v^{rot} \circ Z}) \|_{H^s} \lesssim_\CalAZ \CalAT \|Z-\alpha\|_{H^{s+1}}, \\
 &\|T_{\W_\alpha}\nP \Pi(R+\overline{v^{rot} \circ Z}, \bar{\Y}) \|_{H^s} \lesssim_\CalAZ \| \W\|_{C^{\epsilon}_{*}}(\|R\|_{H^{s-\frac{1}{2}}}\| \Y\|_{C^{\frac{3}{2}+\epsilon}_{*}}+ \|\Y\|_{H^s}\| \bar{\nP}(v^{rot}\circ Z)\|_{C^1_{*}})\\
 &+ \| \W\|_{C^{\epsilon}_{*}}\|\Y\|_{H^s}\| \nP(v^{rot}\circ Z)\|_{C^1_{*}} \lesssim \CalAZ \CalAT \| (\W, R)\|_{\H^{s-\frac{1}{2}}}.
\end{align*}

For other terms in $T_{D_t}\W$, they are perturbative, since
\begin{align*}
 &\|T_{b_\alpha}\W \|_{H^s} + \|\nP \Pi(b_\alpha, \W) \|_{H^s} + \|\nP\Pi(\W_\alpha, b) \|_{H^s} \lesssim \| b\|_{C^1_{*}}\|\W\|_{H^s}\lesssim_\CalAZ \CalAT \|\W\|_{H^s}, \\
 &\|\nP\partial_\alpha(v^{rot}\circ Z) \|_{H^s} \lesssim \|Z-\alpha \|_{H^{s+1}}.
\end{align*}
Putting all the perturbative terms into $G$, we obtain the leading term of $T_{D_t}\W$ \eqref{WParaMat}.
\end{proof}
We remark here the source term $G$ in $\eqref{WParaMat}$ also satisfies the Zygmund estimate:
\begin{equation}
    \|G\|_{C_*^s} \lesssim_\CalAZ \CalAT \left(\|(\W,R)\|_{C^s_{*}\times C_{*}^{s-\frac{1}{2}}}+ \|Z-\alpha \|_{C_{*}^{s+1}}\right), \quad \forall s>\frac{1}{2}. \label{GZygmund}
\end{equation}
The proof of this estimate is similar to the proof of the Sobolev bound; one just needs to replace the corresponding Sobolev estimate by the corresponding Zygmund estimate.

For $T_{D_t}R$, we write the second equation of \eqref{e:WW} as a paradifferential equation and apply the projection $\nP$, then we get the holomorphic paradifferential equation 
\begin{equation}  \label{ParaRExpression}
\begin{aligned}
 T_{D_t} R =& -\nP D_t \overline{v^{rot}\circ Z} -\nP T_{R_\alpha}b -\nP\Pi(R_\alpha, b)+i\nP[(g+a)\Y] \\
 -& \nP[(R+ \overline{v^{rot}\circ Z})\partial_\alpha(\bar{R}+  v^{rot} \circ Z)]+ \frac{ 2\sigma}{1+\W}\mathbf{P}\Im \partial_\alpha\left[ \frac{\W_{ \alpha}}{J^{\frac{1}{2}}(1+\W)}\right]. 
 \end{aligned}
\end{equation}

The first term on the right-hand side of \eqref{ParaRExpression} is the material derivative of the rotational part of the complex velocity on the surface.
We show that this term is perturbative.

\begin{lemma}
Let $s>0$, then the term $\nP D_t \overline{v^{rot}\circ Z}$ is perturbative in the sense that
\begin{equation} \label{nPMatvrotZ}
 \|\nP D_t \overline{v^{rot}\circ Z}\|_{H^s} \lesssim \CalAO\left(\| R\|_{H^s}+\|Z-\alpha \|_{H^s}+ \sum_{j=1}^N |\lambda_j|\right).
\end{equation}
\end{lemma}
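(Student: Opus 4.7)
The plan is to compute $D_t \overline{v^{rot}\circ Z}$ explicitly via the chain rule, substitute in from the evolution equations already derived, and then bound the resulting finite sum in $H^s$ by the product rule \eqref{HsProduct} together with the decay estimate \eqref{LkHsEstimate} (supplemented by Moser-type estimates \eqref{MoserOne} when needed). Since $\nP$ is bounded on $H^s$, it is enough to estimate $D_t \overline{v^{rot}\circ Z}$ itself in $H^s$.

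Writing $\overline{v^{rot}\circ Z} = -\sum_j \frac{\lambda_j i}{2\pi}(Z-z_j)^{-1}$ and using $D_t z_j = \dot z_j$ together with $D_t Z = \bar R + v^{rot}\circ Z$ from \eqref{ZEqnTwo}, the chain rule gives
\begin{equation*}
D_t \overline{v^{rot}\circ Z} \;=\; \sum_{j=1}^N \frac{\lambda_j i}{2\pi}\,\frac{D_t Z - \dot z_j}{(Z-z_j)^2}.
\end{equation*}
Eliminating $\dot z_j$ with the Helmholtz--Kirchhoff formula \eqref{Kirchhoff} splits the numerator as
\begin{equation*}
D_t Z(\alpha)-\dot z_j \;=\; \bigl[\bar R(\alpha)-\mathcal{U}(t,z_j)\bigr] + v_j^{rot}\circ Z(\alpha) + \sum_{k\neq j}\bigl[v_k^{rot}\circ Z(\alpha)-v_k^{rot}(z_j)\bigr].
\end{equation*}

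Each of these three pieces is estimated separately. For the first, the maximum principle used in \eqref{zjtimedot} gives $|\mathcal{U}(t,z_j)|\leq\|R\|_{L^\infty}\lesssim\CalAO$, so the product rule \eqref{HsProduct}, combined with $|\lambda_j|\lesssim\CalAO$, the pointwise bound $|(Z-z_j)^{-2}|\lesssim d_S(t)^{-2}$, and the Sobolev estimate \eqref{LkHsEstimate} for $(Z-z_j)^{-2}$, yields a contribution of size $\CalAO\,\|R\|_{H^s}$. The far-vortex pieces $v_k^{rot}\circ Z(\alpha)-v_k^{rot}(z_j)$ are smooth because both $Z(\alpha)$ and $z_j$ stay away from $z_k$; writing $v_k^{rot}\circ Z-v_k^{rot}(z_j)$ as a Moser-type difference, using $\max_{k\neq j}|\lambda_k|/|z_j-z_k|\lesssim\CalAO$, and estimating $1/\overline{Z-z_k}$ by \eqref{MoserOne} brings in $\|Z-\alpha\|_{H^s}$, so this contribution is controlled by $\CalAO(\|Z-\alpha\|_{H^s}+\sum_k|\lambda_k|)$. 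The remaining self-interaction term is
\begin{equation*}
\frac{\lambda_j\, v_j^{rot}\circ Z(\alpha)}{(Z-z_j)^2} \;=\; \frac{i\lambda_j^{2}}{2\pi}\,\frac{1}{(Z-z_j)^{2}\,\overline{(Z-z_j)}},
\end{equation*}
which is bounded in $H^s$ by \eqref{LkHsEstimate} and Moser's inequality; since $|\lambda_j|^{2}\leq\bigl(\sum_k|\lambda_k|\bigr)\,\CalAO$, it absorbs cleanly into $\CalAO\sum_k|\lambda_k|$. The main obstacle is not analytic but combinatorial: one has to orchestrate the $\lambda$-weights and the two factors of $(Z-z_j)^{-1}$ so that each term on the right-hand side of \eqref{nPMatvrotZ} ends up carrying exactly one power of $\CalAO$ and one factor from $\|R\|_{H^s}+\|Z-\alpha\|_{H^s}+\sum_j|\lambda_j|$.
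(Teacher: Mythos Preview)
Your chain-rule computation of $D_t\overline{v^{rot}\circ Z}$ and the splitting of the numerator via \eqref{Kirchhoff} are correct and match the paper's starting point. However, there is a genuine gap in the estimates.

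\textbf{The gap.} You plan to bound the full quantity in $H^s$ and then apply $\nP$ afterwards, invoking \eqref{LkHsEstimate} to control $\|(Z-z_j)^{-2}\|_{H^s}$. But \eqref{LkHsEstimate} is stated only for $s>\tfrac32$ and under the hypothesis $d_S(t)\gtrsim 1$; neither assumption is available here, where the lemma is claimed for all $s>0$ with no lower bound on $d_S$. So as written the proof does not cover the range $0<s\le\tfrac32$ and introduces an unquantified $d_S$-dependence. (Moser \eqref{MoserOne} does not directly rescue $\|(Z-z_j)^{-2}\|_{H^s}$ either, since that function does not vanish at $Z=\alpha$.)

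\textbf{What the paper does differently.} Rather than bounding the whole product and then projecting, the paper writes the paraproduct decomposition of $f\,(Z-z_j)^{-2}$ with $f=\bar R+v^{rot}\circ Z-\dot z_j$ and pushes $\nP$ through \emph{before} estimating. This pays off twice. In the low--high piece $T_f\,\nP(Z-z_j)^{-2}$ the projected factor is controlled by \eqref{nPZsquare}, which holds for every $s>0$ and gives $\|Z-\alpha\|_{H^s}$ directly. In the high--low piece $T_{(Z-z_j)^{-2}}\nP f$, the projection kills $\bar R$ and the constant $\dot z_j$, leaving only $\nP(v^{rot}\circ Z)$, which by \eqref{barPvRot} is bounded by $\sum_j|\lambda_j|$. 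The balanced piece uses $\|(Z-z_j)^{-2}\|_{C^\epsilon_*}$ as a coefficient, which is a pointwise (not Sobolev) quantity. In short, the paper never needs an $H^s$ bound on the unprojected $(Z-z_j)^{-2}$, and that is precisely why it works uniformly in $s>0$.

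\textbf{How to repair your argument.} If you want to keep the product-rule route, replace the appeal to \eqref{LkHsEstimate} by splitting $(Z-z_j)^{-2}$ into its antiholomorphic part \eqref{projection2} (an explicit smooth $L^2$ function, in $H^s$ for all $s$ with constant depending on $c_j$) and its holomorphic part, controlled by \eqref{nPZsquare}. That removes the $s>\tfrac32$ restriction. Even so, you will find the paraproduct route cleaner, because the cancellation $\nP\bar R=0$ is what lets you avoid pairing $\|R\|_{H^s}$ against the $L^\infty$ norm of $(Z-z_j)^{-2}$.
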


\begin{proof}
Using the equation for $D_t Z$ \eqref{ZEqnTwo},
\begin{equation*}
D_t \overline{v^{rot}\circ Z} = \sum_{j=1}^N \frac{\lambda_j i}{2\pi}\frac{\bar{R}+v^{rot}\circ Z-\frac{d}{dt}z_j}{(Z-z_j)^2}.
\end{equation*}

We rewrite using the paraproduct decomposition,
\begin{align*}
  \nP D_t \overline{v^{rot}\circ Z} &= \sum_{j=1}^N \frac{\lambda_j i}{2\pi}\Big(T_{\bar{R}+v^{rot}\circ Z - \frac{d}{dt}z_j}\nP (Z-z_j)^{-2} \\
  &+T_{(Z-z_j)^{-2}}\nP (v^{rot}\circ Z) + \nP\Pi\big(\bar{R}+ v^{rot}\circ Z, (Z-z_j)^{-2}\big)\Big).
\end{align*}
Using the projection results \eqref{projection2}, \eqref{barPvRot}, \eqref{PvrotZHsCs}, \eqref{nPZsquare}, and the bounds for $z_j$ \eqref{zjtimedot}, we estimate
\begin{align*}
 &\|T_{\bar{R}+v^{rot}\circ Z - \frac{d}{dt}z_j}\nP (Z-z_j)^{-2} \|_{H^s} \\
 \lesssim& \left(\|R\|_{L^\infty} + \|(\nP+ \bar{\nP}) (v^{rot}\circ Z)\|_{C^\epsilon_{*}}+ \left|\frac{d} {dt}z_j\right|\right)\|\nP (Z- z_j)^{-2} \|_{H^s} \lesssim \CalAO \|Z-\alpha\|_{H^s},\\
 &\|T_{(Z-z_j)^{-2}}\nP (v^{rot}\circ Z) \|_{H^s} \lesssim \|(\nP+\bar{\nP} )(Z-z_j)^{-2} \|_{C^\epsilon_*}\|\nP (v^{rot}\circ Z)\|_{H^s}\lesssim  \CalAO \sum_{j=1}^N |\lambda_j|,\\
 & \|\nP\Pi\big(\bar{R}+ v^{rot}\circ Z, (Z-z_j)^{-2}\big) \|_{H^s} \lesssim (\|\bar{R} \|_{H^s}+ \|(\nP + \bar{\nP})(v^{rot}\circ Z) \|_{H^s})\|(Z-z_j)^{-2} \|_{C^\epsilon_{*}}\\
 \lesssim & \CalAO \left(\| R\|_{H^s}+\|Z-\alpha \|_{H^s}+ \sum_{j=1}^N |\lambda_j|\right).
\end{align*}
This concludes the proof of the estimate \eqref{nPMatvrotZ}.
\end{proof}

For the next four terms on the right-hand side of \eqref{ParaRExpression}, we have the following result.
\begin{lemma}
Let $s>0$, then one can write
\begin{equation}
  -\nP T_{R_\alpha}b -\nP\Pi(R_\alpha, b)+i\nP[(g+a)\Y] -\nP[(R+ \overline{v^{rot}\circ Z})\partial_\alpha(\bar{R}+  v^{rot} \circ Z)] 
=  K,\label{RNonCapillary}
\end{equation}
where the remainder term $K$ satisfies
\begin{equation*}
\| K\|_{H^{s}}\lesssim_\CalAO \CalAT \left(\sum_{j=1}^N |\lambda_j|+\|(\W, R)\|_{\H^s}+ \|Z-\alpha \|_{H^{s+1}}\right).
\end{equation*}
\end{lemma}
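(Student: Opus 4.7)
The plan is to estimate each of the four terms on the left-hand side of \eqref{RNonCapillary} separately and show that each fits inside the bound claimed for $K$. The tools throughout are the paraproduct estimates of Lemma \ref{t:ParaProductEst}, the bounds \eqref{bEst} for $b$ and \eqref{ACHalf}--\eqref{AHsEst} for $a$, the Moser bound \eqref{YBounds} relating $\Y$ to $\W$, and the decomposition of $v^{rot}\circ Z$ into its smooth holomorphic projection (of size $\sum|\lambda_j|$) and its antiholomorphic part, which \eqref{PvrotZHsCs} controls by $\|Z-\alpha\|$.

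First I would handle the two $R_\alpha b$ terms. For the paraproduct piece $\nP T_{R_\alpha}b$, apply \eqref{HsLinfty} to bound by $\|R_\alpha\|_{L^\infty}\|b\|_{H^s}$, and for the balanced piece $\nP\Pi(R_\alpha,b)$ use \eqref{HCHEstimate} to obtain essentially the same quantity. Since $\|R_\alpha\|_{L^\infty}\leq\|R\|_{C^{1+\epsilon}_*}\lesssim\CalAT$ and \eqref{bEst} gives $\|b\|_{H^s}\lesssim_\CalAZ\|R\|_{H^s}+\|Z-\alpha\|_{H^s}$, both contributions land inside the target. Next, for $i\nP[(g+a)\Y]$, the piece $g\Y$ is handled by \eqref{HsProduct} combined with \eqref{YBounds}, which produces $\|g\Y\|_{H^s}\lesssim_\CalAZ g\|\W\|_{H^s}$, perturbative with respect to $\|(\W,R)\|_{\H^s}$. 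For $a\Y$, I decompose paradifferentially as $T_a\Y+T_\Y a+\nP\Pi(a,\Y)$ and estimate: \eqref{HsLinfty} with $\|a\|_{L^\infty}\lesssim\|a\|_{C^{\epsilon}_*}\lesssim_\CalAZ\CalAO^2$ via \eqref{ACHalf} controls $T_a\Y$; \eqref{HsLinfty} combined with \eqref{AHsEst} controls $T_\Y a$; and \eqref{HCHEstimate} handles the balanced remainder, each time absorbing one factor into $\CalAT$ and leaving the other as one of $\|(\W,R)\|_{\H^s}$, $\|Z-\alpha\|_{H^s}$, or $\sum|\lambda_j|$.

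The fourth term is structurally identical to the frequency-shift $a$ itself defined in \eqref{FrequencyShit}, so I would reuse the paraproduct decomposition developed in the proof of \eqref{AHsEst}. Expand the product into its four natural pieces $\bar R R_\alpha$, $\bar R\,\partial_\alpha(v^{rot}\circ Z)$, $\overline{v^{rot}\circ Z}\cdot R_\alpha$, and $\overline{v^{rot}\circ Z}\cdot\partial_\alpha(v^{rot}\circ Z)$; for each, further split each factor $v^{rot}\circ Z$ into $\nP(v^{rot}\circ Z)$ (uniformly smooth, bounded by $\sum|\lambda_j|$) and $\bar\nP(v^{rot}\circ Z)$ (controlled in $H^s$ or $C^s_*$ by $\|Z-\alpha\|$ via \eqref{PvrotZHsCs}); then write each product as the sum of a low-high paraproduct and a balanced paraproduct, applying the relevant estimate from Lemma \ref{t:ParaProductEst} with one factor placed in a Zygmund norm dominated by $\CalAT$ and the other in $H^s$ producing the required base quantity.

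The main obstacle will be bookkeeping: ensuring in each of the many paraproducts that regularity is distributed correctly so that the high-regularity factor lands under $\CalAT$ while the low-regularity factor produces exactly $\|(\W,R)\|_{\H^s}+\|Z-\alpha\|_{H^{s+1}}+\sum|\lambda_j|$. The delicate subcase is the pure $\overline{v^{rot}\circ Z}\cdot\partial_\alpha(v^{rot}\circ Z)$ term, where one must carefully exploit the splitting into holomorphic and antiholomorphic projections to avoid any derivative loss and to convert the factor $\sum|\lambda_j|$ coming from $\nP(v^{rot}\circ Z)$ into a contribution that sits inside the allowed base norm.
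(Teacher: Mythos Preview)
Your proposal is correct and follows essentially the same approach as the paper's proof: handle $T_{R_\alpha}b$ and $\Pi(R_\alpha,b)$ via \eqref{HsLinfty}, \eqref{HCHEstimate} and \eqref{bEst}; handle $a\Y$ via the product/paraproduct estimates together with \eqref{ACHalf}--\eqref{AHsEst}; and expand the last term into its four pieces, treating each with the paraproduct decomposition and the $\nP/\bar\nP$ splitting of $v^{rot}\circ Z$. The only slip is cosmetic: in listing those four pieces you swapped $R\leftrightarrow\bar R$ (the correct expansion of $(R+\overline{v^{rot}\circ Z})\partial_\alpha(\bar R+v^{rot}\circ Z)$ gives $R\bar R_\alpha$, $R\,\partial_\alpha(v^{rot}\circ Z)$, $\overline{v^{rot}\circ Z}\,\bar R_\alpha$, $\overline{v^{rot}\circ Z}\,\partial_\alpha(v^{rot}\circ Z)$), but since the relevant norms are conjugation-invariant and the holomorphic projection still kills one low--high piece in each case, this does not affect the argument.
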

\begin{proof}
We first show that 
\begin{equation*}
  i\nP[(g+a)\Y] = ig\Y+ i\nP[a \Y] + K = K.  
  \end{equation*}
Using the product estimate \eqref{HsProduct} and estimates for $a$ \eqref{ACHalf}, \eqref{AHsEst},
\begin{equation*}
\|\nP[a \Y] \|_{H^{s}}\lesssim \| a\|_{C_{*}^{\epsilon}} \|\Y\|_{H^s} + \| a\|_{H^{s}}\|\Y\|_{L^\infty} \lesssim_\CalAO \CalAT \left(\sum_{j=1}^N |\lambda_j|+\|(\W, R)\|_{\H^s}+ \|Z-\alpha \|_{H^s}\right).
\end{equation*}
Then, we show that the rest terms on the left of \eqref{RNonCapillary} are perturbative and can be put into $K$.
Using \eqref{HCHEstimate}, \eqref{HsLinfty} and the estimate for $b$ \eqref{bEst},
\begin{align*}
&\|\nP T_{R_\alpha} b \|_{H^{s}}+ \|\nP \Pi(R_\alpha , b) \|_{H^{s}} \lesssim \|R_\alpha\|_{C^{\epsilon}_{*}}\|b\|_{H^{s}} \lesssim_\CalAZ \|R\|_{C^{1+\epsilon}_{*}} (\|R\|_{H^{s}}+ \|Z-\alpha\|_{H^s}), \\
& \|\nP[R\bar{R}_\alpha] \|_{H^{s}} \leq \|T_{\bar{R}_\alpha} R\|_{H^{s}} + \|\nP\Pi(\bar{R}_\alpha, R) \|_{H^{s}}\lesssim \| R\|_{C^{1+\epsilon}_{*}}\|R\|_{H^{s}},\\
& \|\nP[\overline{v^{rot}\circ Z}\bar{R}_\alpha] \|_{H^s} \leq \|T_{\bar{R}_\alpha}\nP(\overline{v^{rot}\circ Z}) \|_{H^s} + \| \Pi(\bar{R}_\alpha, \nP(\overline{v^{rot}\circ Z})) \|_{H^s}\\
&\lesssim \|R\|_{C^{1+\epsilon}_{*}}\| \nP(\overline{v^{rot}\circ Z}) \|_{H^s} + \|R\|_{H^s} \| \bar{\nP}(\overline{v^{rot}\circ Z}) \|_{C^1_{*}},\\
&\|\nP[R (v^{rot}\circ Z)_\alpha] \|_{H^s} \leq \| T_R \nP(v^{rot}\circ Z)_\alpha\|_{H^s} + \|T_{(v^{rot}\circ Z)_\alpha}R \|_{H^s} + \|\nP\Pi(R, (v^{rot}\circ Z)_\alpha)\|_{H^s}\\
& \lesssim \|R\|_{C^\epsilon_{*}}\|Z-\alpha\|_{H^{s+1}} + \|R\|_{H^s}\left(\|Z-\alpha\|_{C^{1+\epsilon}_{*}}+\sum_{j=1}^N |\lambda_j| \right), \\
&\|\nP[\overline{v^{rot}\circ Z}(v^{rot}\circ Z)_\alpha] \|_{H^s} \leq \|T_{\overline{v^{rot}\circ Z}}\nP(v^{rot}\circ Z)_\alpha \|_{H^s} + \|T_{(v^{rot}\circ Z)_\alpha}\nP\overline{v^{rot}\circ Z} \|_{H^s}\\
&+ \|\nP\Pi[\overline{v^{rot}\circ Z},(v^{rot}\circ Z)_\alpha]  \|_{H^s}\lesssim \|Z-\alpha\|_{H^{s+1}}\left(\|Z-\alpha \|_{C^{1+\epsilon}_{*}} +\sum_{j=1}^N |\lambda_j| \right).
\end{align*}
These terms are all controlled by 
\begin{equation*}
 \CalAT \left(\|(\W, R)\|_{\H^s}+ \|Z-\alpha \|_{H^{s+1}}+\sum_{j=1}^N |\lambda_j|\right),    
\end{equation*}
and can be absorbed into $K$. 
The proof of \eqref{RNonCapillary} is complete.
\end{proof}

For the last term on the right-hand side of \eqref{ParaRExpression}, we have the following lemma in \cite{MR4891579}:
\begin{lemma}[\hspace{1sp}\cite{MR4891579}]
 Let $s>0$, then
 \begin{equation}  \label{CurvaturePara}
   \frac{ 2\sigma}{1+\W}\mathbf{P}\Im \partial_\alpha\left[ \frac{\W_{ \alpha}}{J^{\frac{1}{2}}(1+\W)}\right] 
   = -i\sigma T_{J^{-\frac{1}{2}}(1-\Y)^2}\W_{\alpha \alpha}+ 3i\sigma T_{J^{-\frac{1}{2}}(1-\Y)^3 \W_\alpha} \W_\alpha + K,  
 \end{equation}
 where the remainder term $K$ satisfies
\begin{equation*}
\| K\|_{H^{s}}\lesssim_\CalAO \CalAT \|\W\|_{H^{s+\frac{1}{2}}}.
\end{equation*}
\end{lemma}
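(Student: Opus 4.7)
The plan is paralinearization via Bony's decomposition. Set $c := J^{-\frac{1}{2}}(1-\Y) = (J^{1/2}(1+\W))^{-1}$ so that the bracket is $c\W_\alpha$ and the outer prefactor $\frac{2}{1+\W}$ equals $2(1-\Y)$. The formula $c\W_\alpha = T_c\W_\alpha + T_{\W_\alpha}c + \Pi(c,\W_\alpha)$ and $\partial_\alpha$-differentiation yield
\begin{equation*}
\partial_\alpha[c\W_\alpha] = T_c\W_{\alpha\alpha} + T_{\partial_\alpha c}\W_\alpha + T_{\W_{\alpha\alpha}}c + T_{\W_\alpha}\partial_\alpha c + \partial_\alpha\Pi(c,\W_\alpha).
\end{equation*}
The first two terms will produce the leading and sub-leading paradifferential outputs; the last three are perturbative.

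For the leading term, holomorphicity of $\W_{\alpha\alpha}$ forces $T_c\W_{\alpha\alpha}$ to be holomorphic up to a lower-order remainder coming from the antiholomorphic part of $c$ (via the paradifferential cutoff localizing the output frequency near the negative input frequency). On holomorphic $u$ one has $\mathbf{P}\Im u = -\frac{i}{2}u$, hence $\mathbf{P}\Im[T_c\W_{\alpha\alpha}] = -\frac{i}{2}T_c\W_{\alpha\alpha}$ modulo a remainder. Multiplying by $2\sigma(1-\Y)$ and using the composition rule $T_{1-\Y}T_c = T_{J^{-1/2}(1-\Y)^2}+E$ from Lemma 2.2, with the remainder $E$ of lower order, yields the first target term $-i\sigma T_{J^{-\frac{1}{2}}(1-\Y)^2}\W_{\alpha\alpha}$. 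For the sub-leading contribution I compute explicitly using $\partial_\alpha(1-\Y) = -(1-\Y)^2\W_\alpha$ and the identity $J^{-3/2}(1+\bar{\W}) = J^{-1/2}(1-\Y)$,
\begin{equation*}
\partial_\alpha c = -\tfrac{3}{2}J^{-\frac{1}{2}}(1-\Y)^2\W_\alpha - \tfrac{1}{2}J^{-\frac{1}{2}}(1-\Y)(1-\bar{\Y})\bar{\W}_\alpha.
\end{equation*}
The key observation is that $T_{\partial_\alpha c}\W_\alpha$ and $T_{\W_\alpha}\partial_\alpha c$ contribute \emph{equally} at this order: in the latter, further paralinearizing the high-frequency symbol $\partial_\alpha c \approx -\frac{3}{2}T_{J^{-\frac{1}{2}}(1-\Y)^2}\W_\alpha + (\text{perturbative})$ and composing $T_{\W_\alpha}T_{J^{-\frac{1}{2}}(1-\Y)^2} = T_{J^{-\frac{1}{2}}(1-\Y)^2\W_\alpha}+E$ produces the same paraproduct structure. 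The combined contribution is $-3\,T_{J^{-\frac{1}{2}}(1-\Y)^2\W_\alpha}\W_\alpha$; applying $\mathbf{P}\Im = -\frac{i}{2}$ on the holomorphic output and multiplying by $2\sigma(1-\Y)$ with one more composition then yields the second target $3i\sigma T_{J^{-\frac{1}{2}}(1-\Y)^3\W_\alpha}\W_\alpha$. The $\bar{\W}_\alpha$ contribution from $\partial_\alpha c$ goes entirely into $K$.

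Everything else is absorbed into $K$ using Lemma 2.2 and Lemma 2.3. The balanced paraproduct is bounded via \eqref{HCHEstimate} by $\|\partial_\alpha\Pi(c,\W_\alpha)\|_{H^s}\lesssim \|c\|_{C^{3/2}_*}\|\W\|_{H^{s+\frac{1}{2}}}\lesssim_\CalAZ \CalAT\|\W\|_{H^{s+\frac{1}{2}}}$ after using Moser \eqref{MoserTwo} together with \eqref{YBounds} to control $c$. For $T_{\W_{\alpha\alpha}}c$, the estimate \eqref{HsHmCStar} with $m=\frac{3}{2}-s$ (and analogous treatments for $s\geq \frac{3}{2}$) gives $\|T_{\W_{\alpha\alpha}}c\|_{H^s}\lesssim \|\W\|_{H^{s+\frac{1}{2}}}\|c\|_{C^{3/2}_*}$; symbolic-calculus errors $E$ from each composition obey \eqref{CompositionPara} and also drop into the same bound. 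The main obstacle is rigorously justifying the two reductions: first, that $\mathbf{P}\Im$ collapses to $-\frac{i}{2}$ on paradifferential operators with holomorphic \emph{input} but non-holomorphic \emph{symbol}; second, that $T_{\W_\alpha}\partial_\alpha c$ can be re-expressed as a low-high paraproduct with the right symbol. Both reductions require splitting the relevant symbols into holomorphic and antiholomorphic Littlewood-Paley pieces and tracking the cutoff $\chi(\xi-\eta,\eta)$ explicitly, verifying that the $\mathbf{P}$-projection of each "wrong-sign" piece is spectrally localized and therefore a smoothing remainder absorbed into $K$.
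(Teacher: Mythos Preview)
The paper does not supply its own proof of this lemma; it is quoted from \cite{MR4891579}. Your overall strategy---Bony paralinearization of $c\W_\alpha$ with $c=J^{-1/2}(1-\Y)$, followed by symbolic composition to produce the two target terms---is the right one, and your identification of $\partial_\alpha c$ and the coefficient $3$ is correct.

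There is, however, a genuine gap. You assert that ``the $\bar{\W}_\alpha$ contribution from $\partial_\alpha c$ goes entirely into $K$.'' It does not, at least not by any direct paraproduct estimate. The offending term is
\[
-\tfrac{i}{2}\,T_{-\frac{1}{2}J^{-1/2}(1-\Y)(1-\bar{\Y})\bar{\W}_\alpha}\,\W_\alpha,
\]
arising from the $\bar{\W}_\alpha$-piece of $c_\alpha$ inside $\mathbf{P}\Im[T_{c_\alpha}\W_\alpha]$ once you replace $\mathbf{P}\Im$ by $-\tfrac{i}{2}$ on the holomorphic output. Since the symbol is merely in $L^\infty$ (bounded by $\CalAO$), the best you get from \eqref{HsLinfty} is control by $\|\W_\alpha\|_{H^s}=\|\W\|_{H^{s+1}}$, which is one half-derivative too many. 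This term is of the \emph{same} sub-leading order as the target $T_{J^{-1/2}(1-\Y)^3\W_\alpha}\W_\alpha$ and cannot be absorbed into $K$.

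What actually disposes of it is a cancellation you are not tracking. When you compute $\mathbf{P}\Im[T_{\W_\alpha}c_\alpha]$ honestly (rather than replacing $\mathbf{P}\Im$ by $-\tfrac{i}{2}$), the conjugate piece $-\tfrac{1}{2i}\mathbf{P}[T_{\bar{\W}_\alpha}\bar{c}_\alpha]$ survives; its holomorphic part picks out precisely the $\W_\alpha$-component of $\bar{c}_\alpha$, namely $-\tfrac{1}{2}J^{-1/2}(1-\Y)(1-\bar{\Y})\W_\alpha$, and after composition this yields $-\tfrac{i}{4}T_{J^{-1/2}(1-\Y)(1-\bar{\Y})\bar{\W}_\alpha}\W_\alpha$, exactly cancelling the $+\tfrac{i}{4}$ contribution above. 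Equivalently, if you expand $\Im$ \emph{before} paralinearizing, the identity $c_\alpha\W_\alpha-\bar{c}_\alpha\bar{\W}_\alpha = -\tfrac{3}{2}J^{-1/2}[(1-\Y)^2\W_\alpha^2-(1-\bar{\Y})^2\bar{\W}_\alpha^2]$ shows the cross term drops out algebraically. Either way, the reduction $\mathbf{P}\Im\mapsto -\tfrac{i}{2}$ must be applied with more care than your write-up indicates: it is valid term-by-term only after the mixed $\bar{\W}_\alpha$-contributions have been paired off.
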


Collecting the results \eqref{nPMatvrotZ}, \eqref{RNonCapillary}, and \eqref{CurvaturePara}, we obtain the expression for the para-material derivative of $R$:
\begin{lemma}
The para-material derivative of $R$ is given by
\begin{equation}
       T_{D_t}R  = -i\sigma T_{J^{-\frac{1}{2}}(1-\Y)^2}\W_{\alpha \alpha}+ 3i\sigma T_{J^{-\frac{1}{2}}(1-\Y)^3 \W_\alpha} \W_\alpha + K,  \label{RParaMat}
\end{equation}
where for $s>0$, the source term $K$  satisfies bounds
\begin{equation*}
    \|K\|_{H^s} \lesssim_\CalAO \CalAT \left(\|(\W,R)\|_{\mathcal{H}^{s}}+ \|Z-\alpha \|_{H^{s+1}}+ \sum_{j=1}^N |\lambda_j|\right).
\end{equation*}
\end{lemma}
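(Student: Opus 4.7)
The plan is to assemble this lemma from the three preceding results \eqref{nPMatvrotZ}, \eqref{RNonCapillary}, and \eqref{CurvaturePara} applied to the already-derived holomorphic paradifferential equation \eqref{ParaRExpression}. So I would start by writing down \eqref{ParaRExpression} and then process its six terms in three groups.

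First, for the material-derivative-of-vortex-velocity term $-\nP D_t\overline{v^{rot}\circ Z}$, I would directly cite \eqref{nPMatvrotZ}. The bound there has a factor $\CalAO$ rather than $\CalAT$ in front of the norms, but since $\CalAO \lesssim \CalAT$ by Zygmund embedding, this contribution fits into the claimed remainder $K$. Second, the four terms $-\nP T_{R_\alpha}b - \nP\Pi(R_\alpha,b) + i\nP[(g+a)\Y] - \nP[(R+\overline{v^{rot}\circ Z})\partial_\alpha(\bar R + v^{rot}\circ Z)]$ are exactly the left-hand side of \eqref{RNonCapillary}, so they collapse to another $K$-type remainder with precisely the right bound. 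Third, the capillary term $\frac{2\sigma}{1+\W}\mathbf{P}\Im\partial_\alpha\!\left[\frac{\W_\alpha}{J^{1/2}(1+\W)}\right]$ is expanded by \eqref{CurvaturePara}, which produces the two explicit leading terms $-i\sigma T_{J^{-1/2}(1-\Y)^2}\W_{\alpha\alpha} + 3i\sigma T_{J^{-1/2}(1-\Y)^3\W_\alpha}\W_\alpha$ appearing in \eqref{RParaMat}, together with yet another acceptable remainder.

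The remaining bookkeeping is to check that the three remainders combine into a single $K$ with the advertised bound. Each input estimate is of the form $\lesssim_{\CalAO}\CalAT \cdot\bigl(\|(\W,R)\|_{\H^s} + \|Z-\alpha\|_{H^{s+1}} + \sum_j|\lambda_j|\bigr)$, up to the loss noted above for \eqref{nPMatvrotZ}, and summing three such expressions preserves this form. Since $\Y = \W/(1+\W)$, the principal symbols in \eqref{RParaMat} are exactly those delivered by \eqref{CurvaturePara}, so no further manipulation is needed.

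I do not expect any genuine analytical obstacle in this assembly step: the nontrivial work — the paraproduct decomposition of the quadratic nonlinearity, the commutation of $\nP$ with $T_{D_t}$, the projection identities \eqref{projection1}–\eqref{barPvRot}, and the symbolic calculus estimates from Lemma~\ref{t:ParaProductEst} and Lemma~\ref{t:SymbolPara} — has already been absorbed into the proofs of the three input lemmas. The one point worth double-checking while writing is the consistency of the loss factor: verifying that the $\CalAO$ prefactor coming out of \eqref{nPMatvrotZ} really is dominated by $\CalAT$ (it is, since $C^{3/2+\epsilon}_*\hookrightarrow C^{1+\epsilon}_*$ and similarly for $R$ and $Z-\alpha$), so that a single clean $\lesssim_{\CalAO}\CalAT$-type bound can be stated for the combined remainder $K$.
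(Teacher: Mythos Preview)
Your proposal is correct and matches the paper's own argument exactly: the paper simply states ``Collecting the results \eqref{nPMatvrotZ}, \eqref{RNonCapillary}, and \eqref{CurvaturePara}'' and records the lemma, with no further manipulation. Your observation that the $\CalAO$ prefactor from \eqref{nPMatvrotZ} is dominated by $\CalAT$ (so that the three remainders combine into a single $K$ with the stated bound) is the only bookkeeping detail, and you have handled it correctly.
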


In the final part of this section, we compute the leading term of the material derivative of $J^{s}$ for $s\neq 0$.
\begin{lemma}
Let $s\neq 0$, the para-material derivative of $J^s$ satisfies the relation 
\begin{equation}
D_t J^s = E, \quad   \| E\|_{L^\infty} \lesssim_\CalAO \CalAT.\label{JsParaMat}
\end{equation}
The same expression also holds for the time derivative and the para-material derivative of $J^s$.
\end{lemma}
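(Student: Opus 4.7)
The plan is to compute the material derivative $D_t J^s$ directly from the $\W$ equation in \eqref{e:WW} using the chain rule, then verify that every resulting term admits a uniform $L^\infty$ bound by $\CalAT$ with constants depending on $\CalAO$. Since $J = (1+\W)(1+\bar{\W})$, differentiation yields
\begin{equation*}
D_t J^s = sJ^{s-1}\bigl[(1+\bar{\W})D_t\W + (1+\W)D_t\bar{\W}\bigr] = 2sJ^{s-1}\Re\bigl[(1+\bar{\W})D_t\W\bigr],
\end{equation*}
so the task reduces to controlling $(1+\bar{\W})D_t\W$ in $L^\infty$.

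First I would invoke the simplified form \eqref{WMatDerivative} of the $\W$ equation, together with the elementary identity $\partial_\alpha(v^{rot}\circ Z) = -\sum_{j=1}^N \frac{\lambda_j i}{2\pi}\frac{1+\bar{\W}}{(\overline{Z-z_j})^2}$ obtained by direct differentiation, to write
\begin{equation*}
(1+\bar{\W})D_t\W = (1+\bar{\W})\bar{R}_\alpha - Jb_\alpha - \sum_{j=1}^N \frac{\lambda_j i}{2\pi}\frac{(1+\bar{\W})^2}{(\overline{Z-z_j})^2}.
\end{equation*}
Setting $E := 2sJ^{s-1}\Re[(1+\bar{\W})D_t\W]$, the identity $D_t J^s = E$ is then immediate.

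The remaining work is the $L^\infty$ estimate on the three constituent pieces of $E$. The prefactor $J^{s-1}$ is bounded by a constant depending on $\CalAZ$ thanks to the chord-arc condition implicit in the control norms. For the $R_\alpha$ piece, Besov embedding gives $\|R_\alpha\|_{L^\infty}\lesssim \|R\|_{C^{1+\epsilon}_*} \leq \CalAT$. For the $b_\alpha$ piece, the Zygmund form of \eqref{bEst} yields $\|b_\alpha\|_{L^\infty}\lesssim \|b\|_{C^{1+\epsilon}_*}\lesssim_\CalAZ \|R\|_{C^{1+\epsilon}_*} + \|Z-\alpha\|_{C^{3/2}_*} \leq \CalAT$. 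For the vortex piece, $|\lambda_j|$ and $|1+\bar{\W}|^2$ are controlled by $\CalAZ$, while $|Z-z_j|^{-1}\leq d_S(t)^{-1}$ is absorbed into the $\CalAO$-dependence as in \eqref{Lkestimate} and throughout Section \ref{s:Estimate}, so the vortex contribution is $\lesssim_\CalAO \CalAZ \leq \CalAT$.

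For the para-material variant $T_{D_t}J^s$ and the plain time derivative $\partial_t J^s$, the discrepancies $(b-T_b)\partial_\alpha J^s = T_{\partial_\alpha J^s}b + \Pi(b,\partial_\alpha J^s)$ and $b\,\partial_\alpha J^s$ are handled by the standard $L^\infty$ paraproduct bounds, using $\|\partial_\alpha J^s\|_{L^\infty}\lesssim_\CalAZ \|\W\|_{C^{1+\epsilon}_*}\leq \CalAO$ together with $\|b\|_{L^\infty}\lesssim_\CalAZ \CalAO$, which delivers the same bound $\lesssim_\CalAO \CalAT$. No step presents a genuine obstacle; the only mild bookkeeping point is the rewriting of $\partial_\alpha(v^{rot}\circ Z)$ in the explicit vortex-source form used above, and once that is in place the estimate becomes essentially immediate.
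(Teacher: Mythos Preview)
Your approach is essentially the same as the paper's: both compute $D_t J^s$ via the chain rule (the paper writes the equivalent form $2s\Re[J^s(1-\Y)D_t\W]$), substitute \eqref{WMatDerivative}, bound each piece in $L^\infty$, and then observe that $b\,\partial_\alpha J^s$ and $T_b\partial_\alpha J^s$ are themselves $O_{\CalAO}(\CalAT)$ in $L^\infty$. The only minor deviation is in the vortex term, where the paper routes through \eqref{barPvRot} and \eqref{PvrotZHsCs} to control $\|v^{rot}\circ Z\|_{C^1_*}$ by quantities already appearing in the control norms, rather than invoking $d_S(t)^{-1}$ directly; note that $d_S(t)^{-1}$ is not explicitly part of $\CalAO$, and \eqref{Lkestimate} is an integral rather than pointwise bound, so the cleaner justification is the paper's decomposition.
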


\begin{proof}
We compute the material derivative of $J^s$ using \eqref{WMatDerivative},
\begin{equation*}
D_t J^s = 2\Re sJ^{s}(1-\Y)D_t\W = 2\Re sJ^{s}(1-\Y)(\bar{R}_\alpha - (1+\W)b_\alpha + \partial_\alpha(v^{rot}\circ Z)).
\end{equation*}
Using the estimate for $b$ \eqref{bEst}, and for $v^{rot}\circ Z$ \eqref{barPvRot}, \eqref{PvrotZHsCs},
\begin{equation*}
\|D_t J^s \|_{L^\infty} \lesssim  \|J^s \|_{L^\infty}(1+\|\Y \|_{C^\epsilon_{*}})(\|R\|_{C^1_{*}} +(1+\|\W\|_{C^\epsilon_*})\|b\|_{C^1_{*}}+ \|v^{rot}\circ Z \|_{C^1_{*}}) \lesssim_\CalAZ \CalAT.
\end{equation*}
Here we use Moser-type estimate \eqref{MoserTwo} for $J^s$,
\begin{equation}\label{JsBound}
\|J^s \|_{L^\infty} \lesssim 1+ \|J^s-1 \|_{C^\epsilon_*} \lesssim  1+ \|\W \|_{C^\epsilon_*} \lesssim 1+\CalAZ.
\end{equation}
Note that using \eqref{CCCEstimate} and \eqref{CsCmStar},
\begin{equation*}
 \|T_b \W_\alpha\|_{L^\infty} + \|T_{\W_\alpha} b\|_{L^\infty} + \| \Pi(\W_\alpha, b)\|_{L^\infty} \lesssim \|b\|_{C^\epsilon_{*}}\|\W\|_{C^{1+\epsilon}_{*}} + \|\W\|_{C^\epsilon_{*}}\|b\|_{C^1_{*}} \lesssim \CalAO \CalAT,
\end{equation*}
so that terms $T_b \W_\alpha$ and $b\W_\alpha$ can be put into $E$, and $T_{D_t}J^s, \partial_t J^s$ have the similar expression as \eqref{JsParaMat}.
\end{proof}

\section{Local well-posedness of water waves} \label{s:Wellposed}
\subsection{A priori energy estimate for water waves}\label{s:Energy} 
Following the more recent paradifferential approach introduced by Alazard-Burq-Zuily \cite{MR2805065, MR2931520}, Ai-Ifrim--Tataru \cite{ai2023dimensional, MR4483135} and Ai \cite{ai2023improved}  in the water waves context, we will proceed by getting more sharper estimates;  here is where the  paradifferential calculus plays a crucial role. 
In this section, we prove the a priori energy estimate for water waves Theorem \ref{t:MainEnergyEstimate}.
Recall that in Section \ref{s:Estimate}, we compute the para-material derivatives of $(\W, R)$:
\begin{equation} \label{WRParaMatSys}
\left\{
\begin{aligned}
&  T_{D_t}\W  = -\partial_\alpha T_{(1+\W)(1-\bar{\Y})}R + G\\
&  T_{D_t}R  = -i\sigma T_{J^{-\frac{1}{2}}(1-\Y)^2}\W_{\alpha \alpha}+ 3i\sigma T_{J^{-\frac{1}{2}}(1-\Y)^3 \W_\alpha} \W_\alpha + K,
\end{aligned} 
\right.
\end{equation}
where the perturbative source terms $(G,K)$ satisfy,
\begin{equation*}
 \|(G, K) \|_{\H^s}\lesssim_\CalAO \CalAT \left(\|(\W,R)\|_{\mathcal{H}^{s}}+ \|Z-\alpha \|_{H^{s+1}}+ \sum_{j=1}^N |\lambda_j|\right), \quad s>0.
\end{equation*}

A natural attempt for the energy would be choosing
\begin{equation*}
    \int \sigma |\langle D \rangle^{s+\Half}\W|^2 + |\langle D \rangle^{s}R|^2 +|Z-\alpha|^2 \,d\alpha + \sum_{j=1}^N |\lambda_j|^2.
\end{equation*}
This choice of energy satisfies the norm equivalence \eqref{normEquivalence}.
However, its time derivative contains non-perturbative terms that cannot be bounded by the right-hand side of \eqref{EnergyEstimate}.
We need to add appropriate energy corrections in order to satisfy the energy estimate \eqref{EnergyEstimate}.

\begin{proof}[Proof of Theorem \ref{t:MainEnergyEstimate}]
We choose the energy
\begin{align*}
    E_s :=&  \int \sigma T_{J^{-\frac{3}{2}}}\langle D \rangle^{s+\Half}\W \cdot \langle D \rangle^{s+\Half}\bar{\W}  + |\langle D \rangle^{s}R|^2 +|Z-\alpha|^2 \,d\alpha + \sum_{j=1}^N |\lambda_j|^2\\
    &+ 9\sigma\Re \int i T_{\W_\alpha}T_{J^{-\frac{3}{2}}(1-\Y)}\langle D \rangle^{s-\frac{1}{2}}\W \cdot \langle D \rangle^{s+\frac{1}{2}}\bar{\W} \,d\alpha\\
    &+   \left(3s -\frac{15}{2}\right)\Im \int  T_{\W_\alpha}\langle D \rangle^{s-1}R \cdot T_{1-\Y}\langle D \rangle^{s}\bar{R} \, d\alpha.
\end{align*}
Here, the para-coefficient $T_{J^{-\frac{3}{2}}}$ is added to the first term of the energy in order to cancel the leading term of the time derivative of $E_s$:
\begin{equation}
J^{-\frac{3}{2}}(1-\bar{\Y})(1+\W) = J^{-\frac{1}{2}}(1-\bar{\Y})^2. \label{LeadingNonPerturb}
\end{equation}
The last two integrals of $E_s$ are energy corrections whose time derivatives eliminate sub-leading terms of the time derivative of the first integral of $E_s$.

Due to \eqref{JsBound} for $J^{-\frac{3}{2}}$ and \eqref{HsLinfty}, \eqref{HsHmCStar}, \eqref{YBounds}, we estimate
\begin{align*}
&\left| \int  T_{J^{-\frac{3}{2}}-1} \langle D \rangle^{s+\Half} \W \cdot \langle D \rangle^{s+\Half}\bar{\W} \,d\alpha\right| \lesssim  \|J^{-\frac{3}{2}}-1 \|_{L^\infty}\|\W\|_{H^{s+\frac{1}{2}}}^2 \lesssim \CalAZ \|\W\|_{H^{s+\frac{1}{2}}}^2, \\
& \left| \Re \int i T_{\W_\alpha}T_{J^{-\frac{3}{2}}(1-\Y)}\langle D \rangle^{s-\frac{1}{2}}\W \cdot \langle D \rangle^{s+\frac{1}{2}}\bar{\W} \,d\alpha \right| \\
\lesssim & \| \W\|_{C^\epsilon_{*}} \|J^{-\frac{3}{2}}(1-\Y) \|_{C^\epsilon_{*}}\|\W\|_{H^{s+\frac{1}{2}}}^2 \lesssim \CalAZ(1+\CalAZ) \|\W\|_{H^{s+\frac{1}{2}}}^2,\\
& \left|\Im \int  T_{\W_\alpha}\langle D \rangle^{s-1}R \cdot T_{1-\Y}\langle D \rangle^{s}\bar{R} \, d\alpha \right|\lesssim \|\W\|_{C^\epsilon_{*}}(1+\|\Y\|_{L^\infty})\|R\|_{H^s}^2 \lesssim \CalAZ(1+\CalAZ) \|R\|_{H^{s}}^2.
\end{align*}
Hence, the energy $E_s$ satisfies the norm equivalence \eqref{normEquivalence}.
It suffices to prove the energy estimate \eqref{EnergyEstimate}.

We compute the time derivative of $E_s$, and replace some time derivatives with para-material derivatives.
\begin{align*}
 \frac{d}{dt}E_s =&  2\Re\int \sigma T_{ J^{-\frac{3}{2}}}\langle D \rangle^{s+\Half} \W_t \cdot \langle D \rangle^{s+\Half}\bar{\W} \,d\alpha + 2\Re \int \langle D \rangle^{s}R \cdot \langle D \rangle^{s}\bar{R}_t \,d\alpha \\
  &+ 9\sigma \Re \partial_t\int i T_{\W_\alpha}T_{J^{-\frac{3}{2}}(1-\Y)}\langle D \rangle^{s-\frac{1}{2}}\W \cdot \langle D \rangle^{s+\frac{1}{2}}\bar{\W} \,d\alpha\\
  &+  \left(3s -\frac{15}{2}\right)\Im \partial_t\int T_{\W_\alpha}\langle D \rangle^{s-1}R \cdot T_{1-\Y}\langle D \rangle^{s}\bar{R} \,d\alpha\\
  &+2\Re \int (Z-\alpha)\bar{Z}_t \,d\alpha + \int \sigma T_{\partial_t J^{-\frac{3}{2}}}\langle D \rangle^{s+\Half}\W \cdot \langle D \rangle^{s+\Half}\bar{\W} \,d\alpha\\
 =& 2\Re \int \sigma T_{ J^{-\frac{3}{2}}}\langle D \rangle^{s+\Half}   T_{D_t}\W \cdot \langle D \rangle^{s+\Half}\bar{\W} +\langle D \rangle^{s}R \cdot \langle D \rangle^{s}T_{D_t}\bar{R} \,d\alpha \\
 &+ 9\sigma \Re \partial_t\int i T_{\W_\alpha}T_{J^{-\frac{3}{2}}(1-\Y)}\langle D \rangle^{s-\frac{1}{2}}\W \cdot \langle D \rangle^{s+\frac{1}{2}}\bar{\W} \,d\alpha\\
  &+\left(3s -\frac{15}{2}\right)\Im \partial_t\int T_{\W_\alpha}\langle D \rangle^{s-1}R \cdot T_{1-\Y}\langle D \rangle^{s}\bar{R} \,d\alpha \\
&- 2\Re \int  \sigma \langle D \rangle^{s+\Half}T_{ J^{-\frac{3}{2}}}T_b \W_\alpha \cdot \langle D \rangle^{s+\Half}\bar{\W} +\langle D \rangle^{s}R \cdot \langle D \rangle^{s}T_{b}\bar{R}_\alpha \,d\alpha \\
  &+2\Re \int (Z-\alpha)\bar{Z}_t \,d\alpha
  + \int \sigma T_{\partial_t J^{-\frac{3}{2}}}\langle D \rangle^{s+\Half}\W \cdot \langle D \rangle^{s+\Half}\bar{\W} \,d\alpha.
\end{align*}
We use the symbolic calculus \eqref{CompositionPara}  to distribute paraproducts and integrate by parts to estimate each part of $\frac{d}{dt}E_s$. 
In the following, for simplicity, we write perturbative integral terms that can be bounded by the right-hand side of \eqref{EnergyEstimate} as $ O_\CalAO \left(\CalAT E_s \right)$.

We begin by estimating the first term of $\frac{d}{dt}E_s$.
\begin{align*}
&2\Re \int  \sigma T_{ J^{-\frac{3}{2}}}\langle D \rangle^{s+\Half}T_{D_t}\W \cdot \langle D \rangle^{s+\Half}\bar{\W}  +\langle D \rangle^{s} R \cdot \langle D \rangle^{s}T_{D_t}\bar{R} \,d\alpha\\
= & -2\Re \int \sigma  T_{J^{-\frac{3}{2}}}\langle D \rangle^{s+\frac{1}{2}}  \partial_\alpha T_{(1-\bar{\Y})(1+\W)}R \cdot \langle D \rangle^{s+\frac{1}{2}} \bar{\W} - i\sigma\langle D \rangle^{s} R\cdot \langle D \rangle^{s} T_{J^{-\Half}(1-\bar{\Y})^2}\bar{\W}_{\alpha \alpha} \,d\alpha \\
&-2\Re \int 3i\sigma\langle D \rangle^{s} R\cdot \langle D \rangle^{s} T_{J^{-\Half}(1-\bar{\Y})^3 \bar{\W}_\alpha}\bar{\W}_{\alpha} \,d\alpha + 2\Re \int  \sigma T_{ J^{-\frac{3}{2}}}\langle D \rangle^{s+\Half}G \cdot \langle D \rangle^{s+\Half}\bar{\W} \,d\alpha \\
& +2\Re \int \langle D \rangle^{s} R \cdot \langle D \rangle^{s}\bar{K} \,d\alpha \\
= &  2\Re \int i\sigma T_{J^{-\frac{3}{2}}} \langle D \rangle^{s+\frac{3}{2}}  T_{(1-\bar{\Y})(1+\W)}R \cdot \langle D \rangle^{s+\frac{1}{2}} \bar{\W} + i\sigma\langle D \rangle^{s+\frac{3}{2}} R\cdot \langle D \rangle^{s-\frac{3}{2}} T_{J^{-\Half}(1-\bar{\Y})^2}\bar{\W}_{\alpha \alpha} \,d\alpha \\
&-6\Re \int i\sigma\langle D \rangle^{s} R\cdot T_{J^{-\Half}(1-\bar{\Y})^3 \bar{\W}_\alpha} \langle D \rangle^{s}\bar{\W}_{\alpha} \,d\alpha +  O_\CalAO \left(\CalAT E_s \right) \\
= & 2\Re \int i\sigma \left(T_{J^{-\frac{3}{2}}} T_{(1-\bar{\Y})(1+\W)} - T_{J^{-\frac{1}{2}}(1-\bar{\Y})^2} \right) \langle D \rangle^{s+\frac{3}{2}} R \cdot \langle D \rangle^{s+\frac{1}{2}} \bar{\W} \,d\alpha \\
 &-  \sigma\Re \int  \left((2s+3) T_{J^{-\frac{3}{2}}} T_{((1-\bar{\Y})(1+\W))_\alpha}-6T_{J^{-\frac{1}{2}}(1-\bar{\Y})^3 \bar{\W}_\alpha}\right)\langle D \rangle^{s+\frac{1}{2}} R \cdot \langle D \rangle^{s+\frac{1}{2}}\bar{\W} \,d\alpha \\
 &-\sigma \Re \int (2s-3)T_{(J^{-\frac{1}{2}}(1-\bar{\Y})^2)_\alpha} \langle D \rangle^{s+\frac{1}{2}} R \cdot \langle D \rangle^{s+\frac{1}{2}}\bar{\W} \,d\alpha  +  O_\CalAO \left(\CalAT E_s \right) \\
 = & 3 \sigma \Re \int \left(s+\frac{1}{2}\right)T_{J^{-\frac{3}{2}}(1-\bar{\Y})\W_\alpha}\langle D \rangle^{s+\frac{1}{2}} R \cdot \langle D \rangle^{s+\frac{1}{2}}\bar{\W} \,d\alpha \\
 &+  3\sigma \Re \int \left(s-\frac{11}{2}\right)T_{J^{-\frac{1}{2}}(1-\bar{\Y})^3 \bar{\W}_\alpha}\langle D \rangle^{s+\frac{1}{2}} R \cdot \langle D \rangle^{s+\frac{1}{2}}\bar{\W} \,d\alpha  +  O_\CalAO \left(\CalAT E_s \right).
\end{align*}
Here, the leading terms get canceled due to \eqref{CompositionPara} and the relation \eqref{LeadingNonPerturb}.
What remains are two non-perturbative sub-leading integral terms, which will be eliminated by the time derivative of the next two integral terms of $\frac{d}{dt}E_s$.
When the time derivative acts on para-coefficients, the integral is always perturbative.
For instance, using the estimates \eqref{HsCmStar}, \eqref{YBounds}, \eqref{bEst}, and \eqref{GZygmund}, we have
\begin{align*}
 &\left|\Im \int T_{\partial_t\W_\alpha}\langle D \rangle^{s-1}R \cdot T_{1-\Y}\langle D \rangle^{s}\bar{R} \,d\alpha \right|
 \leq  \|  T_{\partial_\alpha^2T_{(1+\W)(1-\bar{\Y})}R +(T_b \W_\alpha)_\alpha-G_\alpha}\langle D \rangle^{s-1}R\|_{L^2} \|\langle D \rangle^{s}\bar{R}  \|_{L^2} \\
 & \lesssim (\|T_{(1+\W)(1-\bar{\Y})} R\|_{C^1_{*}}+\|T_b\W_\alpha \|_{C^1_{*}}+\|G\|_{C^\epsilon_{*}}) \|R\|_{H^s}^2 \lesssim_\CalAO \CalAT \|R\|_{H^s}^2.
\end{align*}
In addition, for this part of the integral, only the leading terms of $(\W_t, R_t)$ will contribute to the non-perturbative parts of the integral.
The integrals with transport terms $(T_b \W_\alpha, T_b R_\alpha)$ or the sub-leading terms of of $(\W_t, R_t)$ belong to $ O_\CalAO \left(\CalAT E_s \right)$.
Hence, we simply replace $(\W_t, R_t)$ with $(-T_{(1+\W)(1-\bar{\Y})}R_\alpha, -i\sigma T_{J^{-\frac{1}{2}}(1-\Y)^2}\W_{\alpha\alpha})$ when we compute the time derivative of next two terms,
\begin{align*}
 &9\sigma \Re \partial_t\int i T_{\W_\alpha}T_{J^{-\frac{3}{2}}(1-\Y)}\langle D \rangle^{s-\frac{1}{2}}\W \cdot \langle D \rangle^{s+\frac{1}{2}}\bar{\W} \,d\alpha\\
 +& \left(3s -\frac{15}{2}\right)\Im \partial_t\int T_{\W_\alpha}\langle D \rangle^{s-1}R \cdot T_{1-\Y}\langle D \rangle^{s}\bar{R} \,d\alpha \\
 =& 9\sigma \Re \int i T_{\W_\alpha}T_{J^{-\frac{3}{2}}(1-\Y)}\langle D \rangle^{s-\frac{1}{2}}\W_t \cdot \langle D \rangle^{s+\frac{1}{2}}\bar{\W} \,d\alpha\\
 &+9\sigma \Re \int i T_{\W_\alpha}T_{J^{-\frac{3}{2}}(1-\Y)}\langle D \rangle^{s-\frac{1}{2}}\W \cdot \langle D \rangle^{s+\frac{1}{2}}\bar{\W}_t \,d\alpha\\
 &+ \left(3s -\frac{15}{2}\right)\Im \int T_{\W_\alpha}\langle D \rangle^{s-1}R_t \cdot T_{1-\Y}\langle D \rangle^{s}\bar{R}\,d\alpha\\
 &+ \left(3s -\frac{15}{2}\right)\Im \int T_{\W_\alpha}\langle D \rangle^{s-1}R \cdot T_{1-\Y}\langle D \rangle^{s}\bar{R}_t \,d\alpha  +  O_\CalAO \left(\CalAT E_s \right) \\
 =& 9\sigma \Re \int - T_{J^{-\frac{3}{2}}(1-\bar{\Y})\W_\alpha}\langle D \rangle^{s+\frac{1}{2}}R \cdot \langle D \rangle^{s+\frac{1}{2}}\bar{\W} \,d\alpha\\
 &+ 9\sigma \Re \int T_{J^{-\frac{1}{2}}(1-\Y)^3 \W_\alpha}\langle D \rangle^{s+\frac{1}{2}}\W \cdot \langle D \rangle^{s+\frac{1}{2}}\bar{R} \,d\alpha\\
 &+ \left(3s -\frac{15}{2}\right)\sigma\Im \int iT_{J^{-\frac{1}{2}}(1-\Y)^3\W_\alpha}\langle D \rangle^{s+\frac{1}{2}}\W \cdot \langle D \rangle^{s+\frac{1}{2}}\bar{R}\\
 &- \left(3s -\frac{15}{2}\right)\sigma \Im \int iT_{J^{-\frac{3}{2}}(1-\bar{\Y})\W_\alpha}\langle D \rangle^{s+\frac{1}{2}}R \cdot \langle D \rangle^{s+\frac{1}{2}}\bar{\W} \,d\alpha +  O_\CalAO \left(\CalAT E_s \right) \\
 =& -9\sigma \Re \int T_{J^{-\frac{3}{2}}(1-\bar{\Y})\W_\alpha}\langle D \rangle^{s+\frac{1}{2}}R \cdot \langle D \rangle^{s+\frac{1}{2}}\bar{\W}\\
 &+ 9\sigma \Re \int T_{J^{-\frac{1}{2}}(1-\bar{\Y})^3 \bar{\W}_\alpha}\langle D \rangle^{s+\frac{1}{2}}R \cdot \langle D \rangle^{s+\frac{1}{2}}\bar{\W} \,d\alpha\\
  &- \left(3s -\frac{15}{2}\right)\sigma\Re \int T_{J^{-\frac{3}{2}}(1-\bar{\Y})\W_\alpha}\langle D \rangle^{s+\frac{1}{2}}R \cdot \langle D \rangle^{s+\frac{1}{2}}\bar{\W}\\
  &-\left(3s -\frac{15}{2}\right)\sigma \Re \int T_{J^{-\frac{1}{2}}(1-\bar{\Y})^3\bar{\W}_\alpha}\langle D \rangle^{s+\frac{1}{2}}R \cdot \langle D \rangle^{s+\frac{1}{2}}\bar{\W} \,d\alpha+  O_\CalAO \left(\CalAT E_s \right)\\
 = &-3 \sigma \Re \int \left(s+\frac{1}{2}\right)T_{J^{-\frac{3}{2}}(1-\bar{\Y})\W_\alpha}\langle D \rangle^{s+\frac{1}{2}} R \cdot \langle D \rangle^{s+\frac{1}{2}}\bar{\W} \,d\alpha \\
 &-3\sigma \Re \int \left(s-\frac{11}{2}\right)T_{J^{-\frac{1}{2}}(1-\bar{\Y})^3 \bar{\W}_\alpha}\langle D \rangle^{s+\frac{1}{2}} R \cdot \langle D \rangle^{s+\frac{1}{2}}\bar{\W} \,d\alpha  +  O_\CalAO \left(\CalAT E_s \right).
\end{align*}
This part of integral eliminates the first part of the non-perturbative integrals, so that the remaining terms are  $O_\CalAO \left(\CalAT E_s \right)$.

For the next integral term of $\frac{d}{dt}E_s$, using integration by parts to distribute $\alpha$-derivative,
\begin{align*}
 &- 2\Re \int  \sigma T_{ J^{-\frac{3}{2}}}\langle D \rangle^{s+\Half}T_b\W_\alpha \cdot \langle D \rangle^{s+\Half}\bar{\W} +\langle D \rangle^{s}R \cdot \langle D \rangle^{s}T_{b}\bar{R}_\alpha \,d\alpha \\
 =& - 2\Re \int  \sigma T_{b} T_{ J^{-\frac{3}{2}}}\langle D \rangle^{s+\Half}\W_\alpha \cdot \langle D \rangle^{s+\Half}\bar{\W} +T_{b}\langle D \rangle^{s}R \cdot \langle D \rangle^{s}\bar{R}_\alpha \,d\alpha  +  O_\CalAO \left(\CalAT E_s \right)\\
 =&  2\Re \int  \sigma T_{b} T_{ J^{-\frac{3}{2}}}\langle D \rangle^{s+\Half}\W_\alpha \cdot \langle D \rangle^{s+\Half}\bar{\W} +T_{b}\langle D \rangle^{s}R_\alpha \cdot \langle D \rangle^{s}\bar{R} \,d\alpha+  O_\CalAO \left(\CalAT E_s \right)\\
 &+ 2\Re \int  \sigma \left(T_{b_\alpha} T_{ J^{-\frac{3}{2}}} + T_{b} T_{ (J^{-\frac{3}{2}})_\alpha}\right)\langle D \rangle^{s+\Half}\W \cdot \langle D \rangle^{s+\Half}\bar{\W} +T_{b_\alpha}\langle D \rangle^{s}R \cdot \langle D \rangle^{s}\bar{R} \,d\alpha\\
 =& \Re \int  \sigma \left(T_{b_\alpha} T_{ J^{-\frac{3}{2}}} + T_{b} T_{ (J^{-\frac{3}{2}})_\alpha}\right)\langle D \rangle^{s+\Half}\W \cdot \langle D \rangle^{s+\Half}\bar{\W} +T_{b_\alpha}\langle D \rangle^{s}R \cdot \langle D \rangle^{s}\bar{R} \,d\alpha \\
 &+ O_\CalAO \left(\CalAT E_s \right) =  O_\CalAO \left(\CalAT E_s \right).
\end{align*}
It can be bounded by the right-hand side of \eqref{EnergyEstimate}.
As for the last two integral terms of $\frac{d}{dt}E_s$, we apply the formulae for $Z_t$ \eqref{ZEqnTwo} and $\partial_t J^{s}$ \eqref{JsParaMat},
\begin{align*}
 &\left|2\Re \int (Z-\alpha)\bar{Z}_t \,d\alpha \right| +  \left|\int \sigma T_{\partial_t J^{-\frac{3}{2}}}\langle D \rangle^{s+\Half}\W \cdot \langle D \rangle^{s+\Half}\bar{\W} \,d\alpha \right|\\
 \lesssim& \|Z-\alpha\|_{L^2} (\|b\|_{L^\infty}(\| \W\|_{L^2}+\| b\|_{L^2})+\|R \|_{L^2}+ \| v^{rot}\circ Z\|_{L^2}) + \|\partial_t J^{-\frac{3}{2}} \|_{L^\infty} \| \W\|_{H^{s+\frac{1}{2}}}^2\\
 \lesssim& (1+\|b\|_{L^\infty}+ \|\partial_t J^{-\frac{3}{2}} \|_{L^\infty})\left(\|Z-\alpha \|^2 + \|\W \|_{H^{s+\frac{1}{2}}}^2 + \|R\|_{H^s}^2 + \| v^{rot}\circ Z\|_{L^2}^2 \right) \lesssim_\CalAO\CalAT E_s.
\end{align*}
These two terms are perturbative.
Therefore, the energy $\frac{d}{dt}E_s$ satisfies the energy estimate \eqref{EnergyEstimate}.
This concludes the proof of Theorem \ref{t:MainEnergyEstimate}.
\end{proof}

In the next subsection, we will prove the unique existence of solutions by using the iteration.
At each step of iteration, we will consider the following system of linear differential equations: 
\begin{equation} \label{LinearIterationEqn}
\left\{
\begin{aligned}
&  (\partial_t+T_{\tilde{b}}\partial_\alpha)\W  +\partial_\alpha T_{1+\tilde{A}}R = \tilde{G}\\
&  (\partial_t+T_{\tilde{b}}\partial_\alpha)R  +i\sigma T_{1+\tilde{B}}\W_{\alpha \alpha}- 3i\sigma T_{\tilde{C}} \W_\alpha = \tilde{K},
\end{aligned} 
\right.
\end{equation}
where $\tilde{b}, \tilde{A}, \tilde{B}, \tilde{C}$ and $\tilde{G},\tilde{K}$ are fixed functions.
We show that \eqref{LinearIterationEqn} has a unique solution, provided that these functions satisfy the appropriate regularity conditions.
\begin{proposition}\label{t:LinearExist}
Let $s>0$, and time $T>0$.
Suppose the initial condition $(\W_0, R_0)\in \H^s$, the fixed functions satisfy $(\tilde{G},\tilde{K})\in C([0,T];\H^s)$, $\tilde{b}\in C([0,T];C^{1+\epsilon}_{*})$, $\tilde{A}, \tilde{B}\in C([0,T];C^{\frac{3}{2}+\epsilon}_{*})$, $\tilde{C}\in C([0,T];C^{\frac{1}{2}+\epsilon}_{*})$, $\tilde{A}_t, \tilde{B}_t\in C([0,T]; C^\epsilon_{*})$, and $\tilde{b}, \frac{1+\tilde{B}}{1+\tilde{A}}$ are real-valued functions.
Then the linear equation \eqref{LinearIterationEqn} has a unique solution $(\W, R)\in C([0,T];\H^s)$.
Moreover, there exists a modified energy $\tilde{E}_s$ such that 
\begin{equation} \label{LinEstimate}
 \tilde{E}_s \approx \sigma\|\W \|^2_{H^{s+\frac{1}{2}}}+\|R \|_{H^s}^2, \quad \frac{d}{dt}\tilde{E}_s \lesssim  \tilde{E}_s + \| (G, K)\|^2_{\H^s},  
\end{equation}
where the implicit constant depends on Zygmund norms of $\tilde{b}$, $\tilde{A}, \tilde{B}$, $\tilde{C}$, $\tilde{A}_t$ and $\tilde{B}_t$.
\end{proposition}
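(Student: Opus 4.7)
The plan is to mimic the architecture of the proof of Theorem \ref{t:MainEnergyEstimate}, which handled a nonlinear system whose paradifferential leading part has exactly the same shape as \eqref{LinearIterationEqn}. So I would build a symmetrized energy, close it by symbolic calculus using the regularity hypotheses on $\tilde A,\tilde B,\tilde C,\tilde b$, and then invoke a standard parabolic/frequency regularization for existence.

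\textbf{Energy.} Let $q := \frac{1+\tilde B}{1+\tilde A}$, which is real-valued by hypothesis and lies in $C^{3/2+\epsilon}_*$. By analogy with the choice $T_{J^{-3/2}}$ in the nonlinear energy, I would define
\begin{equation*}
\tilde E_s := \int \sigma\, T_q\langle D\rangle^{s+\frac12}\W\cdot\langle D\rangle^{s+\frac12}\bar\W + |\langle D\rangle^s R|^2\,d\alpha + \tilde E_s^{\mathrm{corr}},
\end{equation*}
where $\tilde E_s^{\mathrm{corr}}$ is a pair of lower-order paradifferential bilinear corrections, modeled on the $9\sigma$- and $(3s-\tfrac{15}{2})$-terms in the proof of Theorem~\ref{t:MainEnergyEstimate}, whose role is to cancel the $\tilde C$- and $\partial_\alpha \tilde A$/$\partial_\alpha\tilde B$-subleading residuals. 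Norm equivalence $\tilde E_s \approx \sigma\|\W\|_{H^{s+1/2}}^2 + \|R\|_{H^s}^2$ follows from $q\geq c>0$ (for $\|\tilde A\|_{L^\infty}$ small enough) together with \eqref{TABound}, \eqref{HsCmStar}.

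\textbf{Energy estimate.} Substituting \eqref{LinearIterationEqn} into $\frac{d}{dt}\tilde E_s$, the highest-order contribution involves
\begin{equation*}
T_q T_{1+\tilde A} - T_{1+\tilde B} \;=\; \tfrac{i}{2}T_{\partial_\xi\{\cdot\}\partial_x\{\cdot\}-\partial_x\{\cdot\}\partial_\xi\{\cdot\}}+\text{remainder},
\end{equation*}
which by \eqref{CompositionPara} with $\rho=\tfrac32+\epsilon$ and the identity $q(1+\tilde A)=1+\tilde B$ produces only operators of order $\leq s+\tfrac12$. One order lower still, the $\tilde C$-induced subleading term is absorbed by the correction exactly as in the main theorem. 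The transport commutator $[T_{\tilde b}\partial_\alpha,\langle D\rangle^{s+1/2}]$ is of order $s+\tfrac12$ by \eqref{CompositionPara} since $\tilde b\in C^{1+\epsilon}_*$, and is disposed of by integration by parts (using $\tilde b$ real). Time derivatives hitting the para-coefficient $q$ produce $T_{q_t}$, controlled by $\|\tilde A_t\|_{C^\epsilon_*}+\|\tilde B_t\|_{C^\epsilon_*}$. The source terms give, by Cauchy--Schwarz, a contribution $\leq \tilde E_s^{1/2}\|(\tilde G,\tilde K)\|_{\mathcal H^s}$. Collecting, one obtains \eqref{LinEstimate}.

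\textbf{Existence and uniqueness.} With the a priori estimate in hand, I would produce solutions by frequency truncation: fix $N\in\mathbb N$, replace \eqref{LinearIterationEqn} by
\begin{equation*}
\left\{\begin{aligned}
&\partial_t\W^N + P_{\leq N}(T_{\tilde b}\partial_\alpha +\partial_\alpha T_{1+\tilde A})P_{\leq N}R^N = P_{\leq N}\tilde G, \\
&\partial_t R^N + P_{\leq N}(T_{\tilde b}\partial_\alpha + i\sigma T_{1+\tilde B}\partial_\alpha^2 -3i\sigma T_{\tilde C}\partial_\alpha)P_{\leq N}\W^N = P_{\leq N}\tilde K,
\end{aligned}\right.
\end{equation*}
which is an ODE in any $H^r$ and admits a unique global-in-time solution in $C([0,T];H^\infty)$ by Cauchy--Lipschitz. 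The same energy construction, applied to the truncated system, yields an $N$-uniform bound in $C([0,T];\mathcal H^s)$; weak-$\ast$ compactness plus passage to the limit in the equation (using strong convergence of the truncations of fixed $\tilde G,\tilde K,\tilde b,\tilde A,\tilde B,\tilde C$) produces a solution. Uniqueness is immediate: the difference of two solutions solves the homogeneous version of \eqref{LinearIterationEqn} with zero data, so \eqref{LinEstimate} and Gronwall force it to vanish.

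\textbf{Main obstacle.} The bookkeeping in the symbolic-calculus step is the delicate part. The hypotheses $\tilde A,\tilde B\in C^{3/2+\epsilon}_*$ and $\tilde C\in C^{1/2+\epsilon}_*$ are precisely what is needed so that, in \eqref{CompositionPara} applied to $T_qT_{1+\tilde A}$ and to $T_{1+\tilde B}\partial_\alpha^2$ composed with $\langle D\rangle^{s+1/2}$, the remainders sit at order $s+\tfrac12-\epsilon$, and therefore are absorbed into $\tilde E_s$ after using Cauchy--Schwarz; any loss of regularity in $\tilde A,\tilde B,\tilde C$ or in $\tilde A_t,\tilde B_t$ would break the estimate. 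Everything else — the choice of $\tilde E_s^{\mathrm{corr}}$, the integration by parts against the transport, and the Gronwall closure — is modeled verbatim on the proof of Theorem~\ref{t:MainEnergyEstimate}.
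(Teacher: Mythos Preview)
Your proposal is correct and follows essentially the same route as the paper: both choose the leading energy with para-coefficient $T_q$, $q=\frac{1+\tilde B}{1+\tilde A}$, add two lower-order corrections to cancel the subleading residuals, and close by the same symbolic-calculus computation as in Theorem~\ref{t:MainEnergyEstimate}; the paper gives the explicit algebraic constraint on the correction coefficients $\tilde D,\tilde E$ but otherwise defers existence to standard linear theory, where you instead spell out a frequency-truncation argument. Your uniqueness argument via the homogeneous difference equation and Gronwall is identical to the paper's.
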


\begin{proof}[Sketch of the proof]
The existence of the solution follows from the standard theory of linear PDEs, see for example Section $5$ in \cite{MR1471885}.
The difference of two solutions of \eqref{LinearIterationEqn} satisfies \eqref{LinearIterationEqn} with zero initial data and zero source term.
Hence, the solution is unique by applying Gronwall's inequality and the modified energy estimate \eqref{LinEstimate}.

For the modified energy estimate, we choose 
\begin{align*}
\tilde{E}_s =& \int \sigma T_{\frac{1+\tilde{B}}{1+\tilde{A}}}\langle D \rangle^{s+\Half}\W \cdot \langle D \rangle^{s+\Half}\bar{\W}  + |\langle D \rangle^{s}R|^2 \,d\alpha + \Re \int i T_{\tilde{D}}\langle D \rangle^{s-\frac{1}{2}}\W \cdot \langle D \rangle^{s+\frac{1}{2}}\bar{\W} \,d\alpha\\
    &+   \Im \int  T_{\tilde{E}}\langle D \rangle^{s-1}R \cdot \langle D \rangle^{s}\bar{R} \,d\alpha,
\end{align*}
where functions $\tilde{D}, \tilde{E} \in C([0,T];C^{-1}_{*})$ are chosen such that
\begin{equation*}
   -2i(1+\tilde{A})\Im \tilde{D} -2\sigma(1+\tilde{B})\Re \tilde{E}  = (2s+3)\frac{1+\tilde{B}}{1+\tilde{A}}\tilde{A}_\alpha -6\bar{\tilde{C}}+(2s-3)\bar{\tilde{B}}_\alpha.
\end{equation*}
These choices of $\tilde{D}$ and $\tilde{E}$ ensure that the sub-leading terms of $\frac{d}{dt}\tilde{E}_s$ are removed, and the leading term of $\frac{d}{dt}\tilde{E}_s$ is eliminated with the para-coefficient $\frac{1+\tilde{B}}{1+\tilde{A}}$.
Then the modified energy estimate \eqref{LinEstimate} is virtually the same as the proof of Theorem \ref{EnergyEstimate}, and we omit it.
\end{proof}

\subsection{The proof of local well-posedness} \label{s:ProofWell} 
Having proved the energy estimate Theorem \ref{t:MainEnergyEstimate}, we now use this result to construct the solution and prove Theorem \ref{t:Wellposed}.
We first use the iteration method to construct a sequence of approximate solutions. 
By computing the uniform and difference bounds, we will show that the sequence of approximate solutions converges to the solution of \eqref{e:WW} on $[0,T]$ for some time $T$.

For the zero-th approximate solution, we take $(\W^{(0)}, R^{(0)})\times\{ z_j^{(0)}\}_{j=1}^N : = (\W_0, R_0)\times \{z_j(0)\}_{j=1}^N$, 
\begin{equation*}
    Z^{(0)}(t, \alpha) = \alpha + \int_{-\infty}^\alpha \W(0, y) \,dy,
\end{equation*}
and choose auxiliary functions $b^{(0)}: = b(\W^{(0)}, R^{(0)}, \{z_j(0)\}_{j=1}^N)$, $\Y^{(0)}: = \Y(\W^{(0)})$, $J^{(0)}: = J(\W^{(0)})$.
Once we construct the $n$-th approximate solution $(\W^{(n)}, R^{(n)})\times\{ z_j^{(n)}\}_{j=1}^N \in C( [0,T]; \H^s)\times C^1([0,T];\Omega_t)^N$ for $n\in \mathbb{N}$ and define those auxiliary functions, in view of the paradifferential system \eqref{WRParaMatSys}, we define the $(n+1)$-th approximate solution by the following scheme:
\begin{enumerate}
\item $(\W^{(n+1)}, R^{(n+1)})$ are the solution of the linear paradifferential system
\begin{equation} \label{WRniteration}
\left\{
\begin{aligned}
&  T_{D_t}^{(n)}\W^{(n+1)}  = -\partial_\alpha T_{(1+\W^{(n)})(1-\bar{\Y}^{(n)})}R^{(n+1)} + G^{(n)}\\
&  T_{D_t}^{(n)}R^{(n+1)}  = -i\sigma T_{(J^{(n)})^{-\frac{1}{2}}(1-\Y^{(n)})^2}\W^{(n+1)}_{\alpha \alpha}+ 3i\sigma T_{(J^{(n)})^{-\frac{1}{2}}(1-\Y^{(n)})^3 \W^{(n)}_\alpha} \W^{(n+1)}_\alpha + K^{(n)},
\end{aligned} 
\right.
\end{equation}
with the initial condition $(\W^{(n+1)}(0, \alpha),R^{(n+1)}(0, \alpha) ):= (\W_0, R_0)$.
Here, the $n$-th para-material derivative is given by $T_{D_t}^{(n)} : = \partial_t + T_{b^{(n)}}\partial_\alpha$, and source terms 
\begin{equation*}
  \left(G^{(n)}, K^{(n)}\right) : = \left(G(\W^{(n)}, R^{(n)}, Z^{(n)}, \{z_j^{(n)}\}_{j=1}^N), K(\W^{(n)}, R^{(n)}, Z^{(n)},\{z_j^{(n)}\}_{j=1}^N)\right).  
\end{equation*}
\item $\{z_j^{(n+1)}(t) \}_{j=1}^N$ are the solutions of the system of ODEs
\begin{equation} \label{ODEIterationN}
    \frac{d}{dt}z_j^{(n+1)}(t) = \frac{i}{2\pi}\int_{\mathbb{R}} \frac{(1+\W^{(n)})\cdot \bar{R}^{(n)} (t,\alpha)}{Z^{(n)}(t, \alpha) -z^{(n)}_j(t)}\,d\alpha + \sum_{k\neq j} \frac{\lambda_k i}{2\pi}\frac{1}{\overline{z_j^{(n)}(t) - z_k^{(n)}(t)}},  
\end{equation}
with initial conditions $z_j^{(n+1)} = z_j(0)$, for $1\leq j \leq N$.
\item Auxiliary functions at $(n+1)$-th iteration are  $b^{(n+1)}: = b(\W^{(n+1)}, R^{(n+1)}, \{z_j^{(n+1)}\}_{j=1}^N)$, $\Y^{(n+1)}: = \Y(\W^{(n+1)})$, $J^{(n+1)}: = J(\W^{(n+1)})$.
\end{enumerate}
Note that in this iteration, $\W^{(n)}, R^{(n)}$ and $Z^{(n)}-\alpha$ are all holomorphic functions on the real line.

Indeed, as in \eqref{WRParaMatSys}, the source terms $\left(G^{(n)}, K^{(n)}\right)$ satisfy for $s>0$,
\begin{equation} \label{SourceTermN}
 \|(G^{(n)}, K^{(n)}) \|_{\H^s}\lesssim_{\mathcal{A}^{(n)}_{1}} \mathcal{A}^{(n)}_{\frac{3}{2}} \left(\|(\W^{(n)},R^{(n)})\|_{\mathcal{H}^{s}}+ \|Z^{(n)}-\alpha \|_{H^{s+1}}+ \sum_{j=1}^N |\lambda_j|\right), 
\end{equation}
and $\mathcal{A}^{(n)}_{\frac{3}{2}}<+\infty$ by Sobolev embedding when $s>\frac{3}{2}$.
The auxiliary function $b^{(n)}$ satisfies the estimate 
\begin{equation*}
\|b^{(n)}\|_{C^{1+\epsilon}_{*}}\lesssim_\CalAZ \|R^{(n)}\|_{C^{1+\epsilon}_{*}} + \|Z^{(n)}-\alpha \|_{C^{1+\epsilon}_{*}}
\end{equation*}
due to \eqref{bEst}.
The para-coefficients satisfy
\begin{equation*}
 \|(1+\W^{(n)})(1-\bar{\Y}^{(n)})-1 \|_{C^{\frac{3}{2}+\epsilon}_{*}} + \|(J^{(n)})^{-\frac{1}{2}}(1-\Y^{(n)})^2-1 \|_{C^{\frac{3}{2}+\epsilon}_{*}} \lesssim \|\W^{(n)}\|_{C^{\frac{3}{2}+\epsilon}_{*}}
\end{equation*}
by the Moser-type estimate \eqref{MoserTwo}, and  
\begin{equation*}
\|(J^{(n)})^{-\frac{1}{2}}(1-\Y^{(n)})^3 \W^{(n)}_\alpha\|_{C^{\frac{1}{2}+\epsilon}_{*}} \lesssim \|\W^{(n)}\|_{C^{\frac{1}{2}+\epsilon}_{*}}
\end{equation*}
by \eqref{MoserTwo} and \eqref{CsProduct}.
In addition, 
\begin{equation*}
 \|\partial_t((1+\W^{(n)})(1-\bar{\Y}^{(n)})) \|_{C^{\epsilon}_{*}} + \|\partial_t((J^{(n)})^{-\frac{1}{2}}(1-\Y^{(n)})^2) \|_{C^{\epsilon}_{*}} \lesssim \mathcal{A}^{(n)}_{\frac{3}{2}}<+\infty.
\end{equation*}
Hence, according to Proposition \ref{t:LinearExist}, the system \eqref{WRniteration} has a unique solution $(\W^{(n+1)}, R^{(n+1)})$.
As in \eqref{zjtimedot}, 
\begin{equation*} 
 \left|\frac{d}{dt}z_j^{(n+1)}(t)\right| \leq \|R^{(n)}(t, \cdot)\|_{L^\infty} + (N-1)\max_{j\neq k} \frac{|\lambda_k|}{2\pi} \frac{1}{|z_j^{(n)}(t) - z_k^{(n)}(t)|}.
\end{equation*}
$z_j^{(n+1)}(t)$ can be defined up to time $t=\tau$ as long as $\| \mathcal{A}^{(n)}_{1}\|_{L^1_t[0,\tau]}<+\infty$. 

We now consider the class of approximate solutions that satisfy the following three properties:
\begin{align} 
    &\min_{j\neq k} |z_j(t) - z_k(t)| \geq \frac{1}{2} \min_{j\neq k} |z_j(0) - z_k(0)|, \label{distance}\\
    &\min_j \inf_{\alpha\in \mathbb{R}}|Z(t,\alpha)-z_j(t)|\geq\frac{1}{2}\min_j \inf_{\alpha\in \mathbb{R}}|Z_0(\alpha)-z_j(0)|,  \label{distanceTwo}\\
    &\|(\W,R, Z-\alpha)(t) \|_{\H^s\times L^2} + \sum_{j=1}^N |\lambda_j|\leq C\left(\|(\W_0,R_0, Z_0-\alpha) \|_{\H^s \times L^2}+ \sum_{j=1}^N |\lambda_j|\right), \label{Energygrowth}
\end{align}
$\forall t \in [0,T]$ for some fixed constant $C>1$.
Clearly, the zero-th approximate solution satisfies these three properties.
Assume that the $n$-th approximate solution satisfies the above three properties, we will check the $(n+1)$-th approximate solution is the same.

To check the distance property \eqref{distance}, we estimate for each pair of $j\neq k$ using the triangle inequality, 
\begin{align*}
 &|z^{(n+1)}_j(t)- z^{(n+1)}_k(t)|\geq |z^{(n+1)}_j(0)- z^{(n+1)}_k(0)|- |z^{(n+1)}_j(t)- z^{(n+1)}_j(0)| -|z^{(n+1)}_k(t)- z^{(n+1)}_k(0)|\\
 &\geq |z^{(n+1)}_j(0)- z^{(n+1)}_k(0)| - T\left(\left|\frac{d}{dt}z_j^{(n+1)}(t)\right|+ \left|\frac{d}{dt}z_k^{(n+1)}(t)\right|\right)\\
 &\geq |z^{(n+1)}_j(0)- z^{(n+1)}_k(0)| - T\left(2C\|R_0 \|_{H^s}+ \frac{N}{\pi}\max_j |\lambda_j|\max_{j\neq k}\frac{1}{|z_j(0)-z_k(0)|}\right).
\end{align*}
Hence, the distance property \eqref{distance} is true as long as the time $T$ is chosen small enough.
For the distance property \eqref{distanceTwo}, we fix an integer $j$,
\begin{align*}
&|Z^{(n+1)}(t,\alpha) - z_j^{(n+1)}(t)|\geq |Z_0(\alpha) - z_j(0)|-|Z^{(n+1)}(t,\alpha) - Z_0(\alpha)|-|z^{(n+1)}_j(t) - z_j^{(n+1)}(0)|\\
 \geq& \min_j \inf_{\alpha\in \mathbb{R}}|Z_0(\alpha)-z_j(0)|-\int_0^t|\partial_tZ^{(n+1)}(\tau, \alpha)|+ \Big|\frac{d}{dt}z_j^{(n+1)}(\tau)\Big|\,d\tau \\
\geq& \min_j \inf_{\alpha\in \mathbb{R}}|Z_0(\alpha)-z_j(0)|- T\left(C\|(\W_0, R_0)\|_{\H^s}+ \frac{N}{\pi}\max_j |\lambda_j|\max_{j\neq k}\frac{1}{|z_j(0)-z_k(0)|}\right).
\end{align*}
Taking the minimum in $j$ on the left-hand side of the inequality and choosing $T$ sufficiently small, we obtain the distance property \eqref{distanceTwo}.
As a by-product of \eqref{distanceTwo}, we get the $L^2$ estimate
\begin{equation*}
 \| v^{rot,(n)}\circ Z^{(n)}\|_{L^2} \lesssim \sum_{j=1}^N|\lambda_j|.
\end{equation*}
As for the uniform bound of the energy, comparing the paradifferential systems \eqref{LinearIterationEqn} and \eqref{WRniteration}, one notes that $(\W^{(n+1)}, R^{(n+1)})$ solve the system \eqref{LinearIterationEqn} with
\begin{align*}
&\tilde{A} = (1+\W^{(n)})(1-\bar{\Y}^{(n)}) -1, \quad\tilde{B} = (J^{(n)})^{-\frac{1}{2}}(1-\Y^{(n)})^2-1, \\
&\tilde{C} = (J^{(n)})^{-\frac{1}{2}}(1-\Y^{(n)})^3 \W^{(n)}_\alpha, \quad\tilde{b} = b^{(n)}, \quad \tilde{G} = G^{(n)}, \quad \tilde{K} = K^{(n)}.
\end{align*}
Hence, by \eqref{LinEstimate} and \eqref{SourceTermN}, there exists an energy $E^{(n+1)}_s$ such that
\begin{equation} \label{ModifiedEnergyN}
 E^{(n+1)}_s \approx \sigma\|\W \|^2_{H^{s+\frac{1}{2}}}+\|R \|_{H^s}^2+ \sum_{j=1}^N |\lambda_j|^2, \quad \frac{d}{dt}E^{(n+1)}_s \lesssim  E_s^{(n+1)} + E_s^{(0)}+ \|Z_0 -\alpha \|^2_{L^2}.  
\end{equation}
Here, in the above inequality, the implicit constant depends on $\mathcal{A}^{(n)}_{\frac{3}{2}}$, which is uniform bounded from above by Sobolev embedding and the properties \eqref{distance} and \eqref{Energygrowth}.
We further compute the time derivative of $\|Z^{(n+1)}-\alpha\|_{L^2}^2$.
As in \eqref{ZEqnTwo}, $Z^{(n+1)}$ solves the equation
\begin{equation*}
  (\partial_t + b^{(n)}\partial_\alpha)Z^{(n+1)} = \bar{R}^{(n)} + \sum_{j=1}^N v^{rot,(n)}_j \circ Z^{(n)}.   
\end{equation*}
Hence, we compute
\begin{align*}
 &\frac{d}{dt}\|Z^{(n+1)}-\alpha\|_{L^2}^2 \leq 2\left|\Re \int (Z-\alpha)\bar{Z}_t \,d\alpha \right| \\
 \lesssim & \|Z^{(n+1)}-\alpha\|_{L^2} \left(\|b^{(n)}\|_{L^\infty}(\| \W^{(n+1)}\|_{L^2}+\| b^{(n)}\|_{L^2})+\|R^{(n)} \|_{L^2}+ \| v^{rot,(n)}\circ Z^{(n)}\|_{L^2}\right) \\
 \lesssim & \|Z^{(n+1)}-\alpha\|_{L^2}^2 + \|\W^{(n+1)}\|_{H^{s+\frac{1}{2}}}^2 + \|R^{(n+1)} \|_{H^s}^2 + \sum_{j=1}^N |\lambda_j|^2.
\end{align*}
Again, here the implicit constant in the inequality is uniformly bounded.
Adding this inequality to the inequality in \eqref{ModifiedEnergyN} and using Gronwall's inequality, we get
\begin{equation*}
    \|(\W^{(n+1)},R^{(n+1)}, Z^{(n+1)}-\alpha)(t) \|_{\H^s\times L^2} + \sum_{j=1}^N |\lambda_j|\leq e^{Ct}\left(\|(\W_0,R_0, Z_0-\alpha) \|_{\H^s \times L^2}+ \sum_{j=1}^N |\lambda_j|\right),
\end{equation*}
for some constant $C$.
Hence, the $(n+1)$-th approximate solution also satisfies \eqref{Energygrowth}.

Next, we show that the approximate solutions $\{\W^{(n)}, R^{(n)}, \{z_j^{(n)}\}_{j=1}^N  \}_{n=0}^{+\infty}$ form a Cauchy sequence.
Define the difference between two approximate solutions
\begin{equation*}
    \hat{\W}^{(n)} : = \W^{(n+1)}-\W^{(n)}, \quad \hat{R}^{(n)} : = R^{(n+1)}-R^{(n)}, \quad \hat{Z}^{(n)} : = Z^{(n+1)}-Z^{(n)}, \quad \hat{z}^{(n)}_j : = z_j^{(n+1)}-z_j^{(n)}.
\end{equation*}
Then $\left(\hat{\W}^{(n)}, \hat{R}^{(n)}, \{\hat{z}_j^{(n)}\}_{j=1}^N  \right)$ solve the system
\begin{equation} \label{WRndifference}
\left\{
\begin{aligned}
&  T_{D_t}^{(n)}\hat{\W}^{(n)}  = -\partial_\alpha T_{(1+\W^{(n)})(1-\bar{\Y}^{(n)})}\hat{R}^{(n)} + \hat{G}^{(n)}\\
&  T_{D_t}^{(n)}\hat{R}^{(n)}  = -i\sigma T_{(J^{(n)})^{-\frac{1}{2}}(1-\Y^{(n)})^2}\hat{\W}^{(n)}_{\alpha \alpha}+ 3i\sigma T_{(J^{(n)})^{-\frac{1}{2}}(1-\Y^{(n)})^3 \W^{(n)}_\alpha} \hat{\W}^{(n)}_\alpha + \hat{K}^{(n)}\\
&\frac{d}{dt}\hat{z}_j^{(n)} = \mathcal{U}^{(n)}(t, z_j^{(n)}) - \mathcal{U}^{(n-1)}(t, z_j^{(n-1)}) + \sum_{k\neq j} \frac{\lambda_k i}{2\pi}\frac{\overline{\hat{z}^{(n-1)}_k - \hat{z}^{(n-1)}_j}}{\overline{\left(z_j^{(n)} - z_k^{(n)}\right)\left(z_j^{(n-1)} - z_k^{(n-1)}\right)}},
\end{aligned} 
\right.
\end{equation}
where the source terms $(\hat{G}^{(n)}, \hat{K}^{(n)})$ are given by
\begin{align*}
\hat{G}^{(n)} = & G^{(n)} -G^{(n-1)} -T_{b^{(n)}-b^{(n-1)}}\W^{(n)}_\alpha- \partial_\alpha T_{(1+\W^{(n)})(1-\bar{\Y}^{(n)})-(1+\W^{(n-1)})(1-\bar{\Y}^{(n-1)})}R^{(n)} , \\
\hat{K}^{(n)} = & K^{(n)} -K^{(n-1)}-T_{b^{(n)}-b^{(n-1)}}R^{(n)}_\alpha -i\sigma T_{(J^{(n)})^{-\frac{1}{2}}(1-\Y^{(n)})^2-(J^{(n-1)})^{-\frac{1}{2}}(1-\Y^{(n-1)})^2}\W^{(n)}_{\alpha \alpha} \\
& + 3i\sigma T_{(J^{(n)})^{-\frac{1}{2}}(1-\Y^{(n)})^3 \W^{(n)}_\alpha-(J^{(n-1)})^{-\frac{1}{2}}(1-\Y^{(n-1)})^3 \W^{(n-1)}_\alpha} \W^{(n)}_\alpha.
\end{align*}
Direct computation using \eqref{Energygrowth} gives for $s>\frac{3}{2}$, 
\begin{align*}
    \|(\hat{G}^{(n)},\hat{K}^{(n)} )\|_{\H^{s-\frac{3}{2}}} \lesssim_{\mathcal{A}^{(n)}_{\frac{3}{2}}, \mathcal{A}^{(n+1)}_{\frac{3}{2}}}& \left(\|(\W_0,R_0, Z_0-\alpha)
    \|_{\H^s \times L^2}+ \sum_{j=1}^N |\lambda_j|\right)\cdot\\
    &\|(\hat{\W}^{(n-1)}, \hat{R}^{(n-1)}, \hat{Z}^{(n-1)} )\|_{\H^{s-\frac{3}{2}}\times L^2}.
\end{align*}

In addition, $\hat{Z}^{(n)}$ satisfies the equation 
\begin{equation*}
  (\partial_t + b^{(n)}\partial_\alpha)\hat{Z}^{(n)} = \bar{\hat{R}}^{(n-1)} + \sum_{j=1}^N \frac{\lambda_j i}{2\pi}\frac{\overline{\hat{z}_j^{(n-1)} - \hat{Z}^{(n-1)}}}{\overline{\left(Z^{(n)} - z_j^{(n)}\right)\left(Z^{(n-1)} - z_j^{(n-1)}\right)}}-(b^{(n)}-b^{(n-1)})Z_\alpha^{(n)}.   
\end{equation*}
We compute the time derivative of $\|\hat{Z}^{(n)}\|_{L^2}^2$,
\begin{align*}
 &\frac{d}{dt}\|\hat{Z}^{(n)}\|_{L^2}^2 \leq 2\left|\Re \int \hat{Z}^{(n)}\bar{\hat{Z}}^{(n)}_t \,d\alpha \right| \lesssim  \|\hat{Z}^{(n)}\|_{L^2} \Big(\|b^{(n)}\|_{L^\infty}\| \hat{\W}^{(n)}\|_{L^2}\\
 +&\|\hat{R}^{(n-1)} \|_{L^2}+ \sum_{j=1}^N |\hat{z}_j^{(n-1) }|+N\|\hat{Z}^{(n-1)} \|_{L^2} + (\|\W^{(n)} \|_{L^2}+1)\|b^{(n)}-b^{(n-1)} \|_{L^\infty}\Big) \\
 \lesssim&   \|(\hat{\W}^{(n)}, \hat{R}^{(n-1)}, \hat{Z}^{(n-1)})\|_{\H^{s-\frac{3}{2}}\times L^2}^2  + \sum_{j=1}^N|\hat{z}_j^{(n-1) }|^2,
\end{align*}
where the implicit constant in the inequality depends on $\|(\W_0,R_0, Z_0-\alpha) \|_{\H^s \times L^2}+ \sum_{j=1}^N |\lambda_j|$.

We estimate the time derivative of $\hat{z}^{(n)}_j$.
\begin{align*}
&\left|\frac{d}{dt}\hat{z}^{(n)}_j(t)\right|  \leq \frac{1}{2\pi} \int \Bigg|\frac{(1+\W^{(n)})\cdot \bar{\hat{R}}^{(n-1)} (t,\alpha)}{Z^{(n)}(t, \alpha) -z^{(n)}_j(t)} \Bigg|\,d\alpha + \frac{1}{2\pi} \int \Bigg|\frac{\hat{\W}^{(n-1)}\cdot \bar{R}^{(n-1)} (t,\alpha)}{Z^{(n)}(t, \alpha) -z^{(n)}_j(t)} \Bigg|\,d\alpha \\
&+ \frac{1}{2\pi} \int \Bigg|\frac{(1+\W^{(n-1)})\cdot \bar{R}^{(n-1)} (t,\alpha)}{Z^{(n)}(t, \alpha) -z^{(n)}_j(t)}-\frac{(1+\W^{(n-1)})\cdot \bar{R}^{(n-1)} (t,\alpha)}{Z^{(n-1)}(t, \alpha) -z^{(n-1)}_j(t)} \Bigg|\,d\alpha\\
&+ \sum_{k\neq j}\frac{|\lambda_k|}{2\pi} \left| \frac{\hat{z}^{(n-1)}_k(t) - \hat{z}^{(n-1)}_j(t)}{\left(z_j^{(n)}(t) - z_k^{(n)}(t)\right)\left(z_j^{(n-1)}(t) - z_k^{(n-1)}(t)\right)}\right|\\
&\lesssim \left(\|(\W_0, R_0, Z_0-\alpha) \|_{\H^s \times L^2}+ \sum_{j=1}^N |\lambda_j|\right)\left(\|( \hat{\W}^{(n-1)}, \hat{R}^{(n-1)}, \hat{Z}^{(n-1)})\|_{\H^{s-\frac{3}{2}}\times L^2} + \sum_{j=1}^N |\hat{z}^{(n-1)}_j |\right), 
\end{align*}
where we use \eqref{distance} and \eqref{distanceTwo} to bound denominators in each term.

Applying Proposition \ref{t:LinearExist} for $(\hat{\W}^{(n)}, \hat{R}^{(n)})$ in the first two equations in \eqref{WRndifference} and adding the time derivative of $\|\hat{Z}^{(n)}\|_{L^2}^2$ and $|\hat{z}^{(n)}_j|^2$, we obtain that for $s>\frac{3}{2}$ and $n\geq 1$, there exists a sequence of energies $\mathcal{E}_s^{(n)}$, such that
\begin{equation*}
 \mathcal{E}_s^{(n)} \approx  \sigma\|\hat{\W}^{(n)} \|^2_{H^{s-1}}+\|\hat{R}^{(n)} \|_{H^{s-\frac{3}{2}}}^2 +\|\hat{Z}^{(n)} \|_{L^2}^2 + \sum_{j=1}^N|\hat{z}_j^{(n)}|^2, \quad \frac{d}{dt}\mathcal{E}_s^{(n)} \lesssim  \mathcal{E}_s^{(n)} + \mathcal{E}_s^{(n-1)}.   
\end{equation*}
Hence, we get the energy bound
\begin{equation*}
    \mathcal{E}^{(n)}_s(t) \leq (e^{Ct}-1) \sup_{0\leq t\leq T} \mathcal{E}^{(n-1)}_s(t), \quad \forall t\in [0,T],
\end{equation*}
where $C$ is a positive constant that depends on $\|(\W_0,R_0, Z_0-\alpha) \|_{\H^s \times L^2}+ \sum_{j=1}^N |\lambda_j|$.
By choosing the time $T$ small enough, 
\begin{equation*}
    \sup_{0\leq t\leq T} \mathcal{E}^{(n)}_s(t) \leq \frac{1}{2}\sup_{0\leq t\leq T} \mathcal{E}^{(n-1)}_s(t),
\end{equation*}
so that $\{\W^{(n)},R^{(n)}, Z^{(n)}-\alpha, \{ z^{(n)}_j\}_{j=1}^N\}_{n=1}^\infty$ forms a Cauchy sequence in $C([0,T]; \H^{s-\frac{3}{2}}\times H^s\times \Omega_t^N)$.
In addition, $\{\W^{(n)},R^{(n)}, Z^{(n)}-\alpha\}$ is uniformly bounded in $C([0,T]; \H^{s}\times H^{s+\frac{3}{2}})$, and $|\frac{d}{dt}z_j^{(n)}(t)|$ is uniformly bounded on $[0,T]$.
We conclude that
\begin{equation*}
 \{\W^{(n)},R^{(n)}, Z^{(n)}-\alpha, \{ z^{(n)}_j\}_{j=1}^N\} \rightarrow \{\tilde{\W}, \tilde{R}, \tilde{Z}-\alpha, \{\tilde{z}_j\}_{j=1}^N\}\in C([0,T]; \H^{s}\times H^{s+\frac{3}{2}}\times \Omega_t^N). 
\end{equation*}
We have that
\begin{align*}
    &\frac{1}{\overline{Z^{(n)}(t,\alpha)-z^{(n)}_j(t)}} \rightarrow \frac{1}{\overline{\tilde{Z}(t,\alpha)-\tilde{z}_j(t)}} \quad \text{in } C([0,T];H^{s+\frac{1}{2}}), \\
    &\mathcal{U}^{(n)}(t,z^{(n)}_j) \rightarrow \frac{i}{2\pi}\int_{\mathbb{R}} \frac{(1+\tilde{\W})\cdot \bar{\tilde{R}} (t,\alpha)}{\tilde{Z}(t, \alpha) -\tilde{z}_j(t)}\,d\alpha \quad \text{uniformly in }\Omega_t.
\end{align*}
Hence, by sending $n\rightarrow \infty$ in \eqref{WRniteration} and \eqref{ODEIterationN}, $\{\tilde{\W}, \tilde{R}, \{\tilde{z}_j\}_{j=1}^N\}$ solve the water wave system \eqref{e:WW} in $C([0,T]; \H^{s}\times \Omega_t^N)$.
In addition, the solution satisfies \eqref{distance}, \eqref{distanceTwo}, \eqref{Energygrowth}.
In the ODEs \eqref{ODEIterationN}, one further gets $\frac{d}{dt}z^{(n)}_j(t)\rightarrow \frac{d}{dt}z_j(t)$ in $C([0,T]; \Omega)$, so that $\{\tilde{z}_j\}_{j=1}^N \in C^1([0,T]; \Omega_t^N)$.

The uniqueness of \eqref{e:WW} is obtained by considering the energy estimate of the difference equation of two solutions with the same initial data as in \eqref{WRndifference}.
We get that if $(\W^i, R^i, \{z_j^i\}_{j=1}^N)$, $i=1,2$, are two solutions of \eqref{e:WW} with initial data $(\W^i_0, R^i_0, \{z_j^i(0)\}_{j=1}^N)$, $i=1,2$, then for $s>\frac{3}{2}$, 
\begin{equation} \label{Lipschitz}
\begin{aligned}
 &\|(\W^1, R^1, \{z_j^1\}_{j=1}^N) - (\W^2, R^2, \{z_j^2\}_{j=1}^N) \|_{L^\infty([0,T]; \H^{s-\frac{3}{2}})\times W^{1,\infty}([0,T]; \mathbb{C}^N)} \\
 \lesssim_{\mathcal{A}_{\frac{3}{2},1}, \mathcal{A}_{\frac{3}{2},2}} &\|(\W^1_0, R^1_0, \{z_j^1(0)\}_{j=1}^N) - (\W^2_0, R^2_0, \{z_j^2(0)\}_{j=1}^N) \|_{\H^{s-\frac{3}{2}}\times \mathbb{C}^N}.
\end{aligned}
\end{equation}
Setting the same initial data for two solutions, we get the uniqueness of the solution from the Lipschitz bound.

From the a priori energy bound of the solution in $L^\infty([0,T]; \H^s\times \Omega_t^N)$ and the Lipschitz bound \eqref{Lipschitz} in $L^\infty([0,T]; \H^{s-\frac{3}{2}}\times \mathbb{C}^N)$,  for any $\sigma <s$, the solution map
\begin{equation*}
(\W_0, R_0, \{z_j(0) \}_{j=1}^N)\in \H^s\times \Omega_t^N \rightarrow (\W, R, \{z_j \}_{j=1}^N)\in C^0([0,T];\H^\sigma)\times C^1([0,T];\Omega_t^N)
\end{equation*}
is uniformly continuous.
Suppose there exists a sequence of initial data $\{(\W_{0,n}, R_{0,n}, \{z_{j,n}(0) \}_{j=1}^N) \}_{n=1}^\infty$ that converges to $(\W_{0}, R_{0}, \{z_{j}(0) \}_{j=1}^N)$ in $\H^s \times \mathbb{C}^N$.
Then using the argument in Theorem $6.12$ in Alazard-Burq-Zuily \cite{MR2805065}, the corresponding solutions satisfy
\begin{equation*}
\lim_{n \rightarrow \infty} \| (\W_{n}, R_{n}, \{z_{j,n} \}_{j=1}^N) - (\W, R, \{z_{j}\}_{j=1}^N)\|_{C^0([0,T];\H^s)\times C^1([0,T];\mathbb{C}^N)} = 0.
\end{equation*}
This shows the continuous dependence of initial data for the solution map.

Finally, the blow-up criterion follows directly from the energy estimate Theorem \ref{t:MainEnergyEstimate}.

\section{Cubic lifespan of solutions for 2 vortices with small symmetrized data} \label{s:CubicLifespan}

Recall that for holomorphic functions $f$ on the real line, $\Re f = H\Im f$.
Since the Hilbert transform $H$ maps even functions to odd functions, and vice versa, we can define the space of complex even and odd functions in $H^s(\R)$ by
\begin{align*}
 &H^s_e : = \{f\in H^s(\R): \Re\{f\} \text{ is odd, and } \Im\{f\} \text{ is even}\},\\
 &H^s_o : = \{f\in H^s(\R): \Re\{f\} \text{ is even, and } \Im\{f\} \text{ is odd}\}.
\end{align*}

If the initial data is symmetric with respect to the vertical axis, then $R_0\in H^{s}_e$, $Z_0 - \alpha \in H^{s+\frac{3}{2}}_e$ and $\W_0 \in H_o^{s+\frac{1}{2}}$.
The following result asserts that the solution keeps this symmetry.
\begin{proposition}
Let $s>\frac{3}{2}$.
If the initial data is symmetric about the vertical axis in the sense that
\begin{enumerate}
\item The initial surface and velocity are symmetric: $(\W_0, R_0)\in H^{s+\frac{1}{2}}_o \times H^s_e$.
\item The number of vortices is even, and the initial positions and strengths of vortices are symmetric. 
The initial vorticity $\omega_0$ can be represented by
\begin{equation*}
    \omega_0 = \sum_{j=1}^{N_0}(\lambda_j\delta_{z_{j,1}(0)} -\lambda_j\delta_{z_{j,2}(0)}),
\end{equation*}
where $\lambda_j\in \R$ and $z_{j,1}(0), z_{j,2}(0)\in \Omega_0$ for $1\leq j \leq N_0$.
Moreover, $\Re \{z_{j,1}(0) \} = -\Re \{z_{j,2}(0) \}$, $\Im \{z_{j,1}(0) \} = \Im \{z_{j,2}(0) \}$.
\end{enumerate}
Then the water wave system \eqref{e:WW} admits a unique solution $\left(\W, R, \{z_{j,1}\}_{j=1}^{N_0},  \{z_{j,2}\}_{j=1}^{N_0}\right)\in C([0,T];H^{s+\frac{1}{2}}_o \times H^s_e) \times C^1([0,T];\Omega_t^{2N_0})$, and $\Re \{z_{j,1}(t) \} = -\Re \{z_{j,2}(t) \}$, $\Im \{z_{j,1}(t) \} = \Im \{z_{j,2}(t) \}$ for $1\leq j \leq N_0$.
\end{proposition}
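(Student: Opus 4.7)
The plan is to combine the reflection invariance of \eqref{e:WW} with the uniqueness statement of Theorem \ref{t:Wellposed}. Introduce the $\mathbb{C}$-antilinear operator $\mathcal{T}$ acting on functions of $\alpha\in\R$ by $(\mathcal{T}f)(\alpha):=\overline{f(-\alpha)}$. Because $\mathcal{T}$ is antilinear, a short Fourier calculation yields $\mathcal{T}H=-H\mathcal{T}$, and this antilinearity combines with the $i$ inside $\nP=\frac{1}{2}(I-iH)$ to give $\mathcal{T}\nP=\nP\mathcal{T}$ and $\mathcal{T}\bar{\nP}=\bar{\nP}\mathcal{T}$; in particular $\mathcal{T}$ preserves the holomorphic class. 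The operator also satisfies $\mathcal{T}\partial_\alpha=-\partial_\alpha\mathcal{T}$, $\mathcal{T}\bar f=\overline{\mathcal{T}f}$, $\mathcal{T}(fg)=\mathcal{T}f\,\mathcal{T}g$ and, for constants, $\mathcal{T}c=\bar c$. Under $\mathcal{T}$ the symmetry hypotheses translate into
\begin{equation*}
\mathcal{T}\W_0=\W_0,\quad \mathcal{T}R_0=-R_0,\quad \mathcal{T}(Z_0-\alpha)=-(Z_0-\alpha),\quad z_{j,2}(0)=-\overline{z_{j,1}(0)},
\end{equation*}
paired with strengths $\lambda_j$ and $-\lambda_j$.

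Starting from the solution $(\W,R,\{z_{j,1}\},\{z_{j,2}\})$ of \eqref{e:WW} supplied by Theorem \ref{t:Wellposed}, define the reflected tuple
\begin{equation*}
\tilde{\W}:=\mathcal{T}\W,\quad \tilde{R}:=-\mathcal{T}R,\quad \tilde{Z}-\alpha:=-\mathcal{T}(Z-\alpha),\quad \tilde{z}_{j,1}:=-\overline{z_{j,2}},\quad \tilde{z}_{j,2}:=-\overline{z_{j,1}},
\end{equation*}
keeping the same strength assignment $(\lambda_j,-\lambda_j)$. The key step is to verify that this tuple is again a solution of \eqref{e:WW}. A direct computation with the commutation rules above yields $\tilde{\Y}=\mathcal{T}\Y$, $\tilde{J}=\mathcal{T}J$, $\tilde{b}=-\mathcal{T}b$, $\tilde{M}=\mathcal{T}M$, $\tilde{a}=\mathcal{T}a$, and the crucial identity
\begin{equation*}
\tilde{v}^{rot}\circ\tilde{Z}=-\mathcal{T}(v^{rot}\circ Z),
\end{equation*}
which uses both the swap $\tilde{z}_{j,1}\leftrightarrow -\overline{z_{j,2}}$ and the opposite-sign pairing of the strengths, together with $\overline{i}=-i$. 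Inserting these transformation rules in each equation of \eqref{e:WW}, and using $\mathcal{T}(if)=-i\mathcal{T}f$ to correctly handle every explicit factor of $i$, one checks term by term that the reflected tuple satisfies exactly the same system \eqref{e:WW} with the same strength assignment.

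Because the initial data is assumed to be symmetric, the reflected tuple agrees with the original one at $t=0$: $\tilde{\W}_0=\W_0$, $\tilde{R}_0=R_0$, $\tilde{z}_{j,k}(0)=z_{j,k}(0)$. The uniqueness part of Theorem \ref{t:Wellposed} then forces agreement on the entire interval of existence, which is precisely the propagation of symmetry claimed in the proposition. The main obstacle is the term-by-term verification in the second step: the right-hand sides of \eqref{e:WW} contain many paradifferential and rotational pieces, and the $\mathbb{C}$-antilinearity of $\mathcal{T}$ must be tracked carefully through every factor of $i$, every holomorphic projection, and every composition $\cdot\circ Z$. A slicker alternative that avoids the nonlinear bookkeeping is to run the symmetry induction at the level of the Picard iteration of Section \ref{s:ProofWell}: the zeroth iterate is symmetric by hypothesis; if the $n$-th iterate is symmetric, then the coefficients of the linear paradifferential problem and of the ODEs defining the $(n+1)$-th iterate inherit the symmetry, so Proposition \ref{t:LinearExist} together with ODE uniqueness forces the $(n+1)$-th iterate to be symmetric as well, and the symmetry passes to the limit constructed in Section \ref{s:ProofWell}.
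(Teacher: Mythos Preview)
Your proposal is correct and follows the standard reflection-symmetry-plus-uniqueness argument. The paper does not actually prove this proposition; it simply says the proof is ``virtually the same as Theorem~5 in \cite{MR4179726}'' and defers to that reference. Your outline---defining the antilinear reflection $\mathcal{T}f(\alpha)=\overline{f(-\alpha)}$, checking that the system \eqref{e:WW} is invariant under the induced map on solutions, and then invoking the uniqueness of Theorem~\ref{t:Wellposed}---is precisely the natural argument and is what one expects the cited reference to contain. Your alternative of propagating the symmetry through the Picard iterates of Section~\ref{s:ProofWell} is equally valid and arguably cleaner, since at each step one only needs to verify that a \emph{linear} problem with symmetric coefficients preserves the symmetry class.
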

The proof of this proposition is virtually the same as Theorem $5$ in \cite{MR4179726}, and we ask the interested readers to check there.

In the rest of this section, we will choose $T = \delta \epsilon^{-2} $, for some positive constant $\delta$.
We will take into account the symmetry of the initial data and prove Theorem \ref{t:lifespan}.
For the simplicity of the computation, we will write the coefficient of the surface tension $\sigma = 1$ in the following.
We consider the control norms defined by
\begin{align*}
\CalAZS : = \|\W \|_{ C^{\epsilon}_{*}}+ \|R \|_{ C^{-\frac{1}{2}+\epsilon}_{*}}, \quad \CalAOS : = \|\W \|_{ C^{1+\epsilon}_{*}}+ \|R \|_{ C^{\frac{1}{2}+\epsilon}_{*}},\quad \CalATS : = \|\W \|_{ C^{\frac{3}{2}+\epsilon}_{*}}+ \|R \|_{ C^{1+\epsilon}_{*}}.
\end{align*}
These control norms do not contain the vortices interaction terms, as we will show that two vortices won't be too close on $[0,T]$. 
The key part of the proof is the following  energy estimate:
\begin{theorem} \label{t:CubicEnergy}
Under the conditions in Theorem \ref{t:lifespan}, there exists an energy functional $\mathcal{E}_s$ associated with the solution that satisfies the following properties:
\begin{enumerate}
     \item Norm equivalence:
       \begin{equation}
           \mathcal{E}_s \approx_\CalAZS \|(\W,  R)\|^2_{\mathcal{H}^s}. \label{normEqnTwo}
       \end{equation}
     \item Energy estimate:
      \begin{equation}
     \frac{d}{dt}\mathcal{E}_s\lesssim_\CalAZS \left(\CalAOS\CalATS + (1+ t)^{-\frac{3}{2}}\right) \mathcal{E}_s+ \epsilon^2(1+ t)^{-\frac{3}{2}}.\label{EnergyEst}
 \end{equation} 
 \end{enumerate} 
\end{theorem}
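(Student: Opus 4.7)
The plan is to build on the a priori energy estimate Theorem \ref{t:MainEnergyEstimate}, refining it in two directions: (i) replace the tame cubic bound $\CalAO \CalAT E_s$ by the genuine cubic $\CalAOS \CalATS \mathcal{E}_s$ for the irrotational part via normal form corrections, and (ii) isolate the rotational contributions and show that they are perturbative at the rate $(1+t)^{-3/2}$ thanks to the fact that the symmetric vortex pair drifts away from the free surface.

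\textbf{Step 1: Control of the distance $d_S(t)$.} Under the bootstrap assumption \eqref{WRZNormBound} on $[0,T]$, I would first analyze the ODE \eqref{Kirchhoff} for the symmetric pair $z_1=-x+iy$, $z_2=x+iy$ with strengths $\lambda,-\lambda$. The mutual Helmholtz--Kirchhoff term is purely vertical, of magnitude $|\lambda|/(4\pi x)$, whereas the irrotational contribution $\mathcal{U}(t,z_j)$ is bounded by $\|R\|_{L^\infty}\lesssim \epsilon$ via \eqref{zjtimedot}. Under the scale-separation hypothesis $|\lambda|/x(0)\geq M\gg 1$, the vertical drift dominates, giving $|y(t)|\gtrsim |y(0)| + c|\lambda|t/x(0)$ when $\lambda<0$, while when $\lambda>0$ the assumption $|y(0)|\gg \epsilon^{-2}$ keeps $|y(t)|\gtrsim \epsilon^{-2}$ over $[0,\delta\epsilon^{-2}]$. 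In either case $d_S(t)\gtrsim 1+t$, and by a symmetric argument $x(t)$ stays comparable to $x(0)$ so the two vortices never collide.

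\textbf{Step 2: Estimate of rotational terms.} Combining the decay \eqref{Lkestimate}--\eqref{LkHsEstimate} with Step 1 and the smallness of $|\lambda|$, each rotational source in \eqref{e:WW}---namely $\nP\partial_\alpha(v^{rot}\circ Z)$, the time-derivative pieces $\partial_t \frac{1}{Z-z_j}$, and the rotational parts of $a,b,M$---satisfies an $H^s$ bound of order $|\lambda|\, d_S(t)^{-3/2}\lesssim \epsilon(1+t)^{-3/2}$. This is the mechanism producing the $\epsilon^2(1+t)^{-3/2}$ forcing and the $(1+t)^{-3/2}\mathcal{E}_s$ term in \eqref{EnergyEst}. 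Note $\int_0^\infty (1+t)^{-3/2}\,dt<\infty$, so integrated against $\epsilon^2$ these contribute at most $\epsilon^2$ to the total energy on $[0,\delta\epsilon^{-2}]$, which is exactly the budget for the cubic lifespan.

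\textbf{Step 3: Energy construction and normal form.} Taking
\begin{equation*}
\mathcal{E}_s := E_s + E_s^{(3),\mathrm{nf}} + E_s^{(2),\mathrm{rot}},
\end{equation*}
where $E_s$ is the Theorem \ref{t:MainEnergyEstimate} energy (without the $|Z-\alpha|^2$ and $\sum|\lambda_j|^2$ terms), $E_s^{(3),\mathrm{nf}}$ is the cubic normal-form correction constructed from the low-high and balanced holomorphic bilinear forms $B_{lh},B_{hh}$ of Lemma \ref{t:SymbolPara} (in the spirit of Ifrim--Tataru \cite{MR3667289} and the paradifferential version in \cite{wan2024}), designed to eliminate the quadratic non-perturbative terms in $\frac{d}{dt}E_s$ arising from pure irrotational interactions, and $E_s^{(2),\mathrm{rot}}$ is a quadratic correction absorbing the non-perturbative quadratic rotational residues. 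The norm equivalence \eqref{normEqnTwo} follows since both corrections are small relative to $\|(\W,R)\|_{\H^s}^2$ under $\CalAZS\lesssim \epsilon$.

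\textbf{Step 4: Energy differentiation.} Differentiating $\mathcal{E}_s$ in time, using the para-material system \eqref{WRParaMatSys} and the symmetry (which forces cancellation of the linearly-growing horizontal terms), the contributions split into: (a) purely irrotational cubic remainders, controlled by $\CalAOS\CalATS\mathcal{E}_s$ after the normal form correction eliminates the quadratic non-perturbative part; (b) mixed irrotational/rotational remainders, carrying a factor of $d_S(t)^{-3/2}$ from Step 2 and yielding $(1+t)^{-3/2}\mathcal{E}_s$; (c) pure rotational forcing of size $\epsilon(1+t)^{-3/2}$, yielding $\epsilon^2(1+t)^{-3/2}$ after pairing with an $\H^s$ factor of size $\epsilon$.

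The main obstacle I anticipate is Step 1 combined with the closure: the ODE for $(x,y)$ is coupled to the surface via $\mathcal{U}(t,z_j)$, so one must bound this irrotational field using only $d_S(t)$ and the bootstrap, then feed these back into the ODE so that both $d_S(t)\gtrsim 1+t$ and $x(t)\gtrsim x(0)$ survive for the full extended time $\delta\epsilon^{-2}$. A secondary difficulty is making sure the normal form correction $E_s^{(3),\mathrm{nf}}$ and the quadratic rotational correction $E_s^{(2),\mathrm{rot}}$ do not reintroduce growing terms---this requires that the rotational symbols, restricted to the holomorphic class, do not resonate with the capillary dispersion, which is exactly where the symmetry of the vortex configuration is used.
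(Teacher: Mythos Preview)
Your Steps 1 and 2 are essentially what the paper does in Section \ref{s:Preliminary}: the symmetric ODE analysis gives $d_S(t)\gtrsim 1+t$, and every rotational source term is shown to satisfy an $H^s$ bound of size $\epsilon d_S(t)^{-3/2}$ (see \eqref{vrotZHs32}--\eqref{1Wsum2z} and \eqref{bEstTwo}). For Step 3, however, the paper packages the normal form differently. Rather than adding a cubic correction $E_s^{(3),\mathrm{nf}}$ to the energy $E_s$ of Theorem \ref{t:MainEnergyEstimate}, it first constructs \emph{normal form variables} $(\tilde{\W},\tilde{R}) = (\W+\W^{[1]}+\W^{[2]},R+R^{[1]}+R^{[2]})$ at the level of the unknowns (Proposition \ref{t:SymbolWR}), solving explicit $4\times 4$ symbol systems so that $(\tilde{\W},\tilde{R})$ satisfy a paradifferential system with only the transport term $T_{b_1}\partial_\alpha$ and perturbative sources $(G_3,K_3)$ remaining. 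The quadratic energy $\mathcal{E}_s^{[2]}$ is then built in the new variables, and three further cubic energy corrections $\mathcal{E}_{s,j}^{[3]}$ remove the residual non-perturbative cubic integrals coming from $\partial_t J^{-3/2}$, the para-coefficient expansions, and the transport term. Your approach of correcting the energy directly is morally equivalent (expanding $\|\tilde{\W}\|^2$ produces cubic cross terms), but the paper's two-stage construction separates the algebra more cleanly.

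Two points in your proposal are off. First, there is no need for a quadratic rotational correction $E_s^{(2),\mathrm{rot}}$: once Step 2 establishes that every rotational term is $O(\epsilon d_S^{-3/2})$ in $H^s$, these contributions go straight into the perturbative bucket $(G_3,K_3)$ satisfying \eqref{perturbative}, and no further cancellation is needed. Second, the symmetry of the vortex configuration is \emph{not} used to avoid resonance with the capillary dispersion; the normal-form denominators $9\xi^2+14\xi\eta+9\eta^2$ and $4\eta^2-4\eta\zeta+9\zeta^2$ are nonvanishing for purely algebraic reasons, independent of the vortices. The symmetry is used exactly where you first said, in the ODE analysis: it forces $\Re\,\mathcal{U}(t,iy)=0$, which via the mean-value theorem keeps $x(t)\approx x(0)$, and the opposite signs $\lambda,-\lambda$ produce the cancellation $v^{rot}\circ Z = -\frac{i\lambda x(t)}{\pi\,\overline{(Z-z_1)(Z-z_2)}}$ that gains the extra factor of $|\lambda x(0)|^{1/2}\lesssim\epsilon^{1/2}$ needed for \eqref{vrotZHs32}.
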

In order to prove Theorem \ref{t:lifespan}, we make the following bootstrap assumption for the solution: 
For some large constant $\tilde{M} \gg 2$,
\begin{equation} \label{bootstrap}
    \|(\W, R, Z-\alpha)(t) \|_{\H^s \times L^2}\leq \tilde{M}\epsilon, \quad \forall t\in[0,T].
\end{equation}

The rest of this section is organized as follows.
In Section \ref{s:Preliminary}, we derive estimates that are related to the point vortices.
Next, in Section \ref{s:Paralin}, we construct normal form variables $(\tilde{\W}, \tilde{R})$, so that using $(\tilde{\W}, \tilde{R})$, most of the non-perturbative quadratic terms in the system are removed.
Then, in Section \ref{s:CubicEnergy}, we construct a modified energy and derive the energy estimate Theorem \ref{t:CubicEnergy}. 
At the end, in Section \ref{s:prooflifespan}, we close the bootstrap assumption \eqref{bootstrap}, and finalize the proof of Theorem \ref{t:lifespan}.

\subsection{Preliminary estimates} \label{s:Preliminary}
In this section, we consider some estimates that are related to terms of vortices.
We show that many of these bounds are related to some negative powers of $d_S(t)$ \eqref{DefDistance}, and also obtain a lower bound for $d_S(t)$.

We first estimate $\mathcal{U}$ and $\mathcal{U}_z$ in $L^\infty$.
By the maximum principle for holomorphic functions and Sobolev embedding, 
\begin{equation} \label{MathcalUEst}
\|\mathcal{U}(t, \cdot) \|_{L^\infty(\Omega_t)} = \|\mathcal{U}(t, \cdot) \|_{L^\infty(\Gamma_t)} = \| R(t)\|_{L^\infty} \lesssim \tilde{M}\epsilon.   
\end{equation}
$\bar{\mathcal{U}}_z(t, \cdot)$ is a holomorphic function in $\Omega_t$ with boundary value $\frac{\bar{R}_\alpha}{\bar{Z}_\alpha}$.
Again by the maximum principle,
\begin{equation} \label{UzEst}
 \|\mathcal{U}_z(t, z) \|_{L^\infty} = \left\|\frac{R_\alpha}{Z_\alpha} \right\|_{L^\infty} \leq \|R_\alpha\|_{L^\infty}(1+\|\Y\|_{L^\infty}) \lesssim 2\tilde{M}\epsilon.
\end{equation}

Next, we control the growth of real and imaginary part of $z(t)$, $x(t): = \Re z(t)$ and $y(t): = \Im z(t)$. 
\begin{lemma} 
Assume the conditions in Theorem \ref{t:lifespan} and \eqref{bootstrap}, then for $0\leq t\leq T$:
\begin{equation}
     C_1\frac{\lambda}{x(0)}\leq \frac{d}{dt}y(t)\leq C_2\frac{\lambda}{x(0)}, \quad \frac{1}{C_3}\leq \frac{x(t)}{x(0)} \leq C_3,  \label{xtgrowth}
\end{equation}
for some constants $C_2> C_1>0$, $C_3>1$.
In other words, point vortices mostly move in the vertical direction.
In addition, the distance of the vortices to the surface $d_S(t)$ \eqref{DefDistance} satisfies
\begin{equation} \label{DistanceSurface}
d_S(t) \gtrsim  1+ t, \quad \text{when } \lambda<0; \quad d_S(t)\gg \epsilon^{-2} , \quad \text{when } \lambda>0. 
\end{equation}
\end{lemma}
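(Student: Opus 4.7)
The plan is to use the vertical symmetry to collapse the two-vortex dynamics to a single ODE for $z_2=x+iy$ and then close a bootstrap on $x(t)$. Because symmetry is preserved by the flow, $z_1(t)=-\overline{z_2(t)}$, so $z_2-z_1=2x\in\R$ and the Helmholtz--Kirchhoff interaction in \eqref{Kirchhoff} reduces to the purely imaginary $i\lambda/(4\pi x)$. Splitting $\dot z_2=\dot x+i\dot y$ into real and imaginary parts yields
\begin{equation*}
\dot x=\Re\mathcal{U}(t,z_2),\qquad \dot y=\Im\mathcal{U}(t,z_2)+\frac{\lambda}{4\pi x}.
\end{equation*}

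I would open a bootstrap window on which $C_3^{-1}\leq x(t)/x(0)\leq C_3$. Under the a priori bound \eqref{bootstrap} and Sobolev embedding, $|\mathcal{U}(t,z_2)|\leq \|R\|_{L^\infty}\lesssim \tilde M\epsilon$ as in \eqref{MathcalUEst}, while the hypothesis $|\lambda|/x(0)\geq M\gg 1$ forces $|\lambda|/(4\pi x(t))\geq M/(4\pi C_3)\gg\tilde M\epsilon$. Hence $\dot y$ has the sign of $\lambda$ and $|\dot y|\asymp |\lambda|/x(0)$, giving the two-sided bound on $\dot y$ in \eqref{xtgrowth}. Integrating yields $|y(t)-y(0)|\asymp |\lambda|t/x(0)\gtrsim Mt$; combined with $|y(0)|\gtrsim 1$ (when $\lambda<0$) this forces $|y(t)|\gtrsim 1+t$, while for $\lambda>0$ the hypothesis $|y(0)|\gg\epsilon^{-2}$ absorbs the worst-case growth $|\lambda|T/x(0)$ so that $|y(t)|\gg\epsilon^{-2}$. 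Since $Z(t,\alpha)-\alpha=O(\epsilon)$ by \eqref{bootstrap}, $d_S(t)\asymp|y(t)|$ and \eqref{DistanceSurface} follows.

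The main obstacle is closing the bootstrap on $x(t)$, since the naive estimate $|\dot x|\leq\|R\|_{L^\infty}\lesssim\tilde M\epsilon$ yields $|x(T)-x(0)|\lesssim\tilde M\delta\epsilon^{-1}$, which is far too large relative to the potentially tiny $x(0)\leq\sqrt\epsilon/M$. The saving grace is a symmetry gain: the parities of $\W,R,Z-\alpha$ and the Cauchy representation \eqref{IrrotationalV} imply $\mathcal{U}(t,-\overline z)=-\overline{\mathcal{U}(t,z)}$, so evaluating at $z=iy\in\Omega_t$ (where $-\overline z=z$) forces $\Re\mathcal{U}(t,iy)=0$. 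Integrating $\mathcal{U}_z$ along the horizontal segment from $iy(t)$ to $z_2(t)$ therefore gives
\begin{equation*}
|\dot x|=|\Re\mathcal{U}(t,z_2)-\Re\mathcal{U}(t,iy(t))|\leq x(t)\sup_{0\leq s\leq x(t)}|\mathcal{U}_z(t,s+iy(t))|.
\end{equation*}

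Finally, differentiating \eqref{IrrotationalV} in $z$ and applying \eqref{Lkestimate} with $k=4$ (the segment stays at depth comparable to $d_S(t)$) produces $|\mathcal{U}_z|\lesssim\tilde M\epsilon\,d_S(t)^{-3/2}$, hence $|\frac{d}{dt}\log x(t)|\lesssim\tilde M\epsilon\,d_S(t)^{-3/2}$. Feeding in the bounds from the previous paragraph,
\begin{equation*}
\int_0^T d_S(t)^{-3/2}\,dt\lesssim \int_0^T(1+t)^{-3/2}\,dt\lesssim 1\quad (\lambda<0),\qquad \lesssim \epsilon^3T\lesssim\delta\epsilon\quad(\lambda>0),
\end{equation*}
so $|\log(x(t)/x(0))|\lesssim\tilde M\epsilon\ll 1$ for $\epsilon\leq\epsilon_0$ sufficiently small. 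This closes the bootstrap window to $[0,T]$, finishing \eqref{xtgrowth} and \eqref{DistanceSurface}.
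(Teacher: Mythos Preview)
Your proposal is correct and follows the same core strategy as the paper: exploit the reflection symmetry to obtain $\Re\mathcal{U}(t,iy)=0$, use the mean-value theorem along the horizontal segment to bound $|\dot x|\leq x\,|\mathcal{U}_z|$, and control $|\mathcal{U}_z|$ via the Cauchy integral representation together with \eqref{Lkestimate} to get $|\mathcal{U}_z|\lesssim \epsilon\,d_S(t)^{-3/2}$.

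The only structural difference is organizational. You run a direct continuity/bootstrap on $x(t)/x(0)\in[C_3^{-1},C_3]$, derive the strong bound $d_S(t)\gtrsim 1+t$ under that hypothesis, and close with $\int_0^T(1+t)^{-3/2}\,dt\leq 2$ to obtain $|\log(x(t)/x(0))|\lesssim\tilde M\epsilon\ll 1$. The paper instead proceeds in two explicit passes: it first uses the crude maximum-principle bound $\|\mathcal{U}_z\|_{L^\infty}\lesssim\epsilon$ from \eqref{UzEst} to get $x(t)/x(0)\leq e^{2\tilde M\epsilon t}$, deduces an intermediate weak bound $d_S(t)\gtrsim 1+\epsilon t$, and then feeds this back into the integral estimate for $\mathcal{U}_z$ to obtain $|\log(x(t)/x(0))|\lesssim \epsilon\int_0^\infty(1+\epsilon\tau)^{-3/2}\,d\tau=O(1)$. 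Your bootstrap packaging is marginally cleaner and yields a tighter closure (an $O(\epsilon)$ log-ratio rather than $O(1)$), while the paper's two-step version avoids the bootstrap machinery at the cost of the extra crude step. Both are valid and essentially equivalent.
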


\begin{proof}
By taking the imaginary part of $z_2(t)$, $y(t)$ solves the equation 
\begin{equation*}
    \frac{d}{dt}y(t) = \Im \mathcal{U}(t, z_2(t)) + \frac{\lambda}{4\pi x(t)}.
\end{equation*}
Using the bootstrap assumption \eqref{bootstrap} and assumption $(4)$ in Theorem \ref{t:lifespan},
\begin{equation*}
 |\Im \mathcal{U}(t, z_2(t))|<  |\mathcal{U}(t, z_2(t))| \leq \| \mathcal{U}(t, \cdot)\|_{L^\infty(\Omega_t)} = \|R(t,\cdot) \|_{L^\infty}\lesssim \epsilon \ll \frac{|\lambda|}{ x(0)}.
\end{equation*}
Hence, the estimate for $\frac{d}{dt}y(t)$ holds as long as one can prove the estimate for $x(t)$ in \eqref{xtgrowth}. 

Since $\Re \mathcal{U}$ is an odd function, $\Re \{ \mathcal{U}(t, iy)\} = 0$.
By the mean value theorem,
\begin{equation*}
    \frac{d}{dt}x(t) = \Re \mathcal{U}(t, x(t)+iy(t)) - \Re \mathcal{U}(t, iy(t)) = \Re \mathcal{U}_z(t, \tilde{x}(t)+iy(t))x(t),
\end{equation*}
for some $\tilde{x}(t)\in (0, x(t))$.
Without loss of generality, one can assume that $\Re  \mathcal{U}_z(t, \tilde{x}(t)+iy(t))\geq 0$, and $x(t)$ is increasing on $0\leq t\leq T$. 
Using the bound \eqref{UzEst}, we get 
\begin{equation*}
    \frac{x(t)}{x(0)}\leq e^{2\tilde{M}\epsilon t}, \quad \frac{1}{x(t)}\geq \frac{e^{-2\tilde{M}\epsilon t}}{x(0)}, \quad t\in [0,T],
\end{equation*}
which is not enough to prove the bound \eqref{xtgrowth}.
To obtain a better estimate for $\frac{x(t)}{x(0)}$, one needs to get a refined bound for $|\Re \mathcal{U}_z(t,z)|$.

We first prove a weaker bound for $d_S(t)$:
\begin{equation} \label{WeakdSBound}
    d_S(t) \gtrsim 1+\epsilon t, \quad \text{when } \lambda< 0; \quad d_S(t)\gg \epsilon^{-2} , \quad \text{when } \lambda>0. 
\end{equation}
Since $\frac{|\lambda|}{x(0)}\geq M \gg 1$ in the assumption of Theorem \ref{t:lifespan},
\begin{align*}
    &y(t)-y(0) \leq \tilde{M}\epsilon t + \lambda \int_0^t \frac{e^{-2\tilde{M}\epsilon \tau}}{4\pi x(0)} \,d\tau \leq \tilde{M} \epsilon t - \frac{M}{4\pi}\int_0^t e^{-2\tilde{M}\epsilon \tau} \,d\tau \\
    =& \tilde{M} \epsilon t -\frac{M}{8\epsilon\tilde{M}\pi}(1-e^{-2\tilde{M}\epsilon t}) \leq -C\epsilon t,
\end{align*}
for some constant $C>0$.
If $\lambda>0$, then $\frac{1}{x(t)}\leq \frac{1}{x(0)}$ since we assume that $x(t)$ is increasing, so that
\begin{equation*}
    y(t) - y(0) \leq \tilde{M}\epsilon t + \frac{|\lambda|}{4\pi x(0)}t.
\end{equation*}
Therefore, $|y(t)| \gg \epsilon^{-2} $ on $[0,T]$.
Because $d_S(t) \geq |y(t)|-\|Z(t)-\alpha\|_{L^\infty}$, we obtain the estimate \eqref{WeakdSBound}.

Returning back to the proof of the bound for $\frac{x(t)}{x(0)}$, we estimate $|\Re \mathcal{U}_z(t, \tilde{x}(t)+iy(t))|$.
Taking the $z$-derivative in \eqref{IrrotationalV}, 
\begin{equation*}
    \partial_z \mathcal{U}(t,z) = -\frac{i}{2\pi}\int_\R \frac{(1+\W)\cdot \bar{R}(t,\alpha)}{(Z(t,\alpha)-z)^2}\,d\alpha,
\end{equation*}
we get using \eqref{Lkestimate}, for $t\in [0,T]$, and $\lambda<0$,
\begin{align*}
 |\Re \mathcal{U}_z(t, \tilde{x}(t)+iy(t))| &\leq \frac{1}{2\pi}(1+\|\W\|_{L^\infty})\left(\int_\R \frac{1}{|\tilde{x}(t)+iy(t)-Z(t,\alpha)|^4}\,d\alpha\right)^{\frac{1}{2}}\|R\|_{L^2} \\
 &\leq 2\epsilon \left(\inf_{\alpha \in \R}\min_{1\leq j \leq N} |Z(t,\alpha)- \tilde{x}(t)-iy(t)|\right)^{-\frac{3}{2}}\lesssim \epsilon (1+\epsilon t)^{-\frac{3}{2}}.
\end{align*}
Hence, we have
\begin{equation*}
\frac{d}{dt}\ln \frac{x(t)}{x(0)} \lesssim \epsilon (1+\epsilon t)^{-\frac{3}{2}},
\end{equation*}
so that for some constant $C>0$,
\begin{equation*}
x(t) \leq x(0) \exp \left\{C \epsilon \int_0^{+\infty}(1+\epsilon t)^{-\frac{3}{2}} \,dt \right\} = x(0)\exp \left\{2C\right\}.
\end{equation*}
When $\lambda>0$,  
\begin{equation*}
 \ln \frac{x(t)}{x(0)} \lesssim \int_0^T \epsilon(\epsilon^{-2})^{-\frac{3}{2}}\,dt \lesssim \delta\epsilon^2 <2C.
\end{equation*}
Choosing the constant $C_3 = \exp \{ 2C\}$ gives the proof for $\frac{x(t)}{x(0)}$, which leads to the estimate \eqref{xtgrowth}.
Integrating $\frac{d}{dt}y(t)$, we get 
\begin{equation*}
    |y(t) - y(0)| \geq \min\{C_1, C_2 \}\frac{|\lambda|}{x(0)}t \gtrsim t.
\end{equation*}
Again using $d_S(t) \geq |y(t)|-\|Z(t)-\alpha\|_{L^\infty}$, we get the estimate for $d_S(t)$ \eqref{DistanceSurface}.
\end{proof}

Under the assumption of Theorem \ref{t:lifespan}, we can write $v^{rot}\circ Z$ as
\begin{equation*}
 v^{rot}\circ Z = \sum_{j=1}^2 v_j^{rot}\circ Z = -\frac{i\lambda}{\pi}\frac{x(t)}{\overline{(Z-z_1)(Z-z_2)}}.   
\end{equation*}
Then we have the following estimates due to Lemma 6.4 in \cite{MR4179726}. 
\begin{lemma}[\hspace{1sp}\cite{MR4179726}]
With the assumption in Theorem \ref{t:lifespan}, and the bootstrap assumption \eqref{bootstrap}, then for $s>\frac{3}{2}$, $\lambda_1 = \lambda$, $\lambda_2 = -\lambda$,
\begin{align}
 \|v^{rot}\circ Z \|_{H^s} \lesssim \epsilon d_S(t)^{-\frac{3}{2}}, \label{vrotZHs32} \\
 \left\|\sum_{j=1}^2 \frac{\lambda_j i}{2\pi}\frac{1}{(Z(t,\alpha)-z_j(t))^2} \right\|_{H^s}\lesssim \epsilon d_S(t)^{-\frac{5}{2}}. \label{valpharotZHs52}
\end{align}
\end{lemma}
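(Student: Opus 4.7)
The plan is to exploit the algebraic cancellation built into the symmetric two-vortex configuration, which effectively replaces the prefactor $\lambda$ by the product $\lambda x(t)$. Combined with the hypothesis $|\lambda x(0)|\leq \epsilon$ from Theorem~\ref{t:lifespan}(3) and the growth bound $x(t)\leq C_3 x(0)$ from \eqref{xtgrowth}, this produces the $\epsilon$ factor in both estimates. Note also that by \eqref{DistanceSurface} we have $d_S(t)\gtrsim 1$, so \eqref{LkHsEstimate} is applicable throughout.

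For \eqref{vrotZHs32} I would start from the identity displayed immediately before the lemma,
\[
v^{rot}\circ Z \;=\; \frac{\lambda i}{2\pi}\Bigl(\frac{1}{\overline{Z-z_1}}-\frac{1}{\overline{Z-z_2}}\Bigr)\;=\;\frac{\lambda i}{2\pi}\cdot\frac{\overline{z_1-z_2}}{\overline{(Z-z_1)(Z-z_2)}}\;=\;-\frac{i\lambda x(t)}{\pi}\cdot\frac{1}{\overline{(Z-z_1)(Z-z_2)}},
\]
and then apply the Sobolev product inequality \eqref{HsProduct} (valid since $s>3/2>1/2$) together with \eqref{LkHsEstimate} and the trivial pointwise bound $\|1/(Z-z_j)\|_{L^\infty}\leq d_S(t)^{-1}$ to obtain
\[
\Bigl\|\frac{1}{(Z-z_1)(Z-z_2)}\Bigr\|_{H^s}\lesssim d_S(t)^{-1}\cdot d_S(t)^{-1/2}+d_S(t)^{-1/2}\cdot d_S(t)^{-1}\lesssim d_S(t)^{-3/2}.
\]
Multiplying by the prefactor $\frac{|\lambda x(t)|}{\pi}\lesssim \epsilon$ yields \eqref{vrotZHs32}.

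For \eqref{valpharotZHs52} I would use the analogous algebraic cancellation
\[
\frac{1}{(Z-z_1)^2}-\frac{1}{(Z-z_2)^2}\;=\;\frac{(Z-z_2+Z-z_1)(z_1-z_2)}{((Z-z_1)(Z-z_2))^2}\;=\;\frac{-4x(t)(Z-iy(t))}{((Z-z_1)(Z-z_2))^2},
\]
and the crucial decomposition $Z-iy(t)=\tfrac{1}{2}((Z-z_1)+(Z-z_2))$, which turns the ratio into
\[
\frac{Z-iy(t)}{((Z-z_1)(Z-z_2))^2}\;=\;\tfrac{1}{2}\Bigl(\tfrac{1}{(Z-z_1)(Z-z_2)^2}+\tfrac{1}{(Z-z_1)^2(Z-z_2)}\Bigr).
\]
Each of these two terms is estimated in $H^s$ by \eqref{HsProduct} combined with \eqref{LkHsEstimate} and the pointwise bound $\|1/(Z-z_j)^k\|_{L^\infty}\leq d_S(t)^{-k}$; for instance,
\[
\Bigl\|\frac{1}{(Z-z_1)(Z-z_2)^2}\Bigr\|_{H^s}\lesssim d_S(t)^{-1/2}\cdot d_S(t)^{-2}+d_S(t)^{-1}\cdot d_S(t)^{-3/2}\lesssim d_S(t)^{-5/2}.
\]
Pulling out the $\frac{2|\lambda x(t)|}{\pi}\lesssim \epsilon$ prefactor completes the proof of \eqref{valpharotZHs52}.

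The proof is essentially a direct computation, so there is no serious obstacle; the only conceptual point is the rewriting $Z-iy(t)=\tfrac{1}{2}((Z-z_1)+(Z-z_2))$ at the last step. Without this rewriting one would try to treat $Z-iy(t)$ as a factor in the product estimate, but this function is unbounded in $\alpha$, and the naive approach would lose one power of $d_S(t)$. This small algebraic observation is exactly what upgrades the exponent from $-3/2$ to $-5/2$.
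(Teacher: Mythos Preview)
The paper does not give its own proof of this lemma; it simply cites Lemma~6.4 of \cite{MR4179726}. Your argument is correct and self-contained, and is presumably close to what the cited reference does: the key point in both \eqref{vrotZHs32} and \eqref{valpharotZHs52} is the algebraic cancellation coming from $\lambda_1=-\lambda_2$, which converts the prefactor $|\lambda|$ into $|\lambda x(t)|\lesssim |\lambda x(0)|\leq\epsilon$ via \eqref{xtgrowth}. The rewriting $Z-iy(t)=\tfrac12((Z-z_1)+(Z-z_2))$ is exactly the right way to avoid the unbounded factor and recover the sharp exponent $-5/2$.

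One small remark: you invoke \eqref{LkHsEstimate} with $k=1$, while in the paper that estimate appears after the hypothesis ``for $k>1$''. The constraint $k>1$ is only needed for the integral \eqref{Lkestimate} to converge; the $H^s$ bound \eqref{LkHsEstimate} with $k=1$ follows by the same proof, since the $L^2$ part is controlled by \eqref{Lkestimate} with exponent $2$, and higher derivatives only contribute additional negative powers of $d_S(t)\gtrsim 1$. So there is no gap.
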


With the above estimates in hand, we estimate one of the terms in $\partial_t (\overline{v^{rot}\circ Z})$.
\begin{lemma}
With the condition in Theorem \ref{t:lifespan}, and the bootstrap assumption \eqref{bootstrap}, for $s>\frac{3}{2}$, $\lambda_1 = \lambda$, $\lambda_2 = -\lambda$,
\begin{equation}
 \left\|\sum_{j=1}^2 \frac{\lambda_j i}{2\pi}\frac{\frac{d}{dt}z_j(t)}{(Z(t,\alpha)-z_j(t))^2} \right\|_{H^s}\lesssim \epsilon d_S(t)^{-\frac{5}{2}}+ \epsilon^2 d_S(t)^{-\frac{3}{2}}. \label{dtvrotZHs}
\end{equation}
\end{lemma}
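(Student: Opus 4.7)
The plan is to exploit the symmetry $z_1 = -\bar{z}_2$ and the corresponding structure of the Helmholtz--Kirchhoff dynamics \eqref{Kirchhoff} in order to decompose the sum and expose the cancellation already present in \eqref{valpharotZHs52}. A direct consequence of the symmetry is the identity $\mathcal{U}(t,z_1) = -\overline{\mathcal{U}(t,z_2)}$, together with $\overline{z_2 - z_1} = 2x(t)$, which I would use to write
\begin{equation*}
\dot z_2 = u + iv, \qquad \dot z_1 = -u + iv,
\end{equation*}
where $u := \Re \mathcal{U}(t,z_2)$ and $v := \Im \mathcal{U}(t,z_2) + \lambda/(4\pi x(t))$. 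Inserting this into the sum and using $\lambda_1 = \lambda$, $\lambda_2 = -\lambda$ yields
\begin{equation*}
\sum_{j=1}^2 \frac{\lambda_j i}{2\pi}\frac{\dot z_j}{(Z-z_j)^2} \;=\; -\frac{\lambda i u}{2\pi}\,\Sigma_{+} \;-\; \frac{\lambda v}{2\pi}\,\Sigma_{-},
\end{equation*}
with $\Sigma_{\pm} := (Z-z_1)^{-2} \pm (Z-z_2)^{-2}$. The point is that $\Sigma_{-}$ secretly carries a factor of $z_1-z_2 = -2x(t)$ in its numerator, so that it is controlled in $H^s$ by $(\epsilon/|\lambda|)\, d_S^{-5/2}$ via \eqref{valpharotZHs52}, while $\Sigma_{+}$ is controlled only by $d_S^{-3/2}$ via \eqref{LkHsEstimate}; the rest of the argument is to match the right scalar prefactor to the right sum.

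For the $\Sigma_+$ piece, the prefactor $\lambda u$ must gain smallness beyond the trivial $\|R\|_{L^\infty}\lesssim \epsilon$. Since $\Re\mathcal{U}$ vanishes on the imaginary axis by symmetry, the mean value theorem gives $|u| \lesssim x(t)\,|\Re\mathcal{U}_z(\tilde x + iy)|$ for some $\tilde x\in (0,x(t))$. Differentiating \eqref{IrrotationalV} under the integral and applying Cauchy--Schwarz together with \eqref{Lkestimate}, \eqref{DistanceSurface} and the bootstrap \eqref{bootstrap} yields $|\mathcal{U}_z(\tilde x + iy)| \lesssim \epsilon\, d_S^{-3/2}$. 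Combined with $x(t)\lesssim x(0)$ from \eqref{xtgrowth} and the assumption $|\lambda x(0)|\leq \epsilon$, this gives $|\lambda u|\lesssim \epsilon^2\, d_S^{-3/2}$, so that $\|(\lambda u)\Sigma_+\|_{H^s}\lesssim \epsilon^2\, d_S^{-3}\leq \epsilon^2\, d_S^{-3/2}$ since $d_S\gtrsim 1$.

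For the $\Sigma_-$ piece I would split $v = \Im\mathcal{U}(t,z_2) + \lambda/(4\pi x(t))$. The irrotational part contributes $|\lambda \Im\mathcal{U}(z_2)|\cdot \|\Sigma_-\|_{H^s}\lesssim (|\lambda|\epsilon)\cdot (\epsilon/|\lambda|)d_S^{-5/2} = \epsilon^2\, d_S^{-5/2}$, which is admissible. The vortex part $(\lambda^2/(4\pi x(t)))\Sigma_-$ is the delicate one, since $|\lambda|/x(t)\geq M$ has no useful upper bound; the way out is to avoid a product estimate entirely and use the explicit identity
\begin{equation*}
\Sigma_{-} \;=\; \frac{-4 x(t)\bigl(Z - iy(t)\bigr)}{(Z-z_1)^2 (Z-z_2)^2},
\end{equation*}
so that multiplication by $\lambda^2/x(t)$ cancels $x(t)$ outright. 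Writing $Z - iy = \tfrac{1}{2}[(Z-z_1)+(Z-z_2)]$ reduces the expression to two terms of the shape $1/[(Z-z_1)^a(Z-z_2)^b]$ with $a+b = 3$, each of which lies in $H^s$ with norm $\lesssim d_S^{-5/2}$ by the product rule \eqref{HsProduct} combined with \eqref{LkHsEstimate}. Since $\lambda^2 \leq \epsilon$, this piece contributes $\lesssim \epsilon\, d_S^{-5/2}$; summing the three contributions then gives \eqref{dtvrotZHs}.

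The main obstacle is precisely the $\lambda^2/x(t)$ term: a naive $L^\infty$ bound on $\lambda/x(t)$ throws away the large factor $|\lambda|/x(0) \geq M$ and produces a useless estimate, so the cancellation between the $x(t)$ in the denominator and the factor of $x(t)$ appearing in the numerator of $\Sigma_{-}$ must be exposed algebraically before any norm estimate is applied. Once this is done, the remaining work is routine paraproduct bookkeeping within the chord--arc regime guaranteed by the bootstrap.
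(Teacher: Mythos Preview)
Your proof is correct and uses the same core ingredients as the paper: the Helmholtz--Kirchhoff dynamics, the cancellation encoded in \eqref{valpharotZHs52}, the mean value theorem applied to $\mathcal U$, and the smallness $|\lambda x|\le\epsilon$, $|\lambda|^2\le\epsilon$.  The organization differs slightly: the paper splits $\dot z_j$ as $\tfrac{\lambda i}{4\pi x}+\mathcal U(t,z_j)$ and then adds and subtracts $\mathcal U(t,z_1)$ in the $\mathcal U$--part, whereas you first pass to real/imaginary components via the symmetry $\dot z_1=-\overline{\dot z_2}$ and then split $v$.  Your explicit algebraic cancellation of $x(t)$ in the $\lambda^2/x(t)$ contribution is the same mechanism that underlies the proof of \eqref{valpharotZHs52}; the paper invokes that lemma as a black box at the corresponding step, so your version makes the crucial cancellation more transparent but is otherwise equivalent.
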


\begin{proof}
Using the equation for $\frac{d}{dt}z_j(t)$:
\begin{equation*}
 \frac{d}{dt}z_j(t) = \frac{\lambda i}{4\pi x(t)} + \mathcal{U}(t,z_2(t)),
\end{equation*}
we can write
\begin{align*}
&\sum_{j=1}^2 \frac{\lambda_j i}{2\pi}\frac{\frac{d}{dt}z_j(t)}{(Z(t,\alpha)-z_j(t))^2} = -\sum_{j=1}^2 \frac{\lambda_j \lambda}{8\pi^2 x(t)} \frac{1}{(Z(t,\alpha)-z_j(t))^2}\\
+& \mathcal{U}(t, z_1(t))\sum_{j=1}^2 \frac{\lambda_j i}{2\pi}\frac{1}{(Z(t,\alpha)-z_j(t))^2}+ \frac{\lambda i(\mathcal{U}(t, z_1)-\mathcal{U}(t,z_2))}{2\pi}\frac{1}{(Z(t,\alpha)-z_2(t))^2}.
\end{align*}
Note that $\mathcal{U}(t, z_j(t))$ only depends on $t$, we use \eqref{LkHsEstimate}, \eqref{xtgrowth}, \eqref{valpharotZHs52}, and the assumption $|\lambda x(0)|\leq \epsilon$ to estimate,
\begin{align*}
 &\left\|\sum_{j=1}^2 \frac{\lambda_j \lambda}{8\pi^2 x(t)} \frac{1}{(Z-z_j)^2} \right\|_{H^s} +  \left\|\mathcal{U}(t, z_1(t))\sum_{j=1}^2 \frac{\lambda_j i}{2\pi}\frac{1}{(Z-z_j)^2}\right\|_{H^s} \\
 \leq& \frac{C_3|\lambda|}{4\pi x(0)}  \left\|\sum_{j=1}^2 \frac{\lambda_j i}{4\pi} \frac{1}{(Z-z_j)^2} \right\|_{H^s} + \|\mathcal{U}(t,\cdot) \|_{L^\infty}  \left\|\sum_{j=1}^2 \frac{\lambda_j i}{4\pi} \frac{1}{(Z-z_j)^2} \right\|_{H^s}\lesssim \epsilon d_S(t)^{-\frac{5}{2}},\\
 & \left\| \frac{\lambda i(\mathcal{U}(t, z_1)-\mathcal{U}(t,z_2))}{2\pi}\frac{1}{(Z-z_2)^2} \right\|_{H^s} \leq \frac{|\lambda|x(t)}{2\pi}\| \mathcal{U}_z\|_{L^\infty} \left\|\frac{1}{(Z-z_2)^2} \right\|_{H^s} \lesssim \epsilon^2 d_S(t)^{-\frac{3}{2}}.
\end{align*}
Here, in the last step, we use the mean value theorem. 
\end{proof}

Recall that by \eqref{ZEqnTwo}, $D_t Z = \bar{R} + v^{rot}\circ Z$, so that using the bootstrap assumption \eqref{bootstrap} and \eqref{vrotZHs32}, 
\begin{equation*}
    \|D_t Z\|_{H^s} \lesssim \epsilon + \epsilon d_S(t)^{-\frac{3}{2}} \lesssim \epsilon,
\end{equation*}
so that by the product estimate and \eqref{valpharotZHs52},
\begin{equation} \label{DtZSumdS}
\left\|D_t Z  \cdot\sum_{j=1}^2 \frac{\lambda_j i}{2\pi}\frac{1}{(Z(t,\alpha)-z_j(t))^2}\right\|_{H^s} \lesssim \epsilon^2 d_S(t)^{-\frac{5}{2}}.
\end{equation}
Note that one has the direct computation
\begin{equation*}
  D_t \sum_{j=1}^2 \frac{\lambda_j i}{2\pi}\frac{1}{Z-z_j} = \sum_{j=1}^2 \frac{\lambda_j i}{2\pi}\frac{\frac{d}{dt}z_j}{(Z-z_j)^2} - D_t Z \sum_{j=1}^2 \frac{\lambda_j i}{2\pi}\frac{1}{(Z-z_j)^2},  
\end{equation*}
we get by using \eqref{dtvrotZHs} and \eqref{DtZSumdS},
\begin{equation}\label{Dtsum2z}
  \left\|D_t \sum_{j=1}^2 \frac{\lambda_j i}{2\pi}\frac{1}{Z-z_j} \right\|_{H^s} \lesssim \epsilon d_S(t)^{-\frac{5}{2}}+ \epsilon^2 d_S(t)^{-\frac{3}{2}}. 
\end{equation}
Similarly, we have
\begin{equation}\label{1Wsum2z}
  \left\|\sum_{j=1}^2 \frac{\lambda_j i}{2\pi}\frac{1+\bar{\W}}{(\overline{Z-z_j})^2}\right\|_{H^{s+\frac{1}{2}}} \lesssim \left(1+\|\W\|_{H^{s+\frac{1}{2}}}\right) \left\|\sum_{j=1}^2 \frac{\lambda_j i}{2\pi}\frac{1}{(Z-z_j)^2}\right\|_{H^{s+\frac{1}{2}}} \lesssim\epsilon d_S(t)^{-\frac{5}{2}}. 
\end{equation}

Finally, we obtain a refined estimate for the advection velocity $b= b_1 + b_2$.
\begin{lemma}
Two parts of the advection velocity $b$ satisfy the estimates, for $s>\frac{3}{2}$, 
\begin{equation} \label{bEstTwo}
 \|b_1\|_{C^\epsilon_{*}}\lesssim_\CalAZS \|R\|_{C^\epsilon_{*}} , \quad \|b_1\|_{H^s} \lesssim_\CalAZS \|R\|_{H^s}, \quad \|b_2\|_{C^\epsilon_{*}}+\|b_2\|_{H^s}\lesssim_\CalAZS   \epsilon d_S(t)^{-\frac{3}{2}}.    
\end{equation}
\end{lemma}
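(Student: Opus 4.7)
The bound splits naturally into the two parts $b = b_1 + b_2$, since $b_1 = 2\Re \nP[(1-\bar{\Y})R]$ is purely irrotational and already appears in the irrotational water wave system, whereas $b_2 = 2\Re \bar{\nP}[(1-\Y)(v^{rot}\circ Z)]$ captures the vortex contribution and should inherit the decay rate $\epsilon d_S(t)^{-\frac{3}{2}}$. The plan is to treat the two separately and to rely on the sharper vortex estimate \eqref{vrotZHs32} together with the explicit form of $v^{rot}\circ Z$ in the symmetric two-vortex setting.

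For $b_1$, the strategy is the one used in the irrotational analysis of \cite{ai2023dimensional,wan2024}: exploit the holomorphicity of $R$ to rewrite $\nP[(1-\bar{\Y})R] = R - \nP[\bar{\Y} R]$, and then paraproduct-decompose $\bar{\Y} R = T_{\bar{\Y}}R + T_R \bar{\Y} + \Pi(\bar{\Y},R)$. The low-high piece $T_R\bar{\Y}$ is essentially antiholomorphic since $\bar{\Y}$ sits at the high frequency and is antiholomorphic, so $\nP T_R \bar{\Y}$ is a symbolic-calculus remainder controlled via \eqref{CompositionPara}. The remaining two pieces are handled by \eqref{HsLinfty} for $T_{\bar{\Y}}R$ and \eqref{HCHEstimate} for $\Pi(\bar{\Y},R)$, together with \eqref{CsLInfty} and \eqref{CCCEstimate} in the Zygmund case. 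The fact that $\|\Y\|_{L^\infty}$ and $\|\Y\|_{C_{*}^\epsilon}$ are controlled by $\CalAZS$ via \eqref{YBounds} closes this estimate.

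For $b_2$, since $\bar{\nP}$ and the real part are bounded on every Sobolev and Besov space, it suffices to estimate $(1-\Y)(v^{rot}\circ Z)$ in $H^s$ and $C^\epsilon_*$. Applying the product rule \eqref{HsProduct} gives
\[
 \|(1-\Y)(v^{rot}\circ Z)\|_{H^s} \lesssim (1+\|\Y\|_{L^\infty})\|v^{rot}\circ Z\|_{H^s} + \|v^{rot}\circ Z\|_{L^\infty}\|\Y\|_{H^s}.
\]
The first term is directly bounded by $\epsilon d_S(t)^{-\frac{3}{2}}$ via \eqref{vrotZHs32}. For the second, one uses the explicit formula $v^{rot}\circ Z = -i\lambda x(t)/[\pi \overline{(Z-z_1)(Z-z_2)}]$ valid under the two-vortex symmetry, together with $|\lambda x(t)|\lesssim \epsilon$ (from assumption (3) of Theorem \ref{t:lifespan} combined with \eqref{xtgrowth}), to derive $\|v^{rot}\circ Z\|_{L^\infty}\lesssim \epsilon d_S(t)^{-2}$. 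Multiplying this by the bootstrap bound $\|\Y\|_{H^s}\lesssim \|\W\|_{H^s}\lesssim \epsilon$ and using $d_S(t)\gtrsim 1$ produces an admissible error $\epsilon^2 d_S(t)^{-2}\lesssim \epsilon d_S(t)^{-\frac{3}{2}}$. The Zygmund estimate is handled identically using \eqref{CsProduct} in place of \eqref{HsProduct}.

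The step requiring the most care is the pointwise bound on $v^{rot}\circ Z$: a naive Sobolev embedding applied to \eqref{vrotZHs32} only yields $\epsilon d_S(t)^{-\frac{3}{2}}$, which is borderline insufficient when multiplied by $\|\Y\|_{H^s}$ to recover the target rate. It is precisely the symmetric two-vortex configuration and the smallness $|\lambda x(0)|\leq \epsilon$ assumed in Theorem \ref{t:lifespan} that make the improved $d_S(t)^{-2}$ bound available and close the estimate. Beyond this, the argument is a routine combination of paraproduct calculus, the vortex estimates \eqref{vrotZHs32}--\eqref{DistanceSurface} already established in this section, and the bootstrap assumption \eqref{bootstrap}.
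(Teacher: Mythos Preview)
Your argument is correct, but for $b_2$ you take a somewhat different route from the paper. The paper reuses the paraproduct identity from the proof of \eqref{bEst},
\[
\bar{\nP}[(1-\Y)(v^{rot}\circ Z)] = T_{1-\Y}\bar{\nP}(v^{rot}\circ Z) - \bar{\nP}\Pi(\Y, \bar{\nP}(v^{rot}\circ Z)),
\]
which exploits that $\Y$ is holomorphic to kill the piece $\bar{\nP}T_{v^{rot}\circ Z}\Y$ outright. After that, both surviving terms are controlled by $(1+\|\Y\|_{L^\infty})\|v^{rot}\circ Z\|_{H^s}$ (and its Zygmund analogue), so \eqref{vrotZHs32} alone closes the estimate with a constant genuinely depending only on $\CalAZS$. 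Your approach instead applies the full product rule \eqref{HsProduct}, which produces the cross term $\|v^{rot}\circ Z\|_{L^\infty}\|\Y\|_{H^s}$; you then dispose of it via the explicit two-vortex formula and the bootstrap bound $\|\Y\|_{H^s}\lesssim\epsilon$. This works, but it brings in the bootstrap constant $\tilde M$ and the symmetric-vortex smallness $|\lambda x(t)|\lesssim\epsilon$, neither of which the paper needs here. Also, your remark that the naive Sobolev bound $\|v^{rot}\circ Z\|_{L^\infty}\lesssim\epsilon d_S(t)^{-3/2}$ is ``borderline insufficient'' is slightly overstated: multiplying it by $\|\Y\|_{H^s}\lesssim\tilde M\epsilon$ still gives $\tilde M\epsilon^2 d_S(t)^{-3/2}\lesssim\epsilon d_S(t)^{-3/2}$ under the bootstrap, so the refinement to $d_S(t)^{-2}$ is convenient but not strictly necessary. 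In short: both proofs are valid; the paper's is shorter and keeps the $\CalAZS$ dependence clean by leveraging holomorphicity, while yours substitutes that structural observation with the explicit vortex formula and the bootstrap.
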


\begin{proof}
As in the proof of \eqref{bEst}, it suffices to consider the estimate $b_2$ using \eqref{vrotZHs32},
\begin{align*}
 &\|T_{1-\Y}\bar{\nP}(v^{rot}\circ Z) \|_{C^\epsilon_{*}} + \| \bar{\nP}\Pi(\Y, \bar{\nP}(v^{rot}\circ Z))\|_{C^\epsilon_{*}} \lesssim (1+\|\Y \|_{L^\infty}) \|\bar{\nP}(v^{rot}\circ Z) \|_{C^\epsilon_{*}} \lesssim_\CalAZS \epsilon d_S(t)^{-\frac{3}{2}}, \\
  &\|T_{1-\Y}\bar{\nP}(v^{rot}\circ Z) \|_{H^s} + \| \bar{\nP}\Pi(\Y, \bar{\nP}(v^{rot}\circ Z))\|_{H^s} \lesssim (1+\|\Y \|_{L^\infty}) \|\bar{\nP}(v^{rot}\circ Z) \|_{H^s} \lesssim_\CalAZS \epsilon d_S(t)^{-\frac{3}{2}}.
\end{align*}
This gives the proof of \eqref{bEstTwo}.
\end{proof}
In summary, all terms in the water wave system that involve point vortices satisfy the bound \eqref{perturbative}.

\subsection{Normal form variables} \label{s:Paralin}
 We proceed further with a normal form idea which by now is commonly used in water wave systems, beginning with the work of Hunter-Ifrim-Tataru \cite{MR3535894} and Alazard-Delort \cite{MR3429478}. More recently, the normal form performed in a paradifferential setting was a key ingredient in the work of Ai-Ifrim-Tataru \cite{ai2023dimensional, MR4483135}, where it permitted  to achive a close-to-scaling well-posedness theory.
 Here, we follow this later approach and proceed  a similar fashion as in \cite{ai2023dimensional, MR4483135}.  
 From the analysis in Section \ref{s:Estimate} and  Section \ref{s:Preliminary}, we can write $(\W_t, R_t)$ as the sum of leading terms of order $\frac{3}{2}$ plus other source terms:
\begin{equation} \label{WRSysEqn}
\left\{
\begin{aligned}
&  \partial_t\W  = - T_{(1+\W)(1-\bar{\Y})}R_\alpha + G_1 + G_2 + G_3\\
&  \partial_tR  = -i T_{J^{-\frac{1}{2}}(1-\Y)^2}\W_{\alpha \alpha}+ K_1 + K_2 + K_3,
\end{aligned} 
\right.
\end{equation}
where $(G_1, K_1)$ are non-perturbative low-high terms given by
\begin{align*}
G_1 &= -T_{b}\W_\alpha - T_{\W_\alpha(1-\bar{\Y})}R + T_{(1+\W)(1-\bar{\Y})^2\bar{\W}_\alpha}R - T_{R_\alpha + \bar{R}_\alpha}\W,\\
K_1 & = -T_{b}R_\alpha - T_{R_\alpha}R + 3i T_{J^{-\frac{1}{2}}(1-\Y)^3 \W_\alpha} \W_\alpha +\frac{5}{2}iT_{\W_{\alpha \alpha}}\W - \frac{i}{2}T_{\bar{\W}_{\alpha \alpha}}\W,
\end{align*}
and $(G_2, K_2)$ are non-perturbative balanced terms given by
\begin{align*}
G_2 =& \nP \partial_\alpha\Pi(R, \bar{\W})-\nP \partial_\alpha\Pi(R+ \bar{R}, \W),\\
K_2 =& -\nP\Pi(\bar{R}, R_\alpha) -  \Pi(R_\alpha, R)+\frac{5}{2}i\Pi\left(\W, \W_{\alpha \alpha}\right) +\frac{3}{2}i\Pi\left(\W_{\alpha}, \W_{\alpha }\right)\\
&+ \frac{i}{2}\nP \Pi\left(\bar{\W}, \W_{\alpha \alpha}\right)-\frac{i}{2}\nP \Pi\left(\bar{\W}_{\alpha \alpha}, \W\right).
\end{align*}
These two source terms satisfy the Zygmund estimates for $s>0$,
\begin{equation} \label{GKOneTwoEst}
 \|(G_1, K_1) \|_{C^s_{*}} \lesssim \left(\CalAOS +\epsilon d_S(t)^{-\frac{3}{2}}\right)\|(\W, R) \|_{C^{s+1}_{*}}, \quad \|(G_2, K_2) \|_{C^s_{*}}\lesssim \CalATS \| (\W, R)\|_{C^s_*}.
\end{equation}

The rest source terms $(G_3, K_3)$ are either cubic or higher perturbative terms or source terms that involve point vortices such as $\nP[a\Y]$.
These terms satisfy the bounds
\begin{equation} \label{perturbative}
\|(G_3, K_3)\|_{\H^s} \lesssim_\CalAZS \CalAOS \CalATS \|(\W, R)\|_{\H^s}  + \epsilon d_S(t)^{-\frac{3}{2}}.
\end{equation} 
We remark that many terms in $(G_1, K_1)$ and $(G_2, K_2)$ do not have the para-coefficients as in Lemma $3.1$ of \cite{wan2024}.
This is because here we are not working in the low regularity setting as in \cite{wan2024}, and these perturbative terms can be put in $(G_3, K_3)$.

In this section, we compute normal form variables $(\W^{[1]}, R^{[1]})$, $(\W^{[2]}, R^{[2]})$ in a way that is akin to the analysis in Section $3.2$ of \cite{wan2024}, and remove most non-perturbative  low-high terms in $(G_1, K_1)$, together with all non-perturbative balanced terms $(G_2, K_2)$ in the system \eqref{WRSysEqn}.

\subsubsection{Construction of normal form variables $(\W^{[1]}, R^{[1]})$}
Here, we compute normal form variables $(\W^{[1]}, R^{[1]})$ that remove source terms $(G_1, K_1)$ modulo perturbative source terms $(G_3, K_3)$ except main part of transport terms, namely $(T_{b_1}\W_\alpha, T_{b_1}R_\alpha)$,
\begin{equation*} 
\left\{
\begin{aligned}
&  \partial_t\W^{[1]}  + T_{(1+\W)(1-\bar{\Y})}R^{[1]}_\alpha = - G_1 + T_{b_1} \W_\alpha + G_3 \\
&  \partial_t R^{[1]}  + i T_{J^{-\frac{1}{2}}(1-\Y)^2}\W^{[1]}_{\alpha \alpha} = -K_1 + T_{b_1} R_\alpha+K_3.
\end{aligned} 
\right.
\end{equation*}
We consider $(\W^{[1]}, R^{[1]})$ as the sum of low-high bilinear forms of the following type:
\begin{equation*} 
\begin{aligned}
\W^{[1]}   &= B^h_{lh}(\W, T_{1-\Y}\W) + C^h_{lh}(R, T_{J^{\frac{1}{2}}(1+\W)^2(1-\bar{\Y})}R) + B^a_{lh}(\bar{\W}, T_{1-\bar{\Y}} \W) + C^a_{lh}(\bar{R}, T_{J^{\Half}(1+\W)}R), \\
R^{[1]} &= A^h_{lh}(R, T_{1-\Y}\W) + D^h_{lh}(\W, T_{1-\Y}R) + A^a_{lh}(\bar{R}, T_{(1+\bar{\W})(1-\Y)^2}\W) + D^a_{lh}(\bar{\W}, T_{1-\bar{\Y}} R),
\end{aligned}
\end{equation*}
where $A^h_{lh}, B^h_{lh}, C^h_{lh}, D^h_{lh}$ are low-high bilinear forms of the holomorphic type, and $A^a_{lh}, B^a_{lh}, C^a_{lh}, D^a_{lh}$ are low-high bilinear forms of the mixed type.
Para-coefficients such as $T_{1-\Y}$ in the bilinear forms are needed in order to cancel the terms in $(G_1, K_1)$ with non-trivial para-coefficients.
Using direct computation \eqref{WRSysEqn} to substitute $(\W_t, R_t)$ by the leading terms $(- T_{(1+\W)(1-\bar{\Y})}R_\alpha, -i T_{J^{-\frac{1}{2}}(1-\Y)^2}\W_{\alpha \alpha})$ plus higher order terms, we get
\begin{align*}
    &\partial_t \W^{[1]}+T_{(1-\bar{\Y})(1+\W)} \partial_\alpha R^{[1]}+ \text{cubic, higher, or perturbative  terms}   \\
    =& \partial_\alpha A^h_{lh}(R,  \W)-B^h_{lh}(R_\alpha, \W)  -i C^h_{lh}(R,  \W_{\alpha \alpha})    
    -B^h_{lh}(\W, R_\alpha)-iC^h_{lh}(\W_{\alpha \alpha}, R)+\partial_\alpha D^h_{lh}(\W, R) \\
    &+\partial_\alpha A^a_{lh}(\bar{R}, \W)-B^a_{lh}(\bar{R}_\alpha, \W)- iC^a_{lh}(\bar{R}, \W_{\alpha \alpha}) 
    -B^a_{lh}(\bar{\W}, R_\alpha)+iC^a_{lh}(\bar{\W}_{\alpha \alpha}, R)+\partial_\alpha D^a_{lh}(\bar{\W}, R), \\
    &\partial_t R^{[1]} + i T_{(1-\Y)^2J^{-\Half}} \partial_\alpha^2 \W^{[1]} + \text{cubic, higher, or perturbative  terms} \\
    =& -A^h_{lh}(R, R_\alpha) +i\partial_\alpha^2 C^h_{lh}(R, R) -D^h_{lh}(R_\alpha, R) 
     -iA^h_{lh}(\W_{\alpha \alpha}, \W)+i\partial_\alpha^2 B^h_{lh}(\W, \W)-iD^h_{lh}(\W,  \W_{\alpha \alpha}) \\
     & -A^a_{lh}(\bar{R}, R_\alpha)+ i\partial_\alpha^2 C^a_{lh}(\bar{R}, R) - D^a_{lh}(\bar{R}_\alpha, R)
      +i A^a_{lh}(\bar{\W}_{\alpha \alpha}, \W)+ i\partial_\alpha^2 B^a_{lh}(\bar{\W}, \W) - iD^a_{lh}(\bar{\W}, \W_{\alpha \alpha}).
\end{align*}
For low-high holomorphic terms, holomorphic bilinear forms satisfy the  system 
\begin{equation*}
\left\{
    \begin{array}{lr}
    \partial_\alpha A^h_{lh}(R,  \W)-B^h_{lh}(R_\alpha, \W)  -i C^h_{lh}(R,  \W_{\alpha \alpha}) =  T_{R_\alpha} \W &\\
    -B^h_{lh}(\W, R_\alpha)-iC^h_{lh}(\W_{\alpha \alpha}, R)+\partial_\alpha D^h_{lh}(\W, R) = T_{\W_\alpha}R  &\\
    -A^h_{lh}(R, R_\alpha) +i\partial_\alpha^2 C^h_{lh}(R, R) -D^h_{lh}(R_\alpha, R) =   T_{R_\alpha} R &\\
     -iA^h_{lh}(\W_{\alpha \alpha}, \W)+i\partial_\alpha^2 B^h_{lh}(\W, \W)-iD^h_{lh}(\W,  \W_{\alpha \alpha}) = -3iT_{\W_\alpha} \W_\alpha -\frac{5}{2}i T_{\W_{\alpha \alpha}}\W.&  
    \end{array}
\right.
\end{equation*}
Define the symbol $\chi_{1}(\xi, \eta)$ that selects the low-high frequencies \eqref{ChiOnelh}.
By taking the Fourier transform, the symbols $\mathfrak{a}^h_{lh}$, $\mathfrak{b}^h_{lh}$, $\mathfrak{c}^h_{lh}$, $\mathfrak{d}^h_{lh}$ of low-high holomorphic bilinear forms $A^h_{lh}, B^h_{lh}, C^h_{lh}, D^h_{lh}$ then solve the system
\begin{equation*}
\left\{
    \begin{array}{lr}
    (\xi+\eta) \mathfrak{a}^h_{lh} - \xi \mathfrak{b}^h_{lh} + \eta^2 \mathfrak{c}^h_{lh} = \xi\chi_1(\xi, \eta) &\\
    \eta \mathfrak{b}^h_{lh} - \xi^2 \mathfrak{c}^h_{lh} - (\xi+\eta) \mathfrak{d}^h_{lh} = -\xi \chi_1(\xi, \eta)  &\\
    \eta \mathfrak{a}^h_{lh} + (\xi + \eta)^2 \mathfrak{c}^h_{lh} + \xi \mathfrak{d}^h_{lh} = -\xi\chi_1(\xi, \eta)  &\\
     \xi^2 \mathfrak{a}^h_{lh} -(\xi+ \eta)^2 \mathfrak{b}^h_{lh} + \eta^2 \mathfrak{d}^h_{lh} = 3\xi\eta \chi_1(\xi, \eta) +\frac{5}{2} \xi^2 \chi_1(\xi, \eta).&  
    \end{array}
\right.
\end{equation*}
The solution of this  system is given by
\begin{equation*}
 \begin{aligned}
&\mathfrak{a}^h_{lh}(\xi, \eta) = \dfrac{(-9\xi^3- 7\xi^2\eta + 4\xi\eta^2 +4\eta^3) \chi_1(\xi, \eta)}{2 \eta(9\xi^2 + 14\xi \eta + 9 \eta^2)}, \\
&\mathfrak{b}^h_{lh}(\xi, \eta) = -\dfrac{(9\xi^3+ 34\xi^2\eta +37\xi\eta^2 +12 \eta^3)\chi_1(\xi, \eta)}{2 \eta(9\xi^2 + 14\xi \eta + 9 \eta^2)}, \\
&\mathfrak{c}^h_{lh}(\xi, \eta) = -\dfrac{ (3\xi^2 + \xi \eta +2\eta^2)\chi_1(\xi, \eta)}{ 
\eta(9\xi^2 + 14\xi \eta + 9 \eta^2)}, \\
&\mathfrak{d}^h_{lh}(\xi, \eta) = \dfrac{(6\xi^3+ 5\xi^2\eta -7\xi\eta^2 -12 \eta^3)\chi_1(\xi, \eta)}{2\eta(9\xi^2 + 14\xi \eta + 9 \eta^2)}. %\label{BilinearHLH}
\end{aligned}   
\end{equation*}

For low-high mixed terms, mixed bilinear forms satisfy the  system 
\begin{equation*}
\left\{
    \begin{array}{lr}
    \partial_\alpha A^a_{lh}(\bar{R}, \W)-B^a_{lh}(\bar{R}_\alpha, \W)- iC^a_{lh}(\bar{R}, \W_{\alpha \alpha}) =  T_{\bar{R}_\alpha} \W &\\
     -B^a_{lh}(\bar{\W}, R_\alpha)+iC^a_{lh}(\bar{\W}_{\alpha \alpha}, R)+\partial_\alpha D^a_{lh}(\bar{\W}, R)= -T_{\bar{\W}_\alpha}R  &\\
     -A^a_{lh}(\bar{R}, R_\alpha)+ i\partial_\alpha^2 C^a_{lh}(\bar{R}, R) - D^a_{lh}(\bar{R}_\alpha, R)= 0 &\\
      i A^a_{lh}(\bar{\W}_{\alpha \alpha}, \W)+ i\partial_\alpha^2 B^a_{lh}(\bar{\W}, \W) - iD^a_{lh}(\bar{\W}, \W_{\alpha \alpha})= \frac{i}{2} T_{\bar{\W}_{\alpha \alpha}}\W.&  
    \end{array}
\right.
\end{equation*}
Taking the Fourier transform, the symbols $\mathfrak{a}^a_{lh}$, $\mathfrak{b}^a_{lh}$, $\mathfrak{c}^a_{lh}$, $\mathfrak{d}^a_{lh}$ of low-high mixed bilinear forms $A^a_{lh}, B^a_{lh}, C^a_{lh}, D^a_{lh}$ solve the algebraic system
\begin{equation*}
\left\{
    \begin{array}{lr}
    (\zeta -\eta) \mathfrak{a}^a_{lh} + \eta \mathfrak{b}^a_{lh} + \zeta^2 \mathfrak{c}^a_{lh} =  -\eta \chi_1(\eta, \zeta) &\\
    \zeta \mathfrak{b}^a_{lh} + \eta^2 \mathfrak{c}^a_{lh} - (\zeta-\eta) \mathfrak{d}^a_{lh} =  -\eta \chi_1(\eta, \zeta)   &\\
    \zeta \mathfrak{a}^a_{lh} + ( \zeta -\eta)^2 \mathfrak{c}^a_{lh} - \eta \mathfrak{d}^h_{lh} = 0  &\\
     \eta^2 \mathfrak{a}^a_{lh} +( \zeta -\eta)^2 \mathfrak{b}^a_{lh} - \zeta^2 \mathfrak{d}^a_{lh} =  \frac{1}{2}\eta^2 \chi_1(\eta, \zeta).&  
    \end{array}
\right.
\end{equation*}
The solution of this algebraic system is given by
\begin{equation*}
\begin{aligned}
&\mathfrak{a}^a_{lh}(\eta, \zeta) = -\dfrac{(6\eta^3-11\eta^2\zeta + 10\eta\zeta^2 -2\zeta^3) \chi_1(\eta, \zeta)}{2 (\zeta -\eta)(4\eta^2 -4\eta \zeta + 9 \zeta^2)}, \\
&\mathfrak{b}^a_{lh}(\eta, \zeta) = \dfrac{(2\eta^3+\eta^2\zeta +\eta\zeta^2 -6 \zeta^3)\chi_1(\eta, \zeta)}{2 (\zeta -\eta)(4\eta^2 -4\eta \zeta + 9 \zeta^2)}, \\
&\mathfrak{c}^a_{lh}(\eta, \zeta) = \dfrac{ (2\eta^2 - \zeta^2)\chi_1(\eta, \zeta)}{ (\zeta -\eta)(4\eta^2 -4\eta \zeta + 9 \zeta^2)}, \\
&\mathfrak{d}^a_{lh}(\eta, \zeta) = \dfrac{(4\eta^3 -14\eta^2\zeta +13\eta\zeta^2 - 6\zeta^3)\chi_1(\eta, \zeta)}{2 (\zeta -\eta)(4\eta^2 -4\eta \zeta + 9 \zeta^2)}. %\label{BilinearALH}
\end{aligned}    
\end{equation*}

Since the polynomials $9\xi^2 + 14\xi \eta + 9 \eta^2,\text{ }4\eta^2 -4\eta \zeta + 9 \zeta^2 > 0$ unless the frequencies $\xi = \eta = \zeta = 0$, we get using \eqref{BFHsCmStar} and \eqref{BFHsLinfty}, for $s>\frac{3}{2}$,
\begin{equation} \label{WOneBound}
 \|(\W^{[1]}, R^{[1]}) \|_{H^s} \lesssim \CalAZS \|(\W, R)\|_{H^s}.
\end{equation}
The normal form variables $(\W^{[1]}, R^{[1]}) $ remove $(G_1, K_1)$ except sub-leading terms $(T_{b_1}\W_\alpha, T_{b_1}R_\alpha)$, and introduce cubic and higher terms.
These new terms introduced are perturbative, and can be put into $(G_3, K_3)$.

\subsubsection{Construction of normal form variables $(\W^{[2]}, R^{[2]})$}
Next, we compute normal form variables $(\W^{[2]}, R^{[2]})$ that remove source terms $(G_2, K_2)$ modulo perturbative source terms $(G_3, K_3)$,
\begin{equation*} 
\left\{
\begin{aligned}
&  \partial_t\W^{[2]}  + T_{(1+\W)(1-\bar{\Y})}R^{[2]}_\alpha = - G_2 + G_3 \\
&  \partial_t R^{[2]}  + i T_{J^{-\frac{1}{2}}(1-\Y)^2}\W^{[2]}_{\alpha \alpha} = -K_2+K_3.
\end{aligned} 
\right.
\end{equation*}
We choose $(\W^{[2]}, R^{[2]})$ as the sum of balanced bilinear forms of the following type:
\begin{align*}
\W^{[2]}   &= B^h_{hh}(\W, \W) + C^h_{hh}(R, R) + B^a_{hh}(\bar{\W},  \W) + C^a_{hh}(\bar{R}, R), \\
R^{[2]} &= A^h_{hh}(R, \W) + A^a_{hh}(\bar{R}, \W) + D^a_{hh}(\bar{\W}, R),
\end{align*}
where $A^h_{hh}, B^h_{hh}, C^h_{hh}$ are balanced bilinear forms of the holomorphic type, and $A^a_{hh}, B^a_{hh}, C^a_{hh}, D^a_{hh}$ are balanced bilinear forms of the mixed type.
Here, $B^h_{hh}$ and $C^h_{hh}$ are symmetric in two arguments.
Substituting $(\W_t, R_t)$ by their leading terms $(- R_\alpha, -i \W_{\alpha \alpha})$ plus higher-order terms, we get
\begin{align*}
    &\partial_t \W^{[2]}+T_{(1-\bar{\Y})(1+\W)} \partial_\alpha R^{[2]}+ \text{cubic, higher, or perturbative  terms}   \\
    =& \partial_\alpha A^h_{hh}(R,  \W)-B^h_{hh}(R_\alpha, \W)  -i C^h_{hh}(R,  \W_{\alpha \alpha})    
    -B^h_{hh}(\W, R_\alpha)-iC^h_{hh}(\W_{\alpha \alpha}, R) \\
    +&\partial_\alpha A^a_{hh}(\bar{R}, \W)-B^a_{hh}(\bar{R}_\alpha, \W)- iC^a_{hh}(\bar{R}, \W_{\alpha \alpha}) 
    -B^a_{hh}(\bar{\W}, R_\alpha)+iC^a_{hh}(\bar{\W}_{\alpha \alpha}, R)+\partial_\alpha D^a_{hh}(\bar{\W}, R), \\
    &\partial_t R^{[2]} + i T_{(1-\Y)^2J^{-\Half}} \partial_\alpha^2 \W^{[2]} + \text{cubic, higher, or perturbative  terms} \\
    =& -A^h_{hh}(R, R_\alpha) +i\partial_\alpha^2 C^h_{hh}(R, R) 
     -iA^h_{hh}(\W_{\alpha \alpha}, \W)+i\partial_\alpha^2 B^h_{hh}(\W, \W)  -A^a_{hh}(\bar{R}, R_\alpha)\\
     &+ i\partial_\alpha^2 C^a_{hh}(\bar{R}, R) - D^a_{hh}(\bar{R}_\alpha, R)
      +i A^a_{hh}(\bar{\W}_{\alpha \alpha}, \W)+ i\partial_\alpha^2 B^a_{hh}(\bar{\W}, \W) - iD^a_{hh}(\bar{\W}, \W_{\alpha \alpha}).
\end{align*}

For balanced holomorphic terms, holomorphic bilinear forms satisfy the system
\begin{equation*}
\left\{
    \begin{array}{lr}
    \partial_\alpha A^h_{hh}(R,  \W)-2B^h_{hh}(R_\alpha, \W)  -2i C^h_{hh}(R,  \W_{\alpha \alpha}) =  \partial_\alpha \Pi(R, \W)  &\\
    -A^h_{hh}(R, R_\alpha) +i\partial_\alpha^2 C^h_{hh}(R, R) =  \Pi(R_\alpha, R)  &\\
     -iA^h_{hh}(\W_{\alpha \alpha}, \W)+i\partial_\alpha^2 B^h_{hh}(\W, \W) = -\frac{5}{2}i\Pi\left(\W, \W_{\alpha \alpha}\right) -\frac{3}{2}i\Pi\left(\W_{\alpha}, \W_{\alpha }\right).&  
    \end{array}
\right.
\end{equation*}

Define the symbol $\chi_{2}(\xi, \eta)$ that selects the balanced frequencies \eqref{ChiTwohh} and $m_{sym}$ be the symmetrization of the bilinear symbol $m$, symbols $\mathfrak{a}^h_{hh}$, $\mathfrak{b}^h_{hh}$, $\mathfrak{c}^h_{hh}$ of bilinear forms  $A^h_{hh}, B^h_{hh}, C^h_{hh}$ solve the system
\begin{equation*}
\left\{
    \begin{array}{lr}
    (\xi+\eta) \mathfrak{a}^h_{hh} - 2\xi \mathfrak{b}^h_{hh} + 2\eta^2 \mathfrak{c}^h_{hh} = (\xi +\eta) \chi_2(\xi, \eta) &\\
    (\eta\mathfrak{a}^h_{hh})_{sym} + (\xi + \eta)^2 \mathfrak{c}^h_{hh} = -\frac{1}{2}(\xi+\eta)\chi_2(\xi, \eta)  &\\
     (\xi^2 \mathfrak{a}^h_{hh})_{sym} -(\xi+ \eta)^2 \mathfrak{b}^h_{hh}  = \frac{1}{4}(5\xi^2+6\xi \eta + 5 \eta^2) \chi_2(\xi, \eta).&  
    \end{array}
\right.
\end{equation*}
The solution of the system is given by 
\begin{equation*}
 \begin{aligned}
&\mathfrak{a}^h_{hh}(\xi, \eta) = -\dfrac{(27\xi^4+39\xi^3\eta + 23\xi^2 \eta^2-3\xi \eta^3-6\eta^4)\chi_2(\xi, \eta)}{2\xi 
\eta(9\xi^2 + 14\xi \eta + 9 \eta^2)}, \\
&\mathfrak{b}^h_{hh}(\xi, \eta) = -\dfrac{3(\xi + \eta)^2(9\xi^2+10\xi\eta + 9\eta^2)\chi_2(\xi, \eta)}{4\xi 
\eta(9\xi^2 + 14\xi \eta + 9 \eta^2)}, \\
&\mathfrak{c}^h_{hh}(\xi, \eta) = -\dfrac{3(\xi + \eta)^3\chi_2(\xi, \eta)}{2\xi 
\eta(9\xi^2 + 14\xi \eta + 9 \eta^2)}. %\label{BilinearHHH}
\end{aligned}   
\end{equation*}

For low-high mixed terms, mixed bilinear forms solve the system
\begin{equation*}
\left\{
    \begin{array}{lr}
    \partial_\alpha A^a_{hh}(\bar{R}, \W)-B^a_{hh}(\bar{R}_\alpha, \W)- iC^a_{hh}(\bar{R}, \W_{\alpha \alpha}) = \nP\partial_\alpha \Pi(\bar{R}, \W)   &\\
     -B^a_{hh}(\bar{\W}, R_\alpha)+iC^a_{hh}(\bar{\W}_{\alpha \alpha}, R)+\partial_\alpha D^a_{hh}(\bar{\W}, R)= -\nP\partial_\alpha \Pi(\bar{\W}, R)  &\\
     -A^a_{hh}(\bar{R}, R_\alpha)+ i\partial_\alpha^2 C^a_{hh}(\bar{R}, R) - D^a_{hh}(\bar{R}_\alpha, R)= \nP \Pi(\bar{R}, R_\alpha) &\\
      i A^a_{hh}(\bar{\W}_{\alpha \alpha}, \W)+ i\partial_\alpha^2 B^a_{hh}(\bar{\W}, \W) - iD^a_{hh}(\bar{\W}, \W_{\alpha \alpha})= -\frac{i}{2}\nP \Pi\left(\bar{\W}, \W_{\alpha \alpha}\right)+ \frac{i}{2}\nP \Pi\left(\bar{\W}_{\alpha \alpha}, \W\right).&  
    \end{array}
\right.
\end{equation*}

Then taking the Fourier transform, symbols $\mathfrak{a}^a_{hh}$, $\mathfrak{b}^a_{hh}$, $\mathfrak{c}^a_{hh}$, $\mathfrak{d}^a_{hh}$ of bilinear forms  $A^a_{hh}, B^a_{hh}, C^a_{hh},  D^a_{hh}$ solve the algebraic system
\begin{equation*}
\left\{
    \begin{array}{lr}
    (\zeta -\eta) \mathfrak{a}^a_{hh} + \eta \mathfrak{b}^a_{hh} + \zeta^2 \mathfrak{c}^a_{hh} = (\zeta -\eta) \chi_2(\eta, \zeta)1_{\zeta< \eta} &\\
    \zeta \mathfrak{b}^a_{hh} + \eta^2 \mathfrak{c}^a_{hh} - (\zeta-\eta) \mathfrak{d}^a_{hh} =  (\zeta -\eta) \chi_2(\eta, \zeta)1_{\zeta< \eta}   &\\
    \zeta \mathfrak{a}^a_{hh} + ( \zeta -\eta)^2 \mathfrak{c}^a_{hh} - \eta \mathfrak{d}^h_{hh} = -\zeta\chi_2(\eta, \zeta)1_{\zeta< \eta}  &\\
     \eta^2 \mathfrak{a}^a_{hh} +( \zeta -\eta)^2 \mathfrak{b}^a_{hh} - \zeta^2 \mathfrak{d}^a_{hh} =  \frac{1}{2}(\eta^2-\zeta^2) \chi_2(\eta, \zeta)1_{\zeta< \eta}.&  
    \end{array}
\right.
\end{equation*}
Here, the indicator function $1_{\zeta< \eta}$ is the symbol for the holomorphic projection $\nP$. 
The solution of this algebraic system is 
\begin{equation*}
\begin{aligned}
&\mathfrak{a}^a_{hh}(\eta, \zeta) = \dfrac{3(2\eta^4-5\eta^3 \zeta +6\eta^2 \zeta^2  -3\eta \zeta^3 + 2\zeta^4) \chi_2(\eta, \zeta)1_{\zeta< \eta}}{2\eta (\eta -\zeta)(4\eta^2 -4\eta \zeta + 9 \zeta^2)}, \\
&\mathfrak{b}^a_{hh}(\eta, \zeta) = -\dfrac{(2\eta^4 -3\eta^3\zeta +5\eta^2\zeta^2 -17\eta\zeta^2 + 9\zeta^4)\chi_2(\eta, \zeta)1_{\zeta< \eta}}{2\eta (\eta - \zeta) (4\eta^2 -4\eta \zeta + 9 \zeta^2)}, \\
&\mathfrak{c}^a_{hh}(\eta, \zeta) = -\dfrac{ (2\eta^3-3\eta \zeta^2 + 3\zeta^3)\chi_2(\eta, \zeta)1_{\zeta< \eta}}{\eta(\eta -\zeta) (4\eta^2 -4\eta \zeta + 9 \zeta^2)}, \\
&\mathfrak{d}^a_{hh}(\eta, \zeta) = -\dfrac{(4\eta^4-22\eta^3 \zeta + 29\eta^2 \zeta^2 - 26\eta \zeta^3  +9\zeta^4)\chi_2(\eta, \zeta)1_{\zeta< \eta}}{2 \eta (\eta - \zeta)(4\eta^2 -4\eta \zeta + 9 \zeta^2)}. %\label{BilinearAHH}
\end{aligned}    
\end{equation*}

The denominator of each symbol does not equal zero unless frequencies $\xi = \eta = \zeta=0$.
Hence, estimating using \eqref{HHBilinearHCH}, we get that for $s>\frac{3}{2}$,
\begin{equation} \label{WTwoBound}
 \|(\W^{[2]}, R^{[2]}) \|_{H^s} \lesssim \CalAZS \|(\W, R)\|_{H^s}.
\end{equation}
The normal form variables $(\W^{[2]}, R^{[2]})$ remove source terms $(G_2, K_2)$ and introduce cubic and higher terms.
These new terms introduced are perturbative, and can be put into $(G_3, K_3)$.

To conclude this section, adding the normal form variables $(\W^{[1]}, R^{[1]})$, $(\W^{[2]}, R^{[2]})$ to $(\W, R)$, and taking into account the bounds \eqref{WOneBound}, \eqref{WTwoBound}, we get the following result.
\begin{proposition} \label{t:SymbolWR}
Suppose that $(\W, R)$ solve the water wave system \eqref{WRSysEqn}.
Then there exist normal form variables $(\tilde{\W}, \tilde{R}): = (\W+ \W^{[1]}+ \W^{[2]}, R+R^{[1]}+R^{[2]})$ that solve the system 
 \begin{equation*}
 \left\{
    \begin{array}{lr}
     \partial_t\tilde{\W} +T_{(1-\bar{\Y})(1+\W)}\tilde{R}_\alpha = -T_{b_1}\W_\alpha+G_3  &\\
    \partial_t \tilde{R}+ i T_{J^{-\frac{1}{2}}(1-\Y)^2}\tilde{\W}_{\alpha\alpha} =-T_{b_1}R_\alpha +K_3 ,&
             \end{array}
\right.  
\end{equation*}
The source terms $(G_3, K_3)$ satisfy the bound \eqref{perturbative}.
Moreover, the modified normal form variables $(\tilde{\W}, \tilde{R})$ are equivalent to the original $(\W, R)$ in the sense that for $s>\frac{3}{2}$,
\begin{equation} \label{NormEquRelation}
    \|(\tilde{\W}, \tilde{R} )\|_{H^s} \approx_\CalAZS \|(\W, R) \|_{H^s}.
\end{equation}
\end{proposition}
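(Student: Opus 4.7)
The plan is to construct $(\W^{[1]}, R^{[1]})$ and $(\W^{[2]}, R^{[2]})$ as explicit sums of low-high, respectively balanced, bilinear forms of the holomorphic and mixed types, with the Fourier symbols determined by requiring that the quadratic source terms $(G_1, K_1)$ and $(G_2, K_2)$ get canceled at the level of their leading symbols. Concretely, I would write $\W^{[1]}$ as a sum of the form $B^h_{lh}(\W, T_{1-\Y}\W) + C^h_{lh}(R, T_{J^{1/2}(1+\W)^2(1-\bar{\Y})}R) + \cdots$ and similarly for $R^{[1]}$, $\W^{[2]}$, $R^{[2]}$, following the template already displayed in Section \ref{s:Paralin}. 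The para-coefficients inside the bilinear arguments are chosen precisely to match the non-trivial para-coefficients appearing in the corresponding terms of $G_1, K_1, G_2, K_2$, so that after the substitution the equations reduce to pure symbol identities.

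Next I would differentiate each bilinear ansatz in $t$ and use \eqref{WRSysEqn} to replace $\W_t, R_t$ with $-T_{(1+\W)(1-\bar{\Y})}R_\alpha$ and $-iT_{J^{-1/2}(1-\Y)^2}\W_{\alpha\alpha}$ respectively, up to cubic or perturbative remainders. The cross terms produced, together with the principal term $T_{(1-\bar{\Y})(1+\W)}\partial_\alpha R^{[i]}$ on the $\W$-side and $iT_{J^{-1/2}(1-\Y)^2}\partial_\alpha^2 \W^{[i]}$ on the $R$-side, give an expression that must equal $-G_i - T_{b_1}\W_\alpha \delta_{i,1}$ and $-K_i - T_{b_1}R_\alpha \delta_{i,1}$ modulo the accepted remainder class. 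Passing to the Fourier side this becomes a linear algebraic system in the unknown symbols $\mathfrak{a}, \mathfrak{b}, \mathfrak{c}, \mathfrak{d}$ (four unknowns in the low-high case, three or four in the balanced case) which I solve explicitly, arriving at the symbol formulae already displayed in the preceding pages.

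The boundedness of $(\W^{[1]}, R^{[1]})$ and $(\W^{[2]}, R^{[2]})$ then follows by inspection of the denominators $9\xi^2 + 14\xi\eta + 9\eta^2$ and $4\eta^2 - 4\eta\zeta + 9\zeta^2$, which are both positive definite quadratic forms vanishing only at the origin. Since each symbol is homogeneous of the appropriate order and smooth away from zero, Lemma \ref{t:SymbolPara} (applied with $\mu = 0$, $\mu_1 + \mu_2 = \mu$ chosen so that the total derivative count matches) yields $\|(\W^{[1]}, R^{[1]})\|_{H^s} + \|(\W^{[2]}, R^{[2]})\|_{H^s} \lesssim \CalAZS \|(\W, R)\|_{H^s}$. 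Since $\CalAZS \lesssim \epsilon \ll 1$ on the lifespan we are considering, the equivalence \eqref{NormEquRelation} is immediate.

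The main technical obstacle is verifying that \emph{all} additional terms created by the substitution are absorbed into $(G_3, K_3)$ in the sense of \eqref{perturbative}. Three sources of such terms must be tracked: (i) the lower-order remainders when we substitute the full $(\W_t, R_t)$ rather than only the leading symbols, (ii) commutator errors from sliding the paradifferential coefficients $T_{(1+\W)(1-\bar{\Y})}$ and $T_{J^{-1/2}(1-\Y)^2}$ past derivatives or past bilinear forms, and (iii) the cubic trilinear expressions produced when a bilinear form is fed a quadratic time-derivative. Each is handled using the composition calculus \eqref{CompositionPara}, \eqref{CompositionTwo} together with the bilinear estimates \eqref{HHBilinearHCH}--\eqref{BFCsLInfty} and the bounds \eqref{bEstTwo}, \eqref{vrotZHs32}, \eqref{GKOneTwoEst}; the gain comes from one extra factor of $\CalAOS$ or $\CalAZS$ on each commutator, and the vortex-induced pieces contribute $\epsilon d_S(t)^{-3/2}$ terms already listed as admissible in \eqref{perturbative}. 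Once this bookkeeping is done, the proposition follows.
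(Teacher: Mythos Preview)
Your proposal is correct and follows essentially the same approach as the paper: the ansatz of low-high and balanced bilinear forms, the substitution of the leading parts of $(\W_t,R_t)$, the reduction to the explicit algebraic systems with denominators $9\xi^2+14\xi\eta+9\eta^2$ and $4\eta^2-4\eta\zeta+9\zeta^2$, and the application of Lemma~\ref{t:SymbolPara} to obtain \eqref{WOneBound}, \eqref{WTwoBound} are exactly what is carried out in Sections~5.2.1--5.2.2 preceding the proposition. Your identification of the three classes of remainder terms to be absorbed into $(G_3,K_3)$ is also the right bookkeeping, and matches what the paper leaves implicit.
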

The modified variables $(\tilde{\W}, \tilde{R})$ are also holomorphic functions like $(\W, R)$.

\subsection{Construction of the modified energy} \label{s:CubicEnergy}
 In this section, we construct the \emph{quasilinear modified energy} as introduced by Hunter-Ifrim-Tataru-Wong \cite{MR3348783} for the Burger-Hilbert equation and then successfully implemented and adjusted to fit a more general setting, namely,  the water wave models studied by Ifrim-Tataru and their collaborators \cite{MR3535894, MR3499085, MR3625189,  ai2023dimensional, MR4483135, MR3667289, MR3869381}.
We call these energies $\mathcal{E}_s$ and prove Theorem \ref{t:CubicEnergy}.
Instead of using $(\W, R)$ to build the main part of the energy, we use the modified normal form variables $(\tilde{\W}, \tilde{R})$.
Inspired by the leading term of the energy $E_s$ in Section \ref{s:Energy}, we choose the energy
\begin{equation*}
\mathcal{E}_s^{[2]} := \int T_{J^{-\frac{3}{2}}}\langle D \rangle^{s+\Half}\tilde{\W} \cdot \langle D \rangle^{s+\Half}\bar{\tilde{\W}}  + \langle D \rangle^{s}\tilde{R} \cdot \langle D \rangle^{s}\bar{\tilde{R}}\,d\alpha
\end{equation*}
as the leading part of $\mathcal{E}_s$.
This energy $\mathcal{E}_s^{[2]}$ satisfies the norm equivalence property \eqref{normEqnTwo} due to \eqref{NormEquRelation}.
For the time derivative of $\mathcal{E}_s^{[2]}$, we compute
\begin{align*}
    &\frac{d}{dt}\mathcal{E}_s^{[2]} = \int T_{\partial_tJ^{-\frac{3}{2}}}\langle D \rangle^{s+\Half}\tilde{\W} \cdot \langle D \rangle^{s+\Half}\bar{\tilde{\W}} \,d\alpha \\
    +& 2\Re\int T_{J^{-\frac{3}{2}}}\langle D \rangle^{s+\Half}\tilde{\W} \cdot \langle D \rangle^{s+\Half}\bar{\tilde{\W}}_t + \langle D \rangle^{s}\tilde{R} \cdot \langle D \rangle^{s}\bar{\tilde{R}}_t \,d\alpha \\
    =& \int T_{\partial_tJ^{-\frac{3}{2}}}\langle D \rangle^{s+\Half}\tilde{\W} \cdot \langle D \rangle^{s+\Half}\bar{\tilde{\W}} \,d\alpha \\
    -& 2\Re\int \langle D \rangle^{s+\Half}\tilde{\W} \cdot T_{J^{-\frac{3}{2}}}\langle D \rangle^{s+\Half}T_{(1-\Y)(1+\bar{\tilde{\W}})}\bar{\tilde{R}}_\alpha +i\langle D \rangle^{s}\tilde{R} \cdot \langle D \rangle^{s}T_{J^{-\frac{1}{2}}(1-\bar{\Y})^2}\bar{\tilde{\W}}_{\alpha \alpha} \,d\alpha\\
    -& 2\Re\int T_{J^{-\frac{3}{2}}}\langle D \rangle^{s+\Half}\tilde{\W} \cdot \langle D \rangle^{s+\Half}T_{b_1}\bar{\W}_\alpha +\langle D \rangle^{s}\tilde{R} \cdot \langle D \rangle^{s}T_{b_1}\bar{R}_\alpha \,d\alpha\\
    +&2\Re\int T_{J^{-\frac{3}{2}}}\langle D \rangle^{s+\Half}\tilde{\W} \cdot \langle D \rangle^{s+\Half}\bar{G}_3 + \langle D \rangle^{s}\tilde{R} \cdot \langle D \rangle^{s}\bar{K}_3 \,d\alpha.
\end{align*}
In the following, we write perturbative integrals for integrals $I$ that satisfy the bound
\begin{equation} \label{IboundDSt}
   |I(t)|\lesssim_\CalAZS \left(\CalAOS\CalATS + d_S(t)^{-\frac{3}{2}}\right) \|(\W, R)\|_{\H^s}^2+ \epsilon^2 d_S(t)^{-\frac{3}{2}}.
\end{equation}
We now consider each term in $\frac{d}{dt}\mathcal{E}_s^{[2]}$.
For the first term in $\frac{d}{dt}\mathcal{E}_s^{[2]}$, 
\begin{equation*}
    \partial_t J^{s} = 2sJ^{-\frac{3}{2}} \Re (1-\Y)\partial_t\W = 2s \Re R_\alpha + E, \quad \|E\|_{L^\infty} \lesssim \CalAOS \CalATS + \epsilon d_S(t)^{-\frac{3}{2}}.
\end{equation*}
Hence, using the norm equivalence relation \eqref{NormEquRelation}, we can write
\begin{align*}
 &\int T_{\partial_tJ^{-\frac{3}{2}}}\langle D \rangle^{s+\Half}\tilde{\W} \cdot \langle D \rangle^{s+\Half}\bar{\tilde{\W}} \,d\alpha  \\
 =& -3\int T_{\Re R_\alpha}\langle D \rangle^{s+\Half}\tilde{\W} \cdot \langle D \rangle^{s+\Half}\bar{\tilde{\W}} \,d\alpha + \text{perturbative integrals}.   
\end{align*}
For the second term in $\frac{d}{dt}\mathcal{E}_s^{[2]}$, since $\| \W\|_{L^\infty} \ll 1$, we use Taylor expansion to write
\begin{align*}
 &J^{-\frac{3}{2}} = 1-\frac{3}{2}(\W+\bar{\W})+ \text{higher terms}, \quad (1-\Y)(1+\bar{\W}) = 1-\W +\bar{\W}+ \text{higher terms}, \\
 &J^{-\frac{1}{2}}(1-\bar{\Y})^2 = 1-\frac{1}{2}\W - \frac{5}{2}\bar{\W} + \text{higher terms}.
\end{align*}
Hence, the quadratic integral terms get canceled, the quartic and higher integral terms are perturbative using paradifferential calculus \eqref{CompositionPara}, so that only cubic terms of this integral remains. 
\begin{align*}
 -& 2\Re\int \langle D \rangle^{s+\Half}\tilde{\W} \cdot T_{J^{-\frac{3}{2}}}\langle D \rangle^{s+\Half}T_{(1-\Y)(1+\bar{\W})}\bar{\tilde{R}}_\alpha +i\langle D \rangle^{s}\tilde{R} \cdot \langle D \rangle^{s}T_{J^{-\frac{1}{2}}(1-\bar{\Y})^2}\bar{\tilde{\W}}_{\alpha \alpha} \,d\alpha\\
 = & 3\Re\int i\langle D \rangle^{s+\Half}\tilde{\W} \cdot T_{\W + \bar{\W}}\langle D \rangle^{s+\frac{3}{2}}\bar{\tilde{R}} \,d\alpha+ \Re\int i\langle D \rangle^{s+\frac{3}{2}}\tilde{R} \cdot \langle D \rangle^{s-\frac{3}{2}}T_{\W+ 5\bar{\W}} \langle D \rangle^2\bar{\tilde{\W}} \,d\alpha \\
 &+2\Re \int i\langle D \rangle^{s+\Half}\tilde{\W} \cdot\langle D \rangle^{s+\frac{1}{2}} T_{\W - \bar{\W}} \langle D \rangle\bar{\tilde{R}} \,d\alpha + \text{perturbative integrals}.
\end{align*}
For the third term in $\frac{d}{dt}\mathcal{E}_s^{[2]}$, we write $b_1 = 2\Re R - 2\Re \nP[\bar{\Y} R]$.
The terms with $2\Re R$ are cubic terms that are not perturbative.
The terms with $2\Re \nP[\bar{\Y} R]$ are quartic, and is perturbative using  paradifferential calculus \eqref{CompositionPara},  and integration by parts to shift the $\alpha$-derivative to para-coefficients at low frequencies.
In addition, one can replace $(\bar{\W}_\alpha, \bar{R}_\alpha)$ by $(\bar{\tilde{\W}}_\alpha, \bar{\tilde{R}}_\alpha)$, since the difference integral is perturbative using integration by parts to shift the $\alpha$-derivatives to para-coefficients.
Hence,
\begin{align*}
&- 2\Re\int T_{J^{-\frac{3}{2}}}\langle D \rangle^{s+\Half}\tilde{\W} \cdot \langle D \rangle^{s+\Half}T_{b_1}\bar{\W}_\alpha +\langle D \rangle^{s}\tilde{R} \cdot \langle D \rangle^{s}T_{b_1}\bar{R}_\alpha \,d\alpha\\
=& - 4\Re\int \langle D \rangle^{s+\Half}\tilde{\W} \cdot \langle D \rangle^{s+\Half}T_{\Re R}\bar{\W}_\alpha +\langle D \rangle^{s}\tilde{R} \cdot \langle D \rangle^{s}T_{\Re R}\bar{R}_\alpha \,d\alpha + \text{perturbative integrals}\\
=& - 4\Re\int \langle D \rangle^{s+\Half}\tilde{\W} \cdot \langle D \rangle^{s+\Half}T_{\Re R}\bar{\tilde{\W}}_\alpha +\langle D \rangle^{s}\tilde{R} \cdot \langle D \rangle^{s}T_{\Re R}\bar{\tilde{R}}_\alpha \,d\alpha + \text{perturbative integrals} .
\end{align*}
The last term in $\frac{d}{dt}\mathcal{E}_s^{[2]}$ is perturbative due to the bound for $(G_3, K_3)$ \eqref{perturbative}.
Therefore, collecting all the computations above, 
\begin{align*}
&\frac{d}{dt}\mathcal{E}_s^{[2]} =    -3\Re \int T_{ R_\alpha}\langle D \rangle^{s+\Half}\tilde{\W} \cdot \langle D \rangle^{s+\Half}\bar{\tilde{\W}} \,d\alpha +3\Re\int i\langle D \rangle^{s+\Half}\tilde{\W} \cdot T_{\W + \bar{\W}}\langle D \rangle^{s+\frac{3}{2}}\bar{\tilde{R}} \,d\alpha \\
&+ \Re\int i\langle D \rangle^{s+\frac{3}{2}}\tilde{R} \cdot \langle D \rangle^{s-\frac{3}{2}}T_{\W+ 5\bar{\W}} \langle D \rangle^2\bar{\tilde{\W}} \,d\alpha +2\Re \int i\langle D \rangle^{s+\Half}\tilde{\W} \cdot\langle D \rangle^{s+\frac{1}{2}} T_{\W - \bar{\W}} \langle D \rangle\bar{\tilde{R}} \,d\alpha\\
& - 4\Re\int \langle D \rangle^{s+\Half}\tilde{\W} \cdot \langle D \rangle^{s+\Half}T_{\Re R}\bar{\tilde{\W}}_\alpha +\langle D \rangle^{s}\tilde{R} \cdot \langle D \rangle^{s}T_{\Re R}\bar{\tilde{R}}_\alpha \,d\alpha + \text{perturbative integrals}.
\end{align*}
In order to finish the proof of Theorem \ref{t:CubicEnergy}, we need to find a cubic energy $\mathcal{E}_s^{[3]}$ such that
\begin{equation*}
    |\mathcal{E}_s^{[3]}| \lesssim \CalAZS \|(\W, R) \|^2_{\H^s}, \quad \frac{d}{dt}\mathcal{E}_s^{[3]} = -\frac{d}{dt}\mathcal{E}_s^{[2]}+ \text{perturbative integrals}.
\end{equation*}

We first construct the cubic energy $\mathcal{E}_{s,1}^{[3]}$ as the first part of $\mathcal{E}_s^{[3]}$.
It satisfies the equation
\begin{equation*}
 \frac{d}{dt}\mathcal{E}_{s,1}^{[3]} = 3\Re\int T_{R_\alpha}\langle D \rangle^{s+\Half}\tilde{\W} \cdot \langle D \rangle^{s+\Half}\bar{\tilde{\W}} \,d\alpha + \text{perturbative integrals}.  
\end{equation*}
We consider the cubic energy $\mathcal{E}_{s,1}^{[3]}$ of the following form
\begin{align*}
    \mathcal{E}_{s,1}^{[3]} =& \Re\int T_{J^{\frac{1}{2}}(1+\bar{\W})^2}A_1(R, \tilde{\W}, \bar{\tilde{R}}) + T_{J^{\frac{1}{2}}(1+\W)^2}B_1(R, \tilde{R}, \bar{\tilde{\W}}) \,d\alpha \\
    +& \Re \int T_{(1-\Y)(1+\bar{\W})}C_1(\W, \tilde{\W}, \bar{\tilde{\W}}) + T_{J^{\frac{1}{2}}(1+\bar{\W})^2}D_1(\W, \tilde{R}, \bar{\tilde{R}}) \,d\alpha,
\end{align*}
where $A_1,B_1,C_1, D_1$ are paradifferential cubic forms. 
Para-coefficients such as $J^{\frac{1}{2}}(1+\bar{\W})^2$ are needed in order to cancel the para-coefficients in the system.
Here, we  write $\mathfrak{a}_1(\xi, \eta, \zeta)$ for the symbol of $A_1(R, \tilde{\W}, \bar{\tilde{R}})$, and the other three symbols are defined in the same way.
When the time derivative falls on para-coefficients, it produces quartic and higher-order perturbative integrals.
We compute
\begin{align*}
 &\frac{d}{dt}\mathcal{E}_{s,1}^{[3]} = \Re\int A_1(R, \tilde{\W}, i\bar{\tilde{\W}}_{\alpha \alpha}) + B_1(R, -i\tilde{\W}_{\alpha\alpha}, \bar{\tilde{\W}})+ C_1(-R_\alpha, \tilde{\W}, \bar{\tilde{\W}}) \, d\alpha \\
&+ \Re \int B_1(-i \W_{\alpha \alpha}, \tilde{R}, \bar{\tilde{\W}}) + C_1(\W, -\tilde{R}_{\alpha}, \bar{\tilde{\W}})+ D_1(\W, \tilde{R}, i\bar{\tilde{\W}}_{\alpha\alpha}) \, d\alpha \\
&+ \Re \int A_1(-i\W_{\alpha \alpha }, \tilde{\W}, \bar{\tilde{R}}) + C_1(\W, \tilde{\W}, -\bar{\tilde{R}}_\alpha)+ D_1(\W, -i\tilde{\W}_{\alpha \alpha}, \bar{\tilde{R}}) \, d\alpha \\
&+ \Re \int A_1(R, -\tilde{R}_\alpha, \bar{\tilde{R}}) + B_1(R, \tilde{R}, -\bar{\tilde{R}}_\alpha)+ D_1(-R_\alpha, \tilde{R}, \bar{\tilde{R}}) \, d\alpha  + \text{quartic and higher integrals}.
\end{align*}
Using the Plancherel theorem, $\xi + \eta = \zeta$, and symbols $\mathfrak{a}_1(\xi, \eta, \zeta), \mathfrak{b}_1(\xi, \eta, \zeta), \mathfrak{c}_1(\xi, \eta, \zeta), \mathfrak{d}_1(\xi, \eta, \zeta)$ solve the system 
\begin{equation*}
\left\{
    \begin{array}{lr}
    (\xi+\eta)^2 \mathfrak{a}_1 - \eta^2 \mathfrak{b}_1 + \xi \mathfrak{c}_1 = -3 \xi\langle\eta \rangle^{s+\frac{1}{2}} \langle \xi +\eta \rangle^{s+\frac{1}{2}}\chi_1(\xi, \eta) &\\
    \xi^2 \mathfrak{b}_1 - \eta \mathfrak{c}_1 - (\xi+\eta)^2 \mathfrak{d}_1 = 0  &\\
    \xi^2 \mathfrak{a}_1 + (\xi+\eta) \mathfrak{c}_1 + \eta^2 \mathfrak{d}_1 = 0  &\\
     -\eta \mathfrak{a}_1 +(\xi +\eta) \mathfrak{b}_1 - \xi \mathfrak{d}_1 =  0.&  
    \end{array}
\right.
\end{equation*}
We get the solutions of the system:
\begin{equation*}
 \begin{aligned}
&\mathfrak{a}_1 = -\dfrac{9(\xi+\eta)^2}{2 \eta(9\xi^2 + 14\xi \eta + 9 \eta^2)}\langle\eta \rangle^{s+\frac{1}{2}} \langle \xi +\eta \rangle^{s+\frac{1}{2}}\chi_1(\xi, \eta), \\
&\mathfrak{b}_1 = -\dfrac{3(2\xi^2+ 3\xi\eta +3 \eta^2)}{2 \eta(9\xi^2 + 14\xi \eta + 9 \eta^2)}\langle\eta \rangle^{s+\frac{1}{2}} \langle \xi +\eta \rangle^{s+\frac{1}{2}}\chi_1(\xi, \eta), \\
&\mathfrak{c}_1 = \dfrac{ 3\xi(3\xi^2 + 3\xi \eta +2\eta^2)}{ 
\eta(9\xi^2 + 14\xi \eta + 9 \eta^2)}\langle\eta \rangle^{s+\frac{1}{2}} \langle \xi +\eta \rangle^{s+\frac{1}{2}}\chi_1(\xi, \eta), \\
&\mathfrak{d}_1 = -\dfrac{6\xi(\xi + \eta)}{2\eta(9\xi^2 + 14\xi \eta + 9 \eta^2)}\langle\eta \rangle^{s+\frac{1}{2}} \langle \xi +\eta \rangle^{s+\frac{1}{2}}\chi_1(\xi, \eta). 
\end{aligned}   
\end{equation*}
Using \eqref{BFHsCmStar}, the energy $\mathcal{E}_{s,1}^{[3]}$ satisfies the estimate
\begin{equation*}
    |\mathcal{E}_{s,1}^{[3]}| \lesssim_\CalAZS \CalAZS\|(\W, R) \|_{\H^s}^2.
\end{equation*}
In addition, the extra quartic and higher integral terms are perturbative.

Next, we construct the cubic energy $\mathcal{E}_{s,2}^{[3]}$ as the second part of $\mathcal{E}_s^{[3]}$ such that 
\begin{align*}
 \frac{d}{dt}\mathcal{E}_{s,2}^{[3]} =& -3\Re\int i\langle D \rangle^{s+\Half}\tilde{\W} \cdot T_{\W + \bar{\W}}\langle D \rangle^{s+\frac{3}{2}}\bar{\tilde{R}} \,d\alpha+ \Re\int i  \langle D \rangle^{s-\frac{3}{2}}T_{5\W + \bar{\W}} \langle D \rangle^2\tilde{\W} \cdot\langle D \rangle^{s+\frac{3}{2}}\bar{\tilde{R}} \,d\alpha\\
 &-2\Re \int i\langle D \rangle^{s+\Half}\tilde{\W} \cdot\langle D \rangle^{s+\frac{1}{2}} T_{\W - \bar{\W}} \langle D \rangle\bar{\tilde{R}} \,d\alpha+\text{perturbative integrals}.
\end{align*}
We consider the cubic energy $\mathcal{E}_{s,2}^{[3]}$ of the following form
\begin{align*}
\mathcal{E}_{s,2}^{[3]} &= \Re\int T_{J^{\frac{1}{2}}(1+\bar{\W})^2}A_2^h(R, \tilde{\W}, \bar{\tilde{R}}) + T_{J^{\frac{1}{2}}(1+\W)^2}B_2^h(R, \tilde{R}, \bar{\tilde{\W}}) \,d\alpha\\
&+ \Re \int T_{(1-\Y)(1+\bar
\W)}C_2^h(\W, \tilde{\W}, \bar{\tilde{\W}}) + T_{J^{\frac{1}{2}}(1+\bar{\W})^2}D_2^h(\W, \tilde{R}, \bar{\tilde{R}}) \,d\alpha\\
&+\Re\int T_{J^{\frac{1}{2}}(1+\bar{\W})^2}A_2^a(\bar{R}, \tilde{\W}, \bar{\tilde{R}}) + T_{J^{\frac{1}{2}}(1+\W)^2}B_2^a(\bar{R}, \tilde{R}, \bar{\tilde{\W}}) \,d\alpha \\
&+\Re \int T_{(1+\W)(1-\bar{\Y})}C_2^a(\bar{\W}, \tilde{\W}, \bar{\tilde{\W}}) + T_{J^{\frac{1}{2}}(1+\W)^2}D_2^a(\bar{\W}, \tilde{R}, \bar{\tilde{R}}) \,d\alpha,
\end{align*}
where $A_2^h,B_2^h,C_2^h, D_2^h$ and $A_2^a,B_2^a,C_2^a, D_2^a$ are paradifferential cubic forms. 
Here, we  write $\mathfrak{a}_2^h(\xi, \eta, \zeta)$ for the symbol of $A_2^h(R, \tilde{\W}, \bar{\tilde{R}})$, and  $\mathfrak{a}_2^a(\xi, \zeta, \eta)$ for the symbol of $A_2^a(\bar{R}, \tilde{\W}, \bar{\tilde{R}})$.
Other symbols are defined in the same way.
Then $\xi + \eta = \zeta$.

When the time derivative falls on para-coefficients, it produces quartic and higher-order perturbative integrals.
The time derivative of the energy $\mathcal{E}_{s,2}^{[3]}$ is given by
\begin{align*}
 &\frac{d}{dt}\mathcal{E}_{s,2}^{[3]} = \Re\int A_2^h(R, \tilde{\W}, i\bar{\tilde{\W}}_{\alpha \alpha}) + B_2^h(R, -i\tilde{\W}_{\alpha\alpha}, \bar{\tilde{\W}})+ C_2^h(-R_\alpha, \tilde{\W}, \bar{\tilde{\W}}) \, d\alpha \\
&+ \Re \int B_2^h(-i \W_{\alpha \alpha}, \tilde{R}, \bar{\tilde{\W}}) + C_2^h(\W, -\tilde{R}_{\alpha}, \bar{\tilde{\W}})+ D_2^h(\W, \tilde{R}, i\bar{\tilde{\W}}_{\alpha\alpha}) \, d\alpha \\
&+ \Re \int A_2^h(-i\W_{\alpha \alpha }, \tilde{\W}, \bar{\tilde{R}}) + C_2^h(\W, \tilde{\W}, -\bar{\tilde{R}}_\alpha)+ D_2^h(\W, -i\tilde{\W}_{\alpha \alpha}, \bar{\tilde{R}}) \, d\alpha \\
&+ \Re \int A_2^h(R, -\tilde{R}_\alpha, \bar{\tilde{R}}) + B_2^h(R, \tilde{R}, -\bar{\tilde{R}}_\alpha)+ D_2^h(-R_\alpha, \tilde{R}, \bar{\tilde{R}}) \, d\alpha\\
&+\Re\int A_2^a(\bar{R}, \tilde{\W}, i\bar{\tilde{\W}}_{\alpha \alpha}) + B_2^a(\bar{R}, -i\tilde{\W}_{\alpha\alpha}, \bar{\tilde{\W}})+ C_2^a(-\bar{R}_\alpha, \tilde{\W}, \bar{\tilde{\W}}) \, d\alpha \\
&+ \Re \int B_2^a(i \bar{\W}_{\alpha \alpha}, \tilde{R}, \bar{\tilde{\W}}) + C_2^a(\bar{\W}, -\tilde{R}_{\alpha}, \bar{\tilde{\W}})+ D_2^a(\bar{\W}, \tilde{R}, i\bar{\tilde{\W}}_{\alpha\alpha}) \, d\alpha \\
&+ \Re \int A_2^a(i\bar{\W}_{\alpha \alpha }, \tilde{\W}, \bar{\tilde{R}}) + C_2^a(\bar{\W}, \tilde{\W}, -\bar{\tilde{R}}_\alpha)+ D_2^a(\bar{\W}, -i\tilde{\W}_{\alpha \alpha}, \bar{\tilde{R}}) \, d\alpha \\
&+ \Re \int A_2^a(\bar{R}, -\tilde{R}_\alpha, \bar{\tilde{R}}) + B_2^a(\bar{R}, \tilde{R}, -\bar{\tilde{R}}_\alpha)+ D_2^a(-\bar{R}_\alpha, \tilde{R}, \bar{\tilde{R}}) \, d\alpha  + \text{quartic and higher integrals}.
\end{align*}
To eliminate non-perturbative terms with para-coefficient $T_{\W}$, symbols $\mathfrak{a}_2^h(\xi, \eta, \zeta)$, $\mathfrak{b}_2^h(\xi, \eta, \zeta)$, $\mathfrak{c}_2^h(\xi, \eta, \zeta)$, $\mathfrak{d}_2^h(\xi, \eta, \zeta)$ need to solve the system
\begin{equation*}
\left\{
    \begin{array}{lr}
    (\xi+\eta)^2 \mathfrak{a}_2^h - \eta^2 \mathfrak{b}_2^h + \xi \mathfrak{c}_2^h = 0 &\\
    \xi^2 \mathfrak{b}_2^h - \eta \mathfrak{c}_2^h - (\xi+\eta)^2 \mathfrak{d}_2^h = 0  &\\
    \xi^2 \mathfrak{a}_2^h + (\xi+\eta) \mathfrak{c}_2^h + \eta^2 \mathfrak{d}_2^h = -3\langle\eta\rangle^{s+\frac{1}{2}}\langle\xi+\eta\rangle^{s+\frac{3}{2}}+5\langle \eta\rangle^2 \langle\xi +\eta\rangle^{2s} -2\langle\eta\rangle^{2s+1}\langle\xi+\eta\rangle &\\
     -\eta \mathfrak{a}_2^h +(\xi +\eta) \mathfrak{b}_2^h - \xi \mathfrak{d}_2^h = 0.&  
    \end{array}
\right.
\end{equation*}
The solutions of the symbols are given by 
\begin{equation*}
 \begin{aligned}
&\mathfrak{a}_2^h = -\dfrac{(3\xi^2 + 3\xi\eta + 2 \eta^2)\chi_1(\xi, \eta)}{ \xi\eta(9\xi^2 + 14\xi \eta + 9 \eta^2)}(-3\langle\eta\rangle^{s+\frac{1}{2}}\langle\xi+\eta\rangle^{s+\frac{3}{2}}+5\langle \eta\rangle^2 \langle\xi +\eta\rangle^{2s} -2\langle\eta\rangle^{2s+1}\langle\xi+\eta\rangle), \\
&\mathfrak{b}_2^h = -\dfrac{2(\xi+\eta)^2\chi_1(\xi, \eta)}{ \xi\eta(9\xi^2 + 14\xi \eta + 9 \eta^2)}(-3\langle\eta\rangle^{s+\frac{1}{2}}\langle\xi+\eta\rangle^{s+\frac{3}{2}}+5\langle \eta\rangle^2 \langle\xi +\eta\rangle^{2s} -2\langle\eta\rangle^{2s+1}\langle\xi+\eta\rangle) , \\
&\mathfrak{c}_2^h = \dfrac{ 3(\xi+\eta)^3\chi_1(\xi, \eta)}{ 
\xi \eta(9\xi^2 + 14\xi \eta + 9 \eta^2)}(-3\langle\eta\rangle^{s+\frac{1}{2}}\langle\xi+\eta\rangle^{s+\frac{3}{2}}+5\langle \eta\rangle^2 \langle\xi +\eta\rangle^{2s} -2\langle\eta\rangle^{2s+1}\langle\xi+\eta\rangle) , \\
&\mathfrak{d}_2^h = -\dfrac{(2\xi^2 +3\xi \eta + 3\eta^2)\chi_1(\xi, \eta)}{\xi \eta(9\xi^2 + 14\xi \eta + 9 \eta^2)}(-3\langle\eta\rangle^{s+\frac{1}{2}}\langle\xi+\eta\rangle^{s+\frac{3}{2}}+5\langle \eta\rangle^2 \langle\xi +\eta\rangle^{2s} -2\langle\eta\rangle^{2s+1}\langle\xi+\eta\rangle). 
\end{aligned}   
\end{equation*}

Similarly, to eliminate non-perturbative terms with para-coefficient $T_{\bar{\W}}$, symbols $\mathfrak{a}_2^a(\xi, \zeta, \eta)$, $\mathfrak{b}_2^a(\xi, \zeta, \eta)$, $\mathfrak{c}_2^a(\xi, \zeta, \eta)$, $\mathfrak{d}_2^a(\xi, \zeta, \eta)$ need to solve the system
\begin{equation*}
\left\{
    \begin{array}{lr}
    \eta^2 \mathfrak{a}_2^a - (\xi+\eta)^2 \mathfrak{b}_2^a - \xi \mathfrak{c}_2^a = 0 &\\
    \xi^2 \mathfrak{b}_2^a + (\xi+\eta) \mathfrak{c}_2^a + \eta^2 \mathfrak{d}_2^a = 0  &\\
    \xi^2 \mathfrak{a}_2^a - \eta \mathfrak{c}_2^a - (\xi+\eta)^2 \mathfrak{d}_2^a = 3\langle \xi+\eta\rangle^{s+\frac{1}{2}}\langle \eta\rangle^{s+\frac{3}{2}}-\langle \xi+\eta\rangle^2 \langle\eta\rangle^{2s} -2\langle \xi+\eta\rangle^{2s+1}\langle\eta\rangle  &\\
     -(\xi+\eta) \mathfrak{a}_2^a +\eta \mathfrak{b}_2^a + \xi \mathfrak{d}_2^a = 0.&  
    \end{array}
\right.
\end{equation*}
The solutions of these symbols are given by 
\begin{equation*}
 \begin{aligned}
&\mathfrak{a}_2^h = -\dfrac{(3\xi^2 + 3\xi\eta + 2 \eta^2)\chi_1(\xi, \eta)}{ \xi\eta(9\xi^2 + 14\xi \eta + 9 \eta^2)}(3\langle \xi+\eta\rangle^{s+\frac{1}{2}}\langle \eta\rangle^{s+\frac{3}{2}}-\langle \xi+\eta\rangle^2 \langle\eta\rangle^{2s} -2\langle \xi+\eta\rangle^{2s+1}\langle\eta\rangle), \\
&\mathfrak{b}_2^h = -\dfrac{2(\xi+\eta)\chi_1(\xi, \eta)}{ \xi(9\xi^2 + 14\xi \eta + 9 \eta^2)}(3\langle \xi+\eta\rangle^{s+\frac{1}{2}}\langle \eta\rangle^{s+\frac{3}{2}}-\langle \xi+\eta\rangle^2 \langle\eta\rangle^{2s} -2\langle \xi+\eta\rangle^{2s+1}\langle\eta\rangle) , \\
&\mathfrak{c}_2^h = \dfrac{ (2\xi^2 +3\xi \eta + 3\eta^2)\chi_1(\xi, \eta)}{ 
\xi(9\xi^2 + 14\xi \eta + 9 \eta^2)}(3\langle \xi+\eta\rangle^{s+\frac{1}{2}}\langle \eta\rangle^{s+\frac{3}{2}}-\langle \xi+\eta\rangle^2 \langle\eta\rangle^{2s} -2\langle \xi+\eta\rangle^{2s+1}\langle\eta\rangle) , \\
&\mathfrak{d}_2^h = -\dfrac{3(\xi+\eta)^2\chi_1(\xi, \eta)}{ \xi\eta(9\xi^2 + 14\xi \eta + 9 \eta^2)}(3\langle \xi+\eta\rangle^{s+\frac{1}{2}}\langle \eta\rangle^{s+\frac{3}{2}}-\langle \xi+\eta\rangle^2 \langle\eta\rangle^{2s} -2\langle \xi+\eta\rangle^{2s+1}\langle\eta\rangle). 
\end{aligned}   
\end{equation*}
Since $|\xi|\ll |\eta|$, and $\langle\eta \rangle \approx \langle\xi+\eta\rangle$, we have the estimates for symbols,
\begin{align*}
    &|-3\langle\eta\rangle^{s+\frac{1}{2}}\langle\xi+\eta\rangle^{s+\frac{3}{2}}+5\langle \eta\rangle^2 \langle\xi +\eta\rangle^{2s} -2\langle\eta\rangle^{2s+1}\langle\xi+\eta\rangle| \lesssim |\xi|\langle \xi+\eta\rangle^{2s+1}, \\
    &|3\langle \xi+\eta\rangle^{s+\frac{1}{2}}\langle \eta\rangle^{s+\frac{3}{2}}-\langle \xi+\eta\rangle^2 \langle\eta\rangle^{2s} -2\langle \xi+\eta\rangle^{2s+1}\langle\eta\rangle| \lesssim |\xi|\langle \xi+\eta\rangle^{2s+1}.
\end{align*}
Using the above inequalities and \eqref{BFHsCmStar}, the energy $\mathcal{E}_{s,2}^{[3]}$ satisfies the estimate
\begin{equation*}
    |\mathcal{E}_{s,2}^{[3]}| \lesssim_\CalAZS \CalAZS\|(\W, R) \|_{\H^s}^2.
\end{equation*}
The extra quartic and higher integral terms are also perturbative.

Finally, we first construct the cubic energy $\mathcal{E}_{s,3}^{[3]}$ as the last part of $\mathcal{E}_s^{[3]}$ such that
\begin{align*}
 \frac{d}{dt}\mathcal{E}_{s,3}^{[3]} &= 2\Re\int \langle D \rangle^{s+\Half}\tilde{\W} \cdot \langle D \rangle^{s+\Half}T_{R}\bar{\tilde{\W}}_\alpha +\langle D \rangle^{s}\tilde{R} \cdot \langle D \rangle^{s}T_{ R}\bar{\tilde{R}}_\alpha \,d\alpha \\
  + 2\Re&\int \langle D \rangle^{s+\Half}\tilde{\W} \cdot \langle D \rangle^{s+\Half}T_{\bar{R}}\bar{\tilde{\W}}_\alpha +\langle D \rangle^{s}\tilde{R} \cdot \langle D \rangle^{s}T_{\tilde{R}}\bar{\tilde{R}}_\alpha \,d\alpha+ \text{perturbative integrals}.
\end{align*}
We consider the cubic energy $\mathcal{E}_{s,3}^{[3]}$ of the following form
\begin{align*}
\mathcal{E}_{s,2}^{[3]} &= \Re\int T_{J^{\frac{1}{2}}(1+\bar{\W})^2}A_3^h(R, \tilde{\W}, \bar{\tilde{R}}) + T_{J^{\frac{1}{2}}(1+\W)^2}B_3^h(R, \tilde{R}, \bar{\tilde{\W}}) \,d\alpha\\
&+ \Re \int T_{(1-\Y)(1+\bar
\W)}C_3^h(\W, \tilde{\W}, \bar{\tilde{\W}}) + T_{J^{\frac{1}{2}}(1+\bar{\W})^2}D_3^h(\W, \tilde{R}, \bar{\tilde{R}}) \,d\alpha\\
&+\Re\int T_{J^{\frac{1}{2}}(1+\bar{\W})^2}A_3^a(\bar{R}, \tilde{\W}, \bar{\tilde{R}}) + T_{J^{\frac{1}{2}}(1+\W)^2}B_3^a(\bar{R}, \tilde{R}, \bar{\tilde{\W}}) \,d\alpha \\
&+\Re \int T_{(1+\W)(1-\bar{\Y})}C_3^a(\bar{\W}, \tilde{\W}, \bar{\tilde{\W}}) + T_{J^{\frac{1}{2}}(1+\W)^2}D_3^a(\bar{\W}, \tilde{R}, \bar{\tilde{R}}) \,d\alpha,
\end{align*}
where $A_3^h,B_3^h,C_3^h, D_3^h$ and $A_3^a,B_3^a,C_3^a, D_3^a$ are paradifferential cubic forms. 
Here, we  write $\mathfrak{a}_3^h(\xi, \eta, \zeta)$ for the symbol of $A_3^h(R, \tilde{\W}, \bar{\tilde{R}})$, and  $\mathfrak{a}_3^a(\xi, \zeta, \eta)$ for the symbol of $A_3^a(\bar{R}, \tilde{\W}, \bar{\tilde{R}})$.
Other symbols are defined in the same way.
Then $\xi + \eta = \zeta$.

When the time derivative falls on the para-coefficients, the quartic and higher-order integrals produced are perturbative.
We compute the time derivative of the energy
\begin{align*}
 &\frac{d}{dt}\mathcal{E}_{s,3}^{[3]} = \Re\int A_3^h(R, \tilde{\W}, i\bar{\tilde{\W}}_{\alpha \alpha}) + B_3^h(R, -i\tilde{\W}_{\alpha\alpha}, \bar{\tilde{\W}})+ C_3^h(-R_\alpha, \tilde{\W}, \bar{\tilde{\W}}) \, d\alpha \\
&+ \Re \int B_3^h(-i \W_{\alpha \alpha}, \tilde{R}, \bar{\tilde{\W}}) + C_3^h(\W, -\tilde{R}_{\alpha}, \bar{\tilde{\W}})+ D_3^h(\W, \tilde{R}, i\bar{\tilde{\W}}_{\alpha\alpha}) \, d\alpha \\
&+ \Re \int A_3^h(-i\W_{\alpha \alpha }, \tilde{\W}, \bar{\tilde{R}}) + C_3^h(\W, \tilde{\W}, -\bar{\tilde{R}}_\alpha)+ D_3^h(\W, -i\tilde{\W}_{\alpha \alpha}, \bar{\tilde{R}}) \, d\alpha \\
&+ \Re \int A_3^h(R, -\tilde{R}_\alpha, \bar{\tilde{R}}) + B_3^h(R, \tilde{R}, -\bar{\tilde{R}}_\alpha)+ D_3^h(-R_\alpha, \tilde{R}, \bar{\tilde{R}}) \, d\alpha\\
&+\Re\int A_3^a(\bar{R}, \tilde{\W}, i\bar{\tilde{\W}}_{\alpha \alpha}) + B_3^a(\bar{R}, -i\tilde{\W}_{\alpha\alpha}, \bar{\tilde{\W}})+ C_3^a(-\bar{R}_\alpha, \tilde{\W}, \bar{\tilde{\W}}) \, d\alpha \\
&+ \Re \int B_3^a(i \bar{\W}_{\alpha \alpha}, \tilde{R}, \bar{\tilde{\W}}) + C_3^a(\bar{\W}, -\tilde{R}_{\alpha}, \bar{\tilde{\W}})+ D_3^a(\bar{\W}, \tilde{R}, i\bar{\tilde{\W}}_{\alpha\alpha}) \, d\alpha \\
&+ \Re \int A_3^a(i\bar{\W}_{\alpha \alpha }, \tilde{\W}, \bar{\tilde{R}}) + C_3^a(\bar{\W}, \tilde{\W}, -\bar{\tilde{R}}_\alpha)+ D_3^a(\bar{\W}, -i\tilde{\W}_{\alpha \alpha}, \bar{\tilde{R}}) \, d\alpha \\
&+ \Re \int A_3^a(\bar{R}, -\tilde{R}_\alpha, \bar{\tilde{R}}) + B_3^a(\bar{R}, \tilde{R}, -\bar{\tilde{R}}_\alpha)+ D_3^a(-\bar{R}_\alpha, \tilde{R}, \bar{\tilde{R}}) \, d\alpha  + \text{quartic and higher integrals}.
\end{align*}

To eliminate non-perturbative integral terms, symbols $\mathfrak{a}_3^h(\xi, \eta, \zeta), \mathfrak{b}_3^h(\xi, \eta, \zeta), \mathfrak{c}_3^h(\xi, \eta, \zeta), \mathfrak{d}_3^h(\xi, \eta, \zeta)$ need to solve the system 
\begin{equation*}
\left\{
    \begin{array}{lr}
    (\xi+\eta)^2 \mathfrak{a}_3^h - \eta^2 \mathfrak{b}_3^h + \xi \mathfrak{c}_3^h = 2 (\xi+ \eta)\langle\eta \rangle^{2s+1} \chi_1(\xi, \eta) &\\
    \xi^2 \mathfrak{b}_3^h - \eta \mathfrak{c}_3^h - (\xi+\eta)^2 \mathfrak{d}_3^h = 0  &\\
    \xi^2 \mathfrak{a}_3^h + (\xi+\eta) \mathfrak{c}_3^h + \eta^2 \mathfrak{d}_3^h = 0  &\\
     -\eta \mathfrak{a}_3^h +(\xi +\eta) \mathfrak{b}_3^h - \xi \mathfrak{d}_3^h = -2 (\xi+ \eta)\langle\eta \rangle^{2s} \chi_1(\xi, \eta).&  
    \end{array}
\right.
\end{equation*}
At low frequency $|\eta| \approx 1$, the symbols are all bounded and the estimates are trivial, it only suffices to work on the high frequency regime $|\eta|\gg 1$.
Since $\tilde{\W}$ and $\tilde{R}$ are holomorphic functions, at high frequencies $\eta \approx -\langle \eta \rangle$, so that at high frequencies $|\eta|\gg 1$,
\begin{equation*}
 \begin{aligned}
&\mathfrak{a}_3^h \approx -\dfrac{2(\xi+\eta)(5\xi^2 + 9\xi \eta + 6\eta^2)}{2 \xi(9\xi^2 + 14\xi \eta + 9 \eta^2)}\langle\eta \rangle^{2s}\chi_1(\xi, \eta), \\
&\mathfrak{b}_3^h \approx -\dfrac{2(\xi + \eta)(3\xi^3+ 11\xi^2\eta +12 \xi\eta^2 +6\eta^3)}{\xi\eta(9\xi^2 + 14\xi \eta + 9 \eta^2)}\langle\eta \rangle^{2s} \chi_1(\xi, \eta), \\
&\mathfrak{c}_3^h \approx \dfrac{ 2(\xi+\eta)(5\xi^2 + 7\xi \eta +4\eta^2)}{ 
9\xi^2 + 14\xi \eta + 9 \eta^2}\langle\eta \rangle^{2s} \chi_1(\xi, \eta), \\
&\mathfrak{d}_3^h \approx -\dfrac{2(\xi + \eta)(3\xi^2 +5\xi \eta + 4\eta^2)}{\eta(9\xi^2 + 14\xi \eta + 9 \eta^2)}\langle\eta \rangle^{2s} \chi_1(\xi, \eta). 
\end{aligned}   
\end{equation*}

Similarly, symbols $\mathfrak{a}_3^a(\xi, \zeta, \eta),\mathfrak{b}_3^a(\xi, \zeta, \eta),\mathfrak{c}_3^a(\xi, \zeta, \eta),\mathfrak{d}_3^a(\xi, \zeta, \eta)$
need to solve the system 
\begin{equation*}
\left\{
    \begin{array}{lr}
    \eta^2 \mathfrak{a}_3^a - (\xi+\eta)^2 \mathfrak{b}_3^a - \xi \mathfrak{c}_3^a = 2  \eta\langle\eta \rangle^{2s+1} \chi_1(\xi, \eta) &\\
    \xi^2 \mathfrak{b}_3^a + (\xi+\eta) \mathfrak{c}_3^a + \eta^2 \mathfrak{d}_3^a = 0  &\\
    \xi^2 \mathfrak{a}_3^a - \eta \mathfrak{c}_3^a - (\xi+\eta)^2 \mathfrak{d}_3^a = 0  &\\
     -(\xi+\eta) \mathfrak{a}_3^a +\eta \mathfrak{b}_3^a + \xi \mathfrak{d}_3^a = -2 \eta\langle\eta \rangle^{2s} \chi_1(\xi, \eta).&  
    \end{array}
\right.
\end{equation*}
Again it suffices to consider the high frequency regime where $\eta \approx -\langle \eta \rangle$, 
\begin{equation*}
 \begin{aligned}
&\mathfrak{a}_3^a \approx \dfrac{2(3\xi^3 + 11\xi^2\eta + 12 \xi \eta^2 + 6\eta^3)}{ \xi(9\xi^2 + 14\xi \eta + 9 \eta^2)}\langle\eta \rangle^{2s}\chi_1(\xi, \eta), \\
&\mathfrak{b}_3^a \approx \dfrac{2\eta(5\xi^2+ 9\xi\eta +6\eta^2)}{2 \xi(9\xi^2 + 14\xi \eta + 9 \eta^2)}\langle\eta \rangle^{2s} \chi_1(\xi, \eta), \\
&\mathfrak{c}_3^a \approx -\dfrac{ 2\eta(5\xi^2 + 7\xi \eta +4\eta^2)}{ 
9\xi^2 + 14\xi \eta + 9 \eta^2}\langle\eta \rangle^{2s} \chi_1(\xi, \eta), \\
&\mathfrak{d}_3^a \approx -\dfrac{2(3\xi^2 +5\xi \eta + 4\eta^2)}{(9\xi^2 + 14\xi \eta + 9 \eta^2)}\langle\eta \rangle^{2s} \chi_1(\xi, \eta). 
\end{aligned}   
\end{equation*}
One part of the energy $\mathcal{E}_{s,3}^{[3]}$
\begin{align*}
\Re &\int T_{(1-\Y)(1+\bar
\W)}C_3^h(\W, \tilde{\W}, \bar{\tilde{\W}}) + T_{J^{\frac{1}{2}}(1+\bar{\W})^2}D_3^h(\W, \tilde{R}, \bar{\tilde{R}}) \\
&+ T_{(1-\bar{\Y})(1+
\W)}C_3^a(\bar{\W}, \tilde{\W}, \bar{\tilde{\W}}) + T_{J^{\frac{1}{2}}(1+\W)^2}D_3^a(\bar{\W}, \tilde{R}, \bar{\tilde{R}})\,d\alpha
\end{align*}
satisfies the energy bound
\begin{equation} \label{EnergyEqual}
    |\mathcal{E}|\lesssim_\CalAZS \CalAZS\|(\W, R) \|_{\H^s}^2
\end{equation}
according to \eqref{BFHsCmStar}.
The other four parts of the energy $\mathcal{E}_{s,3}^{[3]}$
\begin{align*}
 &\Re \int T_{J^{\frac{1}{2}}(1+\bar{\W})^2}A_3^h(R, \tilde{\W}, \bar{\tilde{R}})   \,d\alpha, \quad \Re \int T_{J^{\frac{1}{2}}(1+\W)^2}B_3^h(R, \tilde{R}, \bar{\tilde{\W}})\,d\alpha, \\
 &\Re \int T_{J^{\frac{1}{2}}(1+\bar{\W})^2}A_3^a(\bar{R}, \tilde{\W}, \bar{\tilde{R}})\,d\alpha, \quad\Re \int   T_{J^{\frac{1}{2}}(1+\W)^2}B_3^a(\bar{R}, \tilde{R}, \bar{\tilde{\W}}) \,d\alpha
\end{align*}
do not satisfy this energy bound \eqref{EnergyEqual}.
However, since $|\xi|\ll |\eta|$, 
\begin{align*}
 &\mathfrak{a}_3^h(\xi, \eta, \zeta) \approx -\frac{2\eta}{3\xi}\langle \eta \rangle^{2s}\chi_1(\xi,\eta)+ \text{lower order terms in }\eta, \\
 &\mathfrak{b}_3^a(\xi, \eta, \zeta) \approx \frac{2\eta}{3\xi}\langle \eta \rangle^{2s}\chi_1(\xi,\eta)+ \text{lower order terms in }\eta,  \\
 &\mathfrak{b}_3^h(\xi, \eta, \zeta) \approx -\frac{4\eta}{3\xi}\langle \eta \rangle^{2s}\chi_1(\xi,\eta)+ \text{lower order terms in }\eta, \\
 &\mathfrak{a}_3^a(\xi, \eta, \zeta) \approx \frac{4\eta}{3\xi}\langle \eta \rangle^{2s}\chi_1(\xi,\eta)+ \text{lower order terms in }\eta.
\end{align*}
The leading term of the symbol $\mathfrak{a}_3^h(\xi, \eta, \zeta)$ cancels the leading term of the symbol $\mathfrak{b}_3^a(\xi, \zeta, \eta)$, and  the leading term of the symbol $\mathfrak{b}_3^h(\xi, \eta, \zeta)$ cancels the leading term of the symbol $\mathfrak{a}_3^a(\xi, \zeta, \eta)$.
The remaining lower order terms of symbols satisfy the need for the energy bound \eqref{EnergyEqual}.
Hence, taking the real part of the energy yields that two energies
\begin{align*}
 &\Re \int T_{J^{\frac{1}{2}}(1+\bar{\W})^2}A_3^h(R, \tilde{\W}, \bar{\tilde{R}})   + T_{J^{\frac{1}{2}}(1+\W)^2}B_3^a(\bar{R}, \tilde{R}, \bar{\tilde{\W}})\,d\alpha, \\
 &\Re \int T_{J^{\frac{1}{2}}(1+\bar{\W})^2}A_3^a(\bar{R}, \tilde{\W}, \bar{\tilde{R}})\,d\alpha + T_{J^{\frac{1}{2}}(1+\W)^2}B_3^h(R, \tilde{R}, \bar{\tilde{\W}})   \,d\alpha
\end{align*}
satisfy the bound \eqref{EnergyEqual}.
Therefore, the energy $\mathcal{E}_{s,3}^{[3]}$ satisfies the estimate
\begin{equation*}
    |\mathcal{E}_{s,3}^{[3]}| \lesssim_\CalAZS \CalAZS\|(\W, R) \|_{\H^s}^2,
\end{equation*}
and quartic and higher integral terms are  perturbative.

Collecting all parts of the modified energy
\begin{equation*}
    \mathcal{E}_s = \mathcal{E}_s^{[2]} + \mathcal{E}_{s,1}^{[3]} + \mathcal{E}_{s,2}^{[3]}  +\mathcal{E}_{s,3}^{[3]}, 
\end{equation*}
it satisfies the norm equivalence \eqref{normEqnTwo} and the time derivative of the energy is bounded by the right hand side of \eqref{IboundDSt}.
Using the estimate for estimate for $d_S(t)$ \eqref{DistanceSurface}, for either $\lambda>0$ or $\lambda<0$,
\begin{equation*}
    d_S(t)^{-\frac{3}{2}} \lesssim (1+ t)^{-\frac{3}{2}}, \quad t\in[0,T].
\end{equation*}
We get the modified energy estimate \eqref{EnergyEst}, which finishes the proof of Theorem \ref{t:CubicEnergy}.

\subsection{The proof of cubic lifespan} \label{s:prooflifespan}
This section is devoted to the proof of Theorem \ref{t:lifespan}.

We need to close the bootstrap assumption \eqref{bootstrap}.
Applying the Gronwall's inequality for the energy estimate \eqref{EnergyEst},
\begin{align*}
    \mathcal{E}_s(t) &\leq \exp \left\{C_1\int_0^t \CalAOS(s)\CalATS(s) +  (1+ s)^{-\frac{3}{2}} ds \right\}\\
    \cdot &\left(\mathcal{E}_s(0) + C_2\int_0^t \epsilon^2 (1+ s)^{-\frac{3}{2}}  \exp\left\{ -C_3\int_0^s \CalAOS(\tau)\CalATS(\tau) +  (1+ \tau)^{-\frac{3}{2}} d\tau\right\} ds\right),
\end{align*}
for positive constants $C_1, C_2, C_3$ that depend on $\sup_{s\in [0,t]}\CalAZS(s)$.
Note that 
\begin{equation*}
    \int_0^t  (1+ s)^{-\frac{3}{2}} ds \leq \int_0^{+\infty} (1+s)^{-\frac{3}{2}} ds = 2.
\end{equation*}
Using the bootstrap assumption \eqref{bootstrap}, and the Sobolev embedding, the constant $C$ is bounded, and
\begin{align*}
    \CalAOS(t) \leq \CalATS(t) \lesssim \epsilon, \quad \forall t\in [0,T],
\end{align*}
so that we have
\begin{equation*}
    \int_0^t \CalAOS(s)\CalATS(s) \,ds \leq \int_0^T \CalAOS(s)\CalATS(s) \,ds \lesssim \epsilon^2\cdot T \lesssim \delta.
\end{equation*}
Therefore, we get the bound for energy 
\begin{equation*}
\|(\W, R)(t) \|_{\H^s}^2 \leq e^{(\delta+2)C_1} (1+2C_2)\epsilon^2 < M^2 \epsilon^2, \quad t\in [0,T].
\end{equation*}
This closes the first part of the bootstrap assumption \eqref{bootstrap} and also proves the first part of energy bound \eqref{WRZNormBound}.

For the bound of $\|Z-\alpha \|_{L^2}$ in \eqref{bootstrap}, we compute using the equation for $Z$ \eqref{ZEqnTwo}
\begin{align*}
    \frac{d}{dt}\|Z-\alpha\|^2_{L^2} \leq \|Z_t\|_{L^2}\|Z-\alpha \|_{L^2} \leq \|Z-\alpha \|_{L^2} (\|b(1+\W) \|_{L^2} + \|R\|_{L^2} + \|v^{rot}\circ Z \|_{L^2}) \lesssim \epsilon^2.
\end{align*}
Hence, we have the energy bound
\begin{equation*}
 \|Z(t)-\alpha \|^2_{L^2} \leq \exp \left\{C\int_0^t \epsilon^2 \,ds\right\}\| Z(0)-\alpha\|_{L^2}^2 \leq e^{C\delta} \epsilon^2 < M^2\epsilon^2.
\end{equation*}
This closes the second part of the bootstrap assumption \eqref{bootstrap} and also proves the second part of the energy bound \eqref{WRZNormBound}.

Given initial datum $(\W_0, R_0, z_1(0), z_2(0))$ that satisfy the condition in Theorem \ref{t:lifespan}, the solution $(\W(t), R(t), z_1(t), z_2(t))$ exist locally in time according to Theorem \ref{t:Wellposed}.
We also have the a priori energy bound \eqref{WRZNormBound}. 
Two vortices are away from the surface, and the distance of two point vortices is bounded from below. 
Using the Sobolev embedding, this shows that for control norms,
\begin{equation*}
    \| \CalAO(t)\|_{L^\infty_t[0, T]} + \| \CalAT(t)\|_{L^1_t[0,T]} < +\infty.
\end{equation*}
Hence, the solution can be extended to at least cubic lifespan $T = \delta\epsilon^{-2}$.

\bibliography{ww}
\bibliographystyle{plain}
\end{document}